\newcommand\hrefdefaultfont{\ttfamily}
\xpatchcmd\href{\setkeys{href}{#1}}{\setkeys{href}{font=\hrefdefaultfont,#1}}{}{\fail}
\renewcommand*{\backref}[1]{}
\renewcommand*{\backrefalt}[4]{
  \ifcase #1 
  [No citations.]
  \or [#2]
  \else [#2]
  \fi }
\numberwithin{subsection}{chapter} 
\numberwithin{figure}{chapter} 
\let\c@figure\c@subsection
\theoremstyle{plain}
\newtheorem{theorem}[subsection]{Theorem}
\newtheorem{lemma}[subsection]{Lemma}
\newtheorem{claim}[subsection]{Claim}
\newtheorem{corollary}[subsection]{Corollary}
\newtheorem{proposition}[subsection]{Proposition}
\newtheorem*{claim*}{Claim}
\theoremstyle{definition}
\newtheorem{definition/}[subsection]{Definition}
\newtheorem{example/}[subsection]{Example}
\newtheorem{question/}[subsection]{Question}
\theoremstyle{remark}
\newtheorem{remark/}[subsection]{Remark}
\newtheorem{case}{Case}
\newenvironment{remark}
  {%
   \pushQED{\qed}\begin{remark/}}
  {\popQED\end{remark/}}
\newenvironment{definition}
  {%
   \pushQED{\qed}\begin{definition/}}
  {\popQED\end{definition/}}
\newenvironment{example}
  {%
   \pushQED{\qed}\begin{example/}}
  {\popQED\end{example/}}
\newenvironment{question}
  {%
   \pushQED{\qed}\begin{question/}}
  {\popQED\end{question/}}
\newcommand{\refcha}[1]{Chapter~\ref{Cha:#1}}
\newcommand{\refsec}[1]{Section~\ref{Sec:#1}}
\newcommand{\reflem}[1]{Lemma~\ref{Lem:#1}}
\newcommand{\refrem}[1]{Remark~\ref{Rem:#1}}
\newcommand{\refdef}[1]{Definition~\ref{Def:#1}}
\newcommand{\refthm}[1]{Theorem~\ref{Thm:#1}}
\newcommand{\refprop}[1]{Proposition~\ref{Prop:#1}}
\newcommand{\reffig}[1]{Figure~\ref{Fig:#1}}
\newcommand{\refexa}[1]{Example~\ref{Exa:#1}}
\newcommand{\refitm}[1]{(\ref{Itm:#1})}
\newcommand{\refcor}[1]{Corollary~\ref{Cor:#1}}
\newcommand{\refapp}[1]{Appendix~\ref{App:#1}}
\newcommand{\refque}[1]{Question~\ref{Que:#1}}
\newcommand{\refclm}[1]{Claim~\ref{Clm:#1}}
\newcommand{\calA}{\mathcal{A}}
\newcommand{\calB}{\mathcal{B}}
\newcommand{\calC}{\mathcal{C}}
\newcommand{\calF}{\mathcal{F}}
\newcommand{\calI}{\mathcal{I}}
\newcommand{\calK}{\mathcal{K}}
\newcommand{\calL}{\mathcal{L}}
\newcommand{\calM}{\mathcal{M}}
\newcommand{\calO}{\mathcal{O}}
\newcommand{\calP}{\mathcal{P}}
\newcommand{\calS}{\mathcal{S}}
\newcommand{\calT}{\mathcal{T}}
\newcommand{\calU}{\mathcal{U}}
\newcommand{\calV}{\mathcal{V}}
\newcommand{\calW}{\mathcal{W}}
\newcommand{\Circle}{S^1(\calV)}
\newcommand{\Disk}{D^2(\calV)}
\newcommand{\Sphere}{S^2(\calV)}
\newcommand{\pair}{\operatorname{\mathsf{P}}}
\newcommand{\Mobius}{\calM(\calV)}
\newcommand{\elec}{\pair(\calV)/{\sim}}
\newcommand{\link}{\operatorname{\mathsf{L}}} 
\newcommand{\Loom}{\operatorname{\mathsf{Loom}}} 
\newcommand{\veer}{\operatorname{\mathsf{V}}} 
\newcommand{\Veer}{\operatorname{\mathsf{Veer}}} 
\newcommand{\leaf}{\operatorname{\mathsf{Le}}} 
\newcommand{\flowbox}{\operatorname{\mathsf{B}}} 
\newcommand{\FlowBox}{\operatorname{\mathsf{Box}^\circ}} 
\newcommand{\axis}{\operatorname{\mathsf{A}}}
\newcommand{\point}{\operatorname{\mathsf{p}}}
\newcommand{\rect}{\operatorname{\mathsf{R}}}
\newcommand{\cell}{\operatorname{\mathsf{c}}}
\newcommand{\astro}{\operatorname{Ast}}
\newcommand{\qui}{\operatorname{Qui}}
\newcommand{\triangleup}{\vartriangle}
\newcommand{\eu}{e^\odot}
\newcommand{\en}{e^\triangleup}
\newcommand{\es}{e^{\negthinspace\triangledown}}
\newcommand{\id}{\operatorname{\mathsf{id}}}
\newcommand{\Aut}{\operatorname{Aut}} 
\newcommand{\Stab}{\operatorname{Stab}}
\newcommand{\PSL}{\operatorname{PSL}} 
\newcommand{\Id}{\operatorname{Id}}
\newcommand{\dev}{\operatorname{dev}}
\newcommand{\homeo}{\mathrel{\cong}} 
\newcommand{\isom}{\cong} 
\newcommand{\bdy}{\partial} 
\newcommand{\subgp}[1]{{\langle #1 \rangle}}
\newcommand{\cover}[1]{{\widetilde{#1}}}
\newcommand{\closure}[1]{{\overline{#1}}}
\newcommand{\interior}{{\operatorname{interior}}}
\newcommand{\cross}{\times}
\newcommand{\CC}{\mathbb{C}}
\newcommand{\HH}{\mathbb{H}}
\newcommand{\NN}{\mathbb{N}}
\newcommand{\RR}{\mathbb{R}}
\newcommand{\ZZ}{\mathbb{Z}}
\newcommand{\CP}{\mathbb{CP}} 
\newcommand{\QP}{\mathbb{QP}} 
\newcommand{\from}{\colon} 
\newcommand{\st}{\mathbin{\mid}} 
\newcommand{\thsup}{{\rm th}}
\newcommand{\stsup}{{\rm st}}
\newcommand{\preacw}{\scalebox{0.7}{$\curvearrowleft$}}
\newcommand{\acw}{\rotatebox[origin=c]{-90}{\preacw}}
\newcommand{\fakeenv}{} 
\newenvironment{restate}[2]  
{ 
 \renewcommand{\fakeenv}{#2} 
 \theoremstyle{plain} 
 \newtheorem*{\fakeenv}{#1~\ref{#2}} 
 \begin{\fakeenv}
}
{
 \end{\fakeenv}
}
\begin{document}

\frontmatter

\title[There and back again]{From veering triangulations to link spaces\\ and back again}


\author{Steven Frankel}
\address{Washington University in St. Louis, St. Louis, MO}
\curraddr{}
\email{steven.frankel@wustl.edu}
\thanks{}

\author{Saul Schleimer}
\address{University of Warwick, Coventry, UK}
\curraddr{}
\email{s.schleimer@warwick.ac.uk}
\thanks{}

\author{Henry Segerman}
\address{Oklahoma State University, Stillwater, OK}
\curraddr{}
\email{henry.segerman@okstate.edu}
\thanks{}

\date{}

\subjclass[2020]{Primary 37C85, 57M50,  57M60}

\keywords{veering triangulations, circular orders, branched surfaces, train tracks, laminations, pseudo-Anosov flows}


\begin{abstract}
This paper is the third in a sequence establishing a dictionary between the combinatorics of veering triangulations equipped with appropriate filling slopes, and the dynamics of pseudo-Anosov flows (without perfect fits) on closed three-manifolds.

Our motivation comes from the work of Agol and Gu\'eritaud.
Agol introduced veering triangulations of mapping tori as a tool for understanding the surgery parents of pseudo-Anosov mapping tori.
Gu\'eritaud gave a new construction of veering triangulations of mapping tori using the orbit spaces of their suspension flows.
Generalising this, Agol and Gu\'eritaud announced a method that, given a closed manifold with a pseudo-Anosov flow (without perfect fits), produces a veering triangulation equipped with filling slopes.

In this paper we build, from a veering triangulation, a canonical circular order on the cusps of the universal cover.
Using this we build the veering circle and the link space.
These are the first entries in the promised dictionary.
The link space and the circle are, respectively, analogous to the orbit space of a flow and to Fenley's boundary at infinity of the orbit space.

In the other direction, and using our previous work, we prove that the veering triangulation is recovered (up to canonical isomorphism) from the dynamics of the fundamental group acting on the link space.
This is the first step in proving that our dictionary gives a bijection between the two theories.
\end{abstract}

\maketitle

\setcounter{tocdepth}{2}
\tableofcontents

\mainmatter

\chapter{Introduction}

This is the third in a series of papers~\cite{SchleimerSegerman20, SchleimerSegerman24} 
providing a dictionary between veering triangulations, equipped with appropriate filling slopes, and (topological) pseudo-Anosov flows (without perfect fits) on closed three-manifolds.
This fulfils part of Smale's program for dynamical systems, outlined in his 1967 Bulletin article~\cite{Smale67}; 
one first finds the appropriate notion of stability for flows (pseudo-Anosov without perfect fits), and then combinatorially classifies such systems (veering triangulations with filling slopes). 
Christy gives an exposition~\cite[page~759]{Christy93} of Smale's program, and proves (following Williams~\cite{Williams74}) that smooth transitive pseudo-Anosov flows determine and are determined by a dynamic branched surface.
Christy further promises~\cite[page~760]{Christy93} an algorithm to determine topological equivalence between such flows.
This work never appeared; such an algorithm (for pseudo-Anosov flows without perfect fits) is one consequence of our dictionary.

\begin{figure}[htbp]
\includegraphics[width=0.7\textwidth]{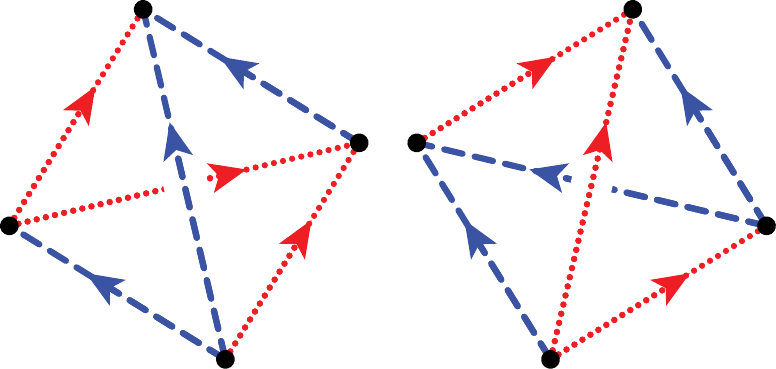}
\caption{The veering structure on the canonical triangulation for the figure-eight knot complement.  }
\label{Fig:VeerFigEightIntro}
\end{figure}

\subsection{The dictionary}
\label{Sec:Dictionary}

We outline our overall programme.

\subsubsection{Link spaces}
\label{Sec:IntroLinkSpaces}
In this third paper, given a \emph{veering triangulation} $\calV$ of a cusped three-manifold $M$, we build the \emph{link space} $\link(\cover{\calV})$ of the universal cover $\cover{\calV}$. 
(The link space is a copy of $\RR^2$ equipped with two foliations and an action of $\pi_1(M)$.)
To do this, we first associate to $\cover{\calV}$ a canonical circular order on its cusps. 
We then construct the \emph{veering circle} and a pair of invariant laminations. 
Following the exposition of Casson and Bleiler~\cite[Section~6]{CassonBleiler88}, we collapse these laminations to produce the link space $\link(\cover{\calV})$.
This done, we prove that the link space is a \emph{loom space}~\cite{SchleimerSegerman24}.
In fact, we show that $\link \from \Veer \to \Loom$ is a functor from the category of veering triangulations to the category of loom spaces.
In~\cite[Section~5]{SchleimerSegerman24} we give a functor $\veer \from \Loom \to \Veer$.
In this paper we show that $(\veer\circ\link)(\cover{\calV})$ is canonically isomorphic to $\cover{\calV}$.
Furthermore, any loom space $\calL$ is canonically isomorphic to $(\link\circ\veer)(\calL)$.
We give a more detailed outline of this paper in \refsec{Outline}. 

\subsubsection{Dynamic pairs}

In our fourth paper~\cite{SchleimerSegerman4} we isotope the \emph{upper and lower branched surfaces} $B^\calV$ and $B_\calV$  (see \refsec{UpperLowerSurfaces}) to form a \emph{dynamic pair} in the sense of Mosher~\cite[Section~2.4]{Mosher96}.
These branched surfaces may coincide in many sectors:
to tease them apart we first construct a semi-local notion of ``horizontal'' using a canonical decomposition of $|\calV|$ into \emph{shearing regions}.
We then perform an intricate sequence of isotopies.

In the appendix we find the \emph{maximal rectangles} in the glued-together horizontal cross-sections;
we show that these correspond to the original veering tetrahedra.
We also show that these rectangles ``overlap'' along face rectangles.

\subsubsection{Flow-box decompositions}

Following Mosher~\cite[Theorem~3.4.1]{Mosher96}, a dynamic pair gives an \emph{ideal flow-box decomposition} of the ambient cusped three-manifold.
Filling along appropriate slopes one obtains a topological pseudo-Anosov flow on the closed manifold.
Thus, in our fifth paper~\cite{SchleimerSegerman5}, we start with a veering triangulation (equipped with appropriate filling slopes), and applying our fourth paper, we produce a transitive pseudo-Anosov flow.
We prove that this flow has no \emph{perfect fits}~\cite[Definition~2.2]{Fenley12}.

\begin{remark}
\label{Rem:AgolTsang}
Agol and Tsang~\cite[Theorem~5.1]{AgolTsang24} have recently given another construction of a pseudo-Anosov flow (without perfect fits) from a veering triangulation (equipped with filling slopes).
Their construction begins with the \emph{flow graph}~\cite[Section~4.3]{LandryMinskyTaylor24}.
They then remove \emph{infinitesimal cycles}~\cite[Section~3]{AgolTsang24}.
Each remaining edge gives a flow box.
The resulting flow-box decomposition gives a pseudo-Anosov flow without perfect fits on the filled manifold.
\end{remark}

We then prove the following.
Suppose that $\calA$ and $\calB$ are (ideal) flow-box decompositions of a three-manifold.
Then there is an equivariant homeomorphism between their \emph{leaf spaces} $\leaf(\calA)$ and $\leaf(\calB)$ if and only if $\calA$ and $\calB$ are \emph{flow equivalent}:
that is, connected by a sequence of isotopies, \emph{slide-isotopies}, splittings, and merges.   
The forward direction is more difficult: 
we first match up the transverse foliations in $\calA$ and $\calB$. 
We then engulf the vertical faces of $\calA$ within those of $\calB$ by splitting $\calB$ and appealing to (local) finiteness.
Dealing with the horizontal faces is by far the hardest step. 
This requires a delicate induction on the topological complexity of their intersection.

Drilling the singular flow lines of a (material) flow-box decomposition $\calA$ gives an ideal flow-box decomposition $\calA^\circ$.
We show that if $\leaf(\calA)$ has no perfect fits then $\leaf(\calA^\circ)$ has the \emph{finite shadow covering property}.  
In general, given an ideal flow-box decomposition with this property, we show that its leaf space is a loom space.

\subsubsection{The correspondence} 

In the remainder of our fifth paper we establish the equivalence of $\Veer$, the category of veering triangulations, and $\FlowBox$, the category of (flow equivalence classes of) ideal flow-box decompositions.
That is, we give functors 
\[
\veer \from \Loom \to \Veer, \qquad \leaf \from \FlowBox \to \Loom, \qquad \flowbox \from \Veer \to \FlowBox
\]
and show that the compositions $(\veer \circ \leaf) \circ \flowbox$ and $\flowbox \circ (\veer \circ \leaf)$ are naturally isomorphic to the identities on the categories $\Veer$ and $\FlowBox$ respectively.
In particular, $\calV$ is combinatorially isomorphic to $(\veer \circ \leaf \circ \flowbox)(\calV)$; 
also, $\calA$ is flow-equivalent to $(\flowbox \circ \veer \circ \leaf)(\calA)$.
The above, together with the equivalences mentioned in \refsec{IntroLinkSpaces}, prove that the following diagram commutes (up to natural transformations).
\begin{equation*}
\label{Dia:CommutingDiagram}
    \begin{tikzcd}[row sep=2.5em]
 \Veer \arrow[dr,"\flowbox", swap]  \arrow[rr,"\link", yshift = +3pt] && \Loom  \arrow[ll,"\veer", yshift = -3pt] \\
 & \FlowBox \arrow[ur,"\leaf", swap]     
 \end{tikzcd}
\end{equation*}

This establishes the desired correspondence between, on the one hand, veering triangulations equipped with appropriate filling slopes, and on the other hand, flow-box decompositions (without perfect fits) of closed three-manifolds (up to a finitely generated equivalence relation).

\begin{remark}
\label{Rem:AgolTsang2}
The flow-box decomposition given by Agol and Tsang~\cite[Theorem~5.1]{AgolTsang24} appears to have fewer boxes than our $\flowbox(\calV)$.
On the other hand, it is not currently known if their construction gives an inverse to the Agol--Gu\'eritaud construction.
\end{remark}

\subsection{Cannon--Thurston maps}
\label{Sec:CT}

As a further application of the work of this paper,  we will prove with Jason Manning~\cite{ManningSchleimerSegerman} that there is a Cannon--Thurston map from the veering circle to the boundary of hyperbolic space.
Very roughly, our proof proceeds as follows.
\begin{enumerate}[label=(\Alph*)]
\item 
We compactify the link space $\link(\calV)$ by the veering circle $\Circle$ to obtain the \emph{veering disk} $\Disk$. 
\item 
We analyse Hausdorff limits in $\Disk$. 
This requires the \emph{astroid lemma}~\cite[Lemma~4.10]{SchleimerSegerman24}.
\item 
We prove that the fundamental group $\pi_1(M)$ acts as a convergence group on the \emph{veering sphere} $\Sphere$: a certain equivariant quotient of $\Circle$. 
\item
All cusps project to bounded parabolic points of $\Sphere$.
This requires \reflem{BranchLoop}.
\item
All other points of $\Sphere$ are conical limit points.
This requires suitably controlled neighbourhood bases, as given by Lemmas~\ref{Lem:Singleton} and~\ref{Lem:Leaf}.
\item
We finally appeal to a result of Yaman~\cite[Theorem~0.1]{Yaman04}
characterising the Bowditch boundary of a relatively hyperbolic group pair.
In particular, $\Sphere$ is equivariantly homeomorphic to the boundary of hyperbolic space.
\end{enumerate}

As a consequence, we find examples of Cannon--Thurston maps for cusped manifolds that do not arise, even up to commensurability, from surface groups.
In the closed case such examples can be deduced from the work of Fenley~\cite[Theorems~B and~D]{Fenley12}.
It is not known if our (cusped) examples \emph{slither over the circle} in the sense of Thurston~\cite{Thurston97}.

\subsection{Veering triangulations}

Inspired by~\cite{FarbLeiningerMargalit11}, Agol introduced veering triangulations in order to better understand the surgery parents of the mapping tori of pseudo-Anosov maps with bounded normalised dilatation~\cite{Agol11}. 
His innovation, following ideas of Hamenst\"adt~\cite{Hamenstadt09}, was to produce a canonical train track splitting sequence for the monodromy.
In particular, he showed that the veering triangulation of the drilled mapping torus (together with the corresponding point of the fibred face of the Thurston norm ball) is a complete conjugacy invariant for the monodromy.
More generally, veering triangulations have wide applications in geometric topology~\cite{Bell15, Gueritaud16, MinskyTaylor17, Strenner23, Landry18, Landry23, Landry22, LandryMinskyTaylor24, Parlak24, Parlak23, Nimershiem23, BaikKim22}, as well as in dynamics~\cite{Frankel18, LandryMinskyTaylor23, LandryTsang23, AgolTsang24, Tsang23, Tsang24}, and their properties have been a subject of study by numerous authors~\cite{HRST11, FuterGueritaud13, Kozai13, HodgsonIssaSegerman16, Sakata16, Worden18, FuterTaylorWorden18}.

Agol asks his readers~\cite[Section~7, third question]{Agol11} to ponder the possibility and meaning of non-layered examples.
The first such was found via computer search by Hodgson, Rubinstein, Tillmann and the third author~\cite{HRST11}.
In other work~\cite{GSS19}, we give the census of all transverse veering triangulations with up to 16 tetrahedra; there are 87,047 of these.
The evidence strongly suggests that non-layered veering triangulations dominate. 


The first hint that Agol's question has a general answer comes from the work of Gu\'eritaud~\cite[Theorem~1.1]{Gueritaud16}.
He gives an alternate construction of the veering triangulation, starting from a singular euclidean structure on the fibre of the given mapping torus.
Agol and Gu\'eritaud extend this: given a manifold equipped with a pseudo-Anosov flows without perfect fits, there is an associated veering triangulation on its surgery parent~\cite{Agol15}.
Given this, it is natural to ask if there is a map in the other direction. 
This was the inspiration for our dictionary, given in \refsec{Dictionary}.

\subsection{Outline}
\label{Sec:Outline}

\makeatletter
\g@addto@macro\@parboxrestore{\lineskiplimit\normallineskip}
\makeatother

\begin{figure}[htbp]
\centering
\labellist
\footnotesize\hair 2pt
\pinlabel {\parbox{3.5cm}{\begin{center} veering triangulation $\calV$\\ \refdef{Veering} \end{center}}} at 51 475
\pinlabel \refthm{Equivalence} [t] at 153 473
\pinlabel {\parbox{4.5cm}{\begin{center} induced veering \\ triangulation $\veer(\calL)$ \end{center}}} at 242 475

\pinlabel \refprop{VeerImpliesExhaust} [r] at 60 432
\pinlabel {\parbox{4.5cm}{\begin{center} continental exhaustion\\ $(C_n)$ of universal cover \\ \refdef{ContinentalExhaustion} \end{center}}} at 60 396

\pinlabel {\parbox{2.2cm}{\begin{center} \cite[Main\\ Construction]{Agol11} \end{center}}} [l] at 135 408 
\pinlabel {\parbox{4.4cm}{\begin{center} train tracks $\tau^f, \tau_f$\\ on a face $f$\\ \refdef{UpperLowerTrackFace}\end{center}}} at 135 364 

\pinlabel \refprop{ExhaustImpliesLayered}  [r] at 60 368
\pinlabel {\parbox{4.5cm}{\begin{center} layering $\calK = (K_i)$\\ of universal cover\\ \refdef{Layered} \end{center}}} at 60 334

\pinlabel \reflem{LayeredImpliesUnique} [r] at 24 288
\pinlabel {\parbox{4.4cm}{\begin{center} unique compatible\\ circular order $\calO_\calV$ \\ Definitions~\ref{Def:CircularOrder} and~\ref{Def:Compatible} \end{center}}} at 23 256

\pinlabel {\parbox{3.4cm}{\begin{center} branched surfaces \\ $B^\calV$, $B_\calV$, \refsec{UpperLowerSurfaces} \end{center}}} at 170 274

\pinlabel {\parbox{4.4cm}{\begin{center} train tracks $\tau^K$, $\tau_K$\\ on a layer $K$\\ \refdef{UpperTrack}\end{center}}} at 100 219 

\pinlabel \refthm{VeeringCircle} [r] at 24 226
\pinlabel {\parbox{4.4cm}{\begin{center} veering circle $\Circle$ \\ \refdef{VeeringCircle} \end{center}}} at 23 197

\pinlabel {\parbox{3.4cm}{\begin{center} branch line \\ $S$, \refsec{BranchLines} \end{center}}} at 170 204

\pinlabel \refthm{Laminations} [r] at 94 153
\pinlabel {\parbox{4.4cm}{\begin{center} laminations $\Lambda^\calV$, $\Lambda_\calV$ in $\Circle$\\ Definitions~\ref{Def:LaminationInS1} and~\ref{Def:UpperLamination}\end{center}}} at 100 125

\pinlabel \refthm{Laminations}(\ref{Itm:LaminationsInM},\ref{Itm:LaminationsUnique}) [l] at 155 81
\pinlabel {\parbox{3.4cm}{\begin{center} essential laminations\\ $\Sigma^\calV$, $\Sigma_\calV$ in $M$ \\ \refsec{SuspendingDescending} \end{center}}} at 154 50

\pinlabel \refthm{LinkSpace} [r] at 58 80
\pinlabel {\parbox{4.4cm}{\begin{center} link space $\link(\calV)$\\ \refdef{LinkSpace} \end{center}}} at 62 55

\pinlabel \refthm{LinkIsLoom} [r] at 58 32
\pinlabel {\parbox{4.4cm}{\begin{center} induced loom \\ space $\link(\calV)$
\end{center}}} at 62 8

\pinlabel \refthm{Equivalence} [t] at 146 5
\pinlabel {\parbox{4.4cm}{\begin{center} loom space $\calL$\\ 
\refdef{Loom} \end{center}}} at 242 8

\pinlabel {\cite[Section 5]{SchleimerSegerman24}} [l] at 243 224
\endlabellist
\includegraphics[width=0.8\textwidth]{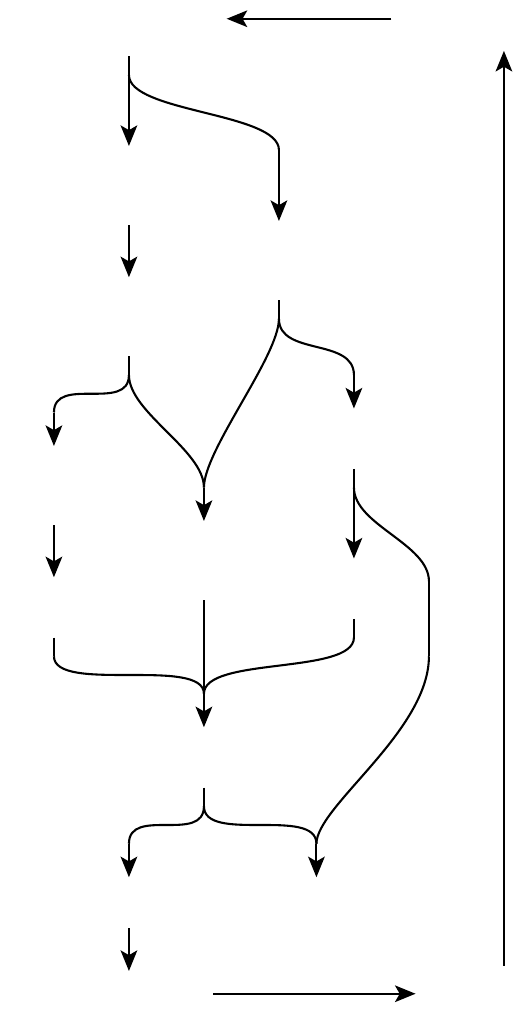}
\caption{The main objects and constructions in this paper.}
\label{Fig:PaperFlowChart}
\end{figure}

The flowchart shown in \reffig{PaperFlowChart} illustrates the interrelations between the main objects as well as their constructions.

In \refcha{OrdersTriangulations} we review the definitions of \emph{circular orders}, \emph{transverse taut ideal} triangulations, and \emph{layered} triangulations.
We then define what it means for a circular order to be \emph{compatible} with a given transverse taut structure.
In \refexa{SurfaceBundlesTwo} we show that layered triangulations are \emph{rigid}: that is, they admit a unique compatible circular order.
In a striking contrast, in \refsec{FlatDSS} we give a non-layered taut ideal triangulation which is far from rigid -- 
by \refthm{Exotic} it admits uncountably many compatible circular orders.
On the other hand, layered or not, veering triangulations are rigid. 

\begin{restate}{Theorem}{Thm:VeerImpliesUnique}
Suppose that $M$ is an oriented three-manifold equipped with a transverse veering triangulation $\calV$.
Then there is a unique compatible circular order $\calO_\calV$ on the cusps of $\cover{\calV}$.
Furthermore, $\calO_\calV$ is dense and $\pi_1(M)$--invariant.
\end{restate}



\noindent
To prove this, in \refcha{Geography} we quickly review the terminology of \emph{train tracks}.
We then introduce the new tools required to obtain \refthm{VeerImpliesUnique}.
The most important of these is the idea of a \emph{continental exhaustion}, introduced in \refdef{ContinentalExhaustion}.
This is a kind of developing map; 
however, instead of constructing a geometric position for each tetrahedron in $\HH^3$, we produce combinatorial positions in a ``thickened version'' of $\HH^2$.
The heart of the proof of \refthm{VeerImpliesUnique} is given by \refprop{VeerImpliesExhaust}; in a delicate induction we promote a transverse veering triangulation to a continental exhaustion.  

In \refcha{SurfacesAndLines} we construct various \emph{branched surfaces}.
Using these and the techniques developed for the proof of \refthm{VeerImpliesUnique}, we promote the circular order $\calO_\calV$ on the cusps $\Delta_\calV$ of $\cover{\calV}$ to the \emph{veering circle} $\Circle$.  

\begin{restate}{Theorem}{Thm:VeeringCircle}
Suppose that $M$ is an oriented three-manifold equipped with a transverse veering triangulation $\calV$. 
Then the order completion of $(\Delta_\calV, \calO_\calV)$ is a circle $\Circle$ with the following properties.
\begin{enumerate}
\item 
The action of $\pi_1(M)$ on $\Delta_\calV$ extends to give a continuous, faithful, orientation-preserving action on $\Circle$.  
\item 
If $\calV$ is finite then all orbits are dense.
\end{enumerate}
\end{restate}

The veering circle $\Circle$ depends only on $\calV$, and not on any other choices.
This is in striking contrast with Thurston's \emph{universal circle} for a given closed manifold equipped with a taut foliation; many choices are required for his construction~\cite[Remark~6.27]{CalegariDunfield03}.
Our situation is instead similar to Fenley's \emph{ideal circle boundary} for a closed three-manifold equipped with a pseudo-Anosov flow without perfect fits~\cite[Theorem~A]{Fenley12}.
There the circle is unique.
We stress however that our results require only a finite amount of combinatorial data, while \cite{CalegariDunfield03, Fenley12} both require substantial topological or dynamical inputs.

\begin{remark}
Since our manifold $M$ comes equipped with a taut ideal triangulation, it necessarily has cusps.
Thus the fundamental group $\pi_1(M)$ surjects $\ZZ$.  
We deduce that $\pi_1(M)$ has \emph{left orders} and so has \emph{left circular orders}. 
See~\cite[Definitions~2.26 and~2.40]{Calegari07}.  
We note that \refthm{VeeringCircle} gives a new left circular order on $\pi_1(M)$, by 
inserting gaps at the cusps and, in these gaps, adding left orders on the peripheral groups.
\end{remark}

We next build the \emph{upper} and \emph{lower laminations}.  

\begin{restate}{Theorem}{Thm:Laminations}
Suppose that $M$ is a three-manifold equipped with a transverse veering triangulation $\calV$. 
Then there is a lamination $\Lambda^\calV$ in $\Circle$ with the following properties.
\begin{enumerate}
\item 
The lamination  $\Lambda^\calV$ is $\pi_1(M)$--invariant.
\item 
The lamination $\Lambda^\calV$ suspends to give a $\pi_1(M)$--invariant lamination $\cover{\Sigma}^\calV$ in $\cover{M}$; this descends to $M$ to give a lamination $\Sigma^\calV$ which 
\begin{enumerate}
\item
is carried by the upper branched surface $B^\calV$, 
\item
has only plane, annulus, and M\"obius band leaves, and
\item
is essential.
\end{enumerate}
\item 
Suppose that $\Sigma$ is a lamination fully carried by $B^\calV$.
Then, after collapsing parallel leaves, $\Sigma$ is tie-isotopic to $\Sigma^\calV$.
\end{enumerate}
There is also a lamination $\Lambda_\calV$ with the corresponding properties with respect to $B_\calV$. 
\end{restate}

\refthm{Laminations}\refitm{LaminationsUnique} is surprising; in \refcor{Surprise} we use this to show that any two non-empty laminations carried by the stable branched surface associated to a pseudo-Anosov homeomorphism are tie-isotopic (after collapsing parallel leaves).  
This answers a question of Danny Calegari~\cite{Calegari21}. 

Using the upper and lower laminations, we build the \emph{link space} with its upper and lower foliations.
After reviewing the axioms of loom spaces~\cite{SchleimerSegerman24}, we prove the following.

\begin{restate}{Theorem}{Thm:LinkIsLoom}
Suppose that $M$ is a three-manifold equipped with a (locally) veering triangulation $\calV$. 
\begin{enumerate}
\item
The link space $\link(\calV)$, equipped with the foliations $F^\calL$ and $F_\calL$ is a loom space. 
\item
Every tetrahedron rectangle is a loom-tetrahedron rectangle; and conversely.
\item
The set of cusp rectangles in $\link(\calV)$, with ideal point a given cusp class of $\elec$, is a loom-cusp; and conversely.
\item
The action of $\pi_1(M)$ is faithful and by loom automorphisms.
\end{enumerate}
\end{restate}

The proof of \refthm{LinkIsLoom} is quite involved.  
To build the link space, we must collapse a pair of transverse laminations in a circle to obtain a pair of foliations in a plane.  
There are various expositions of similar collapsing procedures in the literature;
see \cite[Chapter~6]{CassonBleiler88} and \cite[Section~11.9]{Kapovich09}.  
However, we do not have any of the usual hypotheses: 
we do not have a surface group action,
we do not have (projectively) invariant measures, 
and we do not have compactness (of the original three-manifold). 

In \refthm{Equivalence}, we prove that our construction, of a link space from a veering triangulation, and Gu\'eritaud's construction, of a veering triangulation from a loom space, are essentially inverses. 
We do this by interpreting points and leaves of the link space as combinatorial objects in the veering triangulation.
Here is one application of \refthm{Equivalence}.

\begin{restate}{Corollary}{Cor:Recover}
Suppose that $M$ is a connected three-manifold.
Suppose that $\calV$ is a (locally) veering triangulation on $M$.
Suppose that $\Gamma = \veer \circ \link (\pi_1(M))$ is the induced subgroup of $\Aut(\veer \circ \link (\cover{\calV}))$.  
Then $\veer \circ \link(\cover{\calV}) / \Gamma$ is canonically isomorphic to $\calV$. 
\end{restate}

The material in \refapp{CT} is required for our forthcoming work with Manning described in \refsec{CT}. 
\refapp{CarriedArcs} generalises \cite{SchleimerSegerman20} from carried loops to carried arcs; \refcor{VerticesDistinct} is used here in Chapters \ref{Cha:OrdersTriangulations} and \ref{Cha:Geography}.
\refapp{Notation} is an index of commonly used notation.

\subsection*{Acknowledgements}
We thank Ian Agol for many interesting discussions about veering triangulations, and for posing the question that led to this work. 
These conversations happened at the Institute for Advanced Study during the \emph{Geometric Structures on three-manifolds program}, supported by the National Science Foundation under Grant Number DMS-1128155.

We thank Lee Mosher and S\'ergio Fenley for teaching us about pseudo-Anosov flows;
this directly motivated our work on the link space.

We thank Yair Minsky for helping us understand the connection between the veering hypothesis and Fenley's notion of \emph{perfect fits}.
We thank Marc Culler for suggesting the name \emph{continents}.
These conversations happened at the Mathematical Sciences Research Institute during the Fall 2016 semester program \emph{Geometric Group Theory}, supported by the National Science Foundation under Grant Number DMS-1440140.

We thank Danny Calegari for his excellent talks at the 2016 workshop \emph{Contact structures, laminations and foliations}, at the Mathematical Institute of the LMU in Munich;
these directly and indirectly inspired our investigations into the upper and lower laminations.

We thank Sam Taylor for pointing out the relation between the link space and the Guirardel core.

We thank Jason Manning for many patient discussions about the action of $\pi_1(M)$ on the veering circle.

We thank the referee for their very careful reading and many insightful comments.

We thank the Institute for Computational and Experimental Research in Mathematics in Providence, RI, for their hospitality during the Fall 2019 semester program \emph{Illustrating Mathematics};
we made significant progress on the paper during that time.
The ICERM program was supported by the National Science Foundation under Grant Number DMS-1439786 and the Alfred P.~Sloan Foundation award G-2019-11406.

Frankel was supported in part by National Science Foundation grants DMS-1611768 and DMS-2045323.
Segerman was supported in part by Australian Research Council grant DP1095760 and National Science Foundation grants DMS-1308767, DMS-1708239, and DMS-2203993.

\chapter{Circular orders and taut ideal triangulations}
\label{Cha:OrdersTriangulations}

\subsection{Circular orders}
References for this material include~\cite[Section~2]{Thurston98}, \cite[Section~2.6]{Calegari07}, and~\cite[Chapter~3]{Frankel13}.

\begin{definition}
\label{Def:CircularOrder}
Suppose that $\Delta$ is a set.  
A function $\calO \from \Delta^3 \to \{-1, 0, 1\}$ is a \emph{circular order on $\Delta$} if for all $a, b, c, d \in \Delta$ we have the following.
\begin{itemize}
\item
$\calO(a, b, c) \neq 0$ if and only if all of $a$, $b$, and $c$ are distinct.
\item
$\calO(a, b, c) = \calO(c, a, b) = -\calO(b, a, c)$.
\item
If $\calO(a, b, d) = \calO(b, c, d) = 1$ then $\calO(a, c, d) = 1$. \qedhere
\end{itemize}
\end{definition}

\begin{example}
The canonical example is the unit circle $S^1$ equipped with the anticlockwise circular order.
\end{example}

\begin{definition}
\label{Def:Dense}
We say that a circular order $\calO$ on $\Delta$ is \emph{dense} if, for all $a, b, c \in \Delta$, if $\calO(a, b, c) = 1$, then there is a point $d \in \Delta$ so that $\calO(b, d, c) = 1$. 
\end{definition}

\begin{definition}
Suppose that $\Gamma$ is a group acting on $\Delta$.  
A circular order $\calO$ on $\Delta$ is \emph{$\Gamma$--invariant} if for all $a, b, c \in \Delta$ and for all $\gamma \in \Gamma$ we have $\calO(\gamma a, \gamma b, \gamma c) = \calO(a, b, c)$.
\end{definition}

\begin{example}
A second example comes from taking $F$ to be a compact, connected, oriented surface with non-empty boundary and with $\chi(F) < 0$.
Let $\cover{F}$ be the universal cover of $F$.  
Let $\Delta_F$ be the set of boundary components of $\cover{F}$; 
these are the \emph{cusps} of $\cover{F}$.  
Note that the orientation on $F$ gives a dense $\pi_1(F)$--invariant circular order on $\Delta_F$.  
See \reffig{Farey}. 
\end{example}

\begin{figure}[htbp]
\includegraphics[width=0.5\textwidth]{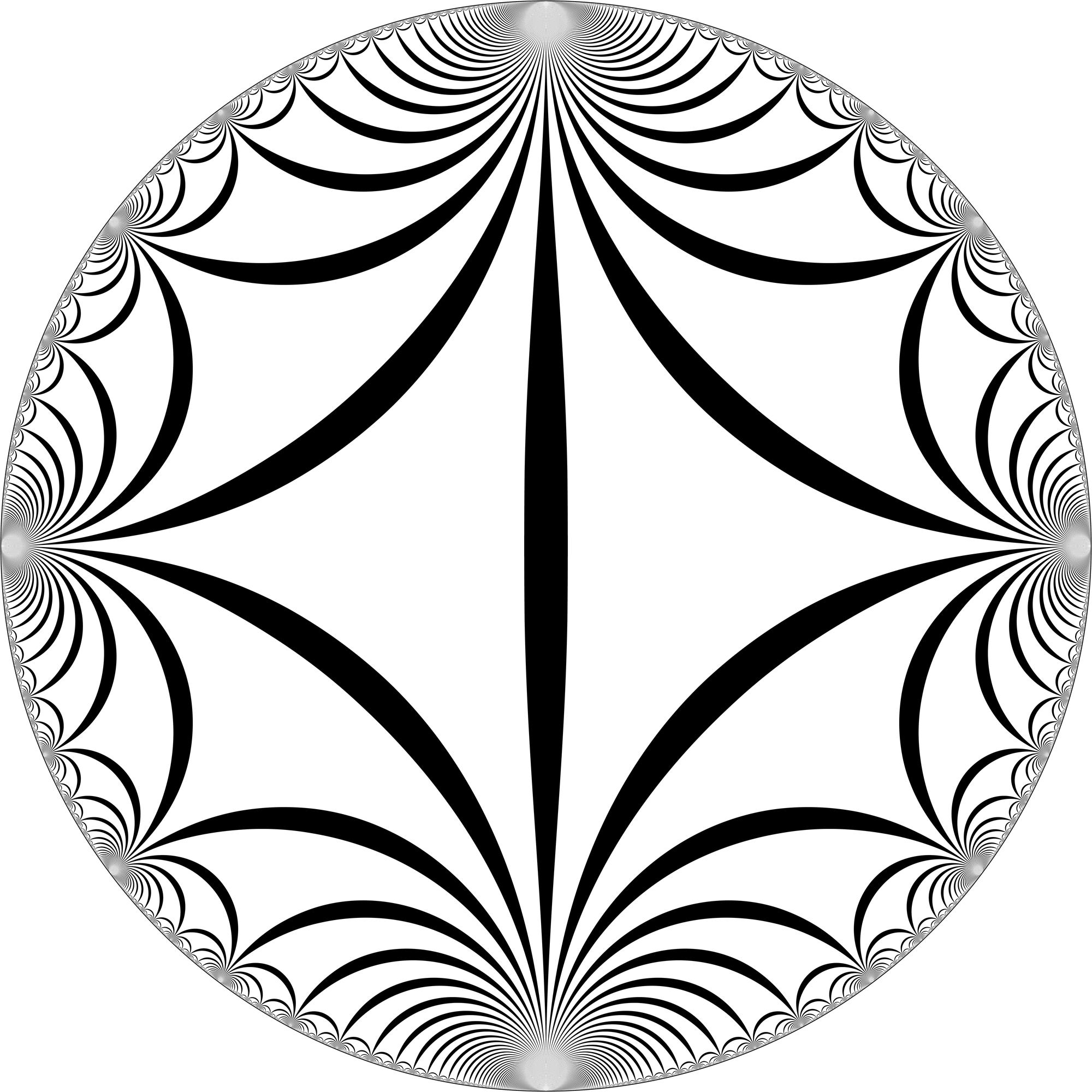}
\caption{Equipping $F$ with an ideal triangulation and lifting to $\cover{F}$ yields the Farey tessellation.  
The cusps of $\cover{F}$ correspond to the rational numbers $\QP^1$.  
Figure by Roice Nelson.}
\label{Fig:Farey}
\end{figure}

For our next example we go up another dimension.  
Suppose that $M$ is a compact, connected, oriented three-manifold, with non-empty boundary, and where all boundary components are tori.  
Let $\cover{M}$ be the universal cover of $M$.  
Let $\Delta_M$ be the set of boundary components of $\cover{M}$;
we call these the \emph{cusps} of $\cover{M}$.
Note that these are naturally in bijection with the peripheral subgroups of $\pi_1(M)$. 
We want to find a dense $\pi_1(M)$--invariant circular order on $\Delta_M$.

\begin{example}
\label{Exa:SurfaceBundlesOne}
Suppose that $F$ is a compact, connected, oriented surface with negative Euler characteristic and with non-empty boundary.  
Suppose that $f \from F \to F$ is an orientation-preserving homeomorphism. 
Define the \emph{surface bundle} $M = M_f$ with \emph{monodromy} $f$ to be the three-manifold obtained by forming $F \cross [0,1]$ and then identifying the point $(x, 1)$ with $(f(x), 0)$, for all $x \in F$.  
The surfaces $F_t = F \cross \{t\}$ are the \emph{fibres} of the bundle $M$.
The intervals $\{x\} \cross [0,1]$ glue together to give an oriented flow $\Phi$ transverse to the fibres.
We choose the orientation on $M$ so that the orientation of $F_0$, followed by the orientation of $\Phi$, makes a right-handed frame.
One simple example is $M_{\Id} \homeo F \cross S^1$;
more interesting ones are given in \refexa{SurfaceBundlesTwo}.

We now identify $F$ with the fibre $F_0$.  
This induces a homeomorphism between $\cover{F} \cross \RR$ and $\cover{M}$ and thus a bijection between $\Delta_F$ and $\Delta_M$.  
The orientation of $\cover{F}$ gives us a dense circular order $\calO_F$ on $\Delta_F$ and thus on $\Delta_M$.  
Finally, note that $\calO_F$ is $\pi_1(M)$--invariant.
\end{example}

\begin{remark}
\label{Rem:BundleRigidity}
The circular order $\calO_F$ of \refexa{SurfaceBundlesOne} possesses a certain kind of rigidity.
Suppose that $F'$ is a properly embedded, connected, oriented surface in $M = M_f$.
Suppose that $F'$ is transverse to the flow $\Phi$, meets every flow line of $\Phi$, and the orientations of $F'$ and $\Phi$, in that order, form a right-handed frame in $M$.
Then $F'$ is a fibre of a surface bundle structure on $M$.  
We deduce that $F'$ gives a circular order $\calO_{F'}$ on the cusps of $\cover{M}$.
Since $\cover{F}$ and $\cover{F}'$ are both transverse to the flow, 
they are both homeomorphic to the leaf-space. 
We deduce that $\calO_F = \calO_{F'}$.
\end{remark}

\subsection{Taut ideal triangulations}
\label{Sec:TautIdealTriangulation}

To discuss more general examples, we will replace bundle structures by ideal triangulations equipped in various ways.  
From now on we will always assume that $M$ is a tame connected three-manifold, possibly with boundary. 

A \emph{triangulation} $\calT$ is a collection $\{ t_i \}$ of \emph{model tetrahedra} (copies of $\{ x \in \RR^4 \st x_i \geq 0, \sum x_i = 1 \}$) glued via \emph{face pairings} $\{\phi_j \}$ (linear maps between faces). 
The faces, edges, and vertices of a model tetrahedron will be called \emph{model cells}. 
Suppose that $\calT$ is a triangulation and that $|\calT|$ is its \emph{realisation space} (the disjoint union of the $t_i$ modulo the maps $\phi_j$). 
We will assume that the interior of every model cell of $\calT$ embeds in $|\calT|$.  
Following~\cite[Section~4.2]{Thurston78}, we say $\calT$ is an \emph{ideal triangulation} of $M$ if $|\calT|$, minus its zero-skeleton, is homeomorphic to the interior of $M$.  
We use $\cover{\calT}$ to denote the induced ideal triangulation of the universal cover, $\cover{M}$. 
The \emph{cusps} of $\calT$, denoted simply as $\Delta_\calT$, is the set of ideal vertices of $\cover{\calT}$. 
When $M$ is (the interior of a) compact three-manifold with Klein bottle and torus boundary components, 
there is an induced bijection between $\Delta_\calT$ and $\Delta_M$ (the peripheral subgroups of $\pi_1(M)$). 

A \emph{taut angle structure}~\cite[Definition~1.1]{HRST11} on an ideal triangulation $\calT$ is an assignment of dihedral angles as follows.
\begin{itemize}
\item
Every model edge of every model tetrahedron has dihedral angle zero or $\pi$.
\item
For every edge, the angle sum of its models is $2\pi$.
\item
For every model vertex, the angle sum of the three adjacent model edges is $\pi$.
\end{itemize}
A model tetrahedron $t$ with such dihedral angles is called a \emph{taut} tetrahedron. 
The four model edges with angle zero give the \emph{equator} of $t$.
See \reffig{TransverseTet}.  

A \emph{transverse} taut angle structure on $\calT$~\cite[Definition~1.2]{HRST11} has, in addition to the dihedral angles, a co-orientation on the faces of $\calT$.
These are arranged so that, for any pair of model faces $f, f'$ of any model tetrahedron $t$ with common model edge $e$, we have the following.
\begin{itemize}
\item 
The dihedral angle of $e$ inside of $t$ is zero if and only if exactly one of the co-orientations on $f, f'$ points into $t$.
\end{itemize}
We extend the co-orientation to the edges of $\calT$ so that, along an edge $e$, it points into the tetrahedron $t$ above $e$.  
We call $e$ the \emph{lower} edge for $t$.
Reversing orientations, we obtain the \emph{upper} edge for $t$.
We define the \emph{upper} and \emph{lower} faces for $t$ similarly.
The co-orientation on the faces of a model taut tetrahedron are shown in \reffig{TransverseTet}.  
Note that our terminology (following~\cite{HRST11}) is a slight variant of the original definition~\cite[page~370]{Lackenby00}.

\begin{figure}[htbp]
\centering
\subfloat[Co-orientations and angles in a transverse taut tetrahedron.]{
\labellist
\small\hair 2pt
\pinlabel 0 at 20 130
\pinlabel 0 at 240 120
\pinlabel 0 at 135 27
\pinlabel 0 at 135 217
\pinlabel $\pi$ at 125 140
\pinlabel $\pi$ at 125 87
\endlabellist
\includegraphics[width=0.35\textwidth]{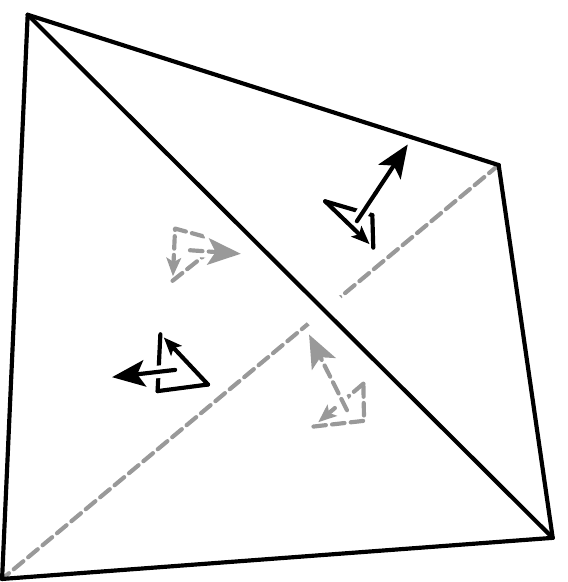}
\label{Fig:TransverseTet}
}
\qquad \qquad
\subfloat[Co-orientations around an edge.]{
\includegraphics[width=0.4\textwidth]{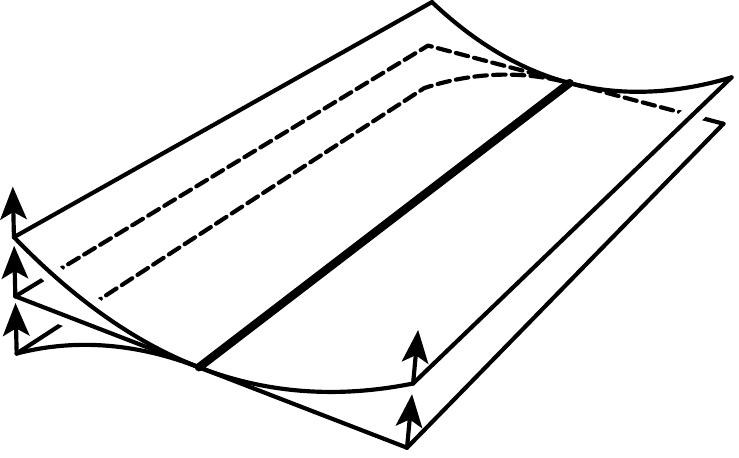}
\label{Fig:TransverseEdge}
}
\caption{}
\label{Fig:Transverse}
\end{figure}

In the following, a transverse taut angle structure on $\calT$ will be called simply a \emph{transverse taut structure}.  

\subsection{Horizontal branched surface}
\label{Sec:HorizontalBranchedSurface}

References for this material include~\cite[Section~1.2]{Mosher96} and~\cite[Section~6.3]{Calegari07}.
Suppose that $B$ is a branched surface.  
The one-skeleton $B^{(1)}$ is here called the \emph{branch locus}.  

\begin{remark}
\label{Rem:Locus}
This terminology is fairly common in the literature; 
for example, see~\cite[page~26]{Mosher96} and~\cite[page~371]{Lackenby00} and~\cite[page~157]{Li02}.  
It is called the singular locus in~\cite[Definition 6.16]{Calegari07} and is also closely related to the \emph{sutures} of \emph{sutured manifolds}~\cite[Definition~2.6]{Gabai83}. 
\end{remark}

The branch locus decomposes as a union of \emph{branch components}: 
these are connected one-manifolds immersed into $M$ meeting (and possibly self-intersecting) only at the vertices of the zero-skeleton $B^{(0)}$.  
The components of $B - B^{(1)}$ are called the \emph{sectors} of $B$.  

Suppose now that $M$ is a three-manifold equipped with a transverse taut ideal triangulation $\calT$.
We now isotope the two-skeleton of $\calT$ near each edge, as shown in \reffig{TransverseEdge}, to obtain $B(\calT)$: 
a co-oriented branched surface without vertices~\cite[page~371]{Lackenby00}.
The branched surface $B(\calT)$ is \emph{taut}~\cite[page~374]{Lackenby00};
this gives taut ideal triangulations their name.  
Note that the sectors of $B(\calT)$ are exactly the original triangular faces of $\calT$. 

Let $\cover{B}$ be the preimage of $B(\calT)$ in the universal cover $\cover{M}$.
Note that $\cover{B}$ is again a co-oriented branched surface. 
We now have a basic result.

\begin{theorem}
\label{Thm:TetEmbeds}
Suppose that $M$ is a three-manifold equipped with a taut ideal triangulation $\calT$.
Suppose that $t$ is a model tetrahedron of $\cover{\calT}$.  
Then $t$ embeds in $\cover{M}$; 
furthermore, the model vertices of $t$ lie in four distinct cusp of $\Delta_\calT$.
\end{theorem}

\begin{proof}
Let $\cover{B}$ be the preimage, in $\cover{M}$, of $B(\calT)$. 
Let $f'$ and $g'$ be the lower model faces of $t$.
Let $f$ and $g$ be the upper model faces of $t$.
Note that $f \cup g$ is a connected surface carried by $\cover{B}$, as is $f' \cup g'$. 
From~\cite[Corollary~1.1]{SchleimerSegerman20} we deduce that each of $f \cup g$ and $f' \cup g'$ is a disk embedded in $\cover{M}$. 
From \refcor{VerticesDistinct} we deduce that each of $f \cup g$ and $f' \cup g'$ meets four distinct cusps of $\Delta_\calT$.  
(In~\cite{SchleimerSegerman20} we assumed that $M$ is compact, but that hypothesis is never used.)
It follows that no pair of model faces (or model edges) of $t$ are identified in $\cover{M}$. 
\end{proof}


\subsection{Layered triangulations}

Suppose that $B$ is a general branched surface. 
Recall that a \emph{tie-neighbourhood} $N(B)$ of $B$ is a regular neighbourhood of $B$ equipped with a foliation by intervals, called \emph{ties}. 
See~\cite[Figure~1.2]{Mosher96}. 
A surface $F$, properly embedded in $M$, is \emph{carried} by $B$ if it is properly isotopic into $N(B)$ and is there transverse to the ties.  
A carried surface $F$ is \emph{fully carried} if, after the isotopy, it intersects all ties.
Properties of $B$ are generally inherited by its carried surfaces: decomposition into sectors, co-orientations, and so on. 

\begin{definition}
\label{Def:Layered}
Suppose that $M$ is a three-manifold equipped with a transverse taut ideal triangulation $\calT$.
A \emph{layering} $\calK = (K_i)$ of $\calT$ is a collection of pairwise disjoint surfaces, all carried by $B(\calT)$.
Here the index set is either 
\begin{itemize}
\item 
$\ZZ / n\ZZ$ if $\calT$ has $n$ tetrahedra or 
\item 
$\ZZ$ if $\calT$ is infinite.
\end{itemize}
The surfaces $K_i$ inherit a transverse orientation from $B(\calT)$.  
Furthermore they have the following property.
\begin{itemize}
\item
For every tetrahedron $t$ of $\calT$ there is a unique $j$ so that $t - N(B(\calT))$ lies in the component of $M - (K_j \sqcup K_{j+1})$ immediately above $K_j$ and immediately below $K_{j+1}$.  
\item
For every $K_i$ 
there is a tetrahedron $t_i$ immediately above $K_i$.
\end{itemize}
We call the surfaces $K_i$ the \emph{layers} of $\calK$. 
\end{definition}

\begin{lemma}
\label{Lem:LayeredImpliesLayered}
Suppose that $M$ is a three-manifold equipped with a transverse taut ideal triangulation $\calT$.
If $\calT$ is layered then so is $\cover{\calT}$. 
\end{lemma}

\begin{proof}
If $M$ is simply connected there is nothing to prove.
Suppose that $\calF = (F_i)$ is the given layering of $\calT$ and $t_i$ is the unique tetrahedron between $F_{i}$ and $F_{i+1}$.  
Fixing $i$, we define an \emph{elevation} of $F_i$ to be an induced map from $\cover{F}_i$ to $\cover{M}$.  

The elevations of the $F_i$ do not give a layering of $\cover{\calT}$;  
adjacent elevations of $F_i$ and $F_{i+1}$ cobound infinitely many tetrahedra (all of which are lifts of $t_i$).
Thus we must rearrange the order of attachment.

Fix $G$, an elevation of $F_0$.  
Since $G$ is properly embedded, 
it separates $\cover{M}$.
Choose a linear order $(t^i)_{i \in \NN}$ on the tetrahedra of $\cover{\calT}$ lying \emph{above} $G$.  
We now build a layering onto $G$.  
By induction suppose that we have attached all of the tetrahedra $(t^i)_{i = 0}^{n-1}$, plus perhaps finitely more, to $G$.  
Now consider the tetrahedron $t^n$.  
If it is already attached, there is nothing to prove.  
If it is not we proceed as follows.  
Consider all possible paths $\gamma$ in $\cover{M}$ so that
\begin{itemize}
\item 
$\gamma$ starts on $G$ and ends in $t^n$,
\item 
$\gamma$ is transverse to the two-skeleton, and
\item 
$\gamma$ crosses each face $f$ in the direction of its co-orientation.
\end{itemize} 
Note that each such $\gamma$ crosses the elevations of the layers $F_i$ the same number of times; we call this number, $h(t^n)$, the \emph{height} of $t^n$.  
Thus each $\gamma$ meets at most $h(t^n)$ tetrahedra.  
Also, since a tetrahedron has only two lower faces, there are at most $2^{h(t^n)}$ such paths.  
So, we attach all of the finitely many tetrahedra met by any of these paths, in height order.  
The last one attached is $t^n$, completing the inductive step. 

Doing the same below $G$ completes the proof.
\end{proof}

\subsection{Compatibility}
\label{Sec:Compatibility}

Suppose that $M$ is an oriented three-manifold equipped with a transverse taut ideal triangulation $\calT$.
Recall that $\Delta_\calT$ is the set of cusps of $\cover{\calT}$.
Suppose that $f$ is a face of $\cover{\calT}$.  
Let $\Delta_f \subset \Delta_\calT$ be the cusps that $f$ meets. 
By \refthm{TetEmbeds} there are exactly three of these.
The co-orientation of $f$ and the induced orientation of $\cover{M}$ picks out a unique circular order $\calO_f$ on $\Delta_f$ (say, anticlockwise as viewed from above).  

\begin{definition}
\label{Def:Compatible}
Suppose that $\calO$ is a circular order on $\Delta \subset \Delta_\calT$.  
We say that $\calO$ is \emph{compatible} with $\calT$ if it has the following property. 
For any face $f \in \cover{\calT}$ if $\Delta_f \subset \Delta$ 
then $\calO | \Delta_f = \calO_f$.  
\end{definition}

\begin{lemma}
\label{Lem:LayeredImpliesUnique}
Suppose that $M$ is an oriented three-manifold equipped with a transverse taut ideal triangulation $\calT$.
If $\cover{\calT}$ is layered then there is a unique compatible circular order $\calO$ on $\Delta_\calT$. 
Furthermore, $\calO$ is dense and $\pi_1(M)$--invariant. 
\end{lemma}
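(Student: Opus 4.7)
The plan is to use the layering $\calK = \{K_i\}$ of $\cover M$ to define the circular order from a single layer, check it agrees across layers, and then argue that the compatibility condition forces uniqueness. By the corollary cited in the proof of Lemma~\ref{Lem:LayeredImpliesLayered}, each layer $K_i$ is combinatorially a copy of the Farey tessellation. First I would check that every $K_i$ has vertex set equal to $\Delta_M$. In a taut tetrahedron $t$, the two $\pi$-edges form a perfect matching on the four vertices (say $ab$ and $cd$); the two bottom faces share one of these edges (say $ab$), and their vertex union is all of $\{a, b, c, d\}$. Thus for each tetrahedron $t_i$ lying between consecutive layers $K_i$ and $K_{i+1}$, all four of its vertices lie in both $V(K_i)$ and $V(K_{i+1})$. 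Hence $V(K_i) = V(K_{i+1})$, and since every cusp of $\cover M$ is a vertex of some tetrahedron, $V(K_i) = \Delta_M$ for every $i$.

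Each $K_i$ is an oriented, simply connected, Farey-type triangulated plane, so its vertex set inherits a canonical anti-clockwise circular order $\calO_i$ on $\Delta_M$ (the cyclic order on its circle at infinity, with $K_i$ oriented so that its co-orientation points up). By construction, $\calO_i | \Delta_f = \calO_f$ for every face $f$ of $K_i$. I would then show $\calO_i = \calO_{i+1}$ by a local argument: the two layers differ by a 2--2 move across $t_i$ which exchanges the bottom diagonal $ab$ for the top diagonal $cd$ of the ideal quadrilateral on $\{a, b, c, d\}$; away from $t_i$ the combinatorial tessellation is unchanged, and on the quadrilateral itself the cyclic order of the four vertices is independent of which diagonal is chosen. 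Hence all $\calO_i$ coincide, and I set $\calO = \calO_0$. Every face $f$ of $\cover \calT$ sits in some $K_i$, so $\calO | \Delta_f = \calO_f$, giving compatibility.

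For uniqueness, I would prove a \emph{Farey uniqueness lemma}: any circular order on the vertex set of an oriented Farey-type tessellation $K$ that agrees with $\calO_f$ on each triangle $f$ of $K$ is uniquely determined. Given three vertices $a, b, c$, I would connect them by a finite fan of adjacent triangles inside $K$; each diagonal flip across a shared edge uniquely determines the cyclic order on the four vertices of the union from the orders of the two adjacent triangles, because the shared edge separates the remaining two vertices. Applying this lemma to each $K_i$ shows that any compatible circular order on $\Delta_M$ must coincide with $\calO_i = \calO$. Density is immediate, as the Farey cusps are dense in the circle at infinity of $K_0$: given $\calO(a, b, c) = 1$ there are infinitely many cusps in the open arc from $b$ to $c$ avoiding $a$. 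Finally, $\pi_1(M)$--invariance follows from uniqueness, since for any $\gamma \in \pi_1(M)$ the pullback $(a, b, c) \mapsto \calO(\gamma a, \gamma b, \gamma c)$ is again a compatible circular order (as $\gamma$ preserves both $\cover \calT$ and $\cover \alpha$), and so must equal $\calO$.

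I expect the main obstacle to be the Farey uniqueness lemma itself. One must verify carefully, by induction along a fan of adjacent triangles, that the prescribed triangle orders consistently propagate to a well-defined circular order on every triple of vertices, independently of the choice of fan. This is essentially combinatorial but requires care with the case analysis when the three cusps are separated by several diagonal flips, and with checking that the transitivity axiom of Definition~\ref{Def:CircularOrder} is never violated.
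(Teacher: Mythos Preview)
Your proposal is correct and follows essentially the same approach as the paper's proof: use the Farey structure of a single layer to define $\calO$, check invariance under the flip between consecutive layers, then derive $\pi_1(M)$--invariance from uniqueness via the pullback order. The step you flag as the main obstacle---your ``Farey uniqueness lemma''---is precisely what the paper compresses into the single sentence ``Then $\calO'$ and $\calO$ agree on all faces of $F_0$; we deduce that $\calO' = \calO$'', so your instinct to spell it out (via edges separating the disk, or equivalently a fan/median-triangle argument) is well placed.
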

 
\begin{proof}
We are given a layering $(F_i)_{i \in \ZZ}$ of $\cover{M}$.  
Applying \cite[Corollary~1.1]{SchleimerSegerman20}, each $F_i$ is a copy of the Farey tessellation.  
Each gives the next via a single flip across a tetrahedron.  
So $F_i$ and $F_{i+1}$ meet the same subset of cusps of $\Delta_\calT$.  
But every cusp meets some $F_j$.  
Thus every $F_i$ meets all cusps of $\Delta_\calT$.

By \refcor{VerticesDistinct}, the layer $F_i$ meets every cusp at most once.  
Since $F_i$ is disk, it gives a circular order $\calO_i$ on $\Delta_\calT$.  
Picking an oriented edge of $F_i$ determines an order isomorphism between $(\Delta_\calT, \calO_i)$ and $\QP^1$, the rational points of $S^1$.  
Thus the circular order $\calO_i$ is dense.  
Since $F_i$ differs from $F_j$ by a finite number of flips, we deduce that $\calO_i = \calO_j$. 
Also, every face of $\cover{\calT}$ lies in some $F_i$.  
So, setting $\calO = \calO_0$, we deduce that $\calO$ is compatible with $\calT$.  

Suppose that $\calO'$ is another compatible circular order on $\Delta_\calT$.  
Then $\calO'$ and $\calO$ agree on all faces of $F_0$; 
we deduce that $\calO' = \calO$.   
Thus $\calO$ is the unique circular order on $\Delta_\calT$ compatible with $\calT$.

Finally, fix any $\gamma \in \pi_1(M)$ and define $\calO_\gamma$ via 
\[
\calO_\gamma(a, b, c) = \calO(\gamma a, \gamma b, \gamma c).
\]
The action of $\gamma$ on $\Delta_\calT$ is bijective, so $\calO_\gamma$ is a circular order on $\Delta_\calT$.  
Now, if $f$ is a face of $\cover{\calT}$ with vertices $x$, $y$, and $z$ then $\calO_\gamma(x, y, z) = \calO(\gamma x, \gamma y, \gamma z)$.  
Note that $\gamma x$, $\gamma y$, and $\gamma z$ are the vertices of the face $\gamma f$.  
Also, $\gamma$ sends the co-orientation of $f$ to that of $\gamma f$; 
also $\gamma$ preserves the orientation of $M$.  
It follows that $\calO_\gamma$ is compatible with $\calT$.  
Thus, by uniqueness, we have $\calO_\gamma = \calO$. 
\end{proof}

\chapter{Examples}

Here we give examples of circular orders coming from transverse taut structures.

\subsection{Layered surface bundles}
\label{Sec:SurfaceBundlesTwo}

We follow the notation of \refexa{SurfaceBundlesOne}.

\begin{example}
\label{Exa:SurfaceBundlesTwo}
Suppose $M = M_f$ is an oriented surface bundle with fibre $F$ and monodromy $f$.  
Let $\calT$ be a layered ideal triangulation of $M$.  
The best-known example is shown in \reffig{VeerFigEight}; 
it is the canonical triangulation of the figure-eight knot complement.
In general, since $\calT$ is layered, \reflem{LayeredImpliesLayered} implies that $\cover{\calT}$ is layered.  
Thus, by \reflem{LayeredImpliesUnique} the cusps of $\Delta_\calT$ admit a unique circular order $\calO_F$ which is compatible with $\calT$.  
Furthermore, it is $\pi_1(M)$--invariant.  
Since the layering of $\cover{\calT}$ has $\cover{F}$ as one of its layers, the circular order $\calO_F$ is the same as the one constructed in \refexa{SurfaceBundlesOne}.  
\end{example}

\begin{figure}[htbp]
\includegraphics[width=0.6\textwidth]{Figures/veering_fig_8_v2}
\caption{The veering structure on the canonical triangulation for the figure-eight knot complement.  
This manifold is \texttt{m004} in the SnapPy census~\cite{snappy}.  
This veering structure is \texttt{cPcbbbiht\_12} in the veering census~\cite{GSS19}.  
See \refcha{Veering} for the definition of veering; in this section we only use the transverse taut structure.  
The $\pi$--edges of the transverse taut structure are the diagonals of the squares.  
The model edges on the sides of the squares all have dihedral angle zero.}
\label{Fig:VeerFigEight}
\end{figure}

\subsection{Geometric structures}

Before we give our next family of examples, there is a bit of necessary background.  
Useful references include~\cite[Chapter~4]{Thurston78} and~\cite[Section~2]{Tillmann12}.  
Take $\calP = \CC - \{0, 1\}$.  
Suppose that $M$ is a compact, connected, oriented three-manifold with boundary being a single torus.  
Suppose that $\calT = (t_i)_{i = 0}^{n-1}$ is an ideal triangulation of $M$.  
We say that $\calT$ admits a \emph{geometric structure} if there is a tuple of \emph{shapes} $\rho_\infty = (z_i) \in \calP^n$ as follows.
\begin{itemize}
\item
The tuple $\rho_\infty$ solves Thurston's gluing and holonomy equations (called the completeness equations in ~\cite[page~800]{Tillmann12}). 
\item
Each $z_i \in \rho_\infty$ has positive imaginary part. 
\end{itemize}
It follows that $\rho_\infty$ gives the interior of $M$ a complete, finite volume, hyperbolic metric.  
The tuples of $\calP^n$ that solve the gluing equations (ignoring the holonomy and positivity) make up the \emph{shape variety} $\calS(\calT)$.  
Let $\calS_\infty$ be the irreducible component of $\calS(\calT)$ which contains $\rho_\infty$.  
Our assumptions on $M$ and the existence of $\rho_\infty$ imply that $\calS_\infty$ is a complex curve~\cite[page~314]{NeumannZagier85}.  

Fix a triangle $f$ of $\cover{\calT}$ and a circular order on ideal vertices of $f$.  
This choice allows us to define, for every $\rho \in \calS_\infty$, a \emph{developing map} 
\[
\dev_{\rho} \, \from \,\, \cover{M} \,\, \to \,\, \HH^3
\]
We extend $\dev_\rho$ to give a function from $\Delta_\calT$, the set of cusps, to $\CP^1 = \bdy_\infty \HH^3$.  
Our choices ensure that the vertices of $f$ are sent to $0$, $1$, and $\infty$, respectively. 

\begin{definition}
\label{Def:Collide}
If $u$ and $v$ are distinct cusps then we say that $u$ and $v$ \emph{collide} at $\rho \in \calS_\infty$ if $\dev_\rho(u) = \dev_\rho(v)$. 
\end{definition}

\begin{lemma}
\label{Lem:FiniteCollisions}
Distinct cusps $u$ and $v$ collide only finitely many times. 
\end{lemma}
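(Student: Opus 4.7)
The plan is to view the statement as an algebro-geometric rigidity result: the collision condition is cut out by a regular equation on the irreducible algebraic curve $\calS_\infty$, and that equation is already non-trivial at $\rho_\infty$, so it must be non-trivial everywhere on $\calS_\infty$.

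First, I would establish that, for each cusp $w \in \Delta_M$, the assignment $\rho \mapsto \dev_\rho(w) \in \CP^1$ is a morphism of algebraic varieties from $\calS_\infty$ to $\CP^1$.  The developing map is built iteratively: from the distinguished face $f$ whose ideal vertices are sent to $0$, $1$, $\infty$, each adjacent face is attached via a M\"obius transformation whose $\PSL(2, \CC)$--matrix entries are Laurent-polynomials in the shape parameter of the intervening tetrahedron.  Any cusp $w$ is incident to some face reachable from $f$ by a finite chain of face crossings in $\cover{\calT}$.  Thus $\dev_\rho(w)$ is the image of $0$, $1$, or $\infty$ under a finite composition of such M\"obius transformations, hence a rational function of the shapes $z_i$.

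Second, I would verify the key geometric input: $\dev_{\rho_\infty}(u) \neq \dev_{\rho_\infty}(v)$ whenever $u \neq v$.  At the complete structure $\rho_\infty$, the holonomy is discrete and faithful.  Each cusp of $\cover{M}$ is stabilized by a conjugate of a peripheral subgroup, which under the holonomy becomes a maximal rank-$2$ parabolic subgroup of $\PSL(2, \CC)$ with a unique fixed point on $\CP^1$.  If two such subgroups were to share a fixed point, the group they generate would consist of parabolics fixing that point; in a discrete subgroup of $\PSL(2, \CC)$ the parabolic stabilizer of a point in $\CP^1$ is abelian, so the two subgroups would lie in a common peripheral stabilizer and the two cusps would coincide.

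Third, combining these, the lemma follows from elementary algebraic geometry.  Pick any affine chart of $\CP^1$ containing both $\dev_{\rho_\infty}(u)$ and $\dev_{\rho_\infty}(v)$; in this chart, $\dev_\rho(u) - \dev_\rho(v)$ is a rational function on $\calS_\infty$ that is non-zero at $\rho_\infty$.  Since $\calS_\infty$ is an irreducible complex algebraic curve, the vanishing locus of any non-zero rational function is a proper Zariski-closed subvariety, hence zero-dimensional and therefore finite.  Every collision of $u$ and $v$ in $\calS_\infty$ lies in this locus, proving the lemma.  The main obstacle I anticipate is the second step: one must be careful that distinct cusps of $\cover{M}$ really do correspond to distinct cosets of distinct peripheral stabilizers and so yield distinct parabolic fixed points at $\rho_\infty$.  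This is the only place where the hyperbolic geometry, as opposed to pure algebraic geometry, enters the argument, and the choice of $\calS_\infty$ as the component through the complete structure is precisely what makes this input available.
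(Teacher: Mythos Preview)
Your proposal is correct and follows essentially the same approach as the paper: both argue that $\rho \mapsto \dev_\rho(w)$ is a rational function of the shapes, that distinct cusps have distinct images at $\rho_\infty$, and hence that the difference is a nonzero meromorphic (equivalently, rational) function on the irreducible curve $\calS_\infty$ with only finitely many zeros. You supply more detail on the second step (via parabolic fixed points and discreteness) than the paper, which simply asserts that distinct cusps develop to distinct points at the complete structure.
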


\begin{proof}
For any cusp $w \in \Delta_\calT$ we define a function $\dev(w) \from \calS_\infty \to \CP^1$ by taking $\rho \mapsto \dev_\rho(w)$.  Note that the coordinate functions, restricted to $\calS_\infty$, are meromorphic.  Also, $\dev(w)$ can be written as a rational function in terms of the coordinates.  Thus $\dev(w)$ is also meromorphic. 

Since $u$ and $v$ are distinct, the functions $\dev(u)$ and $\dev(v)$ disagree at $\rho_\infty$.  Thus their difference is not identically zero. 
\end{proof}

\begin{figure}[htbp]
\labellist
\small\hair 2pt
\pinlabel $z$ at 53 53
\pinlabel $z$ at 144.5 53
\pinlabel $z$ at 53 106
\pinlabel $z$ at 144.5 106
\pinlabel $\frac{z-1}{z}$ at 85 15
\pinlabel $\frac{z-1}{z}$ at 176.5 15
\pinlabel $\frac{z-1}{z}$ at 85 68
\pinlabel $\frac{z-1}{z}$ at 176.5 68
\pinlabel $\frac{z-1}{z}$ at 85 121
\pinlabel $\frac{z-1}{z}$ at 176.5 121
\pinlabel $\frac{1}{1-z}$ at 85 39
\pinlabel $\frac{1}{1-z}$ at 176.5 39
\pinlabel $\frac{1}{1-z}$ at 85 92
\pinlabel $\frac{1}{1-z}$ at 176.5 92

\pinlabel $z$ at 42 46
\pinlabel $z$ at 133.5 46
\pinlabel $z$ at 225 46
\pinlabel $z$ at 42 99
\pinlabel $z$ at 133.5 99
\pinlabel $z$ at 225 99
\pinlabel $\frac{z-1}{z}$ at 39 11
\pinlabel $\frac{z-1}{z}$ at 130.5 11
\pinlabel $\frac{z-1}{z}$ at 222 11
\pinlabel $\frac{z-1}{z}$ at 39 64
\pinlabel $\frac{z-1}{z}$ at 130.5 64
\pinlabel $\frac{z-1}{z}$ at 222 64
\pinlabel $\frac{z-1}{z}$ at 39 117
\pinlabel $\frac{z-1}{z}$ at 130.5 117
\pinlabel $\frac{z-1}{z}$ at 222 117
\pinlabel $\frac{1}{1-z}$ at 13.5 27
\pinlabel $\frac{1}{1-z}$ at 105 27
\pinlabel $\frac{1}{1-z}$ at 196.5 27
\pinlabel $\frac{1}{1-z}$ at 13.5 80
\pinlabel $\frac{1}{1-z}$ at 105 80
\pinlabel $\frac{1}{1-z}$ at 196.5 80

\pinlabel $w$ at 37 52.5
\pinlabel $w$ at 128.5 52.5
\pinlabel $w$ at 220 52.5
\pinlabel $w$ at 37 105.5
\pinlabel $w$ at 128.5 105.5
\pinlabel $w$ at 220 105.5
\pinlabel $\frac{w-1}{w}$ at 53 40
\pinlabel $\frac{w-1}{w}$ at 144.5 40
\pinlabel $\frac{w-1}{w}$ at 53 93
\pinlabel $\frac{w-1}{w}$ at 144.5 93
\pinlabel $\frac{1}{1-w}$ at 77 27
\pinlabel $\frac{1}{1-w}$ at 168.5 27
\pinlabel $\frac{1}{1-w}$ at 77 80
\pinlabel $\frac{1}{1-w}$ at 168.5 80

\pinlabel $w$ at 50.5 8
\pinlabel $w$ at 142 8
\pinlabel $w$ at 50.5 61
\pinlabel $w$ at 142 61
\pinlabel $w$ at 50.5 114
\pinlabel $w$ at 142 114
\pinlabel $\frac{w-1}{w}$ at 7.5 38
\pinlabel $\frac{w-1}{w}$ at 99 38
\pinlabel $\frac{w-1}{w}$ at 190.5 38
\pinlabel $\frac{w-1}{w}$ at 7.5 91
\pinlabel $\frac{w-1}{w}$ at 99 91
\pinlabel $\frac{w-1}{w}$ at 190.5 91
\pinlabel $\frac{1}{1-w}$ at 7.5 15
\pinlabel $\frac{1}{1-w}$ at 99 15
\pinlabel $\frac{1}{1-w}$ at 190.5 15
\pinlabel $\frac{1}{1-w}$ at 7.5 68
\pinlabel $\frac{1}{1-w}$ at 99 68
\pinlabel $\frac{1}{1-w}$ at 190.5 68
\pinlabel $\frac{1}{1-w}$ at 7.5 121
\pinlabel $\frac{1}{1-w}$ at 99 121
\pinlabel $\frac{1}{1-w}$ at 190.5 121
\endlabellist
\includegraphics[width=\textwidth]{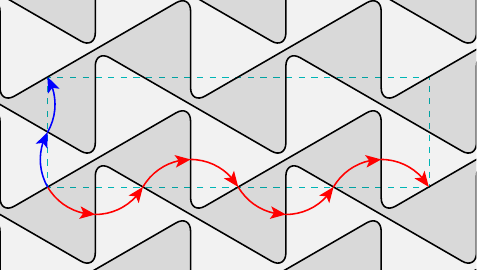}
\caption{The triangulation of the universal cover of the cusp torus of the figure-eight knot complement, as induced by the canonical triangulation (shown in \reffig{VeerFigEight}).  The tetrahedron shapes are all regular ($z = w = e^{\pi i/3}$).  The dashed rectangle is a fundamental domain for the tiling, with the meridian $m$ drawn vertically and the longitude $l$ drawn horizontally.  For more general shapes $z$ and $w$ we obtain the complex dihedral angles shown, following the usual conventions~\cite[page~47]{Thurston78}.}
\label{Fig:CuspTriangulationComplete}
\end{figure}

\subsection{Exotic taut structures}
\label{Sec:FlatDSS}

We are now ready for our next example.  
Let $M$ be the figure-eight knot complement.  
Let $\calT$ be the canonical triangulation of $M$, shown in \reffig{VeerFigEight}, there equipped with a taut structure.  
The triangulation $\calT$ admits two more taut structures, each of which is again transverse.  
These \emph{exotic} structures are generated by \emph{leading-trailing deformations} of the veering triangulation~\cite[Proposition 6.8]{FuterGueritaud13}.
We have the following. 

\begin{theorem}
\label{Thm:Exotic}
Each of the exotic taut structures on $\calT$ admits uncountably many compatible circular orders.
\end{theorem}


\noindent
This result, and the techniques needed for its proof, are not used later in the paper. 
Thus the proof can be skipped upon a first reading.  
However, see \refque{RigidVersusNotSo}.

\begin{proof}[Proof of \refthm{Exotic}]
The triangulation $\calT$ induces a triangulation of the boundary torus $\bdy M$.  
Taking preimages gives a triangulation of the universal cover of the cusp torus; 
this is shown in \reffig{CuspTriangulationComplete} along with other details.  
This is a view of $\cover{\calT}$ as seen from a fixed cusp $c_\infty$.  
The cusps connected to $c_\infty$ by an edge of $\cover{\calT}$ form a combinatorial copy of the lattice $\ZZ \oplus \ZZ e^{\pi i/3}$.  
Let $c_{0,0}$ be any fixed cusp in this lattice.  
Let $m$ and $l$ be the usual meridian and longitude of the figure-eight knot.  
We define a sublattice by taking $c_{p,q} = m^p l^q (c_{0,0})$.  

As shown in \reffig{CuspTriangulationComplete}, we label the corners of the cusp triangles with the corresponding complex dihedral angles of the ideal tetrahedra.  We can now derive Thurston's gluing equations for this triangulation.  The equations for the two ideal edges are identical: namely 
\[
z(z-1)w(w-1) = 1.
\]
As discussed above, the solutions $\rho = (z, w) \in \calP^2$ form the shape variety $\calS(\calT)$.  Again consulting \reffig{CuspTriangulationComplete}, we compute the holonomies for $m$ and $l$ and obtain
\[
H(m) = 1/(w(1-z)) \quad \mbox{and} \quad H(l) = z^2(1-z)^2.
\] 
Recall that Thurston's hyperbolic Dehn surgery equation~\cite[page~57]{Thurston78}, with real \emph{surgery coordinates} $\mu$ and $\lambda$, is
\[
\mu \log H(m) + \lambda \log H(l) = 2 \pi i.
\]
Here analytic continuation must be used to obtain the correct branch of the logarithm. 
Suppose we have found shapes $z$ and $w$ with positive imaginary part solving the gluing equations, as well as $\mu$ and $\lambda$ solving the surgery equation.
Then the tetrahedra with those shapes fit together to give an incomplete hyperbolic structure on $M$.  

We now concentrate on the case where $-4 < \mu < 4$ and $\lambda = 1$.
This is the top edge of the hourglass shown in \reffig{m004_canonical_dss}.
These ``surgeries'' are on the ``boundary'' of Dehn surgery space: the tetrahedron shapes $z$ and $w$ are real and their volume has decreased to zero.
The shapes along the top edge give the triangulation a fixed transverse taut structure: one of the two exotic structures from~\cite[Proposition 6.8]{FuterGueritaud13}.
The other exotic structure corresponds to the bottom edge of the hourglass.
Again, see \reffig{m004_canonical_dss}.

\begin{figure}[htbp]
\includegraphics[width=\textwidth]{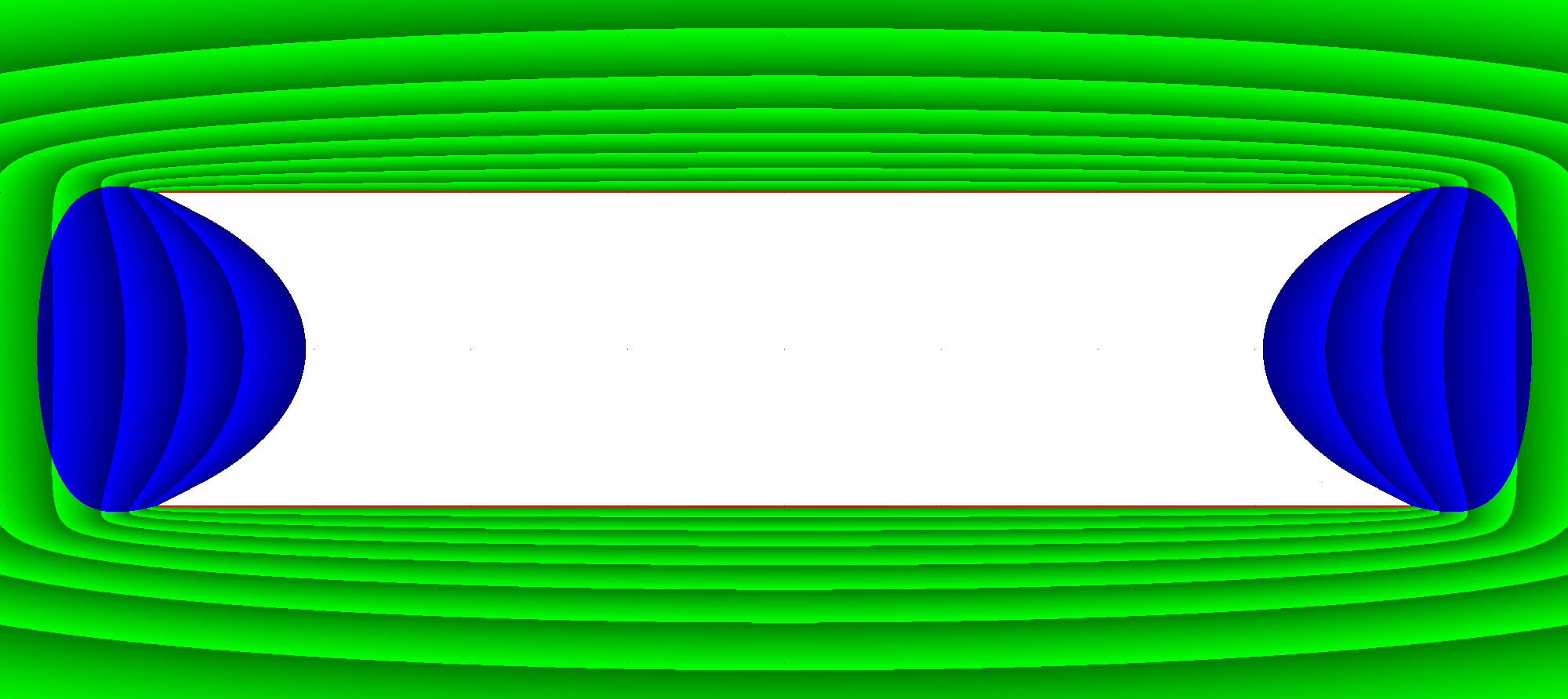}
\caption{The Dehn surgery space of the figure-eight knot complement.  
We call the white region the ``hourglass''.  
The complement of the hourglass is the component of the positive volume locus which contains the shapes giving the complete, finite volume, hyperbolic structure.  
The origin $(0, 0)$ is at the centre, the point $(0, 1)$  is the midpoint of the top of the hourglass, and the point $(4,1)$ is the top right corner of the hourglass.  
For a discussion of the colour scheme and more see~\cite{HSS09}. 
Also see~\cite[page 4.21]{Thurston78} and~\cite[page 70]{Weeks85}. }
\label{Fig:m004_canonical_dss}
\end{figure}

Let $\calI \subset \calS_\infty$ be the set of (pairs of) shapes corresponding to the top edge of the hourglass.
So, for each $\mu \in (-4, 4)$ we have shapes $\rho_\mu = (z_\mu, w_\mu) \in \calI$ and a map $\dev_\mu \from \Delta_\calT \to \bdy \HH^2$.
We adapt the conventions that $\bdy \HH^2 = \RR \cup \infty$ and that $\dev_\mu(c_\infty) = \infty$. 

\reffig{fig8_(-1_1)_shapes2} is a cartoon of the image of the developing map for the shapes $(\mu, \lambda) = (-1,1)$.
The vertical positions of features are purely combinatorial; all of the cusps of the original lattice $\ZZ \oplus \ZZ e^{\pi i/3}$ map into a single horizontal line.
For some $\mu$ (for example, those at the rational points) $\dev_\mu$ is not one-to-one and we do not get a circular order.
However, by \reflem{FiniteCollisions}, a pair of cusps collide at only a finite number of points of $\calI$.
The collection of pairs of distinct cusps is countable;
thus the set of $\rho_\mu \in \calI$ having some collision is only countable.
The uncountable remainder give circular orders;
these are denoted by $\calO_\mu$.  
The definition of $\dev_\mu$ implies that $\calO_\mu$ is compatible with $\calT$.  
The deck group $\pi_1(M)$ acts on cusps $\Delta_\calT$ directly and acts on $\bdy \HH^2 = \RR \cup \{\infty\}$ via the holonomy representation to $\PSL(2,\RR)$.  
The circular orders $\calO_\mu$ are obtained by pulling back from $\bdy \HH^2$ and so are invariant.

Finally, we show that all of the circular orders $\calO_\mu$ are distinct.  
We begin with an example.  
\reffig{fig8_(-1p1_1)_shapes} shows the tetrahedron shapes at Dehn surgery coordinates $(\mu, \lambda) = (-1.1,1)$.  
Since these are not integral, unlike \reffig{CuspTriangulationComplete} the associated developing map now does not factor through a tiling.  
Instead, we have a cut along the negative real axis where the ends of the longitude and meridian do not line up.  
The cusps associated to the end of the longitude have slid to the left relative to the cusps associated to the end of the meridian.  
However, if we go eleven times backwards along the meridian and ten times forwards along the longitude, the corresponding holonomy is again the identity, and again the cusps line up.  

In general, if $\mu = p/q$ is rational (and in lowest terms) then the holonomy of $m^p l^q$ is trivial.  
We deduce that $\dev_\mu(c_{p,q}) = \dev_\mu(c_{0,0})$.  
If $\mu < p/q$ then $\dev_\mu(c_{p,q})$ is to the left of $\dev_\mu(c_{0,0})$ in our picture.  
If $\mu > p/q$ then $\dev_\mu(c_{p,q})$ is to the right of $\dev_\mu(c_{0,0})$ in our picture.

Therefore, if $\rho_\mu \in \calI$ has no collisions, then the circular order $\calO_\mu$ satisfies 
\[
\calO_\mu(\dev_\mu(c_\infty), \dev_\mu(c_{0,0}), \dev_\mu(c_{p,q})) = \pm 1
\]
as $\mu < p/q$ or $p/q < \mu$, respectively.  
Now, for any two circular orders $\calO_\mu$ and $\calO_{\mu'}$, there is a rational number $p/q$ between $\mu$ and $\mu'$; 
therefore $\calO_\mu$ and $\calO_{\mu'}$ are distinct.  
This completes the proof of \refthm{Exotic}.
\end{proof}

\begin{figure}[htbp]
\labellist
\small\hair 2pt
\pinlabel $z$ at 303 31
\pinlabel $z$ at 303 131
\pinlabel $w$ at 260 89
\pinlabel $w$ at 102 148
\pinlabel $z$ at 109 248
\pinlabel $z$ at 189 302
\pinlabel $w$ at 310 172
\pinlabel $w$ at 213 237

\tiny
\pinlabel $z$ at 256 112
\pinlabel $z$ at 256 149
\pinlabel $w$ at 242 128.5
\pinlabel $w$ at 182.5 151

\pinlabel $z$ at 187 192
\pinlabel $z$ at 218 213
\pinlabel $w$ at 261 170
\pinlabel $w$ at 224.5 194.5

\pinlabel $0$ at 102 344
\pinlabel $1$ at 182 344

\endlabellist
\includegraphics[width=0.8\textwidth]{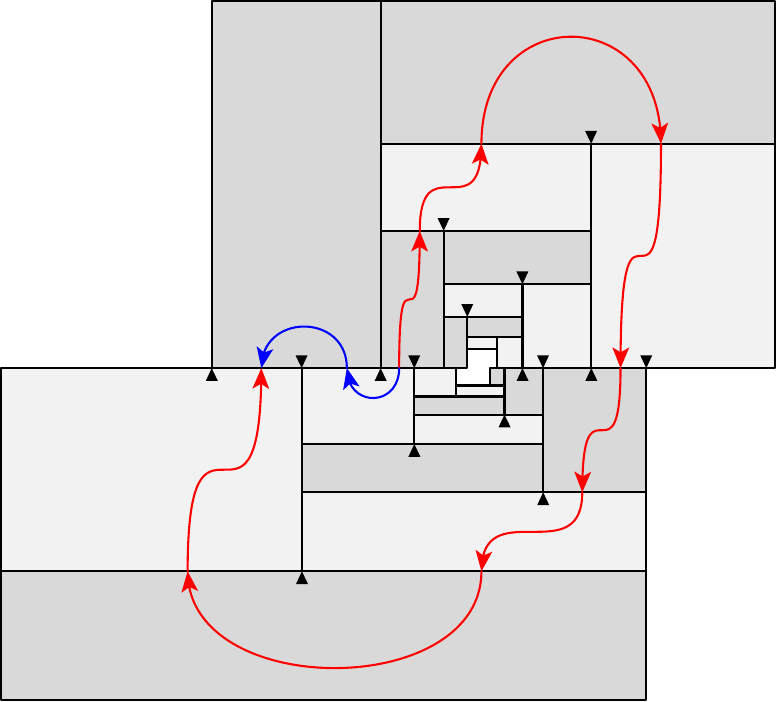}
\caption{A cartoon of the image of the developing map, as applied to the triangulation of the universal cover of the boundary torus of the figure-eight knot complement.  Here the tetrahedron shapes are $z \approx 1.8731617275018602$ and $w \approx -0.428119859615$ (calculated by SnapPy~\cite{snappy}).  These correspond to the surgery coefficients $(\mu, \lambda) = (-1, 1)$.  Since the shapes are real, the tetrahedra are flat.  Thus the cusp triangles degenerate to intervals.  The vertical scale in this figure has no geometric significance; 
we draw the triangles as rectangles instead of as intervals in order to see their gluings.  The vertical sides of each rectangle give the vertices (of the cusp triangle) with dihedral angle zero;
the small black arrow in each rectangle points at the vertex with dihedral angle $\pi$.}
\label{Fig:fig8_(-1_1)_shapes2}
\end{figure}

\begin{figure}[htbp]
\labellist
\small\hair 2pt
\pinlabel $z$ at 335 31
\pinlabel $z$ at 335 131
\pinlabel $w$ at 290 89
\pinlabel $w$ at 115 148
\pinlabel $z$ at 137 248
\pinlabel $z$ at 218 302
\pinlabel $w$ at 342 172
\pinlabel $w$ at 241 237

\tiny
\pinlabel $w$ at 210 150.5
\pinlabel $z$ at 216 192

\pinlabel $0$ at 129.5 344
\pinlabel $1$ at 210.5 344

\endlabellist
\includegraphics[width=0.8\textwidth]{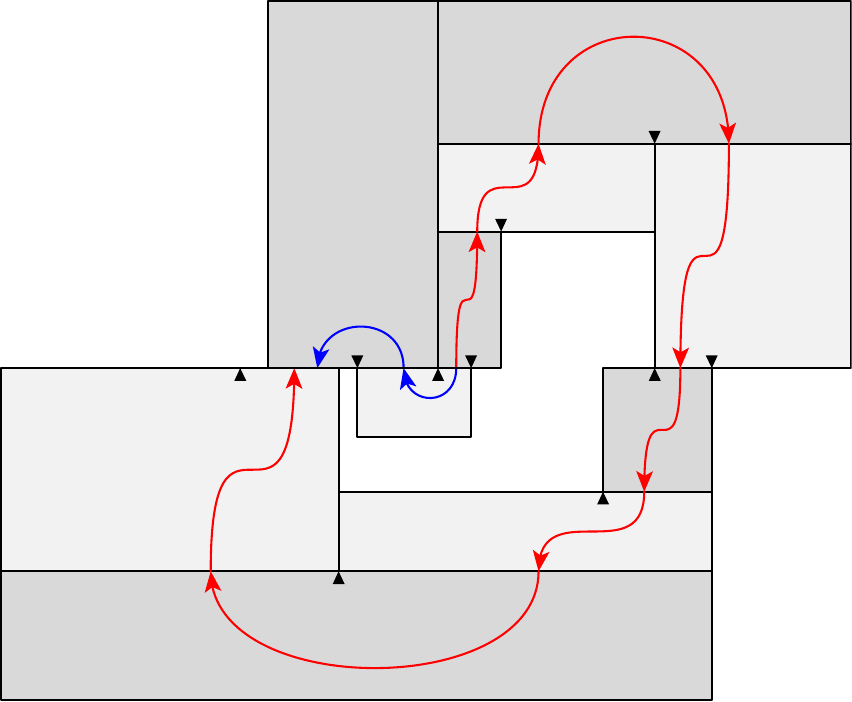}
\caption{Another cartoon, as in \reffig{fig8_(-1_1)_shapes2}.
Here the tetrahedron shapes are $z \approx 1.90447711038$ and $w \approx -0.411335649452$ (calculated by SnapPy~\cite{snappy}).
These correspond to the surgery coefficients $(\mu, \lambda) = (-1.1, 1)$.}
\label{Fig:fig8_(-1p1_1)_shapes}
\end{figure}

\begin{question}
\label{Que:RigidVersusNotSo}
As discussed in \refexa{SurfaceBundlesTwo} the transverse taut structures coming from layered triangulations of surface bundles have unique compatible circular orders.  
As we shall show in \refthm{VeerImpliesUnique}, veering triangulations also have unique compatible circular orders.  

On the one hand, most layered triangulations have no veering structure.  
On the other hand, Hodgson, Rubinstein, Tillmann, and the third author~\cite[Section~4]{HRST11} give a veering triangulation for the SnapPy manifold \texttt{s227}; 
they also show that \texttt{s227} is not fibred.  
Thus \refthm{VeerImpliesUnique} gives examples of circular orders that do not come from fibrations.  
We give many more non-fibred manifolds with veering triangulations in~\cite[\href{https://math.okstate.edu/people/segerman/veering/veering_census_with_data.txt}{\texttt{veering\_census\_with\_data.txt}}]{GSS19}.  
We also use \emph{veering Dehn surgery} to give an infinite family of such manifolds~\cite{veering_dehn_surgery}. 

However, from \refthm{Exotic} we have that the exotic transverse taut structures (of~\cite[Proposition~6.8]{FuterGueritaud13}) each have uncountably many compatible circular orders.  
This raises several questions:  
Suppose that we are given a three-manifold equipped with a transverse taut ideal triangulation.
What are necessary and sufficient conditions on the taut structure to ensure there is a unique compatible circular order?
Is this property decidable?
Is there a transverse taut ideal triangulation that admits \emph{no} compatible circular orders?
\end{question}

\chapter{Geography of taut triangulations}
\label{Cha:Geography}

In this section, after reviewing material on \emph{train tracks}, we introduce the new concepts of \emph{landscapes}, \emph{continents}, and \emph{continental exhaustions}. 
 
\subsection{Train tracks}
\label{Sec:TrainTracks}

Here we will closely follow the drawing style, and thus the imposed definitions, introduced in~\cite[Figure~11]{Agol11}.  
A \emph{pre-track} $\tau$ in a surface $F$ is a locally finite, properly embedded, smooth graph.  
The vertices of $\tau$ are called \emph{switches} while the edges of $\tau$ are called \emph{branches}.  
Every switch of $\tau$ is equipped with a tangent line.  
We call $\tau$ a \emph{train track} if 
\begin{itemize}
\item 
every switch in the interior of $F$ has at least one branch entering on each of its two sides and
\item 
every switch in $\bdy F$ has its tangent perpendicular to $\bdy F$ and has valence at least one. 
\end{itemize}
This is a variant of the usual definition of train tracks (see, for example,~\cite[Definition~8.9.1]{Thurston78}); 
we are omitting the Euler characteristic condition on the components of $F - \tau$. 

Suppose that $s \in \tau$ is a switch in the interior of $F$.
We say $s$ is \emph{large} if $s$ has exactly two branches entering on each side.
We say $s$ is \emph{small} if $s$ has exactly one branch entering on each side.
We say $s$ is \emph{mixed} if one branch enters on one side of $s$ and two enter on the other.
See \reffig{Tracks}.

\begin{figure}[htbp]
\subfloat[A large switch.]{
\includegraphics[width=0.28\textwidth]{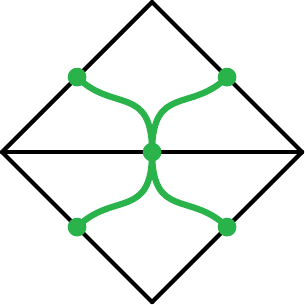}
\label{Fig:SwitchLarge}
}
\quad
\subfloat[A small switch.]{
\includegraphics[width=0.28\textwidth]{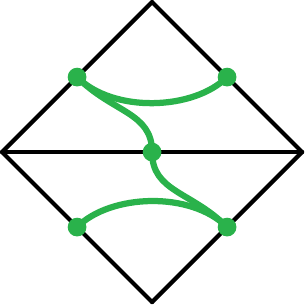}
\label{Fig:SwitchSmall}
}
\quad
\subfloat[A mixed switch.]{
\includegraphics[width=0.28\textwidth]{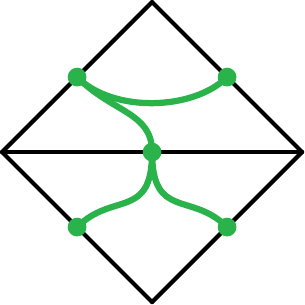}
\label{Fig:SwitchMixed}
}
\caption{Switches.}
\label{Fig:Tracks}
\end{figure}

We may \emph{split} (either to the right or to the left) the track $\tau$ along a large switch to obtain a new track $\tau' \subset F$.  See \reffig{SwitchSplit}.  After a split the components of $F - \tau'$ are homeomorphic to those of $F - \tau$.  The reverse of a split is called a \emph{fold}.

\begin{figure}[htbp]
\includegraphics[width=.935\textwidth]{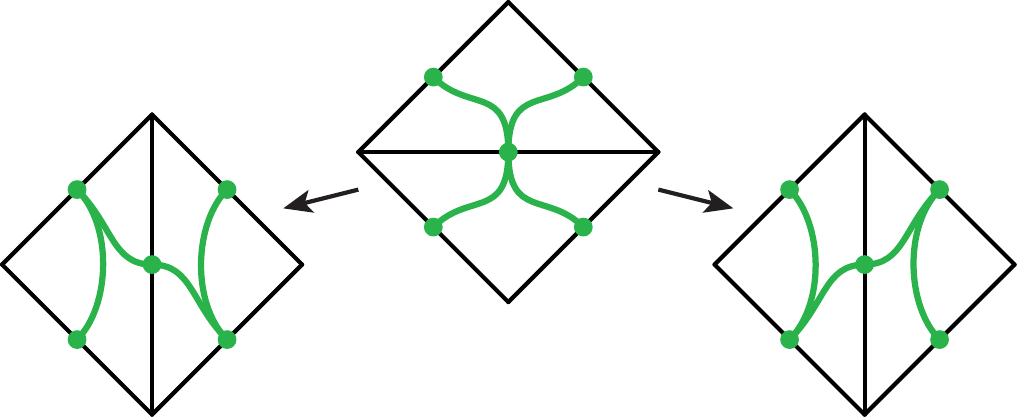}
\caption{A large switch can split either to the left or to the right.}
\label{Fig:SwitchSplit}
\end{figure}

A \emph{train route} in a train track $\tau$ is a smooth embedding of $[0,1]$, of $S^1$, of $\RR_{\geq 0}$, or of $\RR$ into $\tau$.  
Such a route is called, respectively, a \emph{train interval}, a \emph{train loop}, a \emph{train ray}, or a \emph{train line}.  

\subsection{Landscapes and rivers}

For the next several sections, we fix $M$ a three-manifold equipped with a transverse taut ideal triangulation $\calT$.
Let $B = B(\calT)$ be the associated taut branched surface, made from the faces of $\calT$.   
Let $\cover{B}$ be the preimage of $B$ in the universal cover $\cover{M}$.

\begin{definition}
A \emph{landscape} $L \subset \cover{B}$ is a connected embedded carried surface, which is a union of triangles of $\cover{B}$.  
\end{definition}

\noindent
A landscape $L$ inherits an ideal triangulation and a co-orientation from $B$.  
Boundary components of $L$ (if any) are necessarily carried by edges of $\cover{\calT}$. 
We call these \emph{coastal edges} of $L$.   
We now restate \cite[Corollary~1.1]{SchleimerSegerman20} and \refcor{VerticesDistinct} in terms of landscapes.

\begin{lemma}
\label{Lem:Disk}
The interior of a landscape $L$ is an open disk. 
Any pair of vertices of a landscape are distinct cusps of $\Delta_\calT$.
\qed
\end{lemma}

Thus any landscape $L$ induces a circular order $\calO_L$ on the cusps of $\Delta_L \subset \Delta_\calT$; 
furthermore this circular order is compatible with $\calT$ in the sense of \refdef{Compatible}.

Simple examples of landscapes include: a single face of $\cover{\calT}$, the upper or lower boundary of a single tetrahedron of $\cover{\calT}$, or any elevation of a surface carried by $B$.  

\begin{definition}
\label{Def:UpperLowerTrackFace}
Suppose that $f$ is a face of $\cover{\calT}$.
Suppose that $t$ is the tetrahedron attached to $f$, above $f$.  
Let $e$ be the lower $\pi$--edge of $t$; 
let $e'$ and $e''$ be the remaining edges of $f$.  
We define $\tau^f$, the \emph{upper track} for $f$ as follows.  
The track $\tau^f$ has three switches; 
we place these at the midpoints of the three edges.  
The track $\tau^f$ has two branches, running from $e'$ to $e$ and from $e''$ to $e$.  
See \reffig{UpperLowerTracks}. 

The \emph{lower track} $\tau_f$ is defined similarly;
we use the tetrahedron attached to, and immediately below, $f$.
\end{definition}

In this section, it sometimes is convenient to think of water flowing along the branches of $\tau^f$;
in the notation of the previous definition, the water flows away from $e'$ and $e''$ and flows towards $e$.  

\begin{definition}
\label{Def:TrackCusp}
We define the \emph{track-cusp} of $\tau^f$ by taking the switch of $\tau^f$ and adding a small neighbourhood of it in the region of $f - \tau^f$ between the two branches.  
See \reffig{UpperLowerTracks}.  
We define the track-cusp of $\tau_f$ similarly. 
\end{definition}

\begin{definition}
\label{Def:UpperTrack}
Suppose that $L$ is a landscape carried by $\cover{B}$.  
We define the \emph{upper track} $\tau^L$ to be the union of $\tau^f$ as $f$ runs over the ideal triangles of $L$.  
We define $\tau_L$ similarly.  
See \reffig{Tracks} for simple examples.  
\end{definition}

\begin{figure}[htbp]
\subfloat[Two taut tetrahedra $t^f$ and $t_f$ above and below a face $f$.]{
\labellist
\small
\hair 2pt
\pinlabel $f$ at 88 71
\endlabellist
\includegraphics[height=1in]{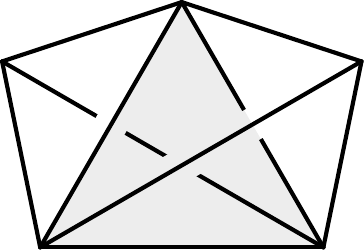}
\label{Fig:TwoTautTetrahedra}
}
\quad
\subfloat[The upper train track $\tau^f$ in $f$.]{
\labellist
\small
\hair 2pt
\pinlabel $\tau^f$ [tr] at 65 42
\endlabellist
\includegraphics[height=1in]{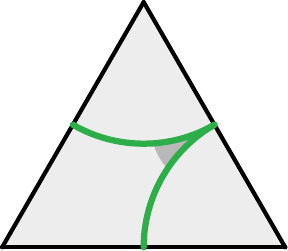}
\label{Fig:UpperTrack}
}
\quad
\subfloat[The lower train track $\tau_f$ in $f$.]{
\labellist
\small
\hair 2pt
\pinlabel $\tau_f$ [tl] at 77 37
\endlabellist
\includegraphics[height=1in]{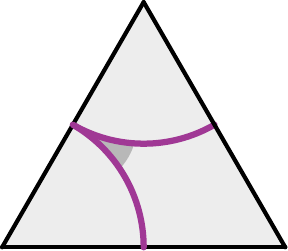}
\label{Fig:LowerTrack}
}
\caption{  
The upper track $\tau^f$ points at the bottom $\pi$--edge of $t^f$ while the lower track $\tau_f$ points at the top $\pi$--edge of $t_f$. 
The track-cusps in $t^f$ and $t_f$ are shaded darker grey.}
\label{Fig:UpperLowerTracks}
\end{figure}

Suppose $e$ is an interior edge for a landscape $L$.   
Drawing on the water analogy above, we call $e$ a \emph{sink} for $\tau^L$ if $e \cap \tau^L$ is a large switch (that is, the flows in the two adjacent triangles flow into $e$).  
We call $e$ a \emph{fall} for $\tau^L$ if $e \cap \tau^L$ is a mixed switch (that is, the flows cross $e$).  
We call $e$ a \emph{watershed} for $\tau^L$ if $e \cap \tau^L$ is a small switch (that is, the flows both flow out of $e$).  
Again, see \reffig{Tracks}.

Suppose $e$ is a coastal edge for a landscape $L$.  
We call $e$ a \emph{coastal sink} for $\tau^L$ if $e$ meets a track cusp of $\tau^L$. 

\begin{remark}
\label{Rem:Sink}
Note that if $e$ is a (coastal) sink for $\tau^L$ then there is a tetrahedron, immediately above $L$, which shares (one) two faces with $L$ and whose lower $\pi$--edge is equal to $e$.  
There is a similar statement for~$\tau_L$. 
\end{remark}

We end this section with a special kind of landscape.

\begin{definition}
\label{Def:River}
Suppose that $R$ is a landscape with finitely many triangles.
We call $R$ an \emph{upper river} if 
\begin{itemize}
\item
every triangle of $R$ meets at most two others and 
\item
every interior edge of $R$ is a fall for $\tau^R$. 
\end{itemize}
For an example, see \reffig{River}(0).
Note that the flows equip $\tau^R$ with a consistent orientation.
The first triangle of $R$ is the \emph{source} of the river.  
The final edge of $R$ is the \emph{mouth} of the river.  
We use $\ell(R)$ to denote the number of triangles in $R$.
This is the \emph{length} of $R$.

Lower rivers are defined using $\tau_L$.
\end{definition}

\begin{lemma}
\label{Lem:River}
Let $R$ be a maximal upper river in a landscape $L$. 
Then the mouth of $R$ is a (possibly coastal) sink for $\tau^L$. 
There is a similar statement for lower rivers. \qed
\end{lemma}

\subsection{Continents}

Let $M$ be an connected oriented three-manifold equipped with a transverse taut ideal triangulation $\calT$.
Recall that $\cover{B}$ is the preimage, in $\cover{M}$, of the co-oriented branched surface $B(\calT)$.

Suppose that $C \subset \cover{\calT}$ is a collection of model tetrahedra.
We say that $C$ is \emph{face-connected} if any pair of its tetrahedra are connected by a path in $C$ (transverse to the two-skeleton of $C$). 

We extend the definition of a layering to a finite face-connected collection of tetrahedra as follows.

\begin{definition}
\label{Def:Continent}
Suppose that $C \subset \cover{\calT}$ is a face-connected collection of $n$ model tetrahedra. 
A \emph{layering} $(K_i)_{i = 0}^n$ of $C$ is exactly as in \refdef{Layered} except that the index set is $\{0, 1, \ldots, n\}$ and the second condition holds for all $K_i$ other than $K_n$.

We say that $C$ is a \emph{continent} if it admits a layering.
\end{definition}

As a simple example, any single tetrahedron of $\cover{\calT}$ is a continent. 
See \reffig{Continent} for a cartoon of a more complicated continent;
we justify this cartoon in \reflem{ContinentEmbeds}.

\begin{figure}[htb]
\centering
\includegraphics[width=0.5\textwidth]{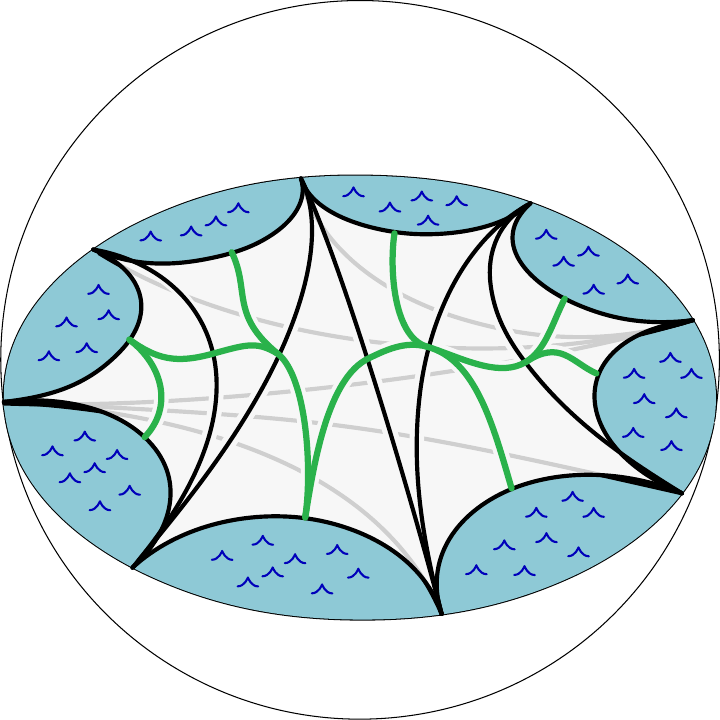}
\caption{A continent.}
\label{Fig:Continent}
\end{figure}

Suppose that $C$ is a continent. 
We say that a face $f$ of $\cover{\calT}$ is a \emph{lower face} of $C$ if 
\begin{itemize}
\item
$f$ lies in $\bdy C$ and
\item
the co-orientation on $f$ points into $C$. 
\end{itemize}
We define \emph{upper faces} of $C$ similarly. 

\begin{lemma}
\label{Lem:Boundary}
Suppose that $C \subset \cover{\calT}$ is a continent.
Suppose that $(L_i)_{i = 0}^n$ is a layering of $C$.
Then the layer $L_0$ is carried by the union of the lower faces of $C$; 
the layer $L_n$ is carried by the union of the upper faces. 
\end{lemma}

\begin{proof}
We first prove that all upper faces lie in $L_n$ and all lower faces lie in $L_0$.
We prove the contrapositive.
Suppose that $f$ is a face of $C$ not in $L_n$.
Let $j$ be the largest index so that $L_j$ contains $f$.
So $f$ is not contained in $L_{j+1}$.
That is, $f$ is a lower face of $t_j$.
Thus $f$ is not an upper face of $C$.
We deal with lower faces similarly.

The other directions require the following.

The layers $L_i$ are carried by $\calB(\cover{\calT})$ so they are landscapes.
By \reflem{Disk}, each layer $L_i$ is an embedded triangulated disk with distinct vertices meeting distinct cusps of $\Delta_\calT$.
Let $\calO_i$ be the induced cyclic ordering on the cusps $\Delta_i$ of $L_i$.
Since $(L_i)$ is a layering, we deduce that $\Delta_i = \Delta_{i+1}$ and $\calO_i = \calO_{i+1}$ for all $i < n$.
To fix ideas, we choose an order preserving map from $\Delta_0$ to $\bdy \HH^2$, the boundary of the hyperbolic plane.
The convex hull of $\Delta_0$ is then an ideal polygon, and the edges of $L_i$ give this polygon an ideal triangulation.

Suppose that $R$ is a finite sided convex polygon in $\HH^2$, perhaps with ideal vertices.
The \emph{index} of $R$ is one, minus a quarter for each material corner of $R$, minus a half for each ideal corner of $R$. By convention, the index of the empty polygon is zero; 
the same holds for line segments.
For any two ideal triangles in $\HH^2$, the index of their intersection is non-positive; 
this is proven by examining the number of sides of the intersection.
We say that two ideal triangles in $\HH^2$ \emph{link} if their intersection has negative index. 
See \reffig{LinkingTriangles}.

\begin{figure}[htb]
\centering
\subfloat[Index $-1/2$.]{
\includegraphics[width=0.30\textwidth]{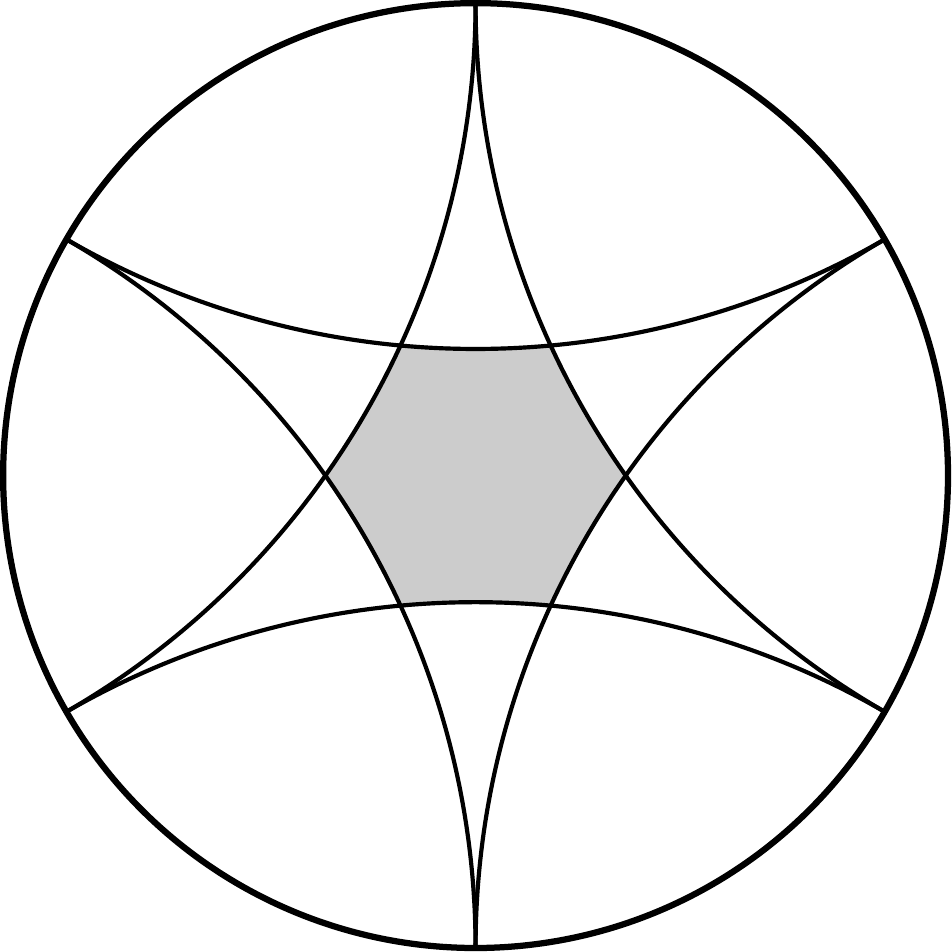}
\label{Fig:LinkingTrianglesMMMMMM}
}
\subfloat[Index $-1/4$.]{
\includegraphics[width=0.30\textwidth]{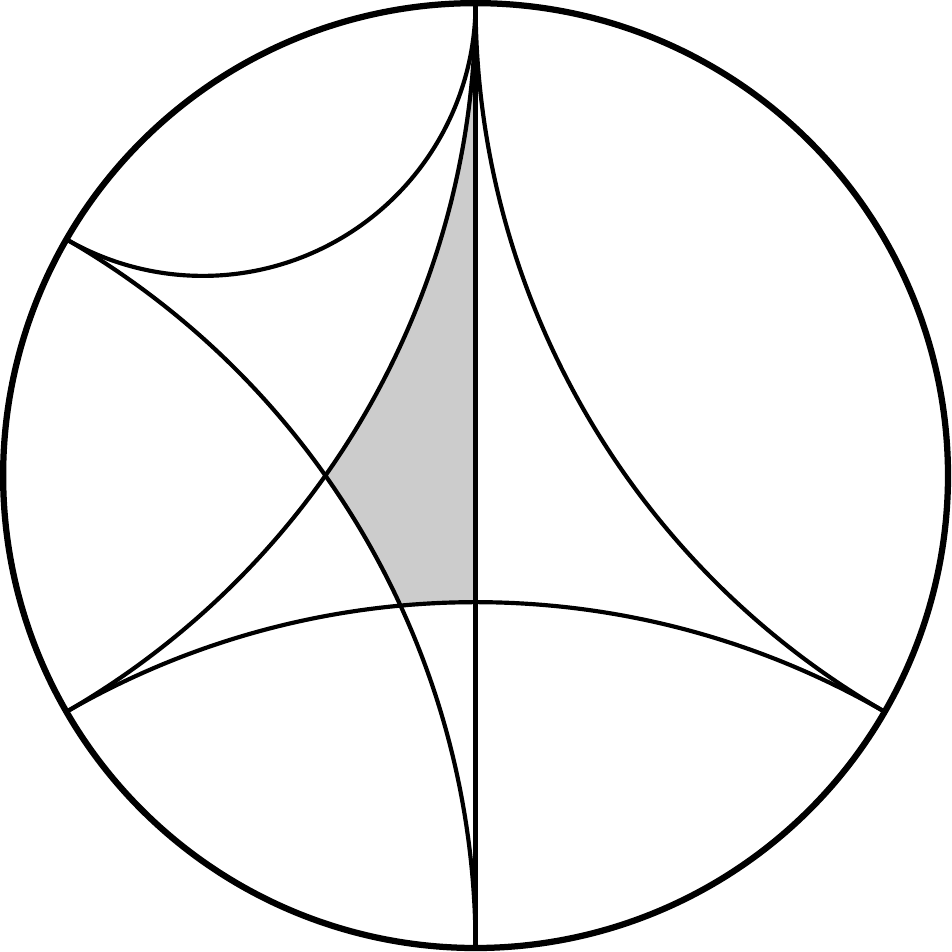}
\label{Fig:LinkingTrianglesIMMM}
}
\subfloat[Index $-1/4$.]{
\includegraphics[width=0.30\textwidth]{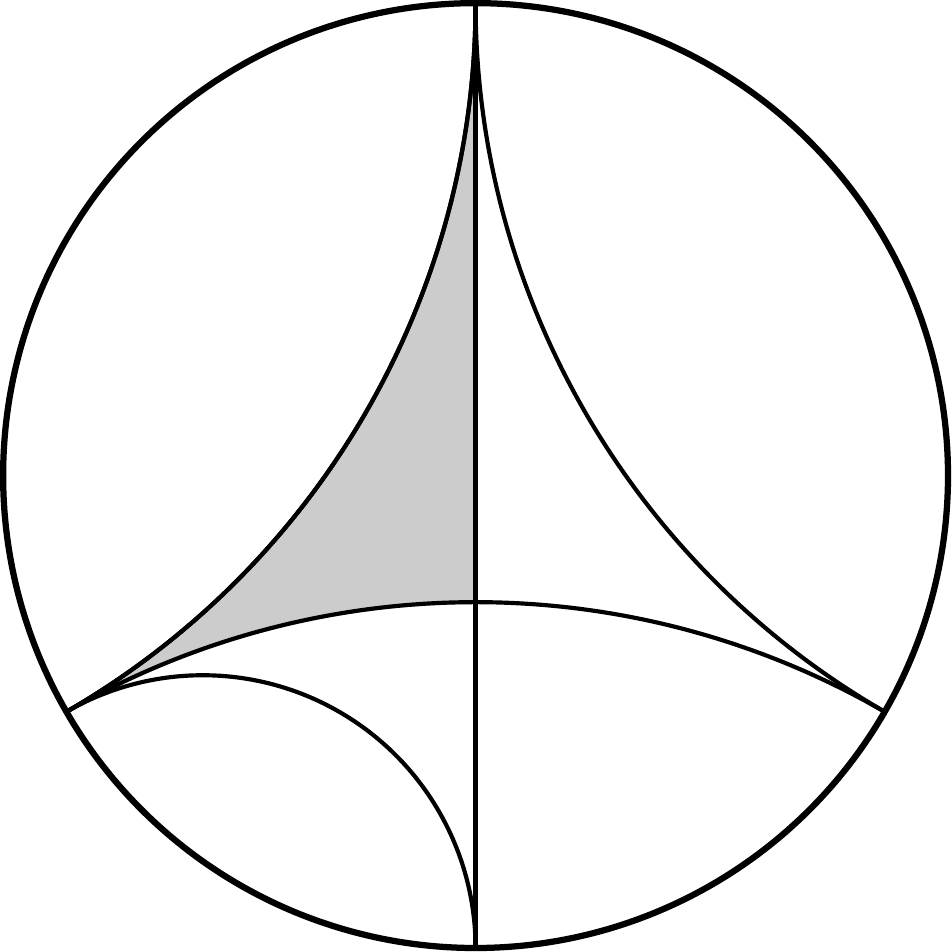}
\label{Fig:LinkingTrianglesIIM}
}
\caption{Two ideal triangles can link in four ways; 
the three shown in this figure, and where the triangles coincide (index $-1/2$).
}
\label{Fig:LinkingTriangles}
\end{figure}

From the above, we deduce the following.

\begin{claim}
\label{Clm:LinkingTriangles}
Two triangles that lie in the same layer, and link, are equal.
\qed
\end{claim}

We now return to proving the lemma.
That is, we show that faces in $L_n$ are upper faces and faces in $L_0$ are lower faces.
Again we prove the contrapositive.
Suppose that $f$ is a face of $C$ which is not an upper face.
Thus there is a tetrahedron $t = t_k$ of $C$ immediately above $f$.
Thus $f$ is a lower face of $t_k$.
We now build a sequence of faces $(f_i)_{i=k}^n$, starting with $f_k = f$, so that $f_i$ carries a face of $L_i$ and so that $f_i$ links $f$ for all $i$.
For the induction step at $j \geq k$, suppose that we have $f_j$.
If $f_j$ carries a face of $L_{j+1}$ then set $f_{j+1} = f_j$.
Otherwise there is a tetrahedron $t$ between $L_j$ and $L_{j+1}$ having $f_j$ as a lower face.
Let $g_j$ be the other lower face of $t$.
Let $h$ and $h'$ be the upper faces of $t$.
Note that $f \cap (f_j \cup g_j) = f \cap (h \cup h')$.
By the additivity of index~\cite[page 57]{Mosher03}, this polygon has negative index.
Thus one of the upper faces, say $h$, links $f$. 
Set $f_{j+1} = h$.

By the definition of $f$ and the construction of $f_{k+1}$ we have that $f_k \neq f_{k+1}$.
Thus there is a vertical arc starting at the centre of $f_k$, transverse to the two-skeleton, and ending at the centre of $f_n$.
If $f=f_n$ then the sequence $(f_i)$ gives a vertical loop (transverse to $B(C)$) from $f$ to itself. 
Since $\cover{M}$ is simply connected, this contradicts~\cite[Theorem~3.2]{SchleimerSegerman20}.
If not, then by \refclm{LinkingTriangles}, $f$ does not lie in $L_n$, as desired.
Similarly, all faces in $L_0$ are lower faces.
\end{proof}

A single continent may admit many different layerings;
these always differ by ``commuting'' flips.
However, by \reflem{Boundary}, the union of the lower faces $L'$ is always the first layer, and the union of the upper faces $L$ is always the last layer. 
Thus we call $L'$ and $L$ the \emph{lower} and \emph{upper} landscapes of $C$ respectively.
The coastal edges of $L'$ and $L$ are the same;
we call these the \emph{coastal} edges of $C$. 
We use $\Delta_C$ to denote the set of cusps of $\Delta_\calT$ meeting $C$.

\begin{corollary}
\label{Cor:ContinentOrder}
Suppose that $C$ is a continent in $\cover{\calT}$. 
Then there is a unique circular order $\calO_C$ on $\Delta_C$ that is compatible with $\calT$. \qed
\end{corollary}

\begin{lemma}
\label{Lem:ContinentEmbeds}
Suppose that $C$ is a continent in $\cover{\calT}$.
Let $L'$ and $L$ be the lower and upper landscapes of $C$ respectively. 
Then $L' \cap L = \bdy L' = \bdy L$. 
\end{lemma}

\begin{proof}
\reflem{Boundary} implies that $L'$ and $L$ cannot meet along a common face.

Suppose, for a contradiction, that $e$ is an edge of $\cover{\calT}$ that lies in the interior of $L$ and also lies in $L'$.
By \refcor{ContinentOrder}, we deduce that $e$ lies in the interior of $L'$.

Let $a$ and $b$ be the cusps of $\Delta_C$ meeting $e$.
Let $f$ and $g$ be the faces of $L$ adjacent to $e$.
Let $f'$ and $g'$ be the faces of $L'$ adjacent to $e$.
Let $c$ and $c'$ be the vertices of $f$ and $f'$ (respectively) other than $a$ and $b$. 
We arrange matters so that $\calO_C(a, b, c) = \calO_C(a, b, c')$. 

Suppose that $(L' = L_0, L_1, \ldots, L_n = L)$ is a layering of $C$.
We now find a sequence of faces $(f_i)_{i = 0}^n$ with $f_i$ carrying a face of $L_i$. 
We will require of each face $f_k$ that either
\begin{itemize}
\item
$a$ and $b$ are cusps of $f_k$ and its third cusp $c_k$ has
$\calO_C(a, b, c_k) = \calO_C(a, b, c)$ or
\item
$a$ is a cusp of $f_k$ and the remaining cusps of $f_k$ link $(a, b)$ in the circular order $\calO_C$.
\end{itemize}

\begin{figure}[htb]
\centering
\subfloat{
\labellist
\small\hair 2pt
\pinlabel $a$ [t] at 229 0
\pinlabel $b$ [b] at 229 468
\endlabellist
\includegraphics[width=0.30\textwidth]{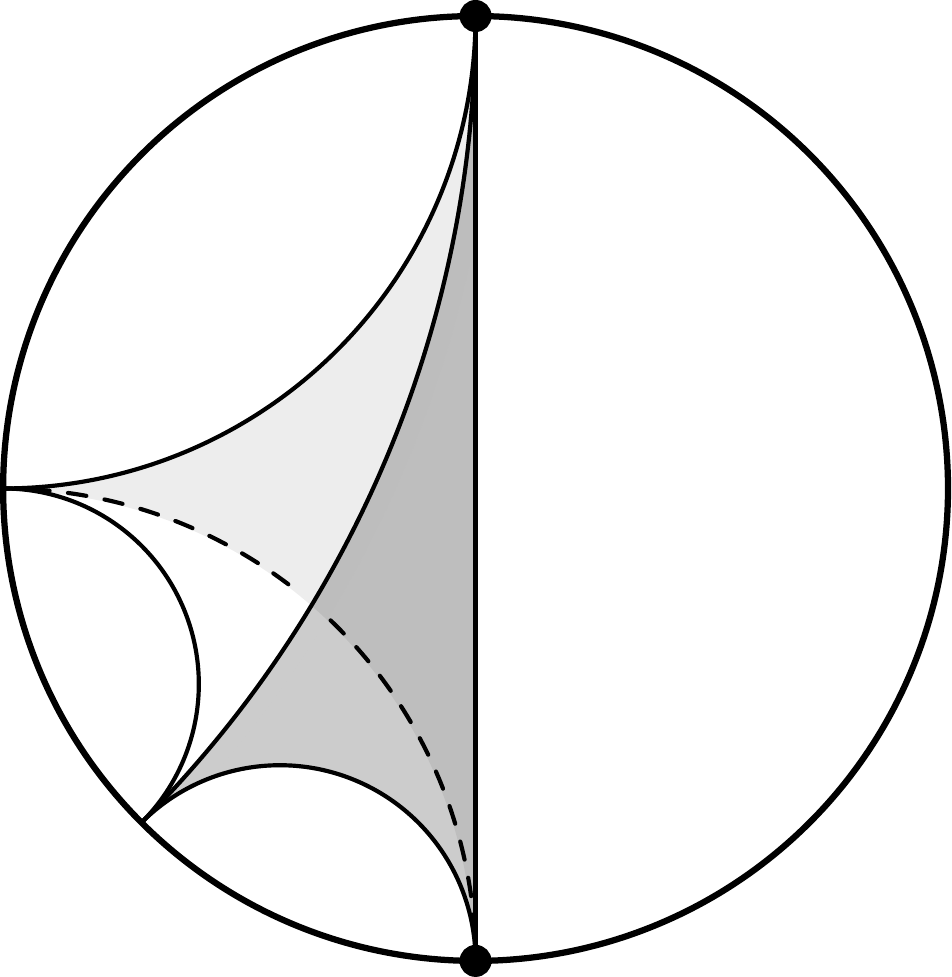}
}
\subfloat{
\labellist
\small\hair 2pt
\pinlabel $a$ [t] at 229 0
\pinlabel $b$ [b] at 229 468
\endlabellist
\includegraphics[width=0.30\textwidth]{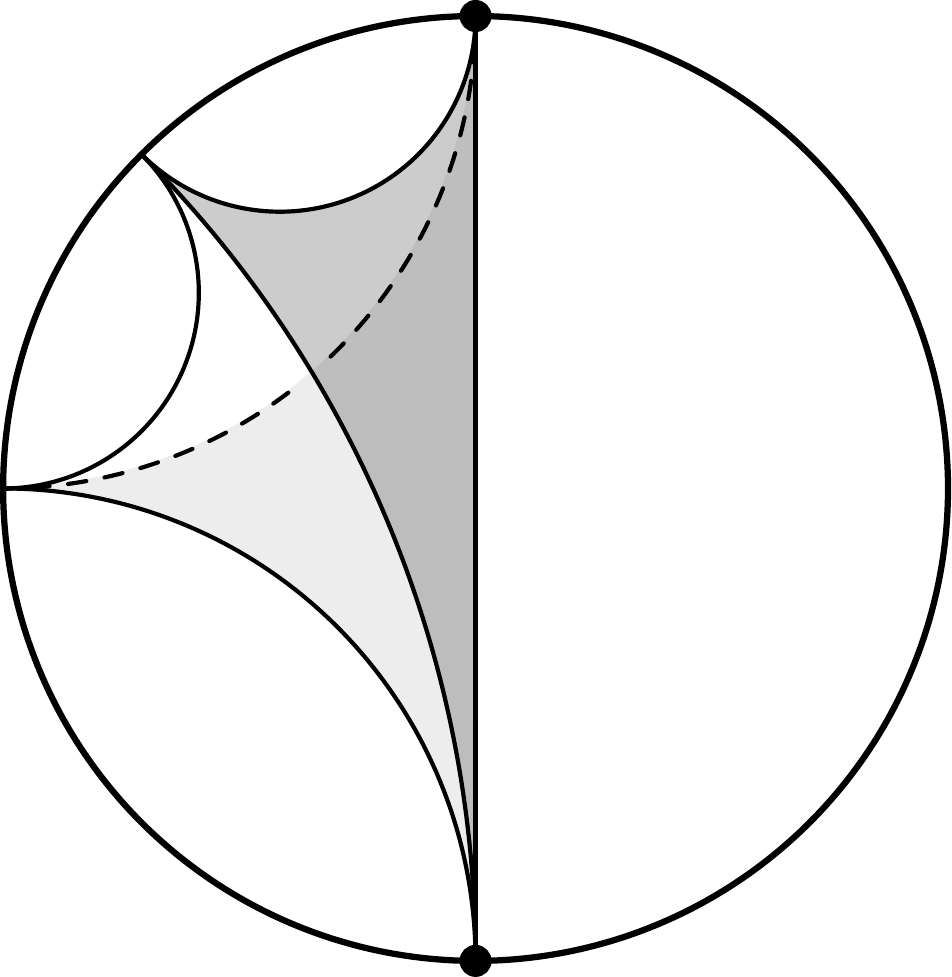}
}
\subfloat{
\labellist
\small\hair 2pt
\pinlabel $a$ [t] at 229 0
\pinlabel $b$ [b] at 229 468
\endlabellist
\includegraphics[width=0.30\textwidth]{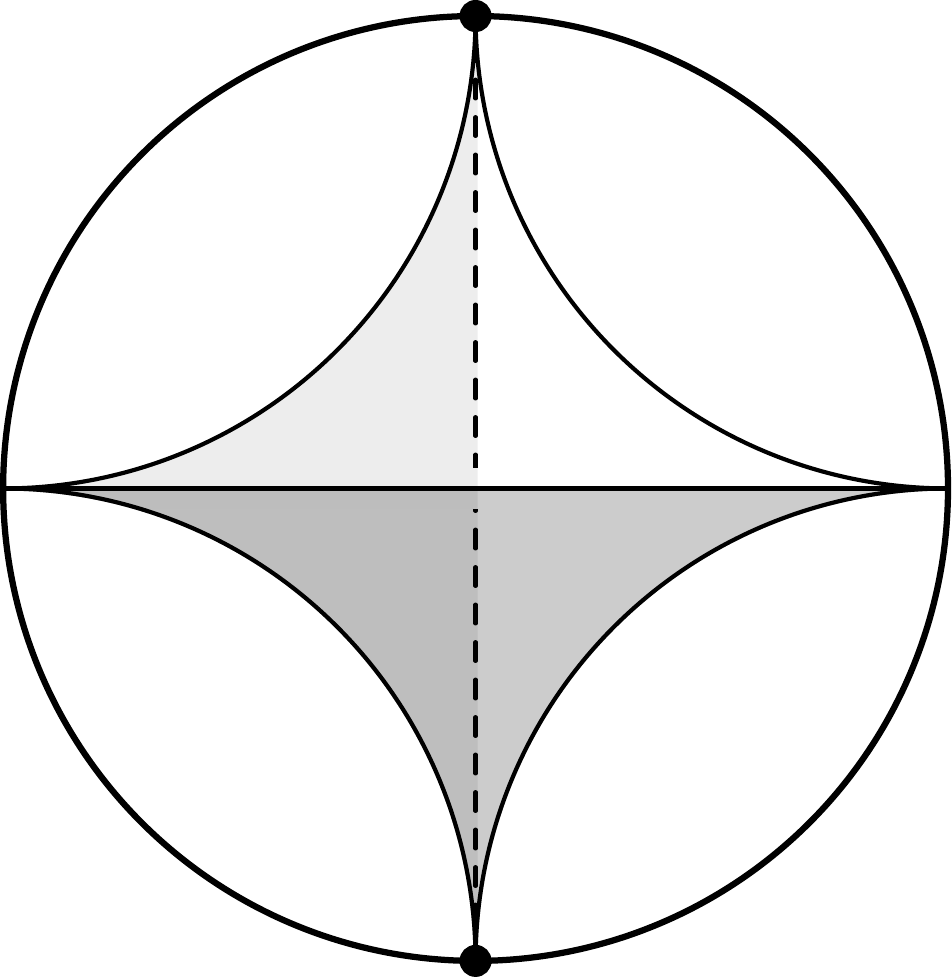}
}

\vspace{10pt}
\subfloat{
\labellist
\small\hair 2pt
\pinlabel $a$ [t] at 229 0
\pinlabel $b$ [b] at 229 468
\endlabellist
\includegraphics[width=0.30\textwidth]{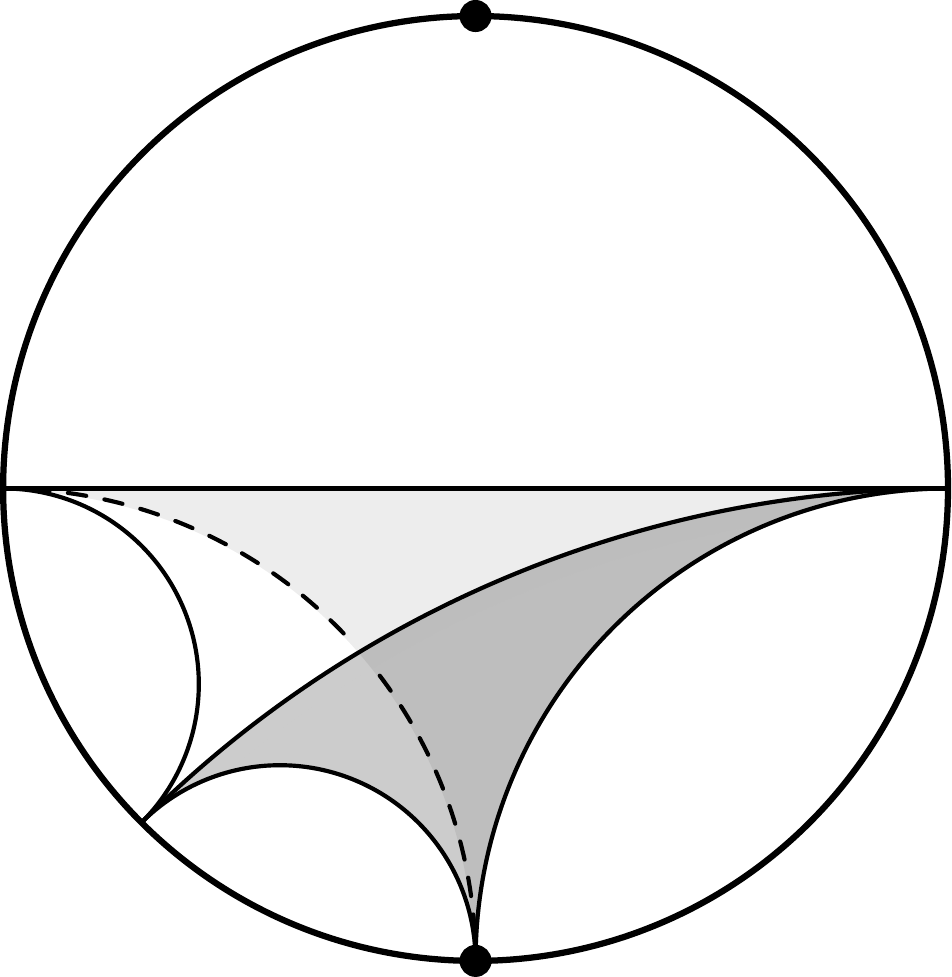}
}
\subfloat{
\labellist
\small\hair 2pt
\pinlabel $a$ [t] at 229 0
\pinlabel $b$ [b] at 229 468
\endlabellist
\includegraphics[width=0.30\textwidth]{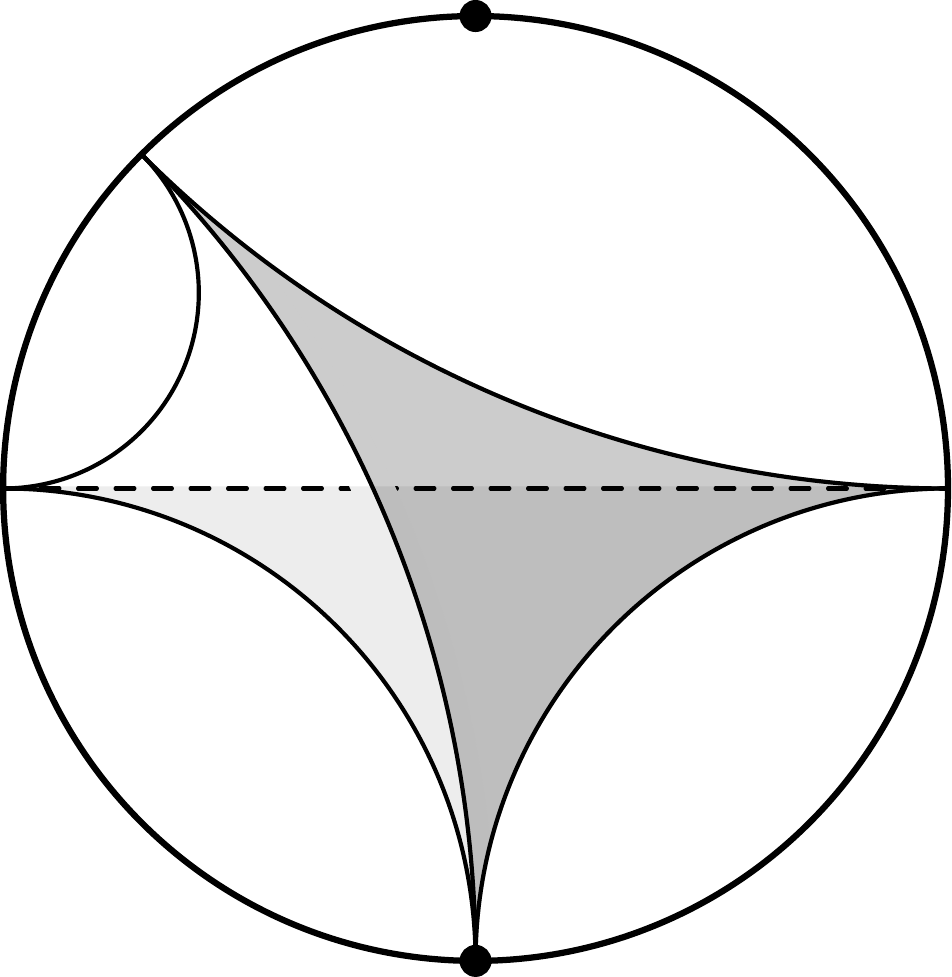}
}
\subfloat{
\labellist
\small\hair 2pt
\pinlabel $a$ [t] at 229 0
\pinlabel $b$ [b] at 229 468
\endlabellist
\includegraphics[width=0.30\textwidth]{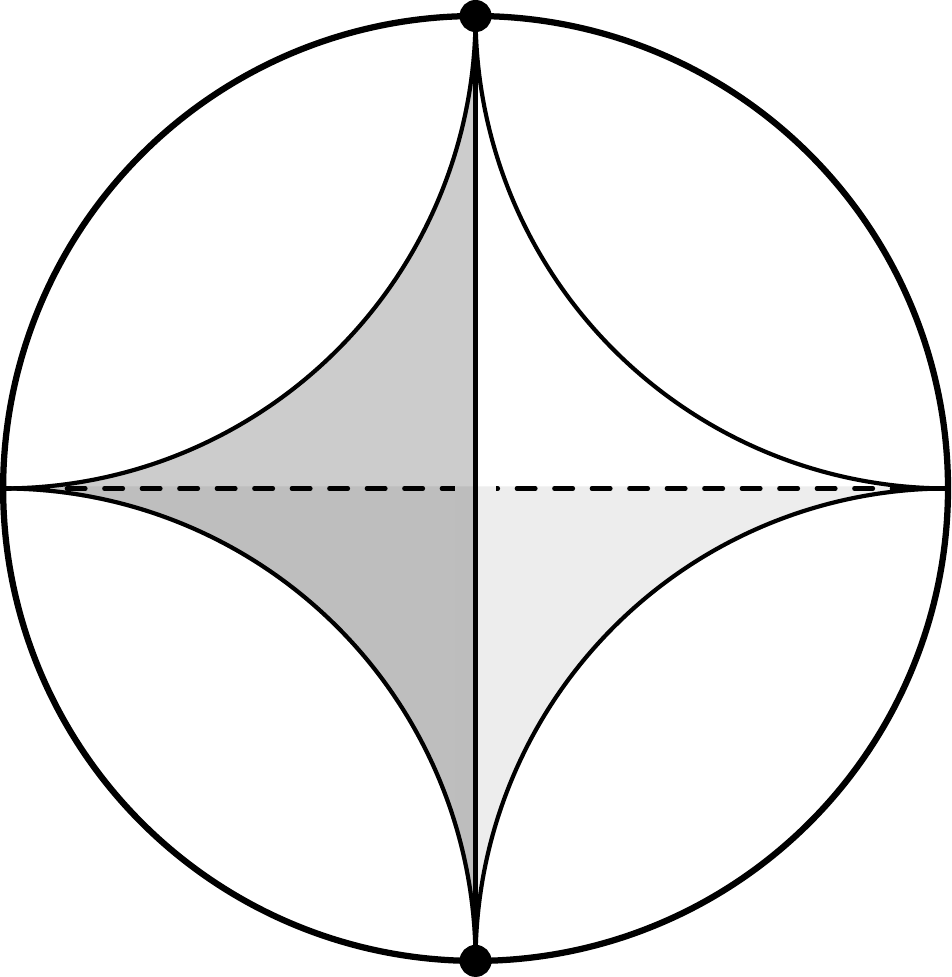}
}
\caption{The six ways, up to symmetry, to pass from $f_k$ (below) to $f_{k+1}$ (above). 
}
\label{Fig:Upward}
\end{figure}

For the base case we take $f_0 = f'$.
For the induction step, given $f_k$, if $f_k$ carries a face of $L_{k+1}$ then we set $f_{k + 1} = f_k$. 
Otherwise, we have that $f_k$ carries a face of $L_{k}$ but no face of $L_{k+1}$.  
Thus there is a tetrahedron $t_k$ above $L_k$, below $L_{k+1}$, and containing $f_k$ as a lower face. 
We deduce that exactly one of the two upper faces of $t_k$ has the desired property; 
we take this face to be $f_{k + 1}$.  
See \reffig{Upward}.


It follows from the inductive hypothesis that $f_n = f$. 
We claim that there are two indices $p < q$ 
so that 
\begin{itemize}
\item
$f_i = f'$ for all $i \leq p$,
\item
$f_i = f$ for all $q \leq i$, and
\item
$f_i$ is an interior face of $C$ for all $p < i < q$. 
\end{itemize}
To prove this, we argue by contradiction.
Suppose that all of the faces $f_k$ meet $e$.
Thus all of the layers $L_k$ meet $e$.  
Since $e$ is an interior edge of $L$ and $L'$, and since no face supports both $L$ and $L'$, we deduce that $e$ separates $C$.
Thus $C$ is not face-connected, a contradiction.

Thus there is some largest $k$ so that $f_k$ does not meet the cusp $b$.
Thus $f_{k+1}$ is an upper face of the tetrahedron below $e$. 
We now consider the collection of faces meeting $e$ and on the same side of $e$ as $f_{k+1}$.  
The transverse orientation of $\cover{B}$ gives these faces an order with $f_{k+1}$ being first.  
All of the faces $f_i$, with $k + 1 \leq i < q$, are interior faces of $C$ so not equal to $f'$.  
Thus $f$ is below $f'$ in the ordering of faces meeting $e$ (on that side).

The sequence of faces $(f_i)$ gives a vertical path $\gamma$ (transverse to $B(C)$) from $f'$ to $f$. 
We combine $\gamma$ with the path of faces around $e$ from $f$ up to $f'$ to obtain a vertical loop.
Since $\cover{M}$ is simply connected, this contradicts \cite[Theorem~3.2]{SchleimerSegerman20}.
\end{proof}

\begin{corollary}
\label{Cor:ThreeBall}
Suppose that $C$ is a continent in $\cover{\calT}$.
Then $C$ is a closed three-ball, minus finitely many points on its boundary corresponding to the cusps of $\Delta_C$. 
Furthermore, the dihedral angle for any edge $e$ in $\bdy C$ is either zero or $\pi$ as $e$ is or is not coastal.\qed
\end{corollary}

We now discuss how to obtain new continents from old. 

\begin{lemma}
\label{Lem:ExtendByTet}
Suppose that $C$ is a continent in $\cover{\calT}$. 
Let $L$ and $L'$ be the upper and lower landscapes of $C$, respectively.
Suppose that $e'$ is a (coastal) sink of $\tau^L$.
Suppose that $t$ is a tetrahedron of $\cover{\calT}$ having $e'$ as its lower edge.
Then $t$ is not in $C$, and $C \cup t$ is a continent.
A similar statement holds, replacing lower with upper and $\tau^L$ with $\tau_{L'}$.
\end{lemma}

\begin{proof}
Let $(L_i)_{i=0}^n$ be the given layering of $C$.
Note that $L = L_n$ and $L' = L_0$.
Let $f$ and $g$ be the upper faces of $t$;
let $f'$ and $g'$ be the lower faces of $t$.
If $e'$ is a sink of $L$ then $L$ meets both of $f'$ and $g'$.
If $e'$ is a coastal sink of $L$, then $L$ meets only one, say $f'$.
Since $f' \subset \bdy C$, we deduce that $t$ is not contained in $C$.

Let $D = C \cup t$; 
note that $D$ is face-connected. 

There are now two cases, as $e'$ is a sink or a coastal sink.
\begin{case}
Suppose that $e'$ is a coastal sink.
Then $e'$ is a coastal edge of each layer $L_i$.
Note that $f'$ and $g'$ are on opposite sides of $e'$.
However, $f'$ carries $L_n$ and $g'$ does not.
Thus $K_i = L_i \cup g'$ is a landscape for $i \leq n$.
We also define $K_{n+1} = (L_n - f') \cup f \cup g$.
Note that $K_{n+1}$ is obtained from $K_n$ by flipping across $t$.
Thus $K_{n+1}$ is a landscape and $(K_i)$ is a layering of $D$.
\end{case}

\begin{case}
Suppose that $e'$ is a sink.
Define $K_i = L_i$ for $i \leq n$.
We also define $K_{n+1} = (L_n - (f' \cup g')) \cup f \cup g$.
Then $(K_i)$ is a layering of $D$. \qedhere
\end{case}
\end{proof}

\subsection{Continental exhaustions}

\begin{definition}
\label{Def:ContinentalExhaustion}
A \emph{continental exhaustion} of $\cover{\calT}$ is a sequence of continents $(C_n)_{n \in \NN}$ so that 
\begin{itemize}
\item
$C_n \subset C_{n+1}$ and
\item
$\cover{\calT} = \bigcup C_n$. \qedhere
\end{itemize}
\end{definition}

\begin{lemma}
\label{Lem:OneAtATime}
Any continental exhaustion can be refined so that $C_{n+1}$ is obtained from $C_n$ by adding exactly one tetrahedron.
\end{lemma}

\begin{proof}
Suppose that $C \subsetneq C'$ are continents.
Let $L$ be the upper landscape of $C$.
Suppose (as the other case is similar) that there is a tetrahedron $t$ of $C' - C$ attached to $C$ along a face $f$ of $L$.
Let $R$ be the maximal upper river in $L$ starting at $f$.
Let $(f_i)_{i = 1}^n$ be the faces of $R$, with $f_i$ flowing into $f_{i+1}$.
Let $(e_i)_{i = 1}^n$ be the edges of $R$ where $f_i$ flows into $e_i$.
By \reflem{River}, the final edge $e_n$ is a (coastal) sink for $\tau^L$.

We now induct on $n$, the length of $R$.
In the base case $n = 1$ and $e_1$ is a sink.
We set $t_1 = t$ and note that $e_1$ is the lower edge of $t_1$. 
So, by \reflem{ExtendByTet}, the union $C \cup t$ is a continent.

In the induction step we have a tetrahedron $t_k$ of $C'$ which is attached to $C$ along $f_k$.
Also, $e_k$ is the lower edge of $t_k$. 
If $e_k$ is a sink then we proceed as in the base case.
Suppose instead that $e_k$ is a fall.
Since $C'$ is a continent, and applying \refcor{ThreeBall}, the dihedral angle at $e_k$ is not $2\pi$.
Thus gives a tetrahedron $t_{k+1}$ of $C'$, attached to $C$ along $f_{k+1}$, with lower edge $e_{k+1}$.
\end{proof}

Applying a result of Brown~\cite{Brown61} we have the following.

\begin{lemma}
\label{Lem:ExhaustImpliesThreeSpace}
Suppose that $\calT$ is a taut ideal triangulation and that $\cover{\calT}$ admits a continental exhaustion. 
Then the realisation $|\cover{\calT}|$ is homeomorphic to $\RR^3$. \qed
\end{lemma}

\begin{proposition}
\label{Prop:ExhaustImpliesLayered}
Suppose that $\calT$ is a transverse taut ideal triangulation.
Suppose that $\cover{\calT}$ admits a continental exhaustion. 
Then $\cover{\calT}$ is layered. 
\end{proposition}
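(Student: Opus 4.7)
The plan is to convert the exhaustion $\{C_n\}$ into a bi-infinite layering $\{K_i\}_{i \in \ZZ}$ by extending the sequence one landscape at a time as each new tetrahedron is added.

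The two main ingredients are \refrem{OneAtATime}, which refines the exhaustion so that $C_{n+1} = C_n \cup \{t_{n+1}\}$ for a single tetrahedron (necessarily a landfill, by \refrem{Fill}), and \reflem{ContinentsAreLayered}, which produces an initial layering of $C_0$. First, initialise $K_0, K_1, \ldots, K_{N_0}$ using the layering of $C_0$ given by \reflem{ContinentsAreLayered}, with $K_0$ and $K_{N_0}$ the lower and upper landscapes of $C_0$ respectively.

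Next, extend inductively. Suppose $\{K_i\}_{m_n \le i \le M_n}$ is a layering of $C_n$, with $K_{m_n}$ and $K_{M_n}$ the lower and upper landscapes. Since $t_{n+1}$ is a landfill, it is attached either to $K_{M_n}$ (from above) or to $K_{m_n}$ (from below). In the former case, let $K_{M_n + 1}$ be the upper landscape of $C_{n+1}$: this is obtained by removing from $K_{M_n}$ the bottom faces of $t_{n+1}$ contained in it and inserting the remaining faces of $t_{n+1}$. By construction $K_{M_n}$ and $K_{M_n+1}$ cobound only $t_{n+1}$, and $\{K_i\}_{m_n \le i \le M_n + 1}$ is a layering of $C_{n+1}$. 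The case where $t_{n+1}$ is attached below is symmetric and produces $K_{m_n - 1}$.

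The main obstacle is verifying that the index set realised is all of $\ZZ$ when $\cover{M}$ has infinitely many tetrahedra, as required by \refdef{Layered}. Every face of $\cover{\calB}$ is adjacent to exactly two tetrahedra of $\cover{M}$, one on each side of its co-orientation, so every landscape sitting inside $\cover{M}$ has tetrahedra of $\cover{M}$ both above and below it. If the sequence $\{m_n\}$ were bounded below by some $m$, then $K_m$ would be the lower landscape of every sufficiently large $C_n$; combined with $\cover{M} = \bigcup_n C_n$ this would force $\cover{M}$ to contain no tetrahedra strictly below $K_m$, contradicting the face-adjacency observation. A symmetric argument shows $M_n \to \infty$. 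Hence $\{K_i\}_{i \in \ZZ}$ is a layering of $\cover{M}$, as required.
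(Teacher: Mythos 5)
Your overall strategy --- refine the exhaustion so that one tetrahedron is added at a time, and extend a sequence of landscapes inductively --- is the same as the paper's. But there is a genuine gap at the heart of the construction: you treat each layer $K_i$ as a \emph{fixed} finite landscape, created once and never modified. A finite landscape has coastal boundary edges and meets only finitely many cusps, so it is not a carried surface of $\cover{M}$ in the sense required by \refdef{Layered}; in particular, when $t_{n+1}$ is a \emph{coastal} landfill, the tetrahedron sticks out past $\bdy K_{M_n}$, so $K_{M_n}$ and $K_{M_n+1}$ do not honestly cobound it. The same coastal landfill also breaks your inductive invariant: attaching $t_{n+1}$ above $C_n$ adds a new free lower face $f$ (meeting the new cusp) to the \emph{lower} landscape of $C_{n+1}$, so $K_{m_n}$ is no longer the lower landscape of $C_{n+1}$, the intermediate $K_i$ no longer span $C_{n+1}$, and hence $\{K_i\}_{m_n \le i \le M_n+1}$ is not a layering of $C_{n+1}$. (Relatedly, your recipe ``insert the remaining faces of $t_{n+1}$'' would wrongly put $f$ into the upper landscape.)

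The missing idea, which is exactly how the paper's proof works, is that the layers must \emph{grow} with the continents. One maintains, for each index $i$, a landscape $K_n^i$ spanning $C_n$ with the compatibility condition $K_n^i \cap C_m = K_m^i$ for $m<n$; at each coastal landfill the new face $f$ is adjoined to \emph{every} existing $K_n^i$ so that all of them continue to span the enlarged continent. The actual layers of $\cover{M}$ are then the increasing unions $K^i = \cup_n K_n^i$, which have no boundary edges and for which $K^{i+1}$ differs from $K^i$ by a single in-fill. Your final paragraph (showing the index set is all of $\ZZ$) is fine and is needed, but it cannot rescue the construction without the growing-layers mechanism.
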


\begin{proof}
Suppose that $\cover{\calT}$ has a continental exhaustion $(C_n)$.  
By \reflem{OneAtATime} we may assume that $C_n$ is a union of exactly $n$ tetrahedra.  

The continents $C_n$ are layered by definition.
However, these layerings need not be compatible with each other.
We re-choose the layerings as follows.

Our induction hypothesis is that $C_n$ is layered by $\calK_n = (K_i^n)_{i = p(n)}^{q(n)}$, 
a sequence of $n+1$ landscapes.
That is,
\begin{itemize}
\item
$K^n_{p(n)}$ is the lower landscape for $C_n$,
\item
$K^n_{q(n)}$ is the upper landscape for $C_n$, and
\item
$K^n_i$ and $K^n_{i+1}$ cobound a single tetrahedron of $C_n$.
\end{itemize}
Thus $q(n) - p(n) = n$.
We further assume that for any $m < n$ and for any $i$ between $p(m)$ and $q(m)$ we have
\begin{itemize}
\item
$K^m_i \subset K^n_i$.
\end{itemize}

Suppose now that $C_{n+1}$ is obtained from $C_n$ by attaching the tetrahedron $t$.
As the other case is similar we assume that $t$ is attached to the upper landscape of $C_n$.
There are now two cases: 
either $C_{n+1}$ has the same cusps as $C_n$ or $C_{n+1}$ has one more cusp than $C_n$. 

Suppose that $C_{n+1}$ and $C_n$ have the same cusps.  
Recalling that $t$ is attached above $C_n$,
we set $p(n+1) = p(n)$ and $q(n+1) = q(n) + 1$, we set $K^{n+1}_{q(n+1)}$ to be the upper landscape of $C_{n+1}$, and we set $K^{n+1}_i = K^n_i$ for all $i$ between $p(n)$ and $q(n)$.  

Suppose that $C_{n+1}$ has one more cusp than $C_n$.
Then there is a unique lower face $f$, of $t$, which meets the new cusp.  
Note that $f$ is not contained in $C_n$ and meets exactly one coastal edge of $C_n$.  
For $i$ between $p(n)$ and $q(n)$ we take $K^{n+1}_i = K^n_i \cup f$ and note that $K^{n+1}_i$ lies in, and contains the coastal edges of, $C_{n+1}$.  
Note that $t$ is layered onto $K^{n+1}_{q(n)}$.  
So set $p(n+1) = p(n)$, set $q(n+1) = q(n) + 1$, and define $K^{n+1}_{q(n+1)}$ to be the upper landscape for $C_{n+1}$.  

Thus $\calK_{n+1} = (K^{n+1}_i)_{i = p(n+1)}^{q(n+1)}$ is the desired layering of $C_{n+1}$.  
Note that, in both cases, we have $K^n_i \subset K^{n+1}_i$.

Finally, for any $i \in \ZZ$ we define $K_i = \bigcup_{n \in \NN} K^n_i$.
Note that $K_i$ has no boundary edges and that $K_{i+1}$ is obtained from $K_i$ by a single in-fill.  
Thus $(K_i)_{i \in \ZZ}$ is the desired layering of $\cover{\calT}$. 
\end{proof}

\begin{remark}
\label{Rem:LayeredImpliesExhaust}
The converse of \refprop{ExhaustImpliesLayered} also holds;
the proof is similar to, but more difficult than, that of \reflem{LayeredImpliesLayered}.  
The converse is not needed for \refthm{VeerImpliesUnique}, so we omit it. 
\end{remark}





\begin{remark}
\label{Rem:BadExample}
Note that \refthm{Exotic} and \reflem{LayeredImpliesUnique} imply that the (universal covers of the) exotic triangulations of~\cite[Proposition 6.8]{FuterGueritaud13} do not have layerings.
Thus by \refprop{ExhaustImpliesLayered} they do not have continental exhaustions.
\end{remark}

\chapter{Veering triangulations}
\label{Cha:Veering}

We now give the definition of \emph{veering}, following~\cite{Agol11, HRST11}.
Recall that the three-manifold $M$ is equipped with an orientation. 

\begin{definition}
\label{Def:Veering}
A \emph{veering triangulation} $\calV$ is a taut ideal triangulation $\calT$ equipped with an edge colouring.  
Each edge is coloured red or blue as follows. 
Suppose that $t$ is a model taut tetrahedron and $f \subset t$ is a face.  
Suppose $e$, $e'$, and $e''$ are the edges of $f$, ordered anticlockwise as viewed from the outside of $t$, 
with $e$ having dihedral angle $\pi$ inside of $t$.  
Then $e'$ is red and $e''$ is blue.  
See \reffig{VeeringTet} for a model veering tetrahedron. 
\end{definition}


If $\calV$ is a veering triangulation whose taut angle structure is transverse then we will say that $\calV$ is a \emph{transverse veering structure}.

\begin{figure}[htbp]
\labellist
\small\hair 2pt
\pinlabel 0 [tr] at 29 29
\pinlabel 0 [tl] at 85 29
\pinlabel 0 [br] at 29 85
\pinlabel 0 [bl] at 85 85
\pinlabel $\pi$ [t] at 80 55
\pinlabel $\pi$ [r] at 55 80
\endlabellist
\includegraphics[width=0.3\textwidth]{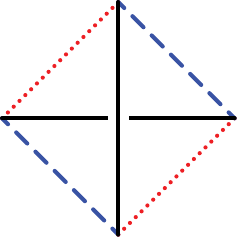}
\caption{A model veering tetrahedron.
The dotted edges are red;
the dashed edges are blue.
The two edges with angle $\pi$ may have either colour.
See \reffig{UpperGluingAutomaton} for the four possible veering tetrahedra.}
\label{Fig:VeeringTet}
\end{figure}

We can now state the main result of this section.  

\begin{theorem}
\label{Thm:VeerImpliesUnique}
Suppose that $M$ is an oriented three-manifold equipped with a transverse veering triangulation $\calV$. 
Then there is a unique compatible circular order $\calO_\calV$ on the cusps of $\cover{M}$.  
Furthermore, $\calO_\calV$ is dense and $\pi_1(M)$--invariant.
\end{theorem}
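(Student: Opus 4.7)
The plan is to traverse the left column of the flowchart in \reffig{PaperFlowChart}: first promote the veering structure to a continental exhaustion $\{C_n\}$ of $\cover{M}$ (this is \refprop{VeerImpliesExhaust}), then apply \refprop{ExhaustImpliesLayered} to obtain a layering of $\cover{M}$, and finally invoke \reflem{LayeredImpliesUnique} on that layering to conclude. The last two implications are already available, so essentially all the work lies in producing the continental exhaustion.

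To construct the exhaustion I would fix an enumeration $\{s_n\}_{n \in \NN}$ of the tetrahedra of $\cover{M}$ and set $C_0$ to be the continent consisting of the single tetrahedron $s_0$; see \refdef{Continent}. Inductively, assume $C_n$ contains $s_0, \dots, s_n$. To produce $C_{n+1}$ I would enlarge $C_n$ by a finite sequence of one-tetrahedron extensions (interior or coastal landfills) until $s_{n+1}$ is absorbed. \refrem{Fill} ensures that landfilling is the only way to extend a continent by a single tetrahedron and again have a continent, so the inductive step reduces to choosing, at each stage, a legal landfill that makes progress along a path in the dual complex from the current boundary to $s_{n+1}$.

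The main obstacle is showing that each step of this process is actually legal: that is, that at every intermediate stage there is a face $f$ of the current upper or lower landscape $L$ whose maximal river under $\tau^L$ (or $\tau_L$) has mouth either at a sink or at a coastal edge corresponding to the next tetrahedron we want to attach. This is the step that requires the veering hypothesis in an essential way. Indeed \refexa{FlatDSS} combined with \refrem{BadExample} shows that for a merely transverse taut structure no such exhaustion need exist. My plan is to run the river argument used in \reflem{ContinentsAreLayered} and keep track of how the red and blue edge colours propagate across $\tau^L$, using the four veering tetrahedra foreshadowed by \reffig{VeeringTet}. The veering conditions restrict which edges can serve as the $\pi$--edge of a tetrahedron of a given colour pattern; this combinatorial rigidity is what should force the river to terminate at a sink or on the coast in the needed direction, and thus permit the desired landfill.

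Once $\{C_n\}$ has been built, \refprop{ExhaustImpliesLayered} provides a layering of $\cover{M}$, and \reflem{LayeredImpliesUnique} then delivers a unique compatible circular order $\calO_\alpha$ on $\Delta_M$, which is automatically dense and $\pi_1(M)$--invariant, as required.
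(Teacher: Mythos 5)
Your overall route is exactly the paper's: the theorem is reduced to \refprop{VeerImpliesExhaust}, whose combination with \refprop{ExhaustImpliesLayered} and \reflem{LayeredImpliesUnique} finishes the argument. The final two steps you cite correctly. However, your sketch of the heart of the matter --- the construction of the continental exhaustion --- contains a genuine gap, and in fact misidentifies where the difficulty lies. You say the main obstacle is that each landfill step be \emph{legal}, i.e.\ that the maximal river from some face ends at a sink or coastal edge permitting a landfill. But this is automatic for \emph{any} transverse taut structure: rivers in the boundary of a finite continent always terminate at a sink or at the coast, and the corresponding tetrahedron can always be attached (this is exactly the river argument of \reflem{ContinentsAreLayered} and \refrem{Fill}, neither of which uses veering). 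The real obstacle is \emph{progress}: given a target tetrahedron $t$ attached to a boundary face $f$ of the current continent, the only legal moves are landfills \emph{downstream} of $f$, and one must show that after finitely many such moves the face $f$ itself gets covered. \refexa{FlatDSS} shows that without veering this chase can go on forever without ever returning to $f$; so "the river terminates" cannot be the point at which veering enters.

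The paper closes this gap with two ingredients that your sketch does not supply. First, one maintains \emph{convexity} of the continents (no sinks on the upper or lower boundary tracks), via \reflem{NoNewSinks} and \reflem{FiniteInFill}; this guarantees that the maximal river $R^k$ with source $f$ always flows to the coast. Second, and crucially, one assigns to $R^k$ the lexicographic complexity $c(R^k) = (\ell(R^k), h_1, \ldots, h_{\ell-1})$ of \refdef{Complexity} and proves (\reflem{RiversSimplify}, via the forked-river bookkeeping of \refclm{ChannelisationInduction} and the colour case analysis of \reffig{Moves}) that each channelisation of $(C^k_n, f)$ strictly decreases this complexity. Since complexities are well-ordered, $f$ is eventually covered. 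Your remark about tracking how red and blue edges propagate points at the right raw material --- the case analysis in \reflem{RiversSimplify} is indeed driven by the edge colours --- but without a monotonically decreasing, well-ordered complexity for the river based at $f$ (and without the convexity hypothesis that makes that complexity behave), the induction does not terminate and the exhaustion is not constructed.
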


Given the work we have already done, to prove \refthm{VeerImpliesUnique} it suffices to prove the following.

\begin{proposition}
\label{Prop:VeerImpliesExhaust}
Suppose that $M$ is a three-manifold equipped with a transverse veering triangulation $\calV$. 
Then $\cover{\calV}$ admits a continental exhaustion.
\end{proposition}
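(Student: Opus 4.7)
The strategy is to build $\{C_n\}$ inductively by landfills, invoking \refrem{OneAtATime} so that consecutive continents differ by a single tetrahedron. First, fix an enumeration $\{\sigma_n\}_{n \in \NN}$ of the tetrahedra of $\cover{\calT}$ in which each $\sigma_{n+1}$ is face-adjacent to $\bigcup_{i \le n} \sigma_i$; such an enumeration exists because $\cover{M}$ is connected with a locally finite triangulation. Set $C_0 = \sigma_0$, which is trivially a continent. Assuming inductively that $C_n$ is a continent containing $\sigma_0, \ldots, \sigma_n$, one must produce a continent $C_{n+1} \supseteq C_n$ with $\sigma_{n+1} \in C_{n+1}$. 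Since $\sigma_{n+1}$ shares some face with $\bdy C_n$, this reduces to the following central lemma: \emph{given any continent $C$ and any tetrahedron $\sigma$ sharing a face $f$ with $\bdy C$, there is a continent $C^* \supseteq C$ containing $\sigma$ and obtained from $C$ by finitely many landfills.}

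\textbf{Attack on the central lemma.} Suppose without loss of generality that $f$ lies in the upper landscape $L$ of $C$ and that $\sigma$ lies above $f$. If $\sigma$ is already a landfill of $C$ there is nothing to prove. Otherwise, I apply the river argument from the proof of \reflem{ContinentsAreLayered}: let $R \subset L$ be the maximal upper river with source $f$; by \refdef{River} its mouth $e$ is either a sink for $\tau^L$ or a coastal edge of $C$, and by \refrem{Sink} the tetrahedron $s$ with lower $\pi$-edge $e$ attaches to $C$ as an interior or coastal landfill, giving a new continent $C \cup s$. If $s = \sigma$, stop; otherwise, iterate the river argument with $C \cup s$ in place of $C$, restarting the river from whichever face of $\bdy(C \cup s)$ is now adjacent to $\sigma$.

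\textbf{Main obstacle.} The difficulty, and the place where the veering hypothesis is indispensable, is showing that this iteration terminates at $\sigma$ after finitely many steps. By \refprop{ExhaustImpliesLayered} combined with \refrem{BadExample}, the exotic non-veering taut structures of \refexa{FlatDSS} admit no continental exhaustion whatsoever, so any termination argument must make essential use of the edge-colouring. I plan to extract from the veering data a monotone potential on tetrahedra of $\cover{\calT} \setminus C$ — for example, a combinatorial distance from $\sigma$ measured along face-adjacency paths, refined lexicographically by a count sensitive to the alternation of red and blue edges encountered by $\tau^L$ along rivers. The veering axiom pins down the rigid fan structure in the link of every edge, dictating precisely which colours may appear on each side and in what order; I expect this rigidity to force each river landfill either to reach $\sigma$ directly or to strictly decrease the potential, ruling out the kind of ``looping'' behaviour seen in the non-veering examples. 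Once the central lemma is in hand, iterating over $n$ yields $\{C_n\}$ with $C_n \subseteq C_{n+1}$, $\sigma_{n+1} \in C_{n+1}$, and hence $\bigcup_n C_n = \cover{M}$, providing the desired continental exhaustion.
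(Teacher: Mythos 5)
Your inductive skeleton matches the paper's exactly: enumerate the tetrahedra, grow convex continents one target tetrahedron at a time, and reduce everything to showing that finitely many landfills starting from the face $f$ eventually cover $f$. You also correctly diagnose where the real content lies -- termination -- and correctly observe (via \refexa{FlatDSS} and \refrem{BadExample}) that the veering colouring must enter. But the proposal stops exactly at the point where the proof begins: the ``monotone potential'' is never defined, and the sentence ``I expect this rigidity to force each river landfill \ldots to strictly decrease the potential'' is a hope, not an argument. The candidate potential you gesture at (combinatorial distance to $\sigma$, refined by colour-alternation counts along $\tau^L$) is not the quantity that works, and there is no reason to believe it is monotone under landfilling. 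The paper's invariant is a property of the maximal river $R$ with source the \emph{fixed} face $f$, namely the lexicographic complexity $c(R) = (\ell(R), h_1, \ldots, h_{\ell-1})$ of \refdef{Complexity}, where the $h_i$ are the heights of the falls; \reflem{RiversSimplify} proves it strictly decreases after each \emph{channelisation}, via a delicate induction on \emph{forked rivers} (\refdef{Forked} and \refclm{ChannelisationInduction}) that tracks how the sink of the forked river migrates upstream while the distributaries absorb the riverbank, case-split on the colours of the riverbank edge and of the top edge of the landfill tetrahedron. None of this is present or foreshadowed in your proposal.

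A second, related gap: your iteration scheme is not the one the decreasing-complexity argument applies to. The paper insists that every continent in the process be \emph{convex} (\refdef{Convex}), so that the maximal river from $f$ always reaches the coast; each step is a single coastal landfill at the mouth followed by finitely many in-fills restoring convexity (\reflem{NoNewSinks} and \reflem{FiniteInFill}). Your scheme landfills at the mouth whether it is a sink or coastal, never restores convexity, and ``restarts the river from whichever face is now adjacent to $\sigma$'' rather than keeping the source fixed at $f$. Without convexity the mouth may be an interior sink, the landfill adds no new cusp, and the bookkeeping that makes $c(R)$ decrease breaks down; without a fixed source face there is no single river whose complexity you can monitor. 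So the central lemma is not proved, and the scheme as stated is not set up so that the paper's termination argument could be transplanted into it.
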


\refprop{VeerImpliesExhaust} and \refprop{ExhaustImpliesLayered} give the following. 

\begin{corollary}
\label{Cor:VeerImpliesLayered}
Suppose that $M$ is a three-manifold equipped with a transverse veering triangulation $\calV$. 
Then $\cover{\calV}$ admits a layering.  \qed
\end{corollary}

\begin{proof}[Proof of \refthm{VeerImpliesUnique}]
\refcor{VeerImpliesLayered} tells us that $\cover{\calV}$ admits a layering.
\reflem{LayeredImpliesUnique} now gives the desired unique, compatible, dense, $\pi_1(M)$--invariant circular order. 
\end{proof}

\subsection{Combinatorics of veering triangulations}

We begin by setting down some of the combinatorics of veering triangulations.
Suppose that $M$ is a three-manifold equipped with a transverse veering triangulation $\calV$.
Suppose that $f$ is a face of $\cover{\calV}$.
So $f$ has at least one blue edge and at least one red edge.
Ordering the cusps of $f$ as in \refsec{Compatibility} -- that is, looking from above, in anticlockwise order -- there is a unique cusp $u(f)$ of $f$ where the colours switch from blue to red.
Consulting \reffig{VeeringTet} we see that the branches of the upper track $\tau^f$ flow away from $u(f)$.
(Swapping colours gives a corresponding statement for $\tau_f$.)
This implies that, in the presence of a veering structure, our upper track is the same as the train track used by Agol~\cite[Main~Construction]{Agol11} to define veering triangulations of mapping tori.
See \reffig{VeeringTriangles}.

\begin{figure}[htb]
\centering
\subfloat[Upper tracks.]{
\includegraphics[width=0.44\textwidth]{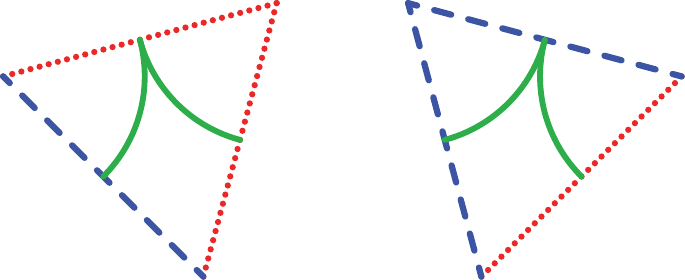}
\label{Fig:VeeringTrianglesUpperTrack}
}
\qquad
\subfloat[Lower tracks.]{
\includegraphics[width=0.44\textwidth]{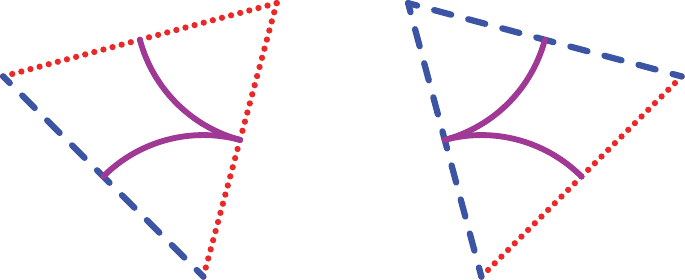}
\label{Fig:VeeringTrianglesLowerTrack}
}
\caption{The two possible faces in a veering triangulation, as viewed from above.  We draw the upper track in green, and the lower track in purple.
Our colour scheme is chosen so that the following holds.  
Suppose that $L$ is a landscape.  
In $L$ we walk anticlockwise about a cusp (as viewed from above).  
As we do so we pass, cyclically, a sequence of red edges, then an upper track cusp (green), then a sequence of blue edges, and then a lower track cusp (purple). 
This follows the standard colour wheel ordering. 
}
\label{Fig:VeeringTriangles}
\end{figure}

\begin{definition}
\label{Def:FanToggle}
Suppose that $t$ is a model veering tetrahedron.
\begin{itemize}
\item
If $t$ has three red and three blue edges then we call $t$ a \emph{toggle tetrahedron}.  
\item
If $t$ has more red than blue edges (more blue than red edges) then we call $t$ a red (blue) \emph{fan tetrahedron}.   \qedhere
\end{itemize}
\end{definition}

The four possible model veering tetrahedra are shown in \reffig{GluingAutomaton}; 
we also show all face gluings that respect the veering structure.
In \reffig{EdgeNeighbourhood} we show one possibility for the tetrahedra on the two sides of an edge. 

\begin{figure}[htb]
\centering
\subfloat[Upper tracks.]{
\includegraphics[width=0.47\textwidth]{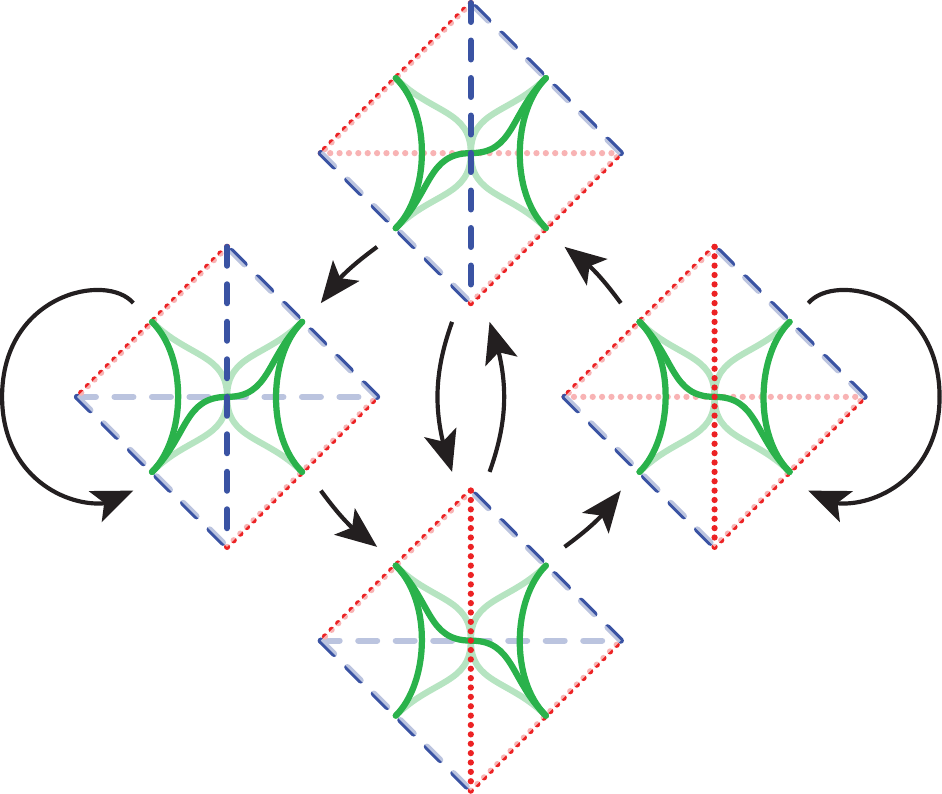}
\label{Fig:UpperGluingAutomaton}
}
\thinspace
\subfloat[Lower tracks.]{
\includegraphics[width=0.47\textwidth]{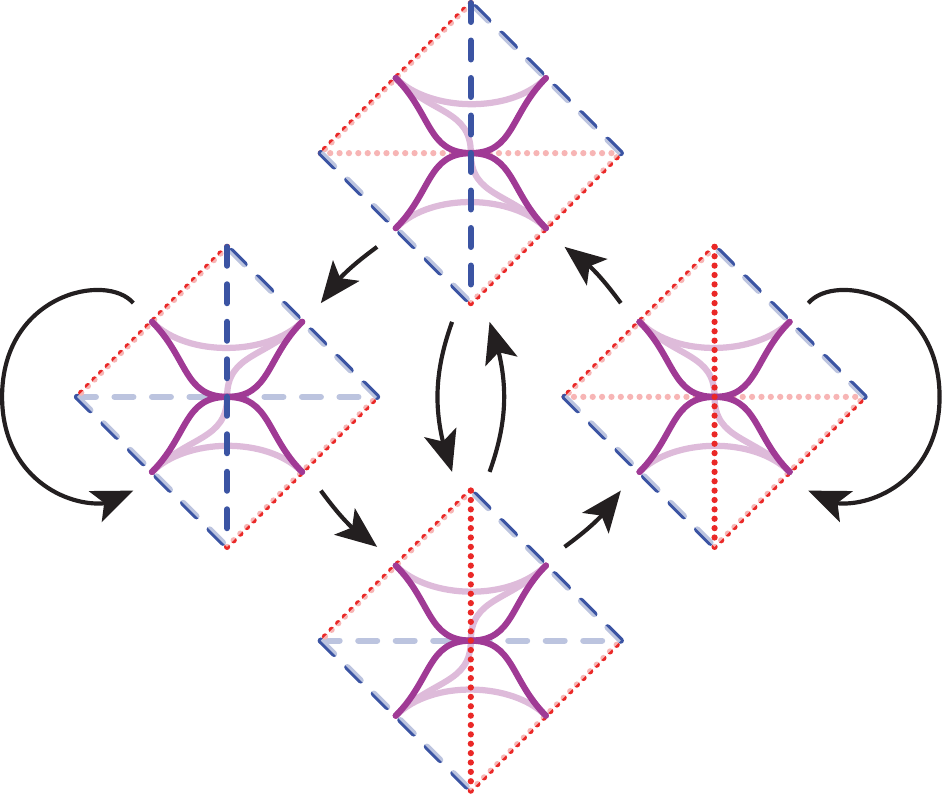}
\label{Fig:LowerGluingAutomaton}
}
\caption{In each diagram: moving anticlockwise from the top we have a toggle tetrahedron (blue on top), a blue fan tetrahedron, the other toggle tetrahedron (red on top), and a red fan tetrahedron.  An arrow points from one tetrahedron to another if the second tetrahedron can be glued on top of the first.  In \reffig{UpperGluingAutomaton} (\reffig{LowerGluingAutomaton}) we draw in each face the corresponding upper (lower) track; the tracks on the lower faces are more faintly drawn.}
\label{Fig:GluingAutomaton}
\end{figure}

\begin{figure}[htbp]
\labellist
\small\hair 2pt
\pinlabel $e$ [tr] at 165 165
\endlabellist
\includegraphics[width=0.35\textwidth]{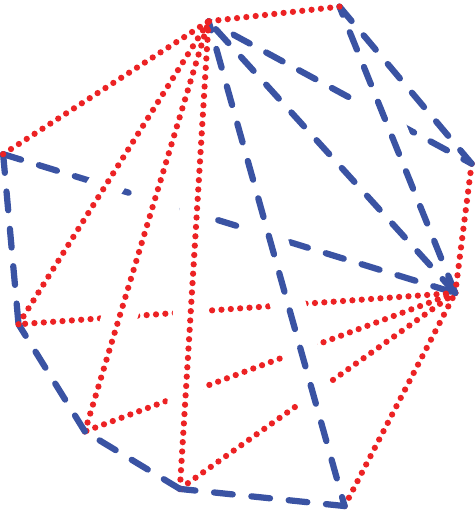}
\caption{One possible neighbourhood of a blue edge, $e$.
To the upper right of $e$ there is a single blue fan tetrahedron.
To its lower left there are two toggle tetrahedra, one of each type, and a stack of two red fan tetrahedra.
We have not drawn the tetrahedron above $e$ (in front of the page) or the one below $e$ (behind the page).
See also~\cite[Figure~12]{Agol11}.}
\label{Fig:EdgeNeighbourhood}
\end{figure}

\begin{remark}
In other literature the toggle tetrahedra are called \emph{hinge tetrahedra}; 
see~\cite[page~1247]{Gueritaud06} or~\cite[page~211]{FuterGueritaud13}. 
\end{remark}

We record, in \reffig{PossibleTwoTriangles}, the upper tracks in all possible two-triangle veering landscapes.
Note that watersheds and sinks fit together to give the boundaries of veering tetrahedra.  

\begin{figure}[htbp]
\subfloat[A left watershed.]{
\includegraphics[width=0.28\textwidth]{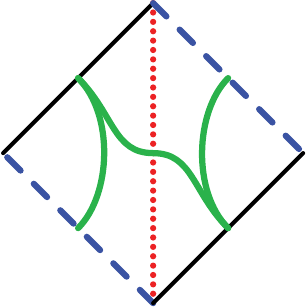}
\label{Fig:LeftWatershed}
}
\quad
\subfloat[A right watershed.]{
\includegraphics[width=0.28\textwidth]{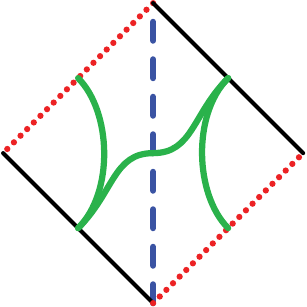}
\label{Fig:RightWatershed}
}

\subfloat[A left fall.]{
\includegraphics[width=0.28\textwidth]{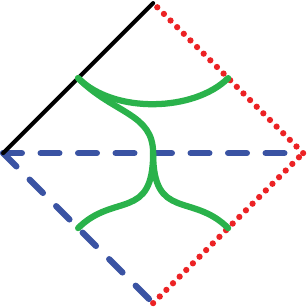}
\label{Fig:LeftFall}
}
\quad
\subfloat[A right fall.]{
\includegraphics[width=0.28\textwidth]{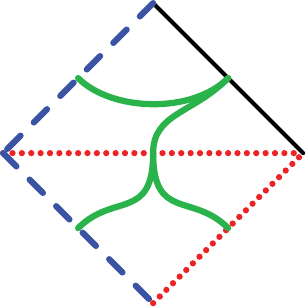}
\label{Fig:RightFall}
}
\quad
\subfloat[A sink.]{
\includegraphics[width=0.28\textwidth]{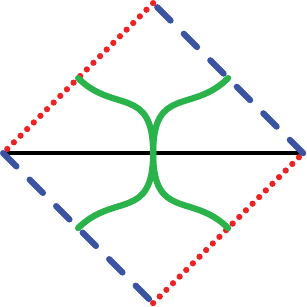}
\label{Fig:Sink}
}
\caption{Upper tracks on all possible two-triangle veering landscapes.
Black edges can be either red or blue.}
\label{Fig:PossibleTwoTriangles}
\end{figure}

\begin{definition}
\label{Def:Riverbanks}
Suppose that $R$ is an upper river in $\cover{\calV}$.  
The \emph{riverbank edges} of $R$ are the boundary edges of $R$, other than the mouth.
\end{definition}

From Figures~\ref{Fig:LeftFall} and~~\ref{Fig:RightFall} 
we deduce that the edges on the left riverbank of $R$ are blue, while those on the right are red. 

Veering triangulations are far more rigid than taut ideal triangulations.
In particular, the veering hypothesis strongly constrains the combinatorics about an edge, as follows.

\begin{lemma}
\label{Lem:EdgeNeighbourhood}
Suppose that $M$ is a three-manifold equipped with a transverse veering triangulation $\calV$. 
Suppose that $e$ is an oriented blue edge in $\cover{\calV}$.  Then the following hold.  
\begin{enumerate}
\item
There is exactly one tetrahedron above, and exactly one below,~$e$.  
\item
There is at least one tetrahedron to the right of $e$, and similarly to its left.  
\label{Itm:HalfEdgeDegreeAtLeastOne}
\item
If there is exactly one tetrahedron to the right of $e$, then it is a blue fan tetrahedron.  
The same holds to the left of $e$. 
\item
If there are at least two tetrahedra to the right of $e$, then the resulting stack of tetrahedra contains, from bottom to top, a red-on-top toggle, some number of red fans, and finally a blue-on-top toggle.  
The same holds to the left of $e$. 
\item
There are exactly four faces meeting $e$ that have more blue edges than red.
\end{enumerate}
Similar statements hold when $e$ is red instead of blue. \qed
\end{lemma}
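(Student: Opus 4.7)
The plan is to prove the four items in order. Item (1) uses only the transverse taut structure, while items (2)--(4) depend essentially on the veering colouring.

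For item (1), around the edge $e$ the dihedral angles sum to $2\pi$, each is either $0$ or $\pi$, so exactly two incident tetrahedra contribute a $\pi$-angle. The transverse co-orientation of $\alpha$ distinguishes these: in one tetrahedron $e$ is the bottom $\pi$-edge (so this tetrahedron lies directly above $e$) and in the other $e$ is the top $\pi$-edge (directly below). Hence there is exactly one of each.

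For items (2) and (3), I would first observe that the remaining tetrahedra incident to $e$ have dihedral angle $0$ there, and so are arranged in two ``stacks'', one on each side of $e$ (right and left, determined by the orientation of $e$ together with the co-orientation and the orientation of $M$). Each side stack is linearly ordered, beginning at a face of $t^{\text{bot}}$ containing $e$ and ending at a face of $t^{\text{top}}$ containing $e$. The main work is then a finite case analysis using the upper gluing automaton of \reffig{GluingAutomaton}. Since $e$ is blue, the face of $t^{\text{bot}}$ at the bottom of the right-hand stack has a specific veering type, and similarly for its top face in $t^{\text{top}}$. Walking the automaton forward, with the additional constraint that each attached tetrahedron must share the edge $e$ as one of its equatorial edges, the only sequences from $t^{\text{bot}}$ up to $t^{\text{top}}$ are: a single blue fan tetrahedron; or a red-on-top toggle, followed by some (possibly zero) number of red fans, and terminated by a blue-on-top toggle. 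The mirror argument handles the left stack. Since there is no allowed sequence of length zero connecting $t^{\text{bot}}$ to $t^{\text{top}}$, item (2) follows from the classification established for item (3).

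For item (4), I would simply count using the structure from (3). On each side, a red fan tetrahedron has only one blue equatorial edge (namely $e$) and so contributes no blue-majority face incident to $e$; each toggle contributes exactly one such face; and a solo blue fan contributes two. In all cases each side contributes exactly two blue-majority faces at $e$, yielding four in total. The whole lemma for red $e$ follows by swapping colours.

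The hard part will be item (3): one must patiently go through the gluing automaton in \reffig{GluingAutomaton}, checking which veering tetrahedra can be stacked on which along a face containing $e$, and ruling out every sequence other than the two forms listed. The compactness of Figures~\ref{Fig:GluingAutomaton} and~\ref{Fig:EdgeNeighbourhood} hides a substantial but ultimately routine case analysis driven entirely by the veering colouring.
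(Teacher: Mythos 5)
Your proposal is correct and takes essentially the approach the paper intends: the lemma is stated there without proof, with the claim that it ``follows from the veering hypothesis'' via the gluing automaton of \reffig{GluingAutomaton} and the picture in \reffig{EdgeNeighbourhood}, which is exactly the finite case analysis you describe (and your derivation of item (1) from the taut angle structure, and of item (2) from the impossibility of the length-zero stack, are both sound). One small factual slip in your count for item (4): a red fan tetrahedron has \emph{two} blue equatorial edges, not one, since the four equatorial edges of any veering tetrahedron alternate in colour around the equator. What actually makes your count work is that the two equatorial edges adjacent to $e$ on the equator are red, so the two faces of a red fan containing $e$ each consist of $e$ together with a red $\pi$--edge and a red equatorial edge, hence are majority red; the conclusion that red fans contribute no blue-majority face at $e$, and therefore the total of four, is unaffected.
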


\subsection{Convex continents}

Here we give two simple ways to enlarge a given continent. 

\begin{lemma}
\label{Lem:NoNewSinks}
Suppose that $M$ is a three-manifold equipped with a transverse veering triangulation $\calV$.
Suppose that $C \subset \cover{\calV}$ is a continent.
Suppose that the lower track on the bottom of $C$ has no sinks.
Then this again holds after a single coastal landfill on top of $C$.
The corresponding statement holds after switching upper and lower as well as top and bottom.
\end{lemma}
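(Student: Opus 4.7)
The plan is to observe that a coastal landfill on top adds exactly one new face to the lower landscape, and then to use the veering condition to force the lower track on that face to flow away from the attachment edge. Let $t$ be the added tetrahedron, attached to $C$ along the face $f_0 \subset L$, with lower $\pi$-edge $e$; by hypothesis $e$ is coastal in $C$. Let $f_1$ be the other lower face of $t$. Then the new lower landscape is $L''' = L' \cup f_1$, glued to $L'$ along $e$ alone: the two remaining edges of $f_1$ meet the new cusp added by the landfill, and since this cusp is not a cusp of $C$, no face of $L'$ is adjacent to them. These edges are therefore boundary edges of $L'''$ and cannot host sinks. Since $\tau_{L'}$ has no sinks by hypothesis, the only candidate new sink of $\tau_{L'''}$ is $e$ itself.

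The crux is to pin down the flow direction of $\tau_{f_1}$ at $e$. By \refdef{UpperLowerTrackFace}, $\tau_{f_1}$ flows toward the upper $\pi$-edge $e^*$ of the tetrahedron $t'$ of $\cover{\calT}$ immediately below $f_1$; here $e^*$ is some edge of $f_1$. I would identify $e^*$ by applying the colour condition of \refdef{Veering} to $f_1$ from both sides. As viewed from outside $t$ (that is, from below $f_1$), the three edges of $f_1$ in anti-clockwise order are $e$ (the $\pi$-edge of $t$), then a red edge $e_r$, then a blue edge $e_b$; reversing orientation, the anti-clockwise order from outside $t'$ (from above $f_1$) is $(e, e_b, e_r)$. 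Requiring this latter cycle, read starting at $e^*$, to have the form ($\pi$-edge, red, blue) leaves only two possibilities: $e^* = e_b$ if $e$ is blue, and $e^* = e_r$ if $e$ is red. In either case $e^* \neq e$, so $\tau_{f_1}$ flows out of $e$ rather than into it. Hence the switch of $\tau_{L'''}$ at $e$ has at most one branch flowing in (the one from the face of $L'$ opposite $f_1$), so $e$ is not a large switch.

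I expect this colour-chasing to be the only subtle step, and the main obstacle is to set the anti-clockwise conventions on the two sides of $f_1$ correctly, remembering that the two $0$-edges of $f_1$ have forced (and opposite) colours by the veering hypothesis. Together with the reduction in the first paragraph, this shows no new sinks arise in $\tau_{L'''}$, proving the claim for coastal landfills on top. The corresponding statement for coastal landfills below $C$ follows by the obvious up-down symmetry of the transverse veering structure, interchanging lower and upper faces, lower and upper $\pi$-edges, and reversing co-orientations throughout the argument.
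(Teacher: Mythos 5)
Your proof is correct, and its overall structure matches the paper's: the only candidate new sink in the new lower landscape is the attaching edge $e$, and one rules it out by showing that the lower track of the free lower face of $t$ flows away from $e$ --- equivalently, that $e$ is not the upper $\pi$--edge of the tetrahedron glued below that face. The one point of divergence is how this last fact is justified. The paper deduces it from \reflem{EdgeNeighbourhood}\refitm{HalfEdgeDegreeAtLeastOne}: if the tetrahedron below the free face had $e$ as its upper $\pi$--edge, it would be the tetrahedron immediately below $e$, leaving no tetrahedron to one side of $e$. You instead re-derive the needed fact directly from the anti-clockwise colour convention of \refdef{Veering}, comparing the two induced cyclic orders on the edges of the shared face; your computation checks out, including the conclusion that the $\pi$--edge of the lower tetrahedron is the blue or red zero--edge according to the colour of $e$, hence is never $e$ itself. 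Both routes bottom out in the veering hypothesis; yours is more self-contained (the paper leaves the proof of \reflem{EdgeNeighbourhood} to the reader), while the paper's is shorter given that lemma. Your reduction to the single candidate edge $e$, and the up--down symmetry for landfills below $C$, are also handled correctly.
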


\begin{proof}
Suppose that $t$ is the given coastal landfill tetrahedron above $C$.  
Suppose $e$ is the lower $\pi$--edge of $t$.  
Let $f$ be the free lower face of $t$.  
Suppose that $t'$ is the tetrahedron having $e$ as its upper $\pi$--edge.  
From \reflem{EdgeNeighbourhood}\refitm{HalfEdgeDegreeAtLeastOne} we deduce that $t'$ cannot be glued to $t$ along $f$. 
Thus the lower track in the lower boundary of $C \cup t$ again has no sinks.
\end{proof}

\begin{definition}
\label{Def:Convex}
Suppose that $C \subset \cover{\calV}$ is a continent with upper and lower landscapes $L$ and $L'$.  
We say that $C$ is \emph{convex} if neither $\tau^L$ nor $\tau_{L'}$ have sinks. 
\end{definition}

For example, a continent containing only one tetrahedron is convex. 

\begin{lemma}
\label{Lem:FiniteInFill}
Suppose that $M$ is a three-manifold equipped with a transverse veering triangulation $\calV$. 
Suppose that $C \subset \cover{\calV}$ is a continent.
Then there exists a convex continent $C'$ which contains $C$ and has the same set of cusps as $C$.
\end{lemma}

In fact, $C'$ is unique; 
this follows from \refthm{LinkIsLoom} and \cite[Proposition~6.46]{SchleimerSegerman24}. 

\begin{proof}[Proof of \reflem{FiniteInFill}]
Let $L$ and $L'$ be the upper and lower landscapes of $C$.  
Note that an in-fill above $C$ leaves $L'$ unchanged.  
Thus we can argue separately for the top and bottom of $C$. 

Set $C_0 = C$.  
Now suppose that we have built from $C_0$ a continent $C_k$ via a finite sequence of in-fills. 
Let $L_k$ be the upper landscape of $C_k$; 
let $\tau^k$ be the upper track for $L_k$.  
If $\tau^k$ has no sinks we are done; 
otherwise we build $C_{k+1}$ from $C_k$ by a single in-fill above.

Suppose that $t_k = t$ is the given in-fill tetrahedron above $L_k$.
By \reflem{Disk}, the landscape $L_k$ is a triangulated disk.  
Let $\tau^k$ be the upper track for $L_k$; 
thus $\tau^k$ is a tree, with all leaves on coastal edges.  
The track $\tau^{k+1}$ is obtained from $\tau^k$ by doing a single split.  
See \reffig{UpperGluingAutomaton}.  
This gives a one-to-one correspondence between the track-cusps of $\tau^k$ and those of $\tau^{k+1}$. 

A track-cusp $s$ of $\tau^k$ meets some edge $e$ of $L_k$;
the flows behind $s$ give $e$ a co-orientation in $L_k$.
The orientation divides the coastal edges into two collections: 
those in front of $e$ and those behind.  
When we attach $t_k$, the track-cusp $s$ moves forward to a new track-cusp $s'$ and a new edge $e'$, both in $L_{k+1}$. 
Again, see \reffig{UpperGluingAutomaton}.  
Recall that $L_{k+1}$ and $L_k$ have the same coastal edges.
Now, at least one coastal edge goes from being in front of $e$ to being behind $e'$, and none move forward.  
Any such sequence $s, s', s'', \ldots$ of track-cusps eventually reaches the coast or otherwise halts.

Let $n = |L_0|$ be the number of triangles in $L_0$, and thus in $L_k$.
Thus any sequence of track-cusps $s, s', s'', \ldots$ as above has length at most $n - 1$.
Also, the number of track-cusps of $\tau^0$ equals $n$.
Thus after adding at most $n(n-1)/2$ in-fill tetrahedra we arrive at an upper track $\tau^k$ without sinks.
\end{proof}

\subsection{Complexity and channelisation}

Our aim is to build a continental exhaustion for the universal cover of a veering triangulation. 
Given a continent $C$ and a tetrahedron $t$, we must enlarge $C$ to include $t$. 
For this we wish to use \reflem{ExtendByTet}. 
However this only permits us to add tetrahedra in certain locations; 
by \reflem{River} these are found by following rivers downstream to (coastal) sinks. 
Note that \refthm{Exotic} shows that in the absence of the veering hypothesis, 
this procedure may never arrive at $t$.

To show that the veering hypothesis suffices, 
we require two more subtle definitions to control the rivers that appear in the argument. 

Suppose that $R$ is an upper river. 
We orient $R$ in the direction of the flows.  
Let $e_i$ be the $i^\thsup$ fall, counting from the source. 
We define $h_i$ to be the \emph{height} of the fall $e_i$; 
that is, $h_i$ is the degree of $e_i$ in $\cover{\calV}$ minus the degree of $e_i$ below the river $R$.
Recall that $\ell(R)$, the length of $R$, is the number of triangles in $R$. 

\begin{definition}
\label{Def:Complexity}
The \emph{complexity} of a river $R$ is the list 
\[
c(R) = (\ell(R), h_1, h_2, \ldots, h_{\ell-1})
\]
We order complexities lexicographically.
\end{definition}

\begin{figure}[htb]
\centering
\labellist
\small\hair 2pt
\pinlabel $f$ [r] at 367 1197
\pinlabel $(0)$ at 310 1145
\pinlabel $(1)$ at 15 940
\pinlabel $(1')$ at 1190 940
\pinlabel $(2')$ at 15 760
\pinlabel $(2)$ at 1190 760
\pinlabel $(3')$ at 15 577
\pinlabel $(3)$ at 1190 577
\pinlabel $(8)$ at 15 207
\pinlabel $(8')$ at 1190 207
\pinlabel (A) at 15 27
\pinlabel (B) at 1190 27
\endlabellist
\includegraphics[width=\textwidth]{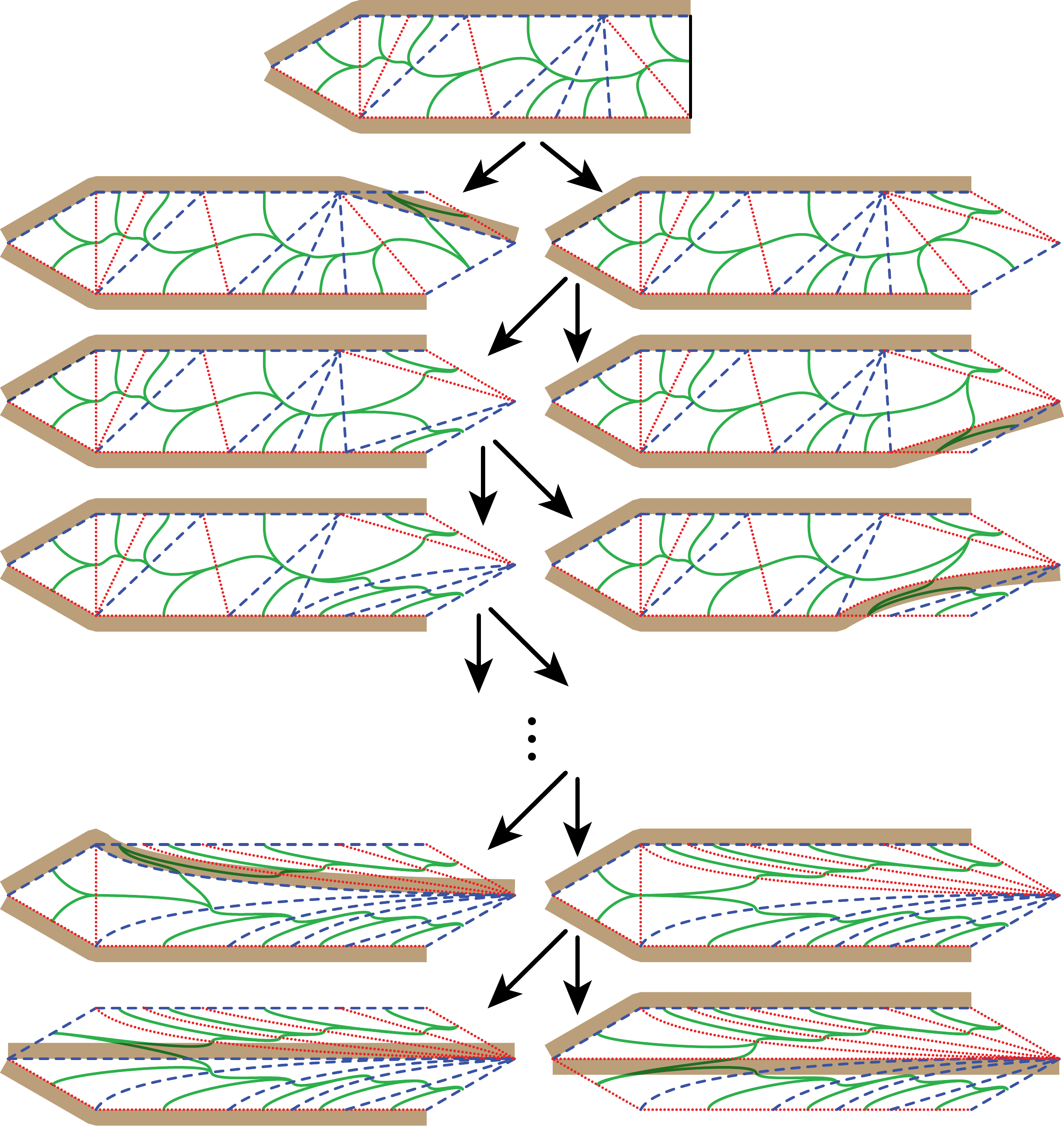}
\caption{Some possible steps in a channelisation process. 
The figures $(0)$, $(1)$, $\ldots$, $(8)$, (A), and (B) all show rivers: 
$(0)$ is the initial river and the others arise from landfilling.  
The figures $(1')$, $(2')$, $\ldots$ , $(8')$ are forked rivers.  
The primed indices count the total number of triangles in the two distributaries after the sink.  
The riverbanks are shaded brown.  
We place a subfigure on the left or right according to the colour of the upper edge of the most recently added tetrahedron.}
\label{Fig:River}
\end{figure}

\begin{definition}
\label{Def:Channelisation}
Suppose $C$ is a convex continent; thus all rivers in $\bdy C$ flow to the coast.  
Suppose that $f$ is a face of the upper landscape of $C$. 
Let $R$ be the maximal river with $f$ as its source.  
Let $f'$ be the last triangle of $R$ and let $e'$ be the mouth of $R$. 
See \reffig{River}(0) for one possible example. 
Let $t'$ be the tetrahedron above $f'$. 
Note that $e'$ is the lower edge of $t'$. 

We now landfill $t'$ onto $C$.
By \reflem{NoNewSinks}, the union $C \cup t'$ is again a continent, with no new sinks in its lower boundary.
We now appeal to \reflem{FiniteInFill}: after in-filling the top of $C \cup t'$ a finite number of times we obtain a convex continent $C'$.
We say that $C'$ is a \emph{channelisation} of $(C, f)$.
We make similar definitions for the lower landscape of $C$. 
\end{definition}


In general, a single channelisation process will not cover the face $f$. 
The heart of the proof of \refprop{VeerImpliesExhaust} is in showing that a finite number of channelisations suffices to cover $f$.

\begin{proof}[Proof of \refprop{VeerImpliesExhaust}]
We must build a continental exhaustion.  We begin by choosing an enumeration $(t_m)_{m \in \NN}$ of the tetrahedra of $\cover{\calV}$.  
Our first continent is $C_0 = t_0$; note that $C_0$ is convex.  Now suppose that we have obtained a convex continent $C_n$.  Let $t_{m(n)}$ be the smallest tetrahedron in our enumeration which is not contained in $C_n$ and which meets $C_n$ along a face $f$.  It will suffice to find a convex continent $C_{n+1}$ containing $C_n$ and $t_{m(n)}$. (For suppose that $t'$ is any tetrahedron.  Let $(t_{p(k)})_{k = 0}^K$ be a sequence of tetrahedra so that $t_{p(0)} = t_0$, so that $t_{p(K)} = t'$, and the tetrahedra $t_{p(k)}$ and $t_{p(k+1)}$ share a face.  Induction proves that $t_{p(k)}$ belongs to $C_{p(k)}$.)

Breaking the symmetry of the situation, we assume that $t_{m(n)}$ is above $C_n$.  
Thus $f$ is a face of the upper landscape of $C_n$.  
Set $C_n^0 = C_n$.  
Suppose that we have constructed $C_n^k$, convex.  
If $t_{m(n)}$ is contained in $C_n^k$ then we set $C_{n+1} = C_n^k$ and we are done.  
If not, then $f$ is still a face of the upper landscape of $C_n^k$.  
Then we define $C_n^{k+1}$ to be the result of channelising the pair $(C_n^k, f)$.  
Note that $C_n^{k+1}$ is again convex.  
Let $R^k$ be the maximal river in the upper landscape of $C_n^k$ with $f$ as its source.

\begin{lemma}
\label{Lem:RiversSimplify}
Suppose that $f$ is a face of the upper landscapes of both $C_n^k$ and $C_n^{k+1}$.  
Then $c(R^{k+1})$ is strictly smaller than $c(R^k)$.  
\end{lemma}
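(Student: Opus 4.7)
The plan is to follow the river starting at $f$ through the channelisation step and argue that its complexity drops. The key preliminary observation is that the upper track $\tau^{f_i}$ in a face $f_i$ of $\cover{\calT}$ depends only on the (unique) tetrahedron of $\cover{\calT}$ glued above $f_i$, and not on any particular continent. Consequently, whenever a triangle of $R^k$ survives in the upper boundary of $C_n^{k+1}$, its interior flow is unchanged; and an interior edge between two surviving triangles retains its status as fall, sink, or watershed. Moreover, the height $h_i$ of a fall depends only on the position of the river in $\cover{M}$, so heights at such surviving edges are also preserved.

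Label the triangles of $R^k$ as $f_1 = f, f_2, \ldots, f_\ell = f'$, with falls $e_1, \ldots, e_{\ell-1}$ and coastal mouth $e'$. The channelisation first attaches the tetrahedron $t'$ (whose lower $\pi$--edge is $e'$) above $f_\ell$ and then in-fills sinks on the top until $C_n^{k+1}$ is convex. Let $j$ be the smallest index for which $f_j$ is no longer in the upper boundary of $C_n^{k+1}$; this is well-defined because $t'$ covers $f_\ell$, so $j \leq \ell$. Then $R^{k+1}$ begins with the same segment $f_1, f_2, \ldots, f_{j-1}$, and the first $j-2$ falls of $R^{k+1}$ are exactly $e_1, \ldots, e_{j-2}$ at the same heights they had in $R^k$.

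Next consider $e_{j-1}$. On the opposite side of $e_{j-1}$ from $f_{j-1}$ now sits a new face $g$, arising either as an upper face of $t'$ or of some in-fill tetrahedron glued during channelisation. Since all these additions were performed above $C_n^k$, the face $g$ lies strictly higher at $e_{j-1}$ than the original $f_j$. Convexity of $C_n^{k+1}$ forbids $e_{j-1}$ from being a sink. Two cases arise. If $e_{j-1}$ is a watershed in $C_n^{k+1}$, then the river $R^{k+1}$ terminates at $f_{j-1}$, so $\ell(R^{k+1}) = j - 1 < \ell = \ell(R^k)$ and we are done. Otherwise $e_{j-1}$ is a fall and $R^{k+1}$ extends across $e_{j-1}$ into $g$. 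In that case the height $h_{j-1}$ strictly decreases: at least one tetrahedron, namely the one immediately above the position of $f_j$ in the pre-channelisation configuration, has been moved from the upper side to the lower side of the river at $e_{j-1}$, so the degree of $e_{j-1}$ below $R^{k+1}$ strictly exceeds the degree below $R^k$.

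What remains is to control $\ell(R^{k+1})$ when the river extends into $g$: we must show it cannot be \emph{larger} than $\ell(R^k)$. The newly added region above $C_n^k$ is a finite union of tetrahedra whose upper boundary (by convexity of $C_n^{k+1}$) carries no sinks, so flow through this region ends either at a watershed or at the (old or new) coast within a bounded number of steps; the veering hypothesis via \reflem{EdgeNeighbourhood} rigidly constrains how the new tetrahedra can stack above the edge $e_{j-1}$, keeping this number at most $\ell - j + 1$. The main obstacle is precisely this case-analysis at $e_{j-1}$: one must trace through the four tetrahedron types of \reffig{UpperGluingAutomaton}, in both the plain landfill step and the subsequent in-fills, to verify the height drop and the length bound simultaneously. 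Together these give $c(R^{k+1}) < c(R^k)$ in the lexicographic order.
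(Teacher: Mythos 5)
Your overall skeleton differs from the paper's: you locate the first triangle $f_j$ of $R^k$ that gets covered during the channelisation and argue locally at the fall $e_{j-1}$, whereas the paper runs an induction over the individual landfills of the channelisation using the auxiliary notion of a \emph{forked river} (\refdef{Forked} and \refclm{ChannelisationInduction}). The parts of your argument that are complete are fine: the upper track in a face depends only on the tetrahedron above it, so surviving triangles keep their flow; the initial segment $f_1,\dots,f_{j-1}$ and the falls $e_1,\dots,e_{j-2}$ persist with unchanged heights; convexity rules out a sink at $e_{j-1}$; and if $e_{j-1}$ survives as a fall, its height strictly drops because at least one tetrahedron incident to $e_{j-1}$ has moved from above the river to below it.

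However, there is a genuine gap, and it sits exactly where the theorem is hard. Your last paragraph asserts, without proof, that the new river beyond $e_{j-1}$ has length at most $\ell - j + 1$, i.e.\ that $\ell(R^{k+1}) \leq \ell(R^k)$, and explicitly defers the required case analysis (``one must trace through the four tetrahedron types \dots to verify the height drop and the length bound simultaneously''). That length bound is the entire content of the lemma: \refexa{FlatDSS} shows that for a non-veering transverse taut structure the analogous process can fail to terminate, so no soft appeal to ``the newly added region is finite and has no sinks'' can work — a priori the new river could be routed far downstream and be much longer than $R^k$. The paper closes this gap by proving, inductively over the landfills, that each intermediate forked river has exactly $\ell(R^k)+1$ triangles and the \emph{same riverbank edges} as $R^k$ (so the only available in-fill is its unique sink, and the sink marches upstream absorbing riverbank edges into the distributaries), and that when the fork finally resolves into a river only one of the two upper faces of the last in-fill tetrahedron belongs to it, forcing $\ell(R^{k+1}) \leq \ell(R^k)$. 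You would need to supply an argument of comparable strength — in particular, something that controls \emph{all} the in-fills of the channelisation, not just the configuration at $e_{j-1}$ — before your proof is complete.
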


\noindent
Note that \reflem{RiversSimplify} completes the proof of \refprop{VeerImpliesExhaust}; 
since complexities are well-ordered, eventually $f$ is not a face of some upper landscape.  
At that stage $t_{m(n)}$ is contained in the continent, as desired. 

Before giving the proof of \reflem{RiversSimplify} we require an expanded notion of a river. We give this in the next definition.  We also give an extended example to illustrate how the proof works. 

\begin{definition}
\label{Def:Forked}
Suppose that $C$ is a continent with upper landscape $L$.  
Let $f$ be a face of $L$.  
We define the \emph{forked river} $R(C, f)$ as follows. 

Let $R^f$ be the maximal upper river in $L$ with source $f$.  
If $R^f$ ends at the coast then we take $R(C, f) = R^f$; 
in this case the forked river is simply a river.
Suppose instead that $R^f$ ends at a sink $e$ for $\tau^L$.
Let $f_0$ be the triangle in $L - R^f$ adjacent to $e$.
Let $e_0$ and $e_1$ be the edges of $f_0 - e$.  
Let $R^{e_0}$ and $R^{e_1}$ be the maximal rivers starting at $e_0$ and $e_1$, respectively, and flowing away from $f_0$.  
These are the two \emph{distributaries} flowing from $f_0$; they may contain zero triangles.  
Finally, the desired forked river is the landscape 
\[
R(C, f) = R^f \cup f_0 \cup R^{e_0} \cup R^{e_1}
\]
If $R^{e_i}$ contains no triangles then we take the mouth of $R^{e_i}$ to be $e_i$.   

Suppose that $R$ is a forked river.
The \emph{length} of $R$, denoted $\ell(R)$, is the number of triangles it contains.
If $R$ contains a sink, then we say that the \emph{mouths} of $R$ are the mouths of its distributaries.
The \emph{riverbank edges} of $R$ are the boundary edges of $R$ other than the mouth(s).
\end{definition}

For examples of forked rivers, with distributaries of various sizes (including zero), see subfigures $(1')$, $(2')$, $(3')$, and $(8')$ of \reffig{River}.  

\begin{example}
\label{Exa:RiversSimplify}
We now discuss channelisation, illustrated in \reffig{River}, in more detail.  
Subfigure~\ref{Fig:River}(0) shows a possible river $R^k$, with source face $f$.  
Suppose that $t'$ is the coastal landfill tetrahedron.  
Note that attaching $t'$ fills the mouth of $R^k$ and adds one new cusp and two new coastal edges.  
After the landfill there are two possibilities according to the colour, blue or red, of the top edge of $t'$.  
These are shown immediately below $(0)$ on the left and right in subfigures $(1)$ and $(1')$, respectively.

In $(1)$ we see a river $R^{k+1}$.  
This is isomorphic (as a landscape) to $R^k$; however the last fall of $R^{k+1}$ is less high.  
Thus, following \refdef{Complexity}, we have $c(R^{k+1}) < c(R^k)$.  
In the remainder of the $(k+1)^\stsup$ channelisation procedure we in-fill all sinks on the top of $C_n^k \cup t'$.
There are none in $R^{k+1}$.  
There are none outside of the subfigure (1) because $C_n^k$ was convex.  
Thus the only possible in-fill is at the track-cusp pointing out of the top of the subfigure (1).  
This may lead to further sinks, which we in-fill in their turn.  
However, these in-fills do not alter $R^{k+1}$ because the only place to fill $R^{k+1}$ is at the coast.  

In $(1')$ we have a forked river $R^k_1$ as in \refdef{Forked}.  
Note that $R^k_1$ has one more coastal edge than the original river $R^k$ but has the \emph{same} riverbanks.  
Thus the only in-fill on the top (or bottom) of the current continent is the unique sink in $R^k_1$.  
In $(1')$ the distributary on the left riverbank has length one while the distributary to the right has length zero.  

When we in-fill the sink of $R^k_1$, there are two possibilities, shown in $(2')$ and $(2)$, as the top edge of the landfill tetrahedron is blue or red.  
In example $(2)$ we obtain a river $R^{k+1}$ which has been routed along the distributary (to the left) in $R^k_1$.  
Again we claim that $c(R^{k+1}) < c(R^k)$.  
Firstly, in-fills disjoint from $R^{k+1}$ do not alter it.  
Also, while $R^k$ and $R^{k+1}$ have the same length, the height of the second-to-last fall of $R^{k+1}$ is smaller than that of $R^k$.  

In example $(2')$ we obtain a forked river $R^k_2$.  
Note that the sink is further upstream while the total length of the distributaries has increased by one.  
Again the only in-fill on the boundary of the continent is the sink of $R^k_2$.  
At that point we will either obtain $(3)$, a river with lower complexity, or $(3')$, a forked river where the sink has moved upstream and one of the distributaries is one triangle larger.

Continuing in this way, if we do not make a river, then eventually the two distributaries contain all of the riverbank edges.  This is shown in the forked river of subfigure~$(8')$.  Here the sink has reached the source and again there is a unique in-fill.  Depending on the colour of the upper edge of this in-fill, we arrive at one of the two rivers shown in subfigures (A) or (B).  In this case our continent now contains $t$. The channelisation process may continue beyond (A) or (B), but by \reflem{FiniteInFill} it eventually terminates and gives a convex continent.
\end{example}

We now consider the general case.  

\begin{proof}[Proof of \reflem{RiversSimplify}]
Recall that $f$ is the upper face of $C_n^0 = C_n$ which we must layer onto.  
Recall that $C^{k+1}_n$ is the result of channelisation of the pair $(C^k_n, f)$.  
By hypothesis, the face $f$ is contained in the upper landscapes of both $C^k_n$ and $C^{k+1}_n$.  
Thus $R^k$ and $R^{k+1}$ are both defined. 

Set $C^k_{n,0} = C^k_{n}$ and let $C^k_{n,i}$ be the continent obtained after attaching the first $i$ landfills of this $(k+1)^\stsup$ channelisation.  
Only the first landfill is coastal; the rest are in-fill. 
We define $R^k_i$ to be the forked river for the pair $(C^k_{n,i}, f)$.  

\begin{claim}
\label{Clm:ChannelisationInduction}
\mbox{}
\begin{enumerate}
\item
\label{Itm:CoastalEdges}
The mouth(s) of $R^k_i$ are coastal edges. 
\item
\label{Itm:ForkedRiverInduction}
Suppose that $R^k_i$ contains a sink.  Then $\ell(R^k_i) = \ell(R^k) + 1$.  Also, the riverbanks of $R^k_i$ are the same as those of $R^k$.  Also, $R^k \cap R^k_i$ is the sub-river of $R^k$ with source $f$ that ends at the sink of $R^k_i$.  
\item
\label{Itm:RiverInduction}
Suppose $i > 0$ and $R^k_i$ does not contain a sink.  Then $\ell(R^k_i) \leq \ell(R^k)$ and $R^k_i = R^{k+1}$.  
\end{enumerate}
\end{claim}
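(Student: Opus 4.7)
The plan is to prove Claim~\ref{Clm:ChannelisationInduction} by induction on $i$. The base case $i = 0$ is immediate: $R^k_0 = R^k$ is a river whose mouth is coastal by the convexity of $C^k_n$, so \refitm{CoastalEdges} holds, while \refitm{ForkedRiverInduction} is vacuous and \refitm{RiverInduction} does not apply.

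For the inductive step, the next landfill $t_{i+1}$ is either the initial coastal landfill (when $i = 0$) at the mouth of $R^k$, or (when $i \geq 1$) an in-fill at some sink $e^*$ of the current upper landscape. In the coastal case, I label the cusps of $t_{i+1}$ so that $e' = AB$ is the mouth of $R^k$, $f' = ABC$ is the last triangle of $R^k$, and $D$ is the new cusp; the new upper landscape then replaces $f'$ by the pair $ACD, BCD$ joined along the new upper $\pi$-edge $CD$. Whether $CD$ becomes a sink is governed by the veering automaton of \reffig{UpperGluingAutomaton}. If $CD$ is not a sink, then the continuation of $R^k$ routes through one of $ACD, BCD$ to a new coastal edge, giving a river $R^k_1$ with $\ell(R^k_1) = \ell(R^k)$ (as in \reffig{River}$(1)$), verifying \refitm{CoastalEdges} and \refitm{RiverInduction}. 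If $CD$ is a sink, then $R^k_1$ is forked (as in \reffig{River}$(1')$) with length $\ell(R^k) + 1$; the riverbanks of $R^k_1$ coincide with those of $R^k$ because the outward edges of $\{ACD, BCD\}$ match those of $f'$, and $R^k \cap R^k_1 = R^k \setminus \{f'\}$ is the expected sub-river. This yields \refitm{CoastalEdges} and \refitm{ForkedRiverInduction}.

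For $i \geq 1$, I split on whether $e^*$ equals the sink of $R^k_i$. If not, then $e^*$ is not adjacent to any face of $R^k_i$: indeed, the mouth of every face of $R^k_i$ is either an interior fall, the sink of $R^k_i$, or (by the inductive hypothesis \refitm{CoastalEdges}) a coastal edge, so any face of $R^k_i$ adjacent to $e^*$ would contribute only one branch to the switch on $e^*$, contradicting that $e^*$ is a sink. A further check using \reffig{UpperGluingAutomaton} confirms that no new sink is created inside $R^k_i$, nor can any former riverbank of $R^k_i$ become a fall in a way that would permit $R^k_i$ to extend. Hence $R^k_{i+1} = R^k_i$ and the inductive hypothesis propagates. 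If instead $e^*$ is the sink of $R^k_i$, then the in-fill resolves the sink, and an analogous cusp-labelling argument to the coastal case yields precisely the two transitions depicted in \reffig{River}: either the sink is pushed one triangle upstream, absorbing one face of the distributary on that side into the river part (preserving the length $\ell(R^k) + 1$ and the riverbanks), or the new configuration is sink-free from $f$ and becomes a river of length at most $\ell(R^k)$.

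The main obstacle is the sink-resolving in-fill step, where one must verify that exactly the transitions of \reffig{River} can arise and that the length and riverbank bookkeeping is consistent. The veering hypothesis, through \reflem{EdgeNeighbourhood} and \reffig{UpperGluingAutomaton}, is essential here; the colour constraint on the upper $\pi$-edge of each in-fill tetrahedron determines the flow direction on its new upper faces, and thus which distributary is absorbed and whether a new sink appears one triangle further upstream. A secondary subtlety is confirming the subcase claim that in-fills away from the sink of $R^k_i$ cannot extend $R^k_i$ through a former riverbank; this combines the veering colour constraint on the in-fill tetrahedron with the observation that riverbanks of $R^k_i$ were not falls in the old landscape.
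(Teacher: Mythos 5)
Your proposal is correct and follows essentially the same route as the paper: induction on $i$, with the coastal landfill as the base case and a four-way colour analysis (riverbank colour versus the upper edge of the in-fill tetrahedron) driving the river/forked-river transitions. The only presentational difference is that you explicitly verify that in-fills away from the sink of $R^k_i$ leave it unchanged, a point the paper handles via the convexity of $C^k_n$ and the riverbank-coincidence observation in its worked example rather than inside the proof of the claim.
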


\refclm{ChannelisationInduction} proves \reflem{RiversSimplify}, as follows.  By \refitm{RiverInduction} the length of $R^{k+1}$ is no greater than that of $R^k$.  If it is shorter we are done; if it is the same length then by \refitm{ForkedRiverInduction} the in-fill giving $R^{k+1}$ has reduced the height of one fall without altering the heights further upstream.

\begin{proof}[Proof of \refclm{ChannelisationInduction}]
If $i = 0$ then we only need check \refitm{CoastalEdges} for $R^k_0 = R^k$.  This holds because $C^k_n$ is convex.  We now induct on $i$, starting with the base case of $i = 1$.  

So suppose that $i = 1$.  
Let $f'$ be the coastal triangle of $R^k_0$.  
Let $t'$ be the tetrahedron immediately above $f'$.  
Thus $C^k_{n,1} = C^k_{n,0} \cup t'$.  
If $f' = f$ then $f$ is not a face of the upper landscape of $C^{k+1}_n$, contrary to hypothesis.  
Thus $f' \neq f$.  
Thus there is an edge $e'$ of $f'$ that is neither on the riverbank nor on the coast.
There are four cases to check as the riverbank edge of $f'$ is either blue or red and 
as the upper edge of $t'$ is either blue or red.  
These are very similar to the general case discussed immediately below, so we omit them.  
(See \reffig{Moves}, but delete the triangle attached to $f'$ in the first row of figures.)

Suppose now that $i > 1$.  
If $R^k_i$ contains no sink then no future infill in the $k^\thsup$ channelisation will alter $R^k_i$.
That is, $R^{k+1} = R^k_{i+1} = R^k_i$ and there is nothing to prove.  
Suppose instead that $R^k_i$ contains a sink.  
We define $f'$ to be the triangle of $R^k_i$ immediately before the sink.
Let $t'$ be the tetrahedron immediately above $f'$.  
As before, if $f' = f$ then $f$ does not appear in the upper landscape of the next continent, contrary to hypothesis.  
So $f' \neq f$.   
Let $e'$ be the edge of $f'$ that is neither on the riverbank nor equal to the sink.  
There are again four possibilities according to the colours of the riverbank edge of $f'$ and the upper edge of $t'$. 
See \reffig{Moves}.

\begin{figure}[htb]
\centering
\subfloat[Landfilling along the blue riverbank.]{
\labellist
\small\hair 2pt
\pinlabel (1) [r] at 115 175
\pinlabel $(1')$ at 250 180
\pinlabel $f'$ at 132 310
\pinlabel $e'$ [r] at 106 280
\endlabellist
\includegraphics[width=0.46\textwidth]{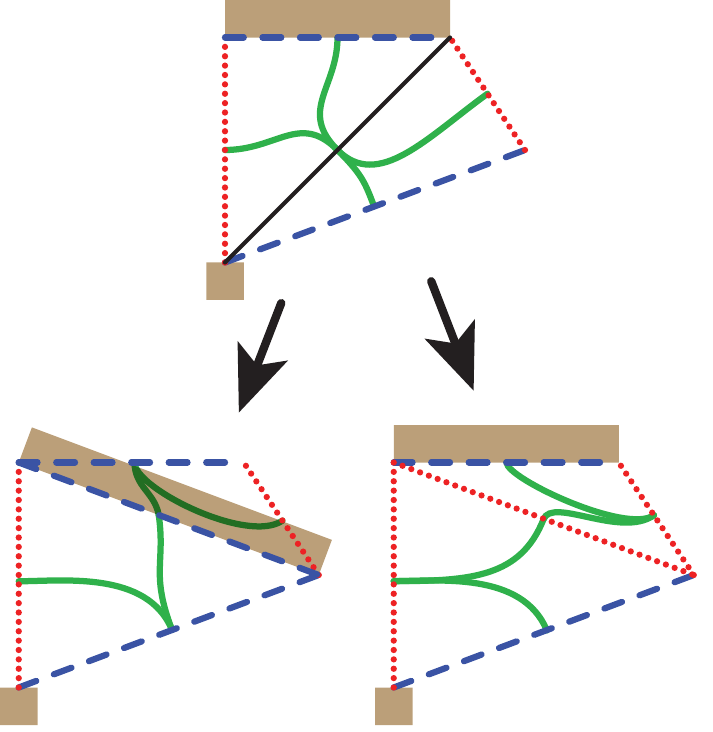}
\label{Fig:landfill_moves_blue}
}
\quad
\subfloat[Landfilling along the red riverbank.]{
\labellist
\small\hair 2pt
\pinlabel $(2')$ [r] at 115 170
\pinlabel (2) at 255 170
\pinlabel $f'$ at 132 250
\pinlabel $e'$ [r] at 106 280
\endlabellist
\includegraphics[width=0.46\textwidth]{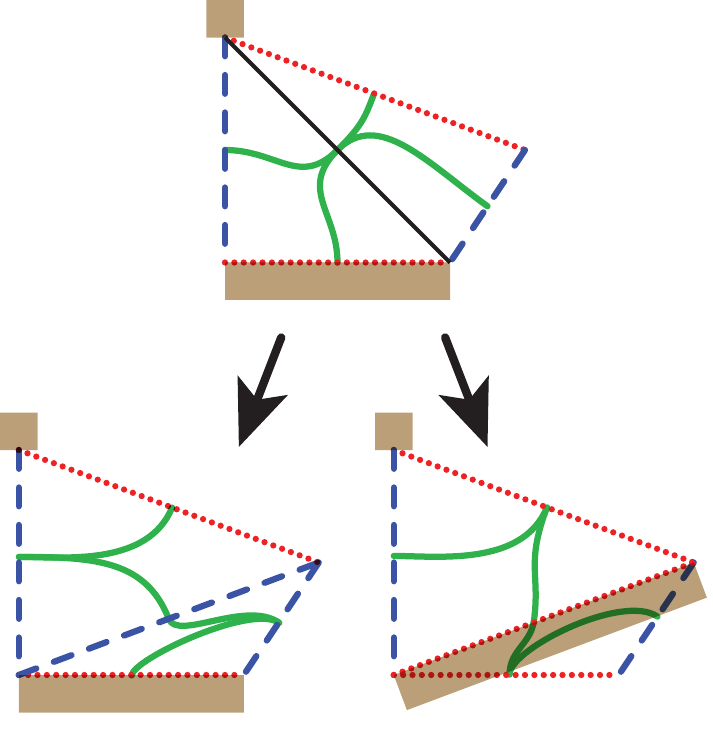}
\label{Fig:landfill_moves_red}
}
\caption{The two possible results of attaching $t'$ to $f'$, depending on the colour of its uppermost edge.  
The riverbank is shown shaded in brown.  
The sink, covered by the landfill tetrahedron, can be of either colour.  
Since the results are the same in each case, we here colour that edge black.}
\label{Fig:Moves}
\end{figure}

Suppose that $f'$ has an edge on the blue riverbank and the upper edge of $t'$ is blue.  
Then, as shown in \reffig{landfill_moves_blue}(1), after attaching $t'$ we have that $R^k_{i+1}$ is a river.  
We deduce that the mouth of $R^k_{i+1}$ is the mouth of the distributary meeting the red riverbank of $R^k_i$, which is coastal by induction.  
This proves \refitm{CoastalEdges} in this case.  
Since $R^k_{i+1}$ is a river, \refitm{ForkedRiverInduction} holds vacuously.  
Note that exactly one of the two upper faces of $t'$ are contained in $R^k_{i+1}$.  
Thus $R^k_{i+1}$ is at least one triangle smaller than $R^k_i$. 
Applying \refitm{ForkedRiverInduction} to $R^k_i$ we deduce that $\ell(R^k_{i+1}) \leq \ell(R^k)$.  
This proves \refitm{RiverInduction} in this case. 

Suppose that $f'$ has an edge on the blue riverbank and the upper edge of $t'$ is red.  
Then, as shown in \reffig{landfill_moves_blue}(1), after attaching $t'$ we have that $R^k_{i+1}$ is again a forked river.  
Both lower faces of $t'$ belonged to $R^k_i$ and both upper faces of $t'$ belong to $R^k_{i+1}$.  
Thus these forked rivers have the same number of triangles.  
Likewise they have the same boundaries and so have the same riverbanks and mouths.  
The distributary meeting the blue riverbank has grown by one triangle and the sink of $R^k_{i+1}$ is one triangle further upstream.  
This proves the induction hypotheses in this case. 

The remaining two cases (where $f'$ has a red riverbank edge) are the same, switching sides and colours correctly. 

This completes the proof of \refclm{ChannelisationInduction}.
\end{proof}
This completes the proof of \reflem{RiversSimplify}.
\end{proof}
This completes the proof of \refprop{VeerImpliesExhaust}.
\end{proof}

\subsection{No parallel edges}

With \refcor{VeerImpliesLayered} in hand we may now give a few global constraints on the combinatorics of veering triangulations. 

\begin{lemma}
\label{Lem:NoParallelEdges}
Suppose that $M$ is a three-manifold equipped with a veering triangulation $\calV$.
Then any pair of cusps of $\cover{\calV}$ are connected by at most one edge.
\end{lemma}

\noindent
Unlike \refthm{TetEmbeds}, the assumption of tautness alone does not suffice to prove \reflem{NoParallelEdges}. 

\begin{proof}[Proof of \reflem{NoParallelEdges}]
Fix cusps $c$ and $d$ of $\cover{\calV}$.
Suppose that $e$ is an edge of $\cover{\calV}$ connecting them.
\refcor{VeerImpliesLayered} gives a layering $\calK = (K_i)$ of $\cover{\calV}$.
Let $\tau^i$ and $\tau_i$ be the upper and lower tracks, respectively, of $K_i$.
For all $i$ we define $P_i \subset K_i$ as follows.  
If there is an edge $e_i$ in $K_i$ connecting $c$ to $d$ then $P_i = e_i$. 
If there is no such edge then we apply \reflem{Disk} and let $P_i$ be the strip of triangles connecting $c$ and $d$.

Let $t$ be the tetrahedron immediately below $e$.
Let $K_p$ be the lowest layer of $\calK$ containing the upper faces of $t$.
We deduce that $\tau^p$ has a small switch at $e$.
Recall that, as shown in \reffig{UpperGluingAutomaton}, the track $\tau^{i-1}$ is obtained from $\tau^i$ by folding at exactly one small switch.
Inducting downwards, we have that for all $i < p$
\begin{itemize}
\item
$P_i$ is a strip of at least two triangles,
\item
the track-cusps of $\tau^{P_i}$, in the first and last triangles of $P_i$, point away from $c$ and $d$, respectively, and
\item
these track-cusps are connected by a train interval in $\tau^{P_i}$.
\end{itemize}
Thus there is no edge connecting $c$ and $d$ below $K_p$. 

Now suppose that $t'$ is the tetrahedron immediately above $e$.  
Let $K_q$ be the highest layer of $\calK$ containing the lower faces of $t'$. 
We now repeat the previous argument replacing $K_p$ by $K_q$ and upper train tracks with lower.

We deduce that, for $i < p$ and for $i > q$, the layer $K_i$ does not have an edge connecting $c$ and $d$.  
Thus $p \leq q$.
From \reflem{EdgeNeighbourhood} we deduce that $p$ is in fact strictly less than $q$.
Finally note that $e$ lies in $K_j$ if $p \leq j \leq q$.  
This is because $e$ can only be removed from $K_j$ by flipping across $t$ or $t'$. 
Since $K_j$ is a copy of the Farey tessellation (\reflem{Disk}) it contains no bigons.  
That is, $e$ is the only edge in $K_j$ connecting $c$ to $d$.  
\end{proof}

\begin{remark}
When the triangulation $\calV$ is finite there is an alternative argument using strict angle structures~\cite{HRST11, FuterGueritaud13} and combinatorial area~\cite[Theorem~6.1]{Hodgson15}.
\end{remark}

\subsection{Train routes wiggle}

We next control the train rays of the upper and lower tracks, in the presence of a veering triangulation. 

\begin{definition}
Suppose that $K$ is a layer of a layering.  
Fix an orientation on $\cover{M}$ and a transverse orientation on $\cover{B}$.
This induces an orientation on $K$.
A train ray in $\tau^K$ travels through the triangles of $K$, entering through one edge of each triangle and exiting through the edge either to the left or the right.  
We refer to the former as a \emph{left turn} and the latter as a \emph{right turn}. 
\end{definition}

\begin{lemma}
\label{Lem:CannotTurnLeftForever}
Suppose that $K$ is a layer of a layering and $\rho$ is a train ray in the upper track $\tau^K$ on $K$.
Then $\rho$ turns both left and right infinitely often.
The same is true for a train ray in the lower track $\tau_K$.
\end{lemma}

\begin{figure}[htbp]
\subfloat[An upper train track that carries a left-turning ray that crosses only blue edges.]{
\label{Fig:TrainRouteTurningLeftBlue}
\includegraphics[height=1.7in]{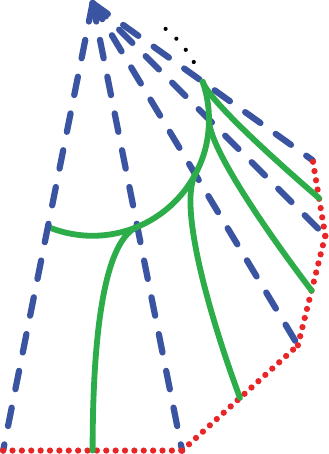}
} \quad
\subfloat[An upper train track that carries a left-turning ray that crosses only red edges (left). The corresponding lower track (right).]{
\label{Fig:TrainRouteTurningLeftRed}
\includegraphics[height=1.7in]{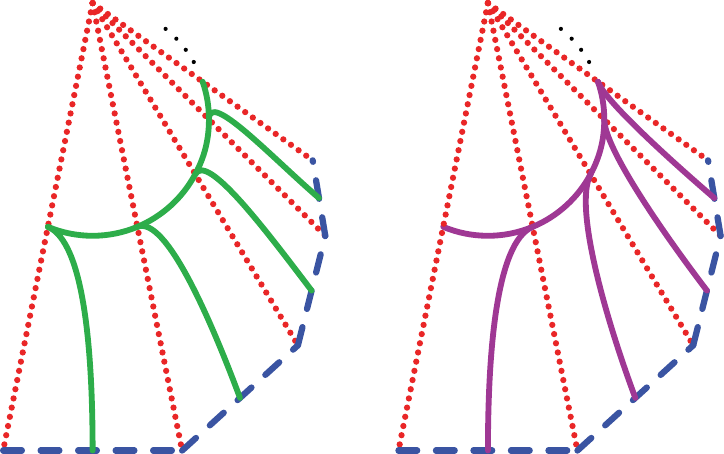}
} 
\caption{}
\label{Fig:TrainRouteTurningLeft}
\end{figure}

\begin{proof}
Suppose, for a contradiction, that $\rho$ is a train ray carried by $\tau^K$ that turns left forever.  
Consulting \reffig{VeeringTriangles}, we see that $\rho$ can cross a face of $K$ and travel from a blue edge to a blue edge, from a blue edge to a red edge, or from a red edge to a red edge, but that it can never travel from a red edge to a blue edge. 
Replacing $\rho$ by a sub-ray, we may assume that either $\rho$ crosses only blue edges or $\rho$ crosses only red edges.  
Let $P \subset K$ be the union of triangles that $\rho$ passes through.  
The two possibilities are shown in \reffig{TrainRouteTurningLeft}.

Suppose that we are in the first case, shown in \reffig{TrainRouteTurningLeftBlue}.  
There are no sinks of $\tau^P$ in $K$ meeting any edge (interior or boundary) of $P$.  
Thus it is impossible to layer a tetrahedron onto any triangle of $P$.  
Therefore $K$ is not a layer of a layering, a contradiction.

Suppose that we are in the second case, shown in \reffig{TrainRouteTurningLeftRed}.  
Here we instead consider the lower train track $\tau_P$.  
We conclude that it is impossible to layer a tetrahedron onto the bottom of $P$, and again we reach a contradiction.

Similar arguments apply to train rays in $\tau^K$ that turn right forever, and train rays in $\tau_K$ that turn in either direction forever.
\end{proof}

\chapter{Branched surfaces and branch lines}
\label{Cha:SurfacesAndLines}

Here, following ideas of Agol, given $M$, an oriented three-manifold equipped with a transverse veering triangulation $\calV$, we construct the upper and lower branched surfaces $B^\calV$ and $B_\calV$.  
Their branch loci play an important role in \refcha{LaminationsAlone}.  

\subsection{Branched surfaces}
\label{Sec:UpperLowerSurfaces}

Again, see~\cite[Section~6.3]{Calegari07} for a reference on branched surfaces.  

We construct the \emph{upper branched surface} $B^\calV$ in two stages.  
First, fix a tetrahedron $t$ in $\cover{\calV}$.  
Let $L$ be the two-triangle landscape above $t$ and $L'$ the two-triangle landscape below.  
Note that $\tau^L$, the upper track for $L$, is obtained from the upper track for $L'$ by doing a single split.  
If we perform this split continuously then it sweeps out a two-complex $B^t$, shown in \reffig{NormalUpperBranchedSurface}.  

The two-complex $B^t$ is not quite a branched surface; 
at the unique vertex the two one-cells are tangent rather than transverse.
We abuse terminology and nonetheless call $B^t$ the \emph{upper} branched surface, in $t$, in \emph{normal position}.
We use this terminology because the sectors of $B^t$ are three \emph{normal disks}:
two triangles and a quadrilateral.  
The track-cusps sweep along the two branch components of $B^t$; 
each branch interval is a normal arc in a lower face of $t$.  
The three sectors and two branch intervals meet at the midpoint of the bottom edge of $t$; 
this is the unique vertex of $B^t$.  
Again, see \reffig{NormalUpperBranchedSurface}.

We now remedy the failure of $B^t$ to be a branched surface.  
We fold $B^t$ in a small regular neighbourhood of its branch intervals.
We call the result the \emph{dual position} of $B^t$ because, after folding, $B^t$ is isotopic to the dual two-skeleton for $t$.  
See \reffig{DualUpperBranchedSurface}; 
note how the branch locus is no longer contained in the lower faces of $t$.   

\begin{figure}[htb]
\centering
\subfloat[Normal position.]{
\label{Fig:NormalUpperBranchedSurface}
\includegraphics[width=0.47\textwidth]{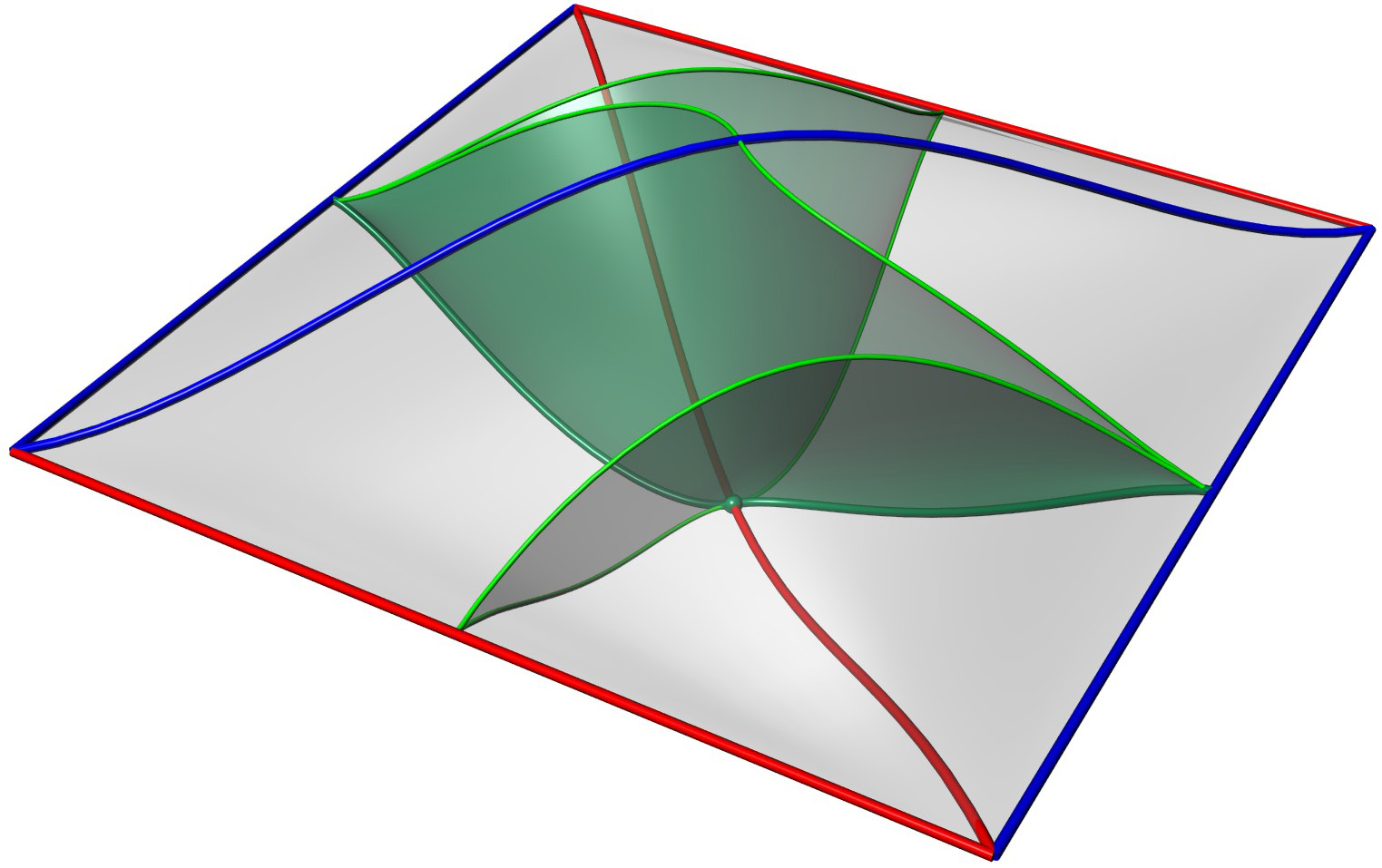}
}
\subfloat[Dual position.]{
\label{Fig:DualUpperBranchedSurface}
\includegraphics[width=0.47\textwidth]{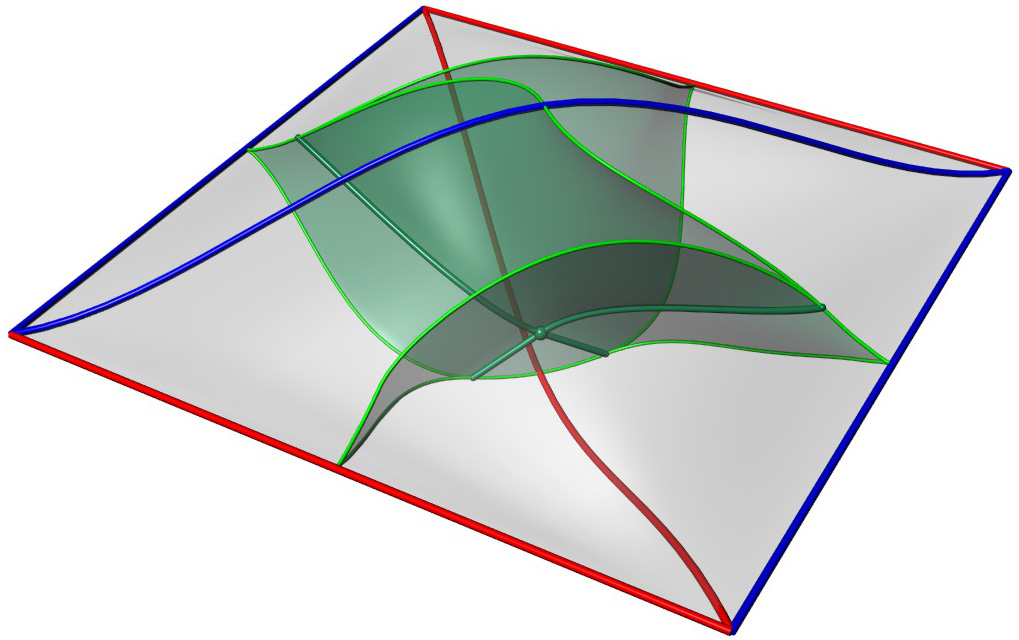}
}
\caption{Two positions of the upper branched surface in a tetrahedron.}
\label{Fig:UpperBranchedSurface}
\end{figure}

We now push $B^t$ down to $M$.
We define $B^\calV \subset M$, the \emph{upper branched surface} in \emph{normal position} or in \emph{dual position}, to be the union $\bigcup_{t \in \calV} B^t$ where the latter are also in normal or dual position.  

\begin{remark}
\label{Rem:Shift}
We justify giving a single name, $B^\calV$, to the two positions with the following claim: 
the normal and dual positions of $B^\calV$ are isotopic in $M$.  
To see this, start with $B^\calV$ in normal position.
Now isotope $B^\calV$ slightly upwards.
This makes the branch locus transverse to $B(\calV)$.
We may then isotope a bit further, to make the branch locus self-transverse.
The result is $B^\calV$ in dual position.
Again, see \reffig{DualUpperBranchedSurface}.
\end{remark}

We will almost always draw $B^\calV$ in normal position.
We define $B_\calV$ similarly, in both its normal and dual position, 
by using the lower tracks $\tau_L$ and $\tau_{L'}$ to build $B_t$, and so on. 

\begin{remark}
\label{Rem:Dual}
The branched surfaces $B^\calV$ and $B_\calV$ (in dual position) are both isotopic to the dual two-skeleton of $\calV$.
It follows that $B^\calV$ and $B_\calV$ are isotopic two-complexes (that is, ignoring their branched surface structures).  
Again, see \reffig{DualUpperBranchedSurface}.
\end{remark}

\subsection{Branch lines}
\label{Sec:BranchLines}

The branched surface $B^\calV \subset M$ cuts $M$ into a disjoint union of \emph{upper cusp neighbourhoods}.  
Each of these is homeomorphic to a torus (or annulus, or plane) crossed with a ray.  
Suppose that $\check{c}$ is a cusp of $\calV$ and $\check{N}$ is the corresponding upper cusp neighbourhood.  
We will abuse notation slightly and identify $\check{N}$ with its closure in the induced path metric; this makes $\check{N}$ homeomorphic to a torus (or annulus, or plane) cross an interval.  
The inner boundary of $\check{N}$ receives a piecewise smooth structure from $B^\calV$.  
The locus of non-smooth points in $\bdy \check{N}$ is the union of \emph{upper branch loops (or lines)} for $\check{N}$. 
We orient the upper branch loops (or lines) so that, with $B^\calV$ in dual position, the orientation on an upper branch loop (or line) agrees with the transverse orientation on $B(\calV)$.  

Suppose that $\check{S}$ is an upper branch loop (or line) in $\check{N}$.  
Let $c$ be an elevation of $\check{c}$ to $\cover{\calV}$. 
Let $N^c$ be the corresponding elevation of $\check{N}$.  
Let $S \subset N^c$ be an elevation of $\check{S}$.  
We call $S$ an \emph{upper branch line}.  

\begin{lemma}
\label{Lem:BranchLinesAreLines}
Branch lines are in fact lines -- not intervals, rays, or loops.
\end{lemma}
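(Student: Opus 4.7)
The plan is to show that the upper branch loop $\check S \subset \bdy \check N$ is essential in the torus $\bdy \check N$.  Once this is established, $S$---a lift of $\check S$ to the universal cover $\bdy N^c \cong \RR^2$---must be homeomorphic to $\RR$: it cannot be a loop (since $\check S$ would then be null-homotopic), and it cannot be a compact interval or a ray (since $\check S$, being a loop, has no boundary, and neither can any lift).

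First I would describe the branch loops on $\bdy \check N$ combinatorially.  The sectors of $\calB^\alpha$ adjacent to the cusp $\check c$ each contribute a polygonal region to $\bdy \check N$, and these regions tile the torus.  The branch loops are precisely the 1-skeleton of this tiling.  Using the structure of sectors from \refrem{Sector}, one identifies the cells of this tiling with the corresponding combinatorial data of $(\calT, \alpha)$; in particular the cells carry the co-orientation from $\alpha$ and are decorated with ``upper'' and ``lower'' vertices.

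Next, to see $\check S$ is essential, I would use the layering of $\cover M$ given by \refcor{VeerImpliesLayered}.  The layering assigns to each tetrahedron $t \in \cover \calT$ an integer index $h(t)$, with $h(t) - h(t')$ measuring the combinatorial vertical distance between $t$ and $t'$.  The key claim is that as $S$ traverses the tetrahedra of $\cover \calT$ adjacent to $c$, the layer index $h$ is strictly monotonic.  This would force $S$ to meet infinitely many distinct tetrahedra, ruling out any possibility of being a loop and hence giving $S \cong \RR$.

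The main obstacle is the monotonicity claim.  I expect the argument to go as follows: in normal position, the branch locus of $\calB^{\cover\alpha}$ inside each tetrahedron $t$ consists of two arcs, each running from the bottom $\pi$-edge through the normal quadrilateral and out to a lower face of $t$.  When two tetrahedra are glued across a common face, the branch loci align in a manner dictated by the gluing automaton of \reffig{UpperGluingAutomaton}.  A case-check using \reflem{EdgeNeighbourhood} (which constrains the possible tetrahedra meeting along each edge of a veering triangulation) shows that following a branch component always moves in the co-orientation direction of $\alpha$ (either consistently up or consistently down) and strictly across layers.  This strict monotonicity of $h$ along $S$ then completes the proof.
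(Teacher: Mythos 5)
Your proposal is correct in substance, but it takes a genuinely different route from the paper at the one non-trivial step.  The paper disposes of the interval/ray cases exactly as you do, and then observes (consulting \reffig{NormalUpperBranchedSurface}) that $\check{S}$ may be oriented to agree everywhere with the co-orientation given by $\alpha$, so that $\check{S}$ is \emph{vertical} in the sense of \cite[Definition~2.2]{SchleimerSegerman19}; Theorem~3.2 of that paper then gives that $\check{S}$ is essential in $\pi_1(M)$, which is precisely the statement that the lift $S$ is not a loop.  Your monotonicity argument --- following $S$ upwards strictly increases the index of a layering supplied by \refcor{VeerImpliesLayered}, so $S$ meets infinitely many distinct tetrahedra and cannot close up --- is a self-contained combinatorial proof of the same fact; indeed it is essentially the mechanism behind \reflem{BranchLines}\refitm{BranchLinesLayer} later in the paper, where one checks that consecutive track-cusps of $S$ lie in a lower and an upper face of a single tetrahedron.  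What the paper's route buys is brevity and independence from the layering; what yours buys is self-containment within the combinatorics already developed.

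One caution about your opening framing.  Showing that $\check{S}$ is essential in the torus $\bdy \check{N}$ is not by itself enough: a loop essential in a boundary torus could a priori still be null-homotopic in $M$, and your assertion that $\bdy N^c \cong \RR^2$ silently assumes that $\pi_1(\bdy \check{N}) \to \pi_1(M)$ is injective, which is not free at this point.  Fortunately your actual argument does not need either claim: strict monotonicity of the layer index along $S$ inside $\cover{M}$ rules out $S$ being a loop directly, with no reference to the topology of $\bdy N^c$.  I would drop the first paragraph's framing and lead with the monotonicity claim.
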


\begin{proof}
With notation as immediately above: $\check{S}$ cannot be an interval or a ray as it has no boundary.
Thus $\check{S}$ is either a loop or a line.  
Consulting \reffig{NormalUpperBranchedSurface} we find that we may orient $\check{S}$ to everywhere agree with the given co-orientation.  

There are now two cases as $\check{S}$ is a loop or a line.
If it is a line then $S$ is as well and we are done.
If it is a loop then it is \emph{vertical} in sense of~\cite[Definition~2.1]{SchleimerSegerman20}.
Applying~\cite[Theorem~3.2]{SchleimerSegerman20} we find that $\check{S}$ is essential in $\pi_1(M)$.  
\end{proof}

\begin{definition}
\label{Def:Adjacent}
We say two upper branch lines $S$ and $T$ in $N^c$ are \emph{adjacent} if they are not separated by a third branch line in $N^c$.  
\end{definition}

\begin{figure}[htb]
\centering
\includegraphics[width=0.5\textwidth]{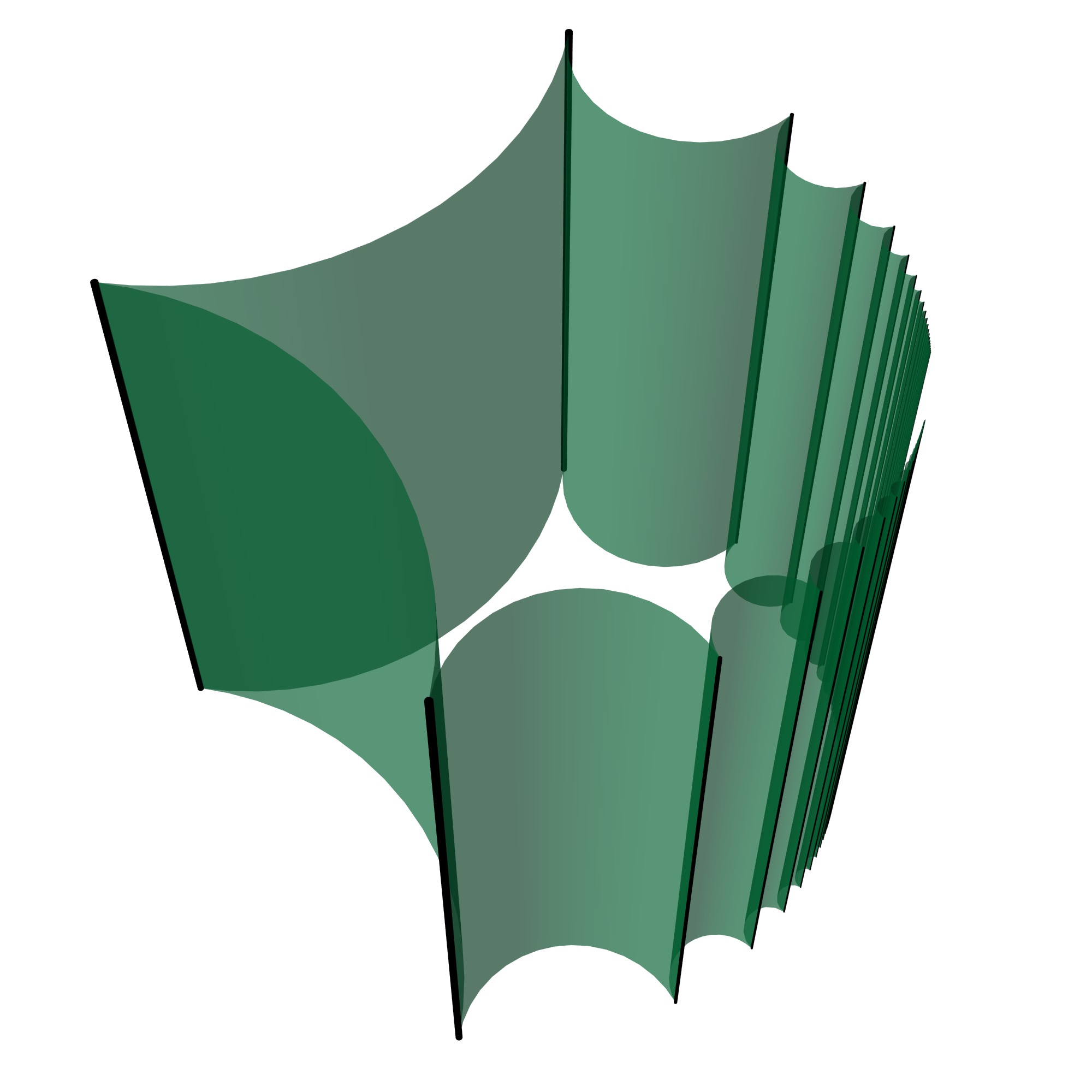}
\caption{The boundary of an upper cusp neighbourhood $N^c$: 
the component of $\cover{M} - \cover{B}^\calV$ containing the cusp $c$.
The black lines represent the branch lines in $N^c$.}
\label{Fig:CuspNeighbourhood}
\end{figure}

We now show that Figures~\ref{Fig:CuspNeighbourhood} and~\ref{Fig:NeighbourhoodLayer} give an accurate depiction of an upper cusp neighbourhood. 

\begin{figure}[htb]
\centering
\labellist
\small\hair 2pt
\pinlabel $c$ [t] at 137 90
\endlabellist
\includegraphics[width=0.5\textwidth]{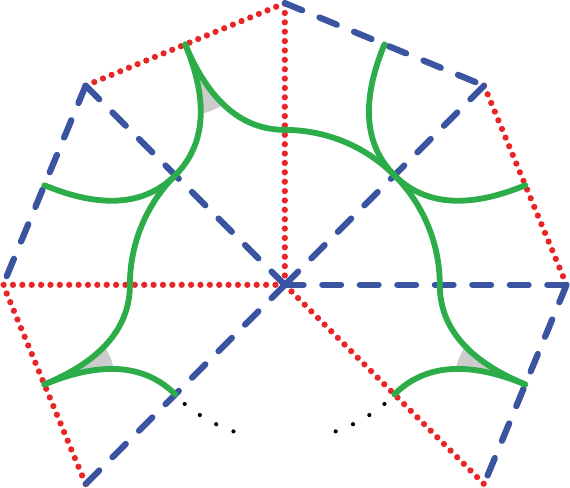}
\caption{The strip of triangles $K(c)$ in the layer $K$ meeting the cusp $c$.
The complementary component of the train track $\tau^{K(c)}$ meeting $c$ is the intersection of $N^c$ and $K$.
The track-cusps in the interior of $N^c$ are shaded grey.}
\label{Fig:NeighbourhoodLayer}
\end{figure}

\begin{lemma}
\label{Lem:BranchLines}
Suppose that $c$ is a cusp of $\cover{\calV}$.  
Let $(S_i)$ be the branch lines on the boundary of $N = N^c$.  
Fix $K$, a layer of some layering $\calK$ of $\cover{\calV}$. 
\begin{enumerate}
\item
\label{Itm:BranchLinesLayer}
For each $i$, there is a unique track-cusp $s_i$ of $S_i$ meeting $K$.
\item
\label{Itm:Cyclic}
The upper branch lines in $N$, equipped with the adjacency relation, are indexed by $\ZZ$.  
\item
\label{Itm:NeighbourhoodLayer}
The intersection $N \cap K$ is contained in the faces of $K$ incident to $c$.
The track-cusp $s_i$ points away from $c$, in the face that contains $s_i$ and meets $c$. 
\item
\label{Itm:BranchStrip}
The strip in $\bdy N$ cobounded by $S_i$ and $S_{i+1}$ meets $K$ in a train interval running from $s_i$ to $s_{i+1}$.
\item
\label{Itm:CuspNeighbourhood}
The cusp neighbourhood $N$ is homeomorphic to $(N \cap K) \cross \RR$.
\end{enumerate}
\end{lemma}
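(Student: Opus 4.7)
The plan is to establish parts \refitm{BranchLinesLayer}, \refitm{NeighbourhoodLayer}, and \refitm{BranchStrip} first -- these concern the local structure of $N = N^c$ in a fixed layer $K$ of a layering $\calK$ of $\cover{M}$ (provided by \refcor{VeerImpliesLayered}). Parts \refitm{Cyclic} and \refitm{CuspNeighbourhood} will then follow from this local picture by a stacking argument across the layering.

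First, I would analyse the intersection $\calB^\alpha \cap K$ near $c$. Since $\calB^\alpha$ is in normal position, its restriction to each face $f$ of $K$ incident to $c$ contains the upper track $\tau^f$. By the cited \cite[Corollary~1.1]{SchleimerSegerman19}, the layer $K$ is combinatorially a Farey tessellation, so the faces of $K$ incident to $c$ form a fan naturally indexed by $\ZZ$. For each such face $f$, consulting \reffig{VeeringTriangles} the upper track $\tau^f$ has a unique track-cusp, and this cusp either points at $c$ or points away from $c$ depending on whether or not $c = u(f)$ (and on which edge of $f$ at $c$ carries a switch). I would verify that the faces contributing to the boundary of the component of $K \setminus \tau^K$ with $c$ on its closure are exactly those whose track-cusp points away from $c$; defining $s_i$ to be these track-cusps establishes \refitm{BranchLinesLayer} and \refitm{NeighbourhoodLayer}.

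Next, to verify \refitm{BranchStrip}, I would trace the boundary of $N \cap K$ around $c$ by passing from one face $f_i$ incident to $c$ to the next $f_{i+1}$ across their common edge $e$. According to the classification of \reffig{PossibleTwoTriangles}, the edge $e$ is a sink, a fall, or a watershed for $\tau^K$; the veering hypothesis and \reflem{EdgeNeighbourhood} constrain which configurations arise at $c$. Tracing through each case, one sees that the sub-arc of $\bdy N$ between $s_i$ and $s_{i+1}$ is built from the two track-cusps together with short segments across the intermediate edges, and hence is a train interval as claimed. For \refitm{Cyclic}, adjacency of branch lines is inherited from the $\ZZ$-ordering of faces at $c$ in $K$: since each $S_i$ contains a unique $s_i \in K$ and the $s_i$ are linearly ordered along the link of $c$, adjacency of $S_i$ and $S_{i+1}$ follows.

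Finally, for \refitm{CuspNeighbourhood}, I would use the layering $\calK = \{K_n\}$ to build a product structure. Each $K_{n+1}$ is obtained from $K_n$ by a single flip across a tetrahedron $t_n$; tracking how $N$ meets the sweep-out between consecutive layers, the intersection $N \cap K_n$ deforms consistently by a local move dictated by \reffig{UpperGluingAutomaton}. Since branch lines are vertical (\reflem{BranchLinesAreLines}), no branch lines are created or destroyed as we pass between layers, so $N \cap K_n$ is homeomorphic to $N \cap K$ for every $n$, and assembling these gives the desired product decomposition. The main obstacle I anticipate is the case analysis in \refitm{BranchStrip}: one must show that for every possible local configuration of two veering triangles around $c$, the boundary of $N \cap K$ between $s_i$ and $s_{i+1}$ is indeed a train interval rather than, for instance, being obstructed by an extra track-cusp pointing toward $c$. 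This should reduce, after careful inspection of \reffig{EdgeNeighbourhood} and \reffig{GluingAutomaton}, to a finite check.
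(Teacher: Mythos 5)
Your treatment of parts \refitm{NeighbourhoodLayer}, \refitm{BranchStrip}, and \refitm{CuspNeighbourhood} matches the paper's argument in substance: $N \cap K$ is a component of $K - \tau^K$, each face is cut by its upper track into three pieces each containing one cusp, and the product structure is assembled layer by layer using the normal position of $\calB^\alpha$ inside each tetrahedron. Your route to \refitm{Cyclic} differs from the paper's (which observes that the branch loops project to disjoint essential loops on the boundary torus, hence are parallel), but deducing the $\ZZ$--indexing from the fan of faces at $c$ in the Farey tessellation is fine \emph{provided} part \refitm{BranchLinesLayer} is in place.

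That is where the gap lies. Part \refitm{BranchLinesLayer} asserts a bijection between the branch lines $\{S_i\}$ -- which are global, vertical objects in $\cover{M}$, defined as lifts of branch loops -- and track-cusps in the single layer $K$. Your analysis of $\calB^\alpha \cap K$ identifies which track-cusps of $\tau^K$ lie on $\bdy N$, but it does not show that each branch line contributes exactly one of them: a priori a branch line could meet $K$ in two track-cusps (both pointing away from $c$, in different faces of the fan), or could miss $K$ entirely. Ruling this out requires comparing layers, not inspecting one. The paper's argument: if $s$ and $s'$ are adjacent track-cusps of $S_i$ with $s' $ above $s$, lying in faces $f$ and $f'$, then the highest layer containing $f$ and the lowest layer containing $f'$ are consecutive, separated by a single tetrahedron $t$ of which $f$ is a lower face and $f'$ an upper face (see \reffig{NormalUpperBranchedSurface}); hence no layer contains both $s$ and $s'$, and an induction on the number of layers between gives both existence and uniqueness of the track-cusp of $S_i$ in $K$. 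You need to add this cross-layer step; without it, both your part \refitm{BranchLinesLayer} and your derivation of part \refitm{Cyclic} are unsupported.
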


\begin{proof}
Fix a branch line $S_i$ and fix a layer $K$. 
Fix also a track-cusp $s'$ of $S_i$. 
The track-cusp $s'$ lies in some face $f'$, which lies in some layer $K'$.
We now induct on the number of layers between $K$ and $K'$ to find some track-cusp $s$ of $S_i$ lying in $K$.
This proves the existence claimed in \refitm{BranchLinesLayer}.

We now prove uniqueness.
Let $s$ and $s'$ be adjacent track-cusps of $S_i$, with $s'$ below $s$.  
Let $f$ and $f'$ be the faces of $\cover{B}$ that contain $s$ and $s'$ respectively. 
Consulting \reffig{NormalUpperBranchedSurface}, we see that $f$ and $f'$ are upper and lower faces of a tetrahedron $t$.
Let $e$ be their shared edge.
The dihedral angle between $f$ and $f'$ at $e$ is zero.
Since the circular order $\calO_\calV$ is compatible (\refdef{Compatible}), the faces $f$ and $f'$ cannot lie in a single layer.

Let $L$ be the lowest layer of $\calK$ containing $f$.
By the above paragraph, $f'$ is below $L$.
Recall that $\cover{\calV}$ is homeomorphic to $\RR^3$ by \refprop{VeerImpliesExhaust} and \reflem{ExhaustImpliesThreeSpace}.
Thus $L$ separates $f'$ from all layers above $L$.
Thus no layer can contain $s'$ and a track cusp of $S_i$ above $s'$.
This proves uniqueness and thus we have \refitm{BranchLinesLayer}.

By \refitm{BranchLinesLayer}, every branch line $S_i$ in the boundary of $N$ meets $K$ in a track-cusp $s_i$.
Let $K(c)$ be the strip of triangles in the layer $K$ meeting the cusp $c$.
Thus $N \cap K$ is contained in $K(c)$.
Thus all of the $s_i$ lie in $K(c)$.
By \reflem{CannotTurnLeftForever}, the track-cusps $(s_i)$ exit both ends of $K(c)$, giving \refitm{Cyclic} and \refitm{NeighbourhoodLayer}.  

The strip in $\bdy N$ cobounded by $S_i$ and $S_{i+1}$ meets $K$ transversely within $\tau^K$.
The endpoints are the switches contained in $s_i$ and $s_{i+1}$, giving \refitm{BranchStrip}.
See \reffig{NeighbourhoodLayer}.

Suppose that $K'$ is the layer of $\calK$ immediately above $K$.
Move $K$ and $K'$ slightly apart, keeping both carried by the branched surface $\cover{B}$.
Let $t$ be the tetrahedron between $K$ and $K'$.
If $t \cap N$ is empty then $K$ and $K'$ cobound a product region $P \homeo (N \cap K) \cross [0,1]$.
The product $P$ is in turn carried by $\cover{B}$.
Suppose instead that $t \cap N$ is non-empty.
Away from $t$ the layers again cobound a product region $P_0$.
Let $Q_t$ be a small closed neighbourhood of $t$.
Inside of $Q_t \cap N$ the layers $K$ and $K'$ cobound a small neighbourhood $P_t$ of $t \cap N$.
Consulting \reffig{NormalUpperBranchedSurface} we see that $P_t$ is again a product region.
We form $P = P_0 \cup P_t$.
Again $P \homeo (N \cap K) \cross [0,1]$ and we have proven \refitm{CuspNeighbourhood}.
See \reffig{CuspNeighbourhood}.
\end{proof}

\begin{remark}
\label{Rem:BranchLines}
\reflem{BranchLines}\refitm{Cyclic} is a version of \cite[Observation~2.3]{FuterGueritaud13} in our context.  
When $M$ is compact, the branch loops in $M$ equip every component of $\bdy M$ with a \emph{branch slope}. 
Similar slopes are called the \emph{ladderpole slopes} in~\cite[Observation~2.4]{FuterGueritaud13}.  
We give more details in \refsec{Ladders}.

More generally, the branch loops are similar to the 
\emph{parabolic locus} of \emph{pared manifolds}~\cite[Definition~4.8]{Morgan84}, 
the \emph{degeneracy slopes} of~\cite[page~62]{GabaiOertel89}, 
and the \emph{maw loops} of~\cite[page~27]{Mosher96}.
\end{remark}

\begin{corollary}
\label{Cor:BranchLinesToggle}
Suppose that $S$ is an upper branch line of $\cover{B}^\calV$ in $\cover{\calV}$.
Then every subray of $S$ meets the lower faces of some toggle tetrahedron.
In particular every subray of $S$ meets both red and blue edges. 
A similar statement holds for the lower branch lines of $\cover{B}_\calV$.
\end{corollary}

\begin{proof}
Let $R \subset S$ be an ascending subray. 
Thus $R$ passes through an infinite sequence $(s_i)_{i \in \NN}$ of track-cusps.
Let $f_i$ be the face containing $s_i$.
Let $t_i$ be the tetrahedron immediately above $f_i$.
Suppose, for a contradiction, that $t_i$ is always a fan tetrahedron.
It follows that all of the $t_i$ are same colour and attached to a single edge of the opposite colour.
See \reffig{GluingAutomaton}.
But this contradicts the finiteness of edge degrees. 

A similar argument applies when $R$ is a descending ray or is contained in a lower branch line. 
\end{proof}

\subsection{Sectors}
\label{Sec:Sectors}

We now consider the various ways the normal quadrilaterals and triangles of $B^\calV$ can meet. 
See \reffig{UpperGluingAutomaton}.  
Recall that $\calV$ endows the faces and edges of $B(\calV)$ with a co-orientation. 

\begin{lemma}
\label{Lem:Sector}
Suppose that $D$ is a sector of $B^\calV$ (in normal position).  Then $D$ is a disk which 
\begin{itemize}
\item
contains exactly one normal quadrilateral, 
\item
contains exactly two (possibly empty) collections of normal triangles above the normal quadrilateral (to its left and right), 
\item
has exactly one \emph{upper} (\emph{lower}) \emph{vertex} where the co-orientation points out of (into) $D$, at the top (bottom) of the normal quadrilateral, 
\item
has exactly two \emph{cusp vertices} where $\bdy D$ has a cusp with respect to the co-orientation, and 
\item
has an \emph{under-side vertex}, where the co-orientation points into $D$, at the bottom of each normal triangle. 
\end{itemize}
\end{lemma}

\begin{proof}
Considering \reffig{NormalUpperBranchedSurface}, we see that in each tetrahedron, the branched surface consists of one normal quadrilateral and two normal triangles.
Both bottom edges of the quadrilateral, and precisely one bottom edge of each triangle, lies in the branch locus. Thus these bottom edges form part of the boundary of the containing sector.
The upper edges of the normal disks are either in the boundary of the sector or are glued to the free bottom edge of some triangle.
The gluings between the normal disks inside of a sector are given in \reffig{UpperGluingAutomaton}.
The desired properties follow.
\end{proof}

See \reffig{Sector} for a fairly generic example of a sector of $B^\calV$.  

\begin{figure}[htbp]
\subfloat[]{
  \label{Fig:SectorTaut}
  \includegraphics[height = 3.0 cm]{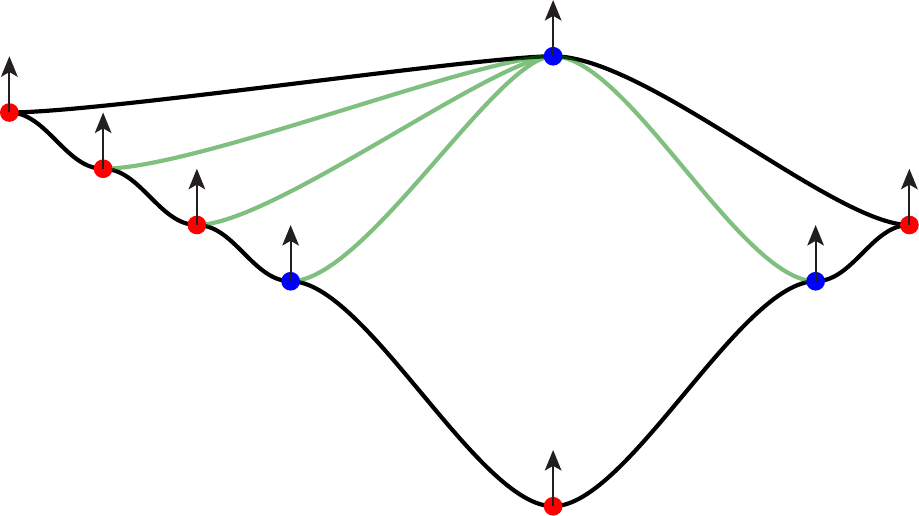}
  }
\quad
\subfloat[]{
  \label{Fig:SectorDiamond}
  \includegraphics[height = 4.0 cm]{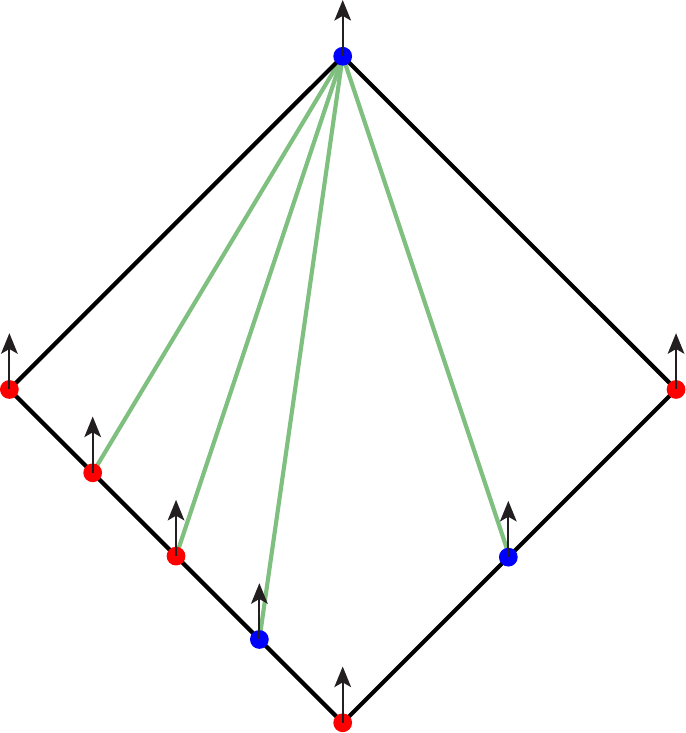}
}
  \caption{Any sector of $B^\calV$ (in normal position) contains exactly one normal quadrilateral and some (perhaps zero) normal triangles.  
The boundary of the sector is drawn in black, while internal boundaries between normal disks are drawn in green.
In \reffig{SectorTaut} we draw a sector in a style that emphasises the smoothing coming from $B(\calV)$.
In \reffig{SectorDiamond} we draw the same sector in a polygonal style, without smoothing. }
\label{Fig:Sector}
\end{figure}

\subsection{Leaves}
\label{Sec:Leaves}

We refer to~\cite[Definitions~6.9 and~6.14, page~215]{Calegari07} for the definitions of \emph{surface laminations} in three-manifolds, their \emph{leaves}, \emph{essential laminations}, and how branched surfaces \emph{carry} laminations.

Suppose that $\calV$ is a transverse veering triangulation. 
Let $B^\calV$ be the resulting upper branched surface (the discussion is similar for $B_\calV$).

\begin{remark}
\label{Rem:Laminar}
The upper and lower branched surfaces $B^\calV$ and $B_\calV$ are \emph{laminar} in the sense of Tao~Li~\cite[Definition~1.4]{Li02}.
Thus they carry essential laminations.
However more is true.
Since $\calV$ is veering, there is a canonical pair of laminations $\Sigma^\calV$ and $\Sigma_\calV$ fully carried by $B^\calV$ and $B_\calV$ respectively. 
Furthermore, $\Sigma^\calV$ and $\Sigma_\calV$ are the \emph{only} laminations fully carried by $B^\calV$ and $B_\calV$, up to Denjoy operations.
See Sections~\ref{Sec:SuspendingDescending} and~\ref{Sec:Uniqueness}.

When $\calV$ is not finite, we still have canonical laminations $\Sigma^\calV$ and $\Sigma_\calV$.
These are needed in order to complete our programme of classifying pseudo-Anosov flows (without perfect fits).
\end{remark}

\begin{figure}[htbp]
\includegraphics[width = 0.8\textwidth]{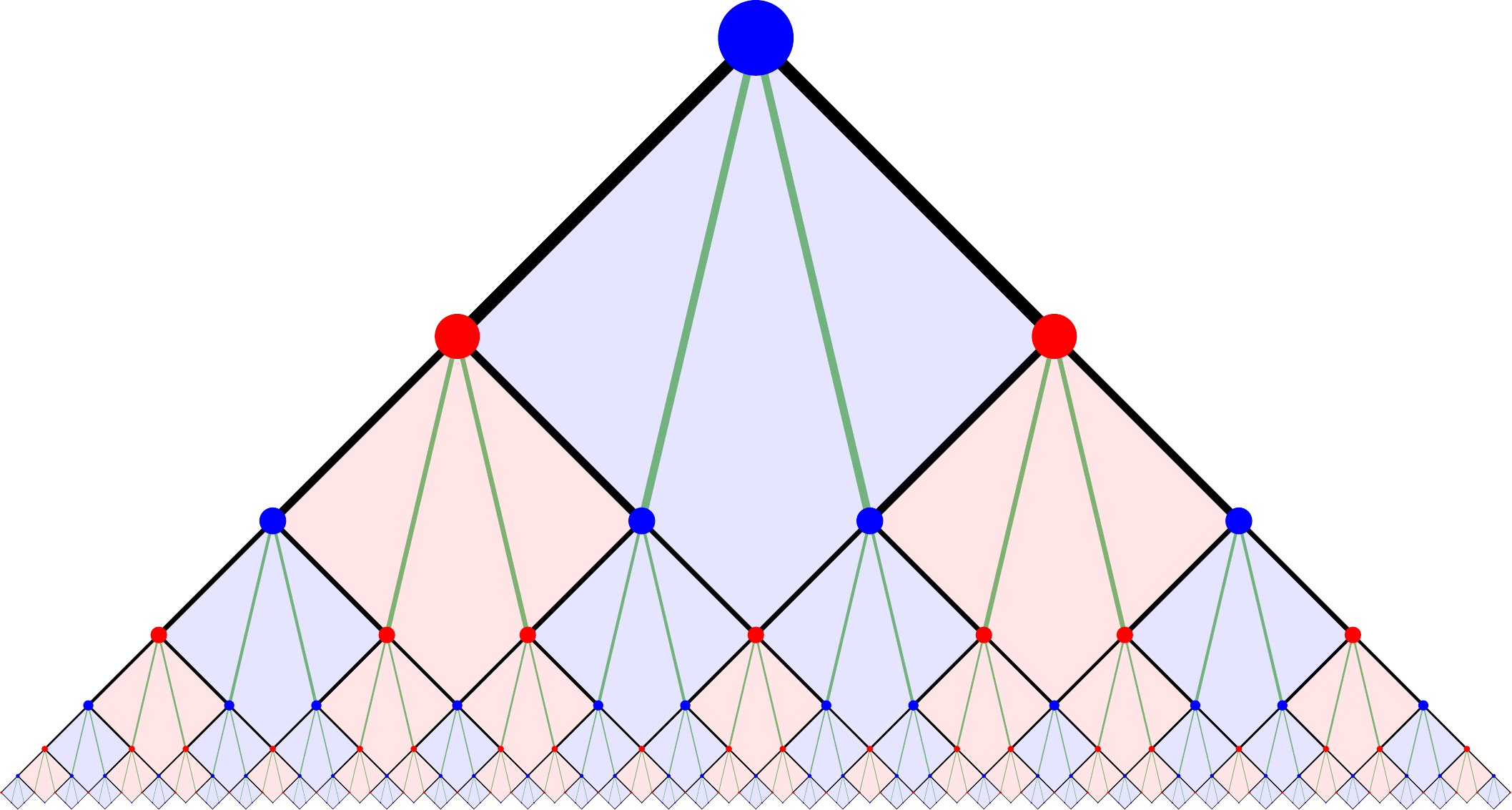}
\caption{The decomposition of a leaf into sectors, and then into normal disks.  
We draw the sectors in the style of \reffig{SectorDiamond}.
This leaf $\sigma$ is fully carried by the upper branched surface for the veering triangulation of the figure-eight knot complement given in \reffig{VeerFigEight}.  
There are two kinds of sector, here coloured light blue and light red according to the colour of the dual edge, as in \refrem{Dual}.}
\label{Fig:SectorsForFig8KnotComplement}
\end{figure}

Any leaf $\sigma$ of any lamination $\Sigma$ carried by $B^\calV$ (or $B_\calV$) inherits a decomposition into sectors.
For an example, see \reffig{SectorsForFig8KnotComplement}.
Here $\sigma$ is fully carried by the upper branched surface for the veering triangulation of the figure-eight knot complement, given in \reffig{VeerFigEight}.

\begin{figure}[htb]
\centering
\subfloat[]{
\includegraphics[width=0.26\textwidth]{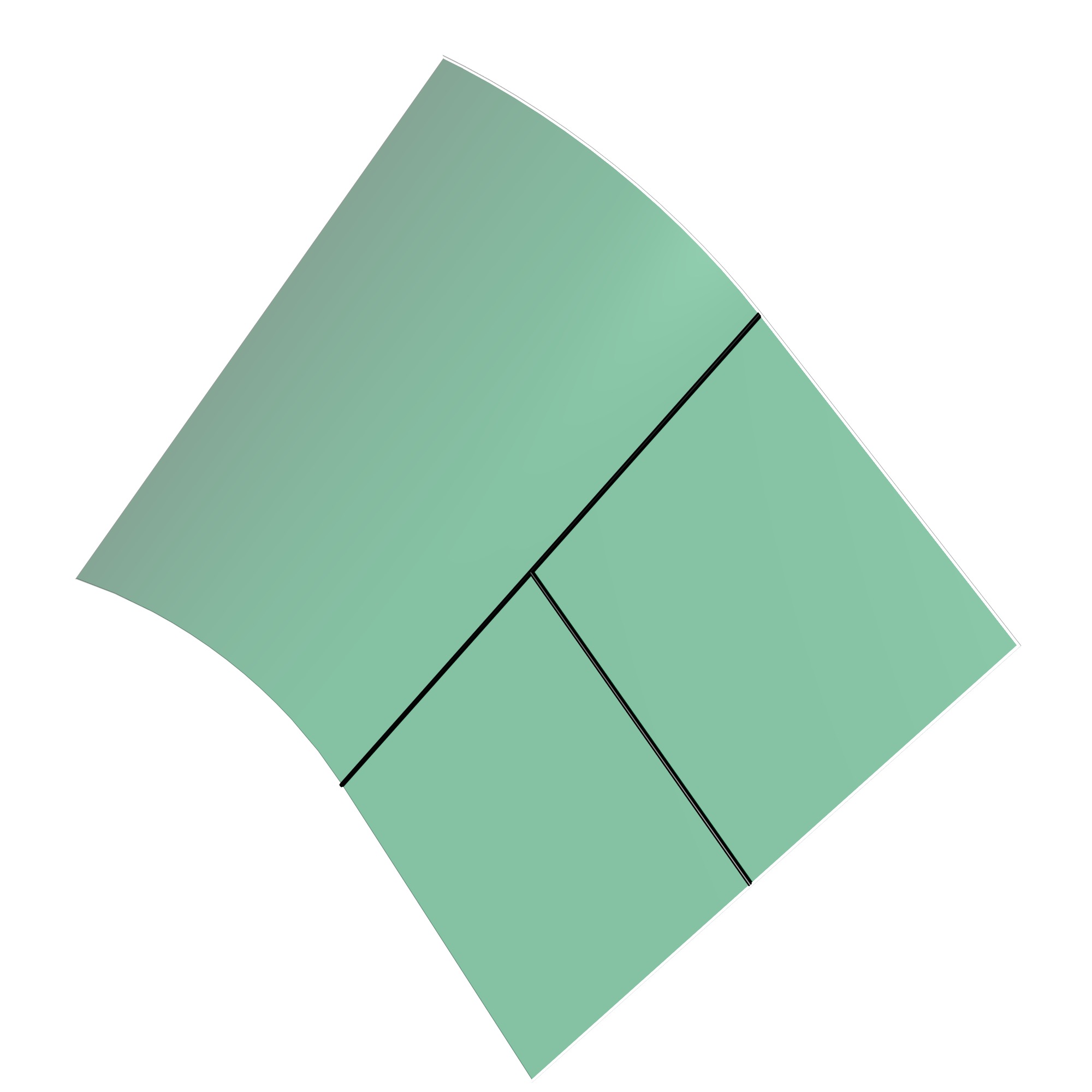}
\label{Fig:branched_surface_front}
}
\subfloat[]{
\includegraphics[width=0.26\textwidth]{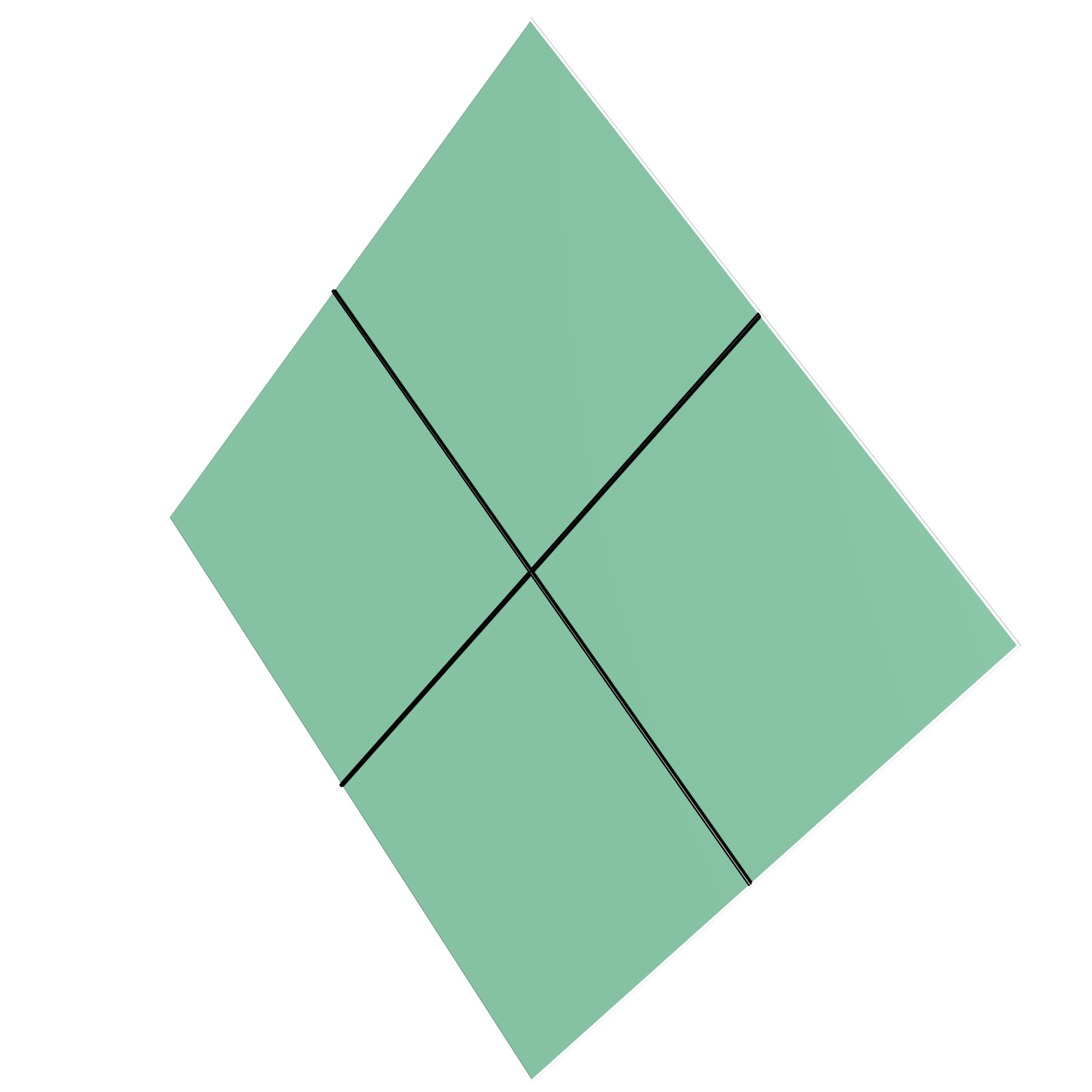}
\label{Fig:branched_surface_middle}
}
\subfloat[]{
\includegraphics[width=0.26\textwidth]{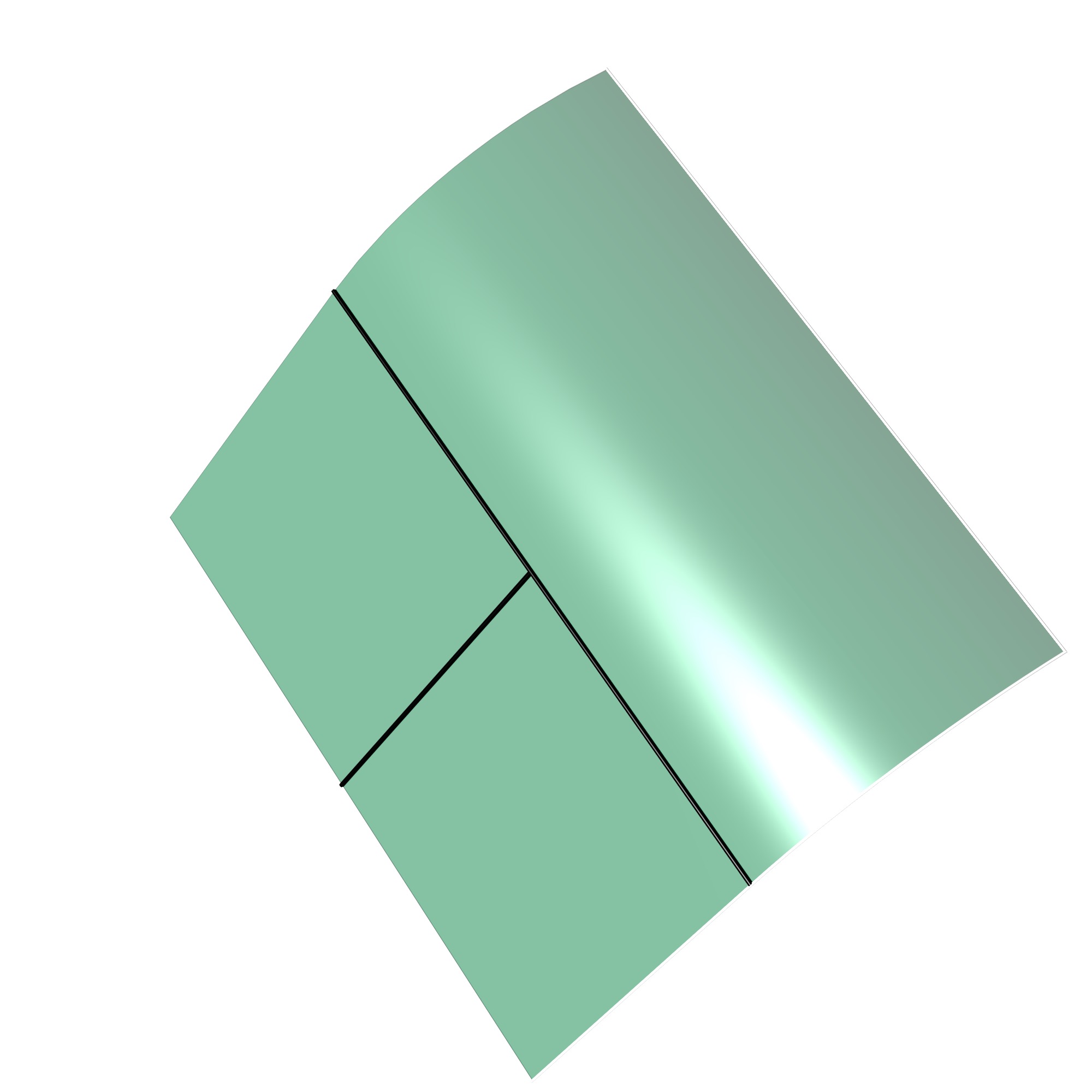}
\label{Fig:branched_surface_back}
}
\caption{A leaf carried by the upper branched surface is patterned by the branch lines of the branched surface in one of these three ways in the vicinity of a vertex of the branched surface (compare with \reffig{DualUpperBranchedSurface}).}
\label{Fig:TAndX}
\end{figure}

Suppose that $\sigma$ is a leaf of any lamination $\Sigma$ carried by $B^\calV$ (say).
The decomposition of $B^\calV$ into sectors induces a cellulation of $\sigma$.
In this cellulation, drawn in the style of \reffig{SectorDiamond}, the two-cells are diamonds (possibly with many vertices on their lower sides) while the vertices all have degree three or four.
We call these vertices, respectively, \emph{T-junctions} or \emph{X-junctions}.
See \reffig{TAndX}.

\chapter{The veering circle}

In this section we build the \emph{veering circle} $\Circle$.
Here is the desired statement. 

\begin{theorem}
\label{Thm:VeeringCircle}
Suppose that $M$ is an oriented three-manifold equipped with a transverse veering triangulation $\calV$. 
Then the order completion of $(\Delta_\calV, \calO_\calV)$ is a circle $\Circle$ with the following properties.
\begin{enumerate}
\item 
\label{Itm:Acts}
The action of $\pi_1(M)$ on $\Delta_\calV$ extends to give a continuous, faithful, orientation-preserving action on  $\Circle$.  
\item 
\label{Itm:Dense}
If $\calV$ is finite then all orbits are dense.
\end{enumerate}
\end{theorem}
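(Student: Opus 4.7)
My plan is built on four steps: build the circle, extend the action, establish faithfulness, and, finally, prove that orbits are dense.

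First I would construct $S^1(\alpha)$ by Dedekind-style order completion. Since $(\Delta_M, \calO_\alpha)$ is dense and $\pi_1(M)$--invariant by \refthm{VeerImpliesUnique}, I may cut the circular order at an auxiliary basepoint to obtain a dense linear order, apply the standard cut construction to produce a complete linear order, and then re-identify the two ends to form a compact, connected, cyclically ordered space. Because $\Delta_M$ is countable with no endpoints, the resulting object has a countable dense subset and no jumps, so it is order-isomorphic (hence homeomorphic, using the order topology) to $S^1$. I would embed $\Delta_M \hookrightarrow S^1(\alpha)$ as a dense subset and record the induced topology.

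For \refitm{Acts}, I would observe that each $\gamma \in \pi_1(M)$ acts on $\Delta_M$ by a bijection preserving $\calO_\alpha$, hence carries cuts to cuts, and so extends uniquely to an order-preserving bijection of $S^1(\alpha)$. Order-preserving bijections of $S^1$ in the order topology are homeomorphisms, and the circular order determines a canonical orientation, so the extension is continuous and orientation-preserving. For faithfulness, suppose $\gamma$ fixes every cusp of $\cover{M}$. Every face of $\cover{\calT}$ is determined by its triple of cusps (using \reflem{NoParallelEdges}, since distinct cusps are connected by at most one edge, and triangles are determined by their edges), so $\gamma$ fixes every face of $\cover{\calT}$ setwise. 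A simplicial automorphism of $\cover{\calT}$ fixing every face combinatorially is the identity, and as $\pi_1(M)$ acts freely on $\cover{M}$ this forces $\gamma = 1$.

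The main obstacle is \refitm{Dense}. Here is my approach. Let $x \in S^1(\alpha)$ and let $I \subset S^1(\alpha)$ be an open arc; I need some $\gamma \in \pi_1(M)$ with $\gamma x \in I$. By denseness of $\Delta_M$ in $S^1(\alpha)$ I may assume $x \in \Delta_M$ is a cusp $c$, and choose cusps $a, b \in I$ with $\calO_\alpha(a, d, b) = 1$ for some $d$ in the interior of $I$. Using a continental exhaustion as produced by \refprop{VeerImpliesExhaust}, I can find a continent $C \subset \cover{M}$ whose coast contains $a$ and $b$ along with many cusps between them. The key will be to produce a group element $\gamma$ translating $c$ into the combinatorial ``interval'' of cusps of $C$ lying between $a$ and $b$. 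I expect to do this by exhibiting, via the layering $\calK$ from \refcor{VeerImpliesLayered} together with the branch line structure developed in \refsec{BranchLines}, elements of $\pi_1(M)$ with effective ``pseudo-Anosov-like'' dynamics on $S^1(\alpha)$: an element $\gamma$ translating a layer so that a chosen coastal interval stretches over $I$ will carry $c$ into $I$ after sufficiently many iterations. \refcor{BranchLinesToggle} (periodic occurrence of toggle tetrahedra along branch lines) should be essential, since it produces the required mixing of red and blue combinatorial data that prevents any closed invariant subset from being strictly smaller than $S^1(\alpha)$. Alternatively, once enough such elements with attracting/repelling pairs are in hand, one may invoke the standard dichotomy for orientation-preserving circle actions to rule out both finite orbits (obstructed by the faithfulness established above combined with the non-triviality of $\pi_1(M)/$peripheral subgroups) and a Cantor minimal set (obstructed by density of $\Delta_M$ together with the combinatorial richness of the continental exhaustion), thereby forcing the action to be minimal.
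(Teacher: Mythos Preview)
Your construction of $S^1(\alpha)$ and the extension of the action in \refitm{Acts} are fine and match the paper's treatment. Your faithfulness argument via \reflem{NoParallelEdges} is correct, though more elaborate than needed: the paper simply notes that the deck group already acts faithfully on $\Delta_M$, and $\Delta_M$ embeds in $S^1(\alpha)$.

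The genuine gap is in \refitm{Dense}. Your reduction ``by denseness of $\Delta_M$ I may assume $x \in \Delta_M$'' is not valid: density of the orbit of a cusp says nothing about the orbit of a non-cusp point, and proximity in $S^1(\alpha)$ is not preserved under the action, so you cannot pass from nearby cusps back to $x$. The rest of your sketch (pseudo-Anosov-like elements, the minimal-set dichotomy, ruling out a Cantor minimal set via continental exhaustions) remains at the level of a wish list; in particular, producing a loxodromic-type element with attracting/repelling fixed points on $S^1(\alpha)$ is not something the machinery of \refsec{Geography} or \refcor{BranchLinesToggle} hands you directly.

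The paper takes a completely different and much shorter route, using \emph{parabolic} rather than hyperbolic dynamics. For each cusp $c$ one analyses $\Stab(c) \cong \ZZ^2$ acting on $S^1(\alpha)$. The upper branch lines $\{S_i\}_{i \in \ZZ}$ in $N^c$ have endpoints $\bdy S_i \in S^1(\alpha)$ which accumulate only on $c$ (this is \reflem{Parabolics}\refitm{TipsOfCrown}, proved from \reflem{BranchLines} and \refcor{Irrational}). The branch slope $\beta$ fixes each $\bdy S_i$, but any $\gamma \in \Stab(c)$ not a power of $\beta$ shifts the indexing: $\gamma(\bdy S_i) = \bdy S_{i+k}$ for some $k \neq 0$. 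Since the arcs $[\bdy S_i, \bdy S_{i+1}]^\acw$ cover $S^1(\alpha) \setminus \{c\}$, iterating $\gamma$ pushes every point of $S^1(\alpha)$ towards $c$. Hence for \emph{any} $x$, the orbit of $x$ accumulates on every cusp, and since $\Delta_M$ is dense the orbit is dense. This is the missing idea: you should be looking at peripheral elements, not at continental exhaustions or loxodromics.
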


\subsection{Arcs at infinity}

We introduce several pieces of notation.  

\begin{definition}
\label{Def:Arcs}
Suppose that $M$ is a three-manifold equipped with a transverse veering triangulation $\calV$. 
Suppose that $a$ and $b$ are cusps in $\Delta_\calV$.  
Define
\[
[a, b]_\Delta^{\acw} = \{ c \in \Delta_\calV \st \calO_\calV(a, c, b) \geq 0 \}
\]
This is the \emph{arc} in $\Delta_\calV$ that is anticlockwise of $a$ and clockwise of $b$. 

Now fix an edge $e$ of $\cover{\calV}$.
Orient $e$ by ordering the cusps $c$ and $c'$ at the ends of $e$.
This, together with the transverse taut structure on $\cover{\calV}$ determines a co-orientation of $e$ using the right-hand rule.
We set $\Delta(e) = [c', c]_\Delta^{\acw}$.
Note that $e$ and $\Delta(e)$ have the same endpoints.
The co-orientation on $e$ points towards the arc $\Delta(e)$.
See \reffig{CoorientedEdge}.  
\end{definition}

\begin{figure}[htbp]
\labellist
\small\hair 2pt
\pinlabel {$c$} at 13 450 
\pinlabel {$c'$} at 565 450 
\pinlabel {$e$} at 220 340 
\pinlabel {$\Delta(e)$} at 220 610
\endlabellist
\[
\begin{array}{c}
\includegraphics[height = 3.5 cm]{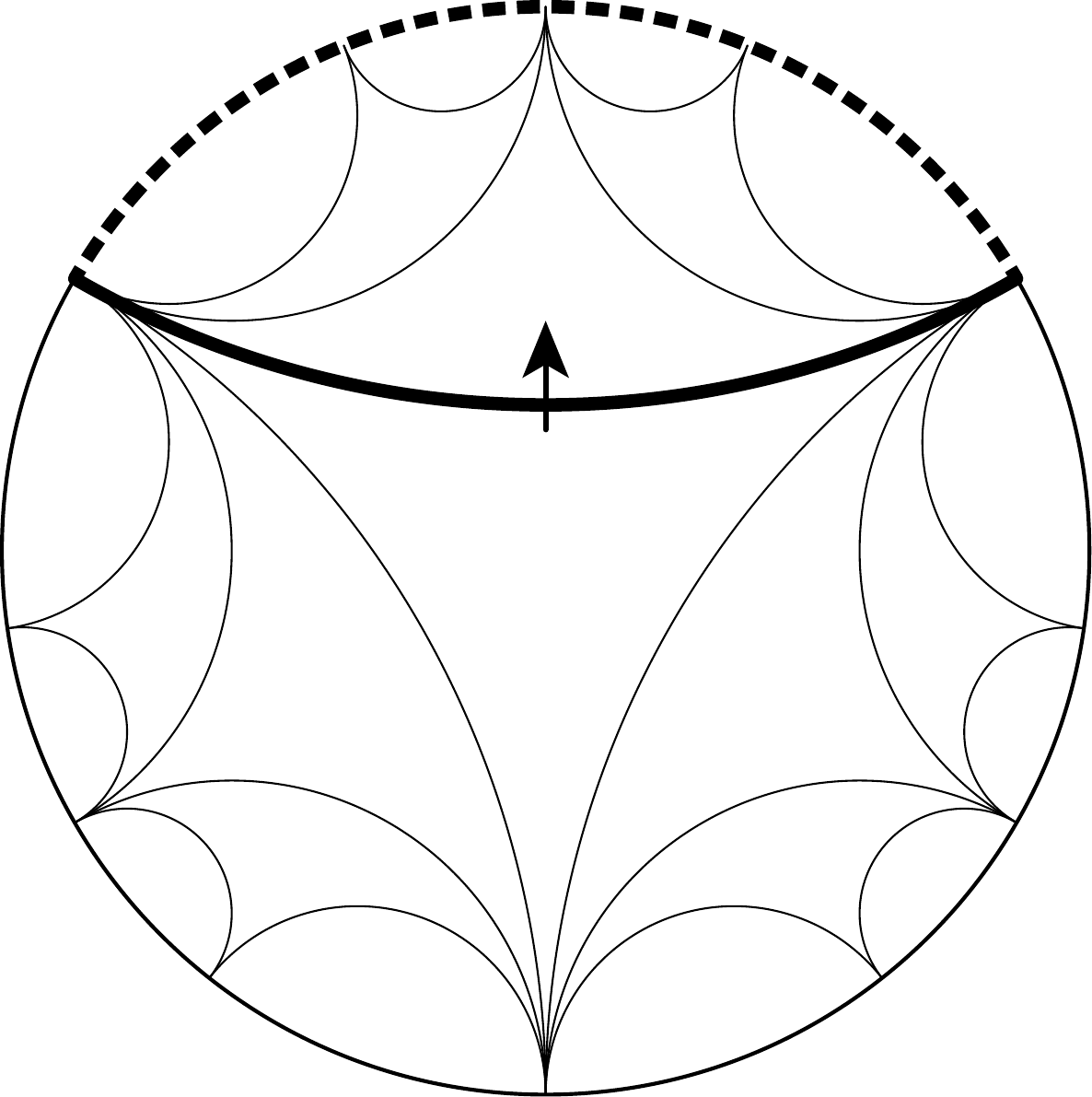}
\end{array}
\]
\caption{The arc $\Delta(e)$ (drawn dashed) in $\Delta_\calV$ corresponding to a co-oriented edge $e$.
In \refsec{BuildingVeeringCircle}, we take the order completion of $\Delta_\calV$ to obtain $\Circle$.
The closure of $\Delta(e)$ is the arc $A(e) \subset \Circle$. }
\label{Fig:CoorientedEdge}
\end{figure}

Suppose that $K$ is a layer of a layering of $\cover{\calV}$.  
The edges in $K$ make it into a copy of the Farey tessellation (\reflem{Disk}).  
The dual graph is an infinite trivalent tree $T$.  
We record the following fact.

\begin{lemma}
\label{Lem:AlternatingPathsShrink}
Suppose that $\rho$ is a ray in $T$.
Let $(e_n)$ be the edges crossed by $\rho$.  
Then $\rho$ turns left and right infinitely often if and only if the nested intersection $\bigcap \Delta(e_n)$ is empty.
\end{lemma}

\begin{proof}
Suppose that $\rho$ turns left and right infinitely often.  
Suppose that $c$ is a cusp of $\cover{\calV}$.  
Let $H$ be the line in $T$ adjacent to $c$.  
Thus no sub-ray of $\rho$ is contained in $H$.  

Let $e_m \in (e_n)$ be the edge which has, among all edges of $(e_n)$ closest to $H$, the largest index.
Thus, either $e_m$ is the last edge (that $\rho$ crosses) that meets $c$, or $e_m$ is the second edge (that $\rho$ crosses) of a Farey triangle, the third edge of which separates $\rho$ from $c$.  
In either case, $c$ is not in the arc $\Delta(e_{m+1})$.  

On the other hand, suppose that $\rho$ turns right only finitely many times. 
Thus there is a cusp $c$ so that infinitely many of the $e_n$ meet $c$.  
Note that $\rho$ turns left through all such $e_n$.  
Appealing to the combinatorics of the Farey tessellation, we deduce that 
$\bigcap \Delta(e_n) = \{c\}$. 
\end{proof}

Suppose that $\rho$ is a train ray in the upper track $\tau^K$.  
Let $(e_n)_{n \in \NN}$ be the sequence of co-oriented edges crossed by $\rho$; 
here the co-orientation of $e_n$ agrees with the orientation of $\rho$.  
Note that the arc $\Delta(e_{n+1})$ is contained in, and shares exactly one endpoint with, the arc $\Delta(e_n)$.  

\begin{corollary}
\label{Cor:RoutesShrink}
Let $(e_n)$ be the edges crossed by a train ray $\rho$ (in some layer).  
Then the nested intersection $\bigcap \Delta(e_n)$ is empty. 
\end{corollary}

\begin{proof}
This follows from Lemmas~\ref{Lem:CannotTurnLeftForever} and~\ref{Lem:AlternatingPathsShrink}.
\end{proof}

Suppose that $S$ is an upper branch line. 
Let $(s_n)_{n \in \ZZ}$ be the track-cusps of $S$, where $s_{n+1}$ is the track-cusp immediately above $s_n$. 
Let $e_n$ be the edge meeting $s_n$. 
We co-orient $e_n$ away from $s_n$. 
Note that  $\Delta(e_{n+1})$ is contained in, and shares exactly one endpoint with, $\Delta(e_n)$.  
See \reffig{UpperGluingAutomaton} for examples.  

\begin{lemma}
\label{Lem:BranchLinesShrink}
Let $(e_n)$ be the edges crossed by the branch line $S$.  Then the nested intersection $\bigcap \Delta(e_n)$ is empty. 
\end{lemma}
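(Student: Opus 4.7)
The claim is that for every cusp $d \in \Delta_M$, there is some $n$ with $d \notin \Delta(e_n)$.  I would split into two cases depending on whether $d$ equals the cusp $c$ on whose neighbourhood $N^c$ the branch line $S$ lies.

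\emph{Case 1 ($d = c$).}  By \reflem{BranchLines}\refitm{NeighbourhoodLayer}, each track-cusp $s_n$ lies in a face $f_n$ adjacent to $c$, and points away from $c$; thus $c$ is the vertex of $f_n$ opposite $e_n$.  Since the co-orientation of $e_n$ points away from $s_n \subset f_n$, it also points away from $c$, so $c \notin \Delta(e_n)$ for any $n$.

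\emph{Case 2 ($d \neq c$).}  The plan is to reduce to \reflem{RoutesShrink}.  Using \refcor{VeerImpliesLayered}, I would pick a layering $\calK = \{K_m\}$ of $\cover{M}$ in which the in-fills occur along the branch line in order: $K_{m+1}$ is obtained from $K_m$ by in-filling at the edge where the branch line's current track-cusp has its apex.  This is possible because each $e_n$, being the lower $\pi$-edge of the tetrahedron $t_n$, is a sink in any layer containing both lower faces of $t_n$, and we may schedule these sinks as the leading in-fills stage by stage.  Fix a base layer $K_0$ containing $f_0$.  In $\tau^{K_0}$ I would construct a train ray $\rho$ starting at $s_0$ whose continuation at each switch is determined by following $S$ upward through the layering (using the branch line to resolve the ambiguity at large switches).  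Applying \reflem{RoutesShrink} to $\rho$, the arcs $\Delta(e'_m)$ of the edges $\{e'_m\}$ of $K_0$ crossed by $\rho$ have empty nested intersection, so $d \notin \Delta(e'_m)$ for some $m$.

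The remaining step is to correlate the arcs $\Delta(e'_m)$ in $K_0$ with the arcs $\Delta(e_n)$ coming from the branch line.  The claim is that $\Delta(e_n) \subseteq \Delta(e'_{m(n)})$ for some function $m(n) \to \infty$, reflecting the fact that successive in-fills refine the Farey-like structure of each layer only locally at the flipped edge: the arcs of unflipped edges persist, while the higher-layer edges $e_n$ nest into progressively smaller sub-arcs traced out by $\rho$ in $K_0$.  I expect this nested containment to be the main obstacle, requiring careful bookkeeping of how the splitting of the train track in $K_0$ matches the ascent of $S$ through the branched surface $\cover{\calB}^\alpha$.
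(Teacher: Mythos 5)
Your Case~1 is fine, and your instinct to compare the branch line with a train ray in a fixed layer is reasonable (it is essentially the \emph{cusp train ray} construction that appears later, in \reflem{ConnectingLines}).  However, there are two problems with Case~2.  First, the layering you describe does not exist in general: the edge $e_n$ that $s_n$ points at is a sink of $\tau^{K_n}$ only if \emph{both} lower faces of $t_n$ already lie in $K_n$, and arranging this requires first performing all the forced in-fills obtained by following rivers downstream from $f_n$ -- this is exactly the channelisation process, and the fact that it terminates so that $t_n$ can eventually be layered on is the content of \reflem{RiversSimplify}.  This is repairable (take any layering and use \reflem{BranchLines}\refitm{BranchLinesLayer}), but it signals that the combinatorics you are waving at is where the work lives.

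Second, and fatally, the containment $\Delta(e_n) \subseteq \Delta(e'_{m(n)})$ with $m(n) \to \infty$ that you defer as ``the main obstacle'' \emph{is} the lemma: since $\cap_m A(e'_m)$ is the single non-cusp point $\bdy\rho$, which lies in the interior of every $A(e'_m)$, your containment is equivalent to the assertion that the nested arcs $A(e_n)$ shrink to that point, i.e.\ to the conclusion you are trying to prove.  Nothing in your proposal rules out the bad scenario in which, from some stage on, $\Delta(e_{n+1})$ always shares the \emph{same} endpoint with $\Delta(e_n)$, so that the arcs stabilise on one side and trap a cusp.  The paper's proof is built precisely to exclude this: it works with convex continents and the rivers sourced at the faces $f_n$, shows the arcs $\Delta(e'_n)$ of the river mouths strictly nest, shows that the endpoint shared by $\Delta(e_n)$ and $\Delta(e'_n)$ lies on the left or right riverbank according to whether $e_n$ is blue or red, and then invokes \refcor{BranchLinesToggle} to see that both colours occur infinitely often along a branch line.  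Your argument never uses the colours or \refcor{BranchLinesToggle} at all, so it cannot distinguish a veering branch line from an arbitrary nested sequence of arcs, for which the statement is false.  (A further caution: if you try to shortcut the bookkeeping by identifying $\bdy\rho$ with $\bdy S$, note that this identification in \reflem{ConnectingLines} passes through \refcor{Irrational}, which depends on the present lemma.)
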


The proof of \reflem{BranchLinesShrink} is more difficult than that of \refcor{RoutesShrink}.  This is because a branch line, unlike a train ray, is never contained in a single layer of a layering, by \reflem{BranchLines}\refitm{BranchLinesLayer}.

\begin{proof}[Proof of \reflem{BranchLinesShrink}]
Fix a cusp $c$ of $\cover{\calV}$.  
By \refprop{VeerImpliesExhaust} there is a continent $C$ which meets both $S$ and $c$.  
By \reflem{FiniteInFill} we can assume that $C$ is convex.  
We define a sequence of convex continents $(C_n)$ as follows. 

Let $C_0 = C$.
Let $L_n$ be the upper landscape of $C_n$, let $s_n$ be the track-cusp of $L_n$ meeting $S$, and let $f_n \subset L_n$ be the face containing $s_n$.
Note that the edge of $f_n$ meeting $s_n$ is one of the edges associated to $S$.
Choosing indices correctly, we arrange that $e_n$ (the edge crossed by $S$, defined above) lies in $f_n$.
Note that $e_n$ is co-oriented away from $f_n$.

Let $\tau^n$ be the upper track for $L_n$ and let $R_n \subset L_n$ be the maximal river with $f_n$ as its source.
Since $C_n$ is convex, the mouth, $e'_n$ say, of $R_n$ is coastal.
We co-orient $e'_n$ away from $C_n$.
Let $c'_n$ and $d'_n$ be the cusps at the endpoints of $e'_n$ meeting the left and right riverbanks of $R_n$, respectively.
To obtain $C_{n+1}$ we channelise repeatedly until the face $f_n$ is covered;
by \reflem{RiversSimplify} this process terminates.

The proof of the following claim is omitted;
it is similar to the analysis of forked rivers in \reflem{RiversSimplify}.
See in particular \reffig{River}(A) and (B).  

\begin{claim*}
For all $n > 0$, the interval $\Delta(e_{n})$ shares an endpoint with $\Delta(e'_{n})$ which is not a cusp of $C_{n-1}$.
This new shared endpoint is either $c'_{n}$ or $d'_{n}$ as the edge $e_{n}$ is blue or red. \qed
\end{claim*}

By \refcor{BranchLinesToggle}, both colours appear amongst the edges $(e_n)$. 
Thus, for sufficiently large $m$, neither $c'_0$ nor $d'_0$ lies in the interval $\Delta(e_{m})$.
We deduce that no cusp of $C_0$ lies in $\Delta(e_{m})$.
In particular, $c$ does not lie in $\bigcap \Delta(e_n)$.  
\end{proof} 

\subsection{Completing the cusps}
\label{Sec:BuildingVeeringCircle}

Suppose that $\Delta$ is a countable set and $\calO$ is a dense circular order in the sense of \refdef{Dense}.  
The \emph{order completion} of the pair $(\Delta, \calO)$ is homeomorphic to $S^1$.
See~\cite[Proposition 2.1.7]{Thurston98} or~\cite[Theorem~2.47]{Calegari07}.  

\begin{definition}
\label{Def:VeeringCircle}
Suppose that $M$ is a three-manifold equipped with a transverse veering triangulation $\calV$. 
Let $\Delta_\calV$ be the set of cusps of $\cover{\calV}$.
Let $\calO_\calV$ be the dense circular order given by \refthm{VeerImpliesUnique}.  
Thus the order completion of $(\Delta_\calV, \calO_\calV)$ is a circle $\Circle$; 
we call this the \emph{veering circle}.  
\end{definition}

\begin{definition}
\label{Def:CircleArc}
For $x, y \in \Circle$ we define $[x, y]^{\acw}$ to be the arc of $\Circle$ anticlockwise of $x$ and clockwise of $y$.  
\end{definition}

Note that if $x$ and $y$ are cusps then this arc is the closure of $[x, y]_\Delta^{\acw}$.  
Similarly, we define $A(e) \subset \Circle$ to be the closure of $\Delta(e)$.  
See \reffig{CoorientedEdge}.

We have the following consequence of \refcor{RoutesShrink} and \reflem{BranchLinesShrink}.

\begin{corollary}
\label{Cor:Irrational}
Suppose that $(e_n)$ is the sequence of edges of $\cover{\calV}$ crossed by either 
\begin{itemize}
\item
a train ray $\rho$ (in some layer) or 
\item
a branch line $S$.  
\end{itemize}
Then the intersection $\bigcap A(e_n)$ is a single non-cusp point of $\Circle$. \qed
\end{corollary}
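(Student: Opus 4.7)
The plan is to combine the two preceding lemmas with a short nested-compactness and density argument. First, I would record that the arcs $A(e_n)$ form a nested sequence of closed arcs in $S^1(\alpha)$: for a train ray in a layer $K$, the preceding discussion (just before \reflem{RoutesShrink}) already notes that $\Delta(e_{n+1}) \subset \Delta(e_n)$ and shares exactly one endpoint; taking closures in $S^1(\alpha)$ preserves this nesting. The same holds in the branch line case (discussion before \reflem{BranchLinesShrink}). Since $S^1(\alpha)$ is compact, the nested intersection $\bigcap_n A(e_n)$ is a non-empty closed connected subset of $S^1(\alpha)$, hence either a point or a closed arc of positive length.

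Next, I would show that no cusp lies in $\bigcap_n A(e_n)$. By \refdef{VeeringCircle}, $S^1(\alpha)$ is the order completion of $(\Delta_M, \calO_\alpha)$; equivalently $A(e_n)$ is the closure in $S^1(\alpha)$ of $\Delta(e_n)$. Now if $d \in \Delta_M$ is a cusp, then by the definition of $\Delta(e_n) = [c', c]_\Delta^\acw$ and the continuity of $\calO_\alpha$, membership $d \in A(e_n)$ is equivalent to $\calO_\alpha(c', d, c) \geq 0$, which is exactly membership $d \in \Delta(e_n)$. Thus, if a cusp lay in every $A(e_n)$, it would lie in every $\Delta(e_n)$, contradicting \reflem{RoutesShrink} in the train ray case or \reflem{BranchLinesShrink} in the branch line case.

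Finally, suppose for contradiction that $\bigcap_n A(e_n)$ contained more than one point. Then it would contain a non-degenerate open sub-arc of $S^1(\alpha)$. Because $\calO_\alpha$ is dense (\refthm{VeerImpliesUnique}) and $S^1(\alpha)$ is its order completion, the cusps $\Delta_M$ are dense in $S^1(\alpha)$, so this sub-arc would contain cusps, contradicting the previous paragraph. Hence $\bigcap_n A(e_n)$ is a single point, which by the same paragraph is not a cusp.

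I do not expect any serious obstacle: the whole argument is a two-line nested-intersection argument glued to the two cited lemmas. The one subtlety to double-check is the equivalence $d \in A(e_n) \Longleftrightarrow d \in \Delta(e_n)$ for cusps $d$, which uses only that $A(e_n)$ is the closure of $\Delta(e_n)$ in the order completion, together with the fact that the cusp endpoints $c, c'$ of $e_n$ themselves lie in $\Delta(e_n)$ by the $\geq 0$ convention in \refdef{Arcs}.
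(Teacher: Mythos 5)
Your proof is correct and takes exactly the route the paper intends: the corollary is stated with no written proof precisely because it is the nested-compactness-plus-density argument you give, applied to \reflem{RoutesShrink} and \reflem{BranchLinesShrink}. Your flagged subtlety (that a cusp lies in $A(e_n)$ if and only if it lies in $\Delta(e_n)$) is handled correctly by the density of $\calO_\alpha$, which separates any cusp outside $\Delta(e_n)$ from $\Delta(e_n)$ by other cusps, so there is no gap.
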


We call this point the \emph{endpoint at infinity} and it is denoted by $\bdy \rho$ or $\bdy S$, respectively.  
An oriented train line $\sigma$ (in some layer) has two endpoints at infinity, denoted $\bdy_\pm \sigma$.

\begin{remark}
\label{Rem:NeighbourhoodBasis}
Suppose that $K$ is a layer of a layering of $\cover{\calV}$.  
Applying \reflem{Disk}, the triangulation of $K$ is a copy of the Farey tessellation.  
Fix $x \in \Circle$.  
We now find a closed neighbourhood basis, about $x$, in $S^1(\calV)$. 
\begin{itemize}
\item
Suppose that $x$ is not a cusp.  
Transversely orient all edges $e$, of $K$, towards $x$.  
By \reflem{AlternatingPathsShrink}, the resulting arcs $\{ A(e) \}$ give the desired basis.
\item
Suppose that $x$ is a cusp.  
Suppose that $e$ and $e'$ are edges of $K$ meeting $x$ that are co-oriented away from each other.  
Now the unions $\{ A(e) \cup A(e') \}$ give the desired basis.
\qedhere
\end{itemize}
\end{remark}

\begin{proof}[Proof of \refthm{VeeringCircle}\refitm{Acts}]
Fix $\gamma \in \pi_1(M)$.  
Suppose that $a, b \in \Delta_\calV$ are distinct cusps.  
Since $\gamma$ preserves the circular order on $\Delta_\calV$ (by \refthm{VeerImpliesUnique}) the image of $[a, b]^{\acw} \cap \Delta_\calV$ is $[\gamma a, \gamma b]^{\acw} \cap \Delta_\calV$.  
This implies that $\gamma$ is well-defined on the non-cusp points of $\Circle$. 
and thus sends arcs to arcs.  Thus $\gamma$ is continuous 
and orientation preserving, and so is $\gamma^{-1}$.  
Thus $\gamma$ is a homeomorphism.  
Finally, the action is faithful because $\Circle$ contains $\Delta_\calV$.
\end{proof}

\subsection{Parabolics}
\label{Sec:CrossingParabolics}

In this section, we assume that $\calV$ is finite.
For the proof of \refthm{VeeringCircle}\refitm{Dense}, we must analyse the action of cusp stabilisers in $\pi_1(M)$. 
Suppose that $\check{c}$ is a cusp of $\calT$.
Let $\check{N} \subset M$ be the upper cusp neighbourhood of $\check{c}$.
Fix a basepoint $\check{p} \in \check{N}$.
We lift to obtain a cusp $c \in \Delta_\calV$ and a basepoint $p \in N = N^c$.
This gives an isomorphism between $\pi_1(M)$ and the deck group of $\cover{M}$, and thus an isomorphism between the $\Stab(c)$ and $\pi_1(\check{N}) \isom \ZZ^2$.
We suppress this from our notation.

\begin{definition}
\label{Def:BranchSlope}
Let $\beta \in \Stab(c)$ be the based homotopy class whose free homotopy class contains any (thus all) oriented branch loops $\check{S}$ in $\bdy \check{N}$.
In an abuse of notation we again call $\beta$ the \emph{branch slope}.
\end{definition}

By \reflem{BranchLines}\refitm{Cyclic} we may index the branch lines of $N = N^c$, say $(S_i)_{i \in \ZZ}$, so that $S_i$ and $S_{i+1}$ are adjacent on $\bdy N$.
We choose the indexing so that the $S_i$ march anticlockwise around $N$ as viewed from above.
We have the following. 

\begin{lemma}
\label{Lem:CrossingParabolics}
\mbox{}
\begin{enumerate}
\item
\label{Itm:TipsOfCrown}
$\Circle - \{ c \} = \bigcup_i \, [\bdy S_i, \bdy S_{i+1}]^{\acw}$  
\item
\label{Itm:NonBranchSlope}
Suppose that $\gamma \in \Stab(c) - \subgp{\beta}$, where $\beta$ is the branch slope.
Then there is a non-zero integer $k$ so that, for all $i$, we have $\gamma(\bdy S_i) = \bdy S_{i + k}$.
Also, if $x$ is any point of $\Circle$ then $\gamma^n(x)$ converges to $c$ as $n$ tends to infinity.
\end{enumerate}
\end{lemma}

\begin{figure}[htbp]
\labellist
\small\hair 2pt
\pinlabel {$c$} [t] at 205 0 
\pinlabel {$\bdy S_{-2}$} [tl] at 315 34
\pinlabel {$\bdy S_{-1}$} [l] at 405 162
\pinlabel {$s_{-1}$} [r] at 312 150
\pinlabel {$\bdy S_0$} [br] at 130 395
\pinlabel {$s_0$} [t] at 178 218  
\pinlabel {$\bdy S_1$} [r] at 5 172
\pinlabel {$s_1$} [l] at 98 150
\pinlabel {$\bdy S_2$} [tr] at 94 37
\pinlabel {$\bdy S_3$} [t] at 115 19
\endlabellist
\includegraphics[width = 0.6\textwidth]{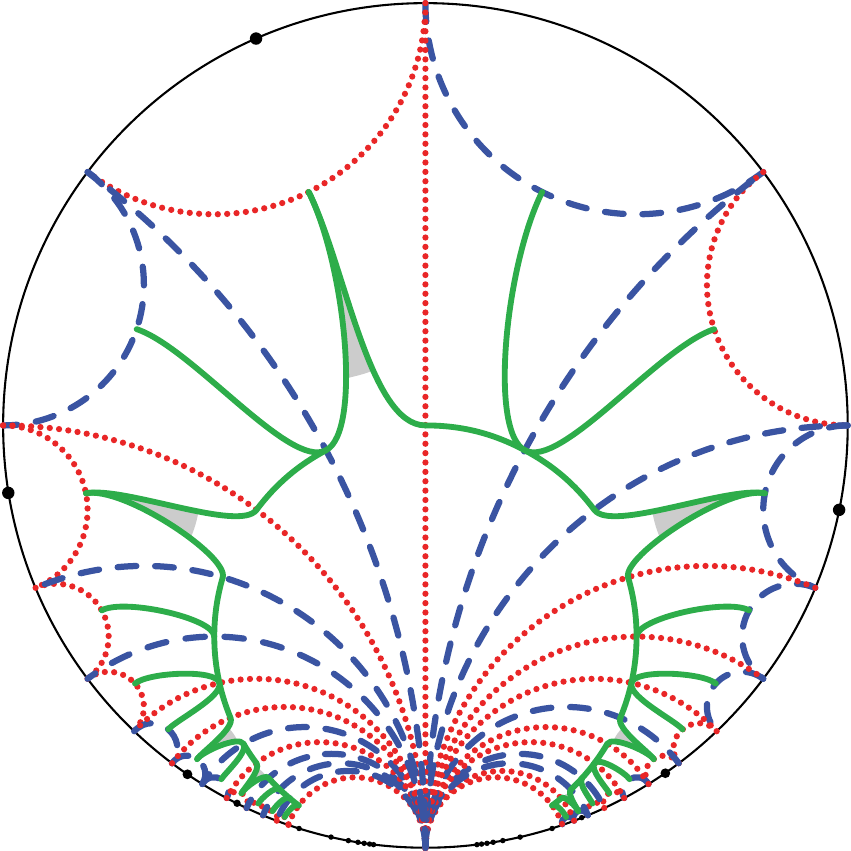}
\caption{The points $(\bdy S_i)$ converge to $c$ as $i$ tends to infinity. Compare with \reffig{NeighbourhoodLayer}.}
\label{Fig:TipsOfCrown}
\end{figure}

\begin{proof}
Suppose that $K$ is a layer of a layering of $\cover{\calV}$.
By \reflem{BranchLines}\refitm{BranchLinesLayer} there is a unique track-cusp $s_i$ of $S_i$ contained in $K$. 
By \reflem{BranchLines}\refitm{NeighbourhoodLayer}, the track-cusp $s_i$ meets an edge $e_i \subset K$ which we co-orient away from $c$. 
See \reffig{NeighbourhoodLayer}.
By \refcor{Irrational} we deduce that $\bdy S_i \in A(e_i)$.
From \refrem{NeighbourhoodBasis} and \reflem{BranchLines}\refitm{BranchStrip} we deduce that the points $\bdy S_i$ tend to $c$ as $i$ tends to infinity.
See \reffig{TipsOfCrown}.
This proves \refitm{TipsOfCrown}. 

Since $\gamma$ is not a power of $\beta$, it crosses $\beta$. 
Suppose that there are $n$ branch loops in $\bdy \check{N}$. 
Let $m$ be the algebraic intersection number between $\gamma$ and $\beta$. 
Set $k = mn$. 
We now deduce \refitm{NonBranchSlope} from \refitm{TipsOfCrown}.
\end{proof}

\begin{proof}[Proof of \refthm{VeeringCircle}\refitm{Dense}]
Fix any point $x \in \Circle$.  
Also, fix a cusp $c \in \Delta_\calV$.  
Suppose that $\gamma \in \Stab(c)$ is not a power of the branch slope.  
By \reflem{CrossingParabolics}\refitm{NonBranchSlope} the sequence $\gamma^n(x)$  converges to $c$.  
Thus the orbit of $x$ accumulates on $\Delta_\calV$; 
the latter is dense in $\Circle$ by construction.  
\end{proof}

\chapter{Laminations}
\label{Cha:LaminationsAlone}

In this section we build the \emph{upper lamination} $\Lambda^\calV$ in $\Circle$.  
The \emph{lower lamination} $\Lambda_\calV$ is constructed similarly. 

\begin{theorem}
\label{Thm:Laminations}
Suppose that $M$ is a three-manifold equipped with a transverse veering triangulation $\calV$. 
Then there is a lamination $\Lambda^\calV$ in $\Circle$ with the following properties.
\begin{enumerate}
\item 
\label{Itm:LaminationsInS1}
The lamination  $\Lambda^\calV$ is $\pi_1(M)$--invariant.
\item 
\label{Itm:LaminationsInM}
The lamination $\Lambda^\calV$ suspends to give a $\pi_1(M)$--invariant lamination $\cover{\Sigma}^\calV$ in $\cover{M}$; this descends to $M$ to give a lamination $\Sigma^\calV$ which 
\begin{enumerate}
\item
\label{Itm:Carried}
is fully carried by the upper branched surface $B^\calV$,
\item
\label{Itm:Types}
has only plane, annulus, and M\"obius band leaves, and
\item
\label{Itm:Essential}
is essential.
\end{enumerate}
\item 
\label{Itm:LaminationsUnique}
Suppose that $\Sigma$ is a lamination fully carried by $B^\calV$.
Then, after collapsing parallel leaves, $\Sigma$ is tie-isotopic to $\Sigma^\calV$.
\end{enumerate}
There is also a lamination $\Lambda_\calV$ with the corresponding properties with respect to $B_\calV$. 
\end{theorem}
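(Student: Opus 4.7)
My plan is to construct $\Lambda^\alpha$ as a closed collection of chords in the closed disk $\overline{D^2}$ bounded by $S^1(\alpha)$, then suspend these to the three-dimensional lamination $\cover{\Sigma}^\alpha$, and finally verify the leaf-type, essentiality, and uniqueness claims. The \emph{basic leaves} of $\Lambda^\alpha$ are chords $\ell(S)$, one for each upper branch line $S$: by \refcor{Irrational}, $S$ determines a non-cusp endpoint $\bdy S \in S^1(\alpha)$, and together with the cusp $c(S)$ on whose neighbourhood boundary $S$ lies this gives a chord in $\overline{D^2}$. I would define $\Lambda^\alpha$ as the Hausdorff closure of the collection $\{\ell(S)\}$. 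To show this is a lamination I need pairwise disjointness of basic chords in the open disk: given two branch lines $S, T$, \refprop{VeerImpliesExhaust} and \reflem{FiniteInFill} produce a convex continent containing both, and a river-style argument along the lines of \reflem{RiversSimplify} shows that their downstream edge-arcs $A(e_n)$ do not link on $S^1(\alpha)$. Taking closures preserves disjointness, and \refitm{LaminationsInS1} follows since $\pi_1(M)$ permutes branch lines and acts continuously and order-preservingly on $S^1(\alpha)$ by \refthm{VeeringCircle}\refitm{Acts}.

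I would then build $\cover{\Sigma}^\alpha$ by suspending each chord $\ell \in \Lambda^\alpha$ to a leaf $\lift{\sigma}_\ell$ carried by $\cover{\calB}^\alpha$. For basic chords $\ell(S)$ the leaf is produced by splitting $\cover{\calB}^\alpha$ repeatedly along the branch line $S$ and taking the limiting carried surface; for limit chords it arises as the Hausdorff limit of sectors whose dual edge sequences realise the nested arcs $A(e_n)$ shrinking to the chord's endpoints. Carrying by $\calB^\alpha$ is then immediate, giving \refitm{Carried}, and $\pi_1(M)$--invariance from the first step allows descent to $\Sigma^\alpha \subset M$. For the leaf-type classification \refitm{Types}, each leaf stabilizer in $\pi_1(M)$ lies inside the simultaneous stabilizer in $\pi_1(M)$ of the chord's two endpoints on $S^1(\alpha)$; using \reflem{Parabolics} and \refthm{VeeringCircle}\refitm{Dense}, such a stabilizer is trivial (giving a plane leaf), infinite cyclic acting by translation along the leaf (annulus), or an index-two extension thereof (Möbius band).

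For essentiality \refitm{Essential}, I would rule out disk leaves (from the leaf-type classification), show leaves are incompressible (they lift to planes, hence are $\pi_1$--injective), and rule out complementary monogons: a monogon would produce a closed-up train ray in some layer, contradicting \refcor{CannotTurnLeftForever} together with the shrinking arcs of \reflem{RoutesShrink}. For uniqueness \refitm{LaminationsUnique}, take any $\Sigma$ carried by $\calB^\alpha$ and lift to $\cover{\Sigma}$. Each leaf of $\cover{\Sigma}$ has two well-defined endpoints on $S^1(\alpha)$ via \refcor{Irrational} applied along rays in the leaf's train-track structure, yielding a closed, $\pi_1(M)$--invariant, disjoint chord family $\Lambda$. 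Using density of $\{\bdy S_i\}$ among non-cusp points (from \reflem{Parabolics}\refitm{TipsOfCrown}) together with density of $\pi_1(M)$--orbits from \refthm{VeeringCircle}\refitm{Dense}, every chord of $\Lambda$ is a limit of basic chords, forcing $\Lambda = \Lambda^\alpha$. Collapsing parallel leaves of $\Sigma$ and applying the standard tie-isotopy argument for laminations carried by branched surfaces then finishes the proof.

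The hard part will be \refitm{Essential}. Ruling out monogons requires interleaving the veering colour-alternation of \refcor{BranchLinesToggle}, the continental exhaustion of \refprop{VeerImpliesExhaust}, and the combinatorial structure of sectors described in \refrem{Sector}: a hypothetical monogonal disk must traverse a finite train interval that ``turns back on itself'', but the veering hypothesis forces red and blue edges to appear periodically along branch lines and rivers in a way that should prevent such closure at any finite scale. A secondary obstacle is verifying that Hausdorff limits of basic chords remain pairwise disjoint and that the resulting leaves in $\cover M$ are genuinely two-dimensional submanifolds rather than degenerate objects.
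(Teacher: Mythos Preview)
Your construction of the basic leaves is wrong, and this error propagates through the entire argument. You take $\ell(S)$ to be the chord joining the cusp $c(S)$ to the non-cusp point $\bdy S$. In the paper's terminology these are the \emph{cusp leaves} $\lambda(c,S)$; they are auxiliary objects used in the argument but are \emph{not} leaves of $\Lambda^\alpha$. The actual basic leaves are the \emph{boundary leaves} $\lambda(S,T) = \lambda(\bdy S,\bdy T)$ for $S,T$ \emph{adjacent} upper branch lines in the same cusp neighbourhood $N^c$; both endpoints are non-cusp points (see \refdef{UpperLamination} and \reflem{Laminations}\refitm{Irrational}). This matters decisively for part~\refitm{LaminationsInM}: the complementary regions of $\calB^\alpha$ are exactly the cusp neighbourhoods $N^c$ (\reflem{BranchLines}\refitm{CuspNeighbourhood}), so any surface carried by $\calB^\alpha$ is disjoint from the cusps. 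Hence a lifted leaf of $\cover{\Sigma}^\alpha$ cannot limit on a cusp of $\cover{M}$, and a chord with a cusp endpoint cannot arise from suspending anything carried by $\calB^\alpha$. Your proposed $\Lambda^\alpha$ therefore cannot correspond to a carried lamination, and your ``splitting along $S$'' description of the suspended leaf does not produce a carried surface either.

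Even after correcting the basic leaves to $\lambda(S,T)$, several steps remain underspecified or use a different strategy than what actually works. For \refitm{Types}, the stabiliser argument you sketch does not by itself control the topology of a leaf: knowing the stabiliser of two boundary points is trivial or cyclic tells you about the deck action, but you still need an Euler-characteristic argument on the sector decomposition of a leaf (\refrem{Sector} and \reffig{SectorArcs}) to rule out higher-genus pieces. For \refitm{Essential} the paper does not argue via monogons at all; it shows directly that complementary regions are pared torus shells. For \refitm{LaminationsUnique}, your last step has a logical gap: showing that every chord of $\Lambda$ is a limit of basic chords gives only $\Lambda \subset \Lambda^\alpha$, not equality, and you have not argued the reverse inclusion. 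The paper instead proves a transitivity lemma for ascending paths in $\calB^\alpha$ (\reflem{Transitive}) and a squeezing lemma (\reflem{Squeeze}) that together match leaves of any carried $\Sigma$ with leaves of $\Sigma^\alpha$.
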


\noindent
We prove this in parts, giving the necessary definitions as we proceed.

\subsection{Laminations in the circle}
\label{Sec:Mobius}

Following Thurston~\cite[page~187]{Thurston78}, we define $\calM$ to be the \emph{M\"obius band past infinity}: 
that is, $S^1 \cross S^1$, minus the diagonal, and quotiented by the symmetry that interchanges the two factors.  
A \emph{leaf at infinity} $\lambda = \lambda(a, b)$ is the point of $\calM$ with \emph{endpoints} $a$ and $b$.  

Suppose that $\mu = \lambda(c, d)$ is another leaf.  
We say that $\lambda$ and $\mu$ are \emph{asymptotic} if they share an endpoint.  
We say that $\lambda$ and $\mu$ are \emph{linked} if 
\begin{itemize}
\item
the points $a$, $b$, $c$, and $d$ are all distinct and 
\item
the points $c$ and $d$ lie in different components of $S^1 - \{a, b\}$.  
\end{itemize}
We say that $\lambda$ and $\mu$ are \emph{unlinked} if they are not linked.

\begin{definition}
\label{Def:LaminationInS1}
A \emph{lamination at infinity} $\Lambda$ in $S^1$ is a closed subset of $\calM$ 
where no pair of leaves $\lambda, \mu \in \Lambda$ are linked.
\end{definition}

\noindent 
Recall that $S^1$ is the Gromov boundary of the hyperbolic plane $\HH^2$ and that $\calM$ parametrises the space of unoriented bi-infinite geodesics in $\HH^2$. 
This justifies the terminology of \refdef{LaminationInS1}.

\subsection{From cusps to train lines}
\label{Sec:ConnectingArc}

Fix $c$ a cusp, set $N = N^c$, and let $S$ be an upper branch line of $N$.  
Suppose that $s$ is a track-cusp of $S$.
Suppose that $f$ is the face of $\cover{\calV}$ containing $s$.  
Note that $c$ is a vertex of $f$.  
We define a \emph{connecting arc} $\ell^f(c, s) \subset f$ to be 
\begin{itemize}
\item 
a smooth arc in $f$, 
\item 
connecting $c$ to the switch in $s$, and 
\item 
meeting $\tau^f$ only at that switch.  
\end{itemize}
We take $\ell^f(s,c) = \ell^f(c,s)$.  
If $K$ is a layer containing $f$, then we set $\ell^K(c,s) = \ell^f(c,s)$.  
See \reffig{ConnectingArc}.

\begin{figure}[htbp]
\labellist
\small\hair 2pt
\pinlabel {$c$} at -5 0 
\pinlabel {$s$} at 73 37
\endlabellist
\includegraphics[height = 3.5 cm]{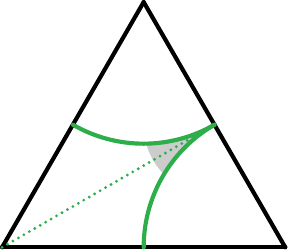}
\caption{The connecting arc (dotted) from the cusp $c$ to the track-cusp~$s$.}
\label{Fig:ConnectingArc}
\end{figure}

\begin{definition}
\label{Def:CuspLine}
Suppose that $K$ is a layer of a layering.  
\reflem{BranchLines}\refitm{BranchLinesLayer} implies that the upper branch line $S$ meets $K$ in a track-cusp, say $s$.  
We define $\ell^K(s, S) \subset K$, the \emph{cusp train ray} from $s$ to $\bdy S$, to be a train ray carried by $\tau^K$ starting at $s$ and with endpoint at infinity $\bdy S$.  

We define $\ell^K(c, S) \subset K$, the \emph{cusp line} from $c$ to $\bdy S$, to be the union $\ell^K(c, S) = \ell^K(c, s) \cup \ell^K(s, S)$.
\end{definition}

\begin{lemma}
\label{Lem:CuspLines}
Suppose that $c \in \Delta_\calV$ is a cusp. 
Suppose that $K$ is a layer of a layering $\calK$.  
Suppose that $S$ is an upper branch line in the boundary of $N^c$.
Then a cusp line $\ell^K(c, S)$ exists.  
It is properly embedded in $K$ and is uniquely determined (up to isotopy of the connecting arc) by $c$ and $S$. 
\end{lemma}

\begin{proof}
Choose indices in $\calK = (K_i)$ so that $K_0 = K$.
We define $\tau^i$ to be the upper track for $K_i$.
Also, applying \reflem{BranchLines}\refitm{BranchLinesLayer}, 
we choose the indexing of the track-cusps of $S$ so that $s_i$ is the track-cusp of $S$ in $K_i$.
Note that here we allow $s_{i+1} = s_i$.

Let $\Phi^i \from \tau^{i+1} \to \tau^i$ be the \emph{carrying map}; 
it is the identity everywhere except at one watershed which is folded by $\Phi^i$ to produce a sink.
See \reffig{UpperBranchedSurface}.
Note that $\Phi^i$ sends switches to switches.
We extend $\Phi^i$ to obtain a map from $K_{i+1}$ to $K_i$.
It is a homeomorphism on all components of $K_{i+1} - \tau^{i+1}$ except for two.
In those two it crushes two track-cusps which are then carried by $\tau^i$.
In particular, if $s_{i+1} \neq s_i$ then $\Phi^i(s_{i+1}) \neq s_i$.
Again, see \reffig{UpperBranchedSurface}.

Fix $i < j$ and define $\Phi^{i,j} \from \tau^j \to \tau^i$ by composing carrying maps.
When $i = j$ we take $\Phi^{i,i}$ to be the identity map. 

For each $i$, fix a connecting arc $\ell^i = \ell^i(c, s_i) \subset K_i$.
We arrange matters so that $\Phi^{i, i+1}(\ell^{i+1})$ contains $\ell^i$.
Set $\ell^{i,j} = \Phi^{i,j}(\ell^j)$.
Thus, for any $i \leq j \leq k$ we have $\ell^{i,j} \subset \ell^{i,k}$.
Furthermore, the containment is proper whenever $s_k \neq s_j$.

Set
\[
\ell^i(c, S) = \bigcup_{i \leq j} \ell^{i,j} \quad \text{and} \quad \ell^i(s_i,S) = \ell^i(c, S) - \ell^i(c, s_i)
\]

\begin{claim*}
For all $i$, the union $\ell^i(c, S)$ is a cusp line. 
Moreover, $\ell^i(s_i,S)$ is a cusp train ray.
\end{claim*}

\noindent
This, and setting $\ell^K(c, S) = \ell^0(c, S)$, proves the lemma. 

We now prove the claim. 
Since $\ell^i(s_i, S)$ is carried by $\tau^i$ it is a one-manifold.
The non-compact end of $\ell^i(s_i, S)$ is properly embedded ray in $K_i$; 
this is because it is an ascending union that does not stabilise and because $\tau^i$ is properly embedded in $K_i$. 
Since $\ell^i(c, S)$ contains $\ell^i(c, s_i)$ the former is a properly embedded line in $K_i$. 

All that is left is to show that the endpoint at infinity for $\ell^i(s_i, S)$ is $\bdy S$. 
Note that $\Phi^{i,j}(\ell^j(c, S)) = \ell^i(c, S)$.
Thus $\ell^i(c, S)$ and $\ell^j(c, S)$ are identical in $\cover{\calV}$ 
away from at most $j - i$ tetrahedra (those between $K_j$ and $K_i$).
We deduce that $\bdy \ell^i(c, S) = \bdy \ell^j(c, S)$ for all $i$ and $j$.

Take $e_j \subset K_j$ to be the edge pointed at by the track-cusp $s_j$. 
Recall that by \refcor{Irrational}, the unique point of $\bigcap_{i \leq j} A(e_j)$ is $\bdy S$. 
Since the cusp train ray $\ell^j(s_j, S)$ crosses $e_j$, we deduce that $\bdy \ell^j(s_j, S)$ lies in $A(e_j)$.  
We deduce that $\bdy \ell^i(s_i, S) = \bdy S$.  
This proves the claim and thus the lemma.
\end{proof}

With notation as above, we define $\lambda(c, \bdy S)$ to be an \emph{(upper) cusp leaf} associated to $S$.  
We will abuse notation and write $\lambda(c, S)$ for $\lambda(c, \bdy S)$.


\begin{lemma}
\label{Lem:Unlinked}
Any pair of upper cusp leaves $\lambda(c, S)$ and $\lambda(d, T)$ are unlinked.  
\end{lemma}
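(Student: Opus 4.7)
The plan is to prove \reflem{Unlinked} by a planar-topology argument in a common layer of $\cover{M}$. First dispose of the trivial cases: if $c = d$ or $\bdy S = \bdy T$, then the two leaves share an endpoint, so they are asymptotic and hence unlinked by definition. Henceforth assume $c \neq d$ and $\bdy S \neq \bdy T$; by \refcor{Irrational} the latter forces $S \neq T$.

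Fix a layering $\calK$ of $\cover{M}$, which exists by \refcor{VeerImpliesLayered}, and choose a layer $K \in \calK$. By \reflem{ConnectingLines}, I obtain cusp lines
\[
\ell^K(c, S) = \ell^K(c, s) \cup \ell^K(s, S), \qquad \ell^K(d, T) = \ell^K(d, t) \cup \ell^K(t, T),
\]
where $s, t$ are the track-cusps of $S, T$ in $K$. Each cusp line is a properly embedded line in $K$; because the ideal vertices of $K$ coincide with $\Delta_M$, the order completion of $K$ is the closed disk with boundary circle $S^1(\alpha)$, and each cusp line extends to a closed arc in this disk with endpoints $\{c, \bdy S\}$ and $\{d, \bdy T\}$ on $S^1(\alpha)$.

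The central step is to show that these two closed arcs can be isotoped, rel endpoints, to be disjoint. For this I would first observe that $\tau^K$ is a tree: each face of $K$ contributes two branches meeting at a single switch, the complementary components of $\tau^K$ in the open disk $K$ are in bijection with the cusps of $\cover{M}$ (one cusp per component), and no complementary component is compact, so $\tau^K$ has no cycles. Each track-cusp lies on a unique branch line, so $S \neq T$ gives $s \neq t$, and therefore the connecting arcs $\ell^K(c, s)$ and $\ell^K(d, t)$ sit in distinct complementary components of $\tau^K$ and are disjoint. The train rays $\ell^K(s, S)$ and $\ell^K(t, T)$ are rays in the tree $\tau^K$ with distinct endpoints at infinity (by \refcor{Irrational}), so their intersection is a compact subtree. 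A small isotopy supported near this compact intersection pushes the two cusp lines apart without moving their endpoints on $S^1(\alpha)$.

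Finally, by planar topology, two disjoint properly embedded arcs in a closed disk whose endpoints lie on the boundary circle have non-interleaved endpoints. Thus $\{c, \bdy S\}$ and $\{d, \bdy T\}$ are not interleaved in $S^1(\alpha)$, which is precisely the unlinking conclusion. The main technical obstacle I expect is verifying the tree property of $\tau^K$ (especially that each complementary component is simply connected and contains exactly one cusp) and handling the possibility that the train ray of one cusp line passes through the terminal switch of the other cusp line's connecting arc; at such a meeting point the connecting arc does not cross $\tau^K$ transversally, so the meeting can be removed by a small isotopy of the connecting arc.
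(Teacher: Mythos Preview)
Your proposal is correct and follows the same overall plan as the paper: fix a layer $K$, realise the two cusp leaves by their cusp lines $\ell^K(c,S)$ and $\ell^K(d,T)$, show these do not cross, and deduce unlinking from planar separation. The difference is in how non-crossing is established. The paper appeals directly to the construction in \reflem{ConnectingLines}: the connecting arcs $\ell^{K'}(c,s)$ and $\ell^{K'}(d,t)$ are disjoint in \emph{every} layer $K'$ above $K$ (they lie in distinct complementary regions of $\tau^{K'}$), and since each cusp line is built as a union of carrying-map images of such connecting arcs, the full cusp lines never cross. You instead argue entirely inside the single layer $K$, using that $\tau^K$ is a tree (established in the proof of \reflem{FiniteInFill}) so that the two train rays share at most a compact interval when $\bdy S \neq \bdy T$, and then isotope the lines apart. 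Your route is slightly more self-contained, at the cost of the isotopy bookkeeping you flag. One caution: the assertion that ``the order completion of $K$ is the closed disk with boundary $S^1(\alpha)$'' is stronger than anything the paper proves; what you actually need is that a properly embedded line in $K$ separates $\Delta_M$ into two arcs of $\calO_\alpha$, which follows from \reflem{Disk} and \refrem{NeighbourhoodBasis}. The paper's own proof leaves this final deduction equally implicit.
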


\begin{proof}
If $c = d$ then $\lambda(c, S)$ and $\lambda(c, T)$ are asymptotic and thus unlinked.

Now assume that $c \neq d$.  
Fix a layer $K$ of a layering.  
Let $\ell^K(c, s)$ and $\ell^K(d, t)$ be the associated connecting arcs.  
These are disjoint.  
This also holds for all layers above $K$; 
applying the carrying maps we deduce that $\ell^K(c,S)$ does not cross $\ell^K(d,T)$. 
\end{proof}


\begin{lemma}
\label{Lem:BranchLinesNotAsymptotic}
Suppose that $S$ and $T$ are upper branch lines.  
If $\bdy S = \bdy T$ then $S = T$.
\end{lemma}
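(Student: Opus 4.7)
The plan is to split into two cases based on whether $S$ and $T$ lie on the same upper cusp neighbourhood.

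In the first case, $S$ and $T$ both lie on $\bdy N^c$ for some cusp $c \in \Delta_M$. By \reflem{BranchLines}\refitm{Cyclic} the branch lines on $\bdy N^c$ form a $\ZZ$--indexed family $\{S_i\}$. \reflem{Parabolics}\refitm{TipsOfCrown} expresses $S^1(\alpha) \setminus \{c\}$ as the union of the closed arcs $[\bdy S_i, \bdy S_{i+1}]^\acw$, forcing the $\bdy S_i$ to be pairwise distinct. Hence $\bdy S = \bdy T$ immediately gives $S = T$.

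In the second case, $S \subset \bdy N^c$ and $T \subset \bdy N^d$ with $c \neq d$; here the plan is to derive a contradiction from the assumption $\bdy S = \bdy T = x$. First I will use \refcor{VeerImpliesLayered} to fix a layering of $\cover{M}$ and work in a single layer $K$. By \reflem{BranchLines}\refitm{BranchLinesLayer}, $S$ meets $K$ at a unique track-cusp $s$ in a face incident to $c$, and $T$ meets $K$ at a unique track-cusp $t$ in a face incident to $d$; since $c \neq d$ we have $s \neq t$. By \reflem{ConnectingLines} and \refcor{Irrational}, the cusp train rays $\ell^K(s, S)$ and $\ell^K(t, T)$ are train rays in $\tau^K$ both converging to $x$.

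The key step is to argue that two train rays in $\tau^K$ converging to a common non-cusp point of $S^1(\alpha)$ share a terminal tail. I will run the dual-tree argument of \reflem{RoutesShrink}: regarding $K$ as a copy of the Farey tessellation and train rays as paths in its infinite trivalent dual tree, and using \refcor{CannotTurnLeftForever} to ensure the requisite turning behaviour, two such tree paths converging to the same ideal point must share a cofinal subpath. I then transfer this shared tail up through the carrying maps $\Phi^{0, j} \from \tau^j \to \tau^K$ introduced in the proof of \reflem{ConnectingLines}, using the ascending-union descriptions $\ell^K(c, S) = \bigcup_j \Phi^{0, j}(\ell^j(c, s_j))$ and similarly for $T$. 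Comparing the two descriptions along the shared tail will force, for $j$ sufficiently large, that the track-cusps $s_j$ of $S$ and $t_j$ of $T$ become equal in $K_j$. But a track-cusp in any layer lies on the boundary of a unique upper cusp neighbourhood, so $s_j = t_j$ forces $c = d$, the desired contradiction.

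The main obstacle is the carrying-map transfer step: showing that the eventual coincidence of the cusp train rays in $K$ really does propagate upward to a literal equality of track-cusps in some high enough layer $K_j$. This will require careful bookkeeping with the folding behaviour of $\Phi^{0, j}$ near the switches of $\tau^j$, and is where the veering hypothesis plays its crucial role, via \reflem{EdgeNeighbourhood} and \refcor{BranchLinesToggle}, which tightly constrain how a branch line can traverse the layering.
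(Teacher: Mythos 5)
Your Case 1 (both branch lines in the same $N^c$) is fine and is a genuinely different route from the paper for that sub-case: the tips $\bdy S_i$ of a single crown are pairwise distinct because each lies in the interior of $A(e_i)$ for edges $e_i$ opposite $c$ in distinct faces of a layer, and these arcs have disjoint interiors (this is the content behind \reflem{Parabolics}\refitm{TipsOfCrown}, though the displayed union of closed arcs does not by itself give distinctness -- you should cite the arcs $A(e_i)$ and \refcor{Irrational} directly). Your set-up for Case 2 is also sound up to and including the ``key step'': two train rays in the tree $\tau^K$ with the same non-cusp endpoint at infinity do share a terminal tail, by the dual-tree argument you describe.

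The genuine gap is the transfer step, and it is not a matter of bookkeeping: the mechanism you propose cannot produce the contradiction. A track-cusp determines its branch line, so $s_j = t_j$ for some $j$ is literally equivalent to $S = T$; hence in the situation you are trying to rule out ($S \neq T$, $\bdy S = \bdy T$), the track-cusps $s_j$ and $t_j$ are distinct in \emph{every} layer, and no amount of pushing the shared tail up through the carrying maps $\Phi^{0,j}$ will merge them -- under splitting, a track-cusp trailing another one that points in the same direction trails it forever. This ``trailing forever'' configuration is internally consistent at the level of taut triangulations (compare \refexa{FlatDSS}), so the contradiction must come from the veering colours, not from the carrying maps. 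The paper's proof does exactly this: after reducing (via unlinkedness of cusp leaves, \reflem{Unlinked}) to the case $\ell^0(s_0,S) \subset \ell^0(t_0,T)$, it shows the ``trailing distance'' between $t_i$ and $s_i$ is non-increasing, hence eventually constant, and then examines the branch line $V$ immediately behind $S$; tracking the minimal strip of triangles $P_i$ containing $\ell^i(v_i, s_i)$ through the layering shows $P_i$ can never gain an internal edge of the minority colour, so $V$ meets only finitely many blue (say) edges, contradicting \refcor{BranchLinesToggle}. You correctly guessed that \refcor{BranchLinesToggle} is the decisive input, but the argument that actually invokes it -- the colour count on the strip $P_i$ -- is the entire content of the lemma and is missing from your proposal. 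You also need the reduction to the nested case (your shared tail only gives eventual coincidence of the rays, not containment of one cusp train ray in the other), which the paper handles with a third branch line at the trivalent vertex of the union.
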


\begin{proof}
Let $(K_i)$ be a layering of $\cover{\calV}$.  Let $\tau^i$ be the upper track for $K_i$.  Again, applying \reflem{BranchLines}\refitm{BranchLinesLayer}, we index the track-cusps of $S$ so that $s_i$ lies in $K_i$; we define $t_i$ similarly.  

Let $\ell^0(s_0,S)$ and $\ell^0(t_0,T)$ be the cusp train rays given by \reflem{CuspLines}. 
Since $\bdy S = \bdy T$, the cusp train rays eventually cross the same collection of edges in $K_0$.  

If neither of $\ell^0(s_0, S)$ and $\ell^0(t_0, T)$ is contained in the other, then their union is a sub-tree $Y \subset \tau^0$.  
Note that $Y$ has a single trivalent vertex, smoothed according to $\tau^0$.  
There is a track-cusp, say $u_0$, at this vertex. 
Let $U$ be the branch line running through $u_0$.  
Let $a$, $b$ and $c$ be the cusps associated to $S$, $T$, and $U$.  
Appealing to \reflem{Unlinked}, we find that the cusp leaf $\lambda(c, U)$ does not link either $\lambda(a, S)$ or $\lambda(b, T)$.  
Thus $\bdy S = \bdy U = \bdy T$ and so $\ell^0(u_0, U) \subset \ell^0(t_0, T)$. 

Relabelling if necessary, we thus restrict to the case where $\ell^0(s_0, S) \subset \ell^0(t_0, T)$.  
If they are equal, we are done.  
For a contradiction, suppose that they are not equal.  
Consulting \reffig{UpperGluingAutomaton}, we deduce that $\ell^i(s_i, S) \subset \ell^i(t_i, T)$ for all $i$.  
Thus, the track-cusps $t_i$ follow behind the track-cusps $s_i$ forever.  
Let  $\ell^i(t_i, s_i) = \ell^i(t_i, T) - \ell^i(s_i, S)$ be the train interval in $\tau^i$ connecting $t_i$ to $s_i$. 

For each train track $\tau^i$, we measure the \emph{trailing distance} $d_i$, as follows.  
This is the number of track-cusps incident to $\ell^i(t_i, s_i)$ (including $s_i$ but excluding $t_i$) that point in the same direction as $s_i$.  
\reffig{ChasingTrackCusps} shows an example where $d_i$ equals four. 

\begin{figure}[htbp]
\labellist
\small\hair 2pt
\pinlabel {$t_i$} at -5 30
\pinlabel {$1$} at 150 20
\pinlabel {$2$} at 53 40
\pinlabel {$3$} at 29 20
\pinlabel {$s_i$} at 184 20
\endlabellist
\includegraphics[width=0.5\textwidth]{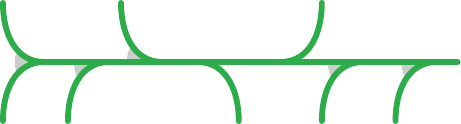}
\caption{Counting the distance between track-cusps $t_i$ and $s_i$.}
\label{Fig:ChasingTrackCusps}
\end{figure}

We claim that $d_{i+1} \leq d_i$. 
To see this, note that as we move up through the layers, track-cusps move by splitting past each other.  
Thus, track-cusps pointing in the same direction as $s_i$ may leave $\ell^i(t_i, s_i)$ but no such track-cusps may enter. 

Thus, above some layer, say $K_j$, the trailing distance $d_i$ becomes constant.  
So for $i > j$ all of the track-cusps in $\ell^i(t_i, s_i)$ follow $s_i$ forever.  
We now consider the branch line $V$ with track-cusps $v_i$ that follow immediately behind $s_i$ in $\ell^i(t_i, s_i)$; that is, at trailing distance one.

\begin{figure}[htbp]
\centering
\subfloat[A strip of triangles containing $\ell^i(v_i, s_i)$.]{
\labellist
\small\hair 2pt
\pinlabel {$v_i$} [r] at 90 70
\pinlabel {$s_i$} [r] at 612 70
\endlabellist
\includegraphics[width=0.98\textwidth]{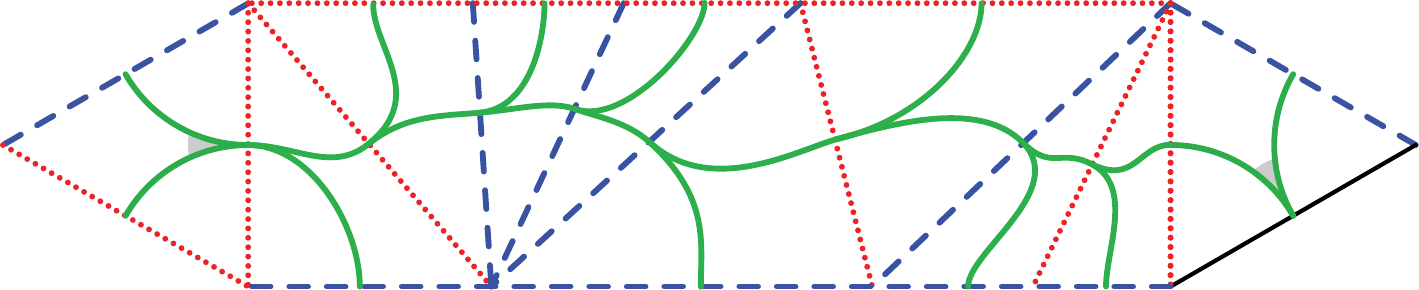}
\label{Fig:ImmediateFollower}
}

\subfloat[Before landfilling a tetrahedron $t$ above $s_i$.]{
\labellist
\small\hair 2pt
\pinlabel {$s_{i}$} [br] at 55 85
\endlabellist
\includegraphics[width=0.22\textwidth]{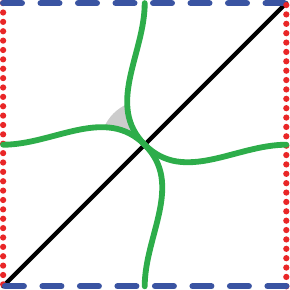}
\label{Fig:ImmediateFollowerSplit}
}
\qquad
\subfloat[If the upper edge of $t$ is blue.]{
\labellist
\small\hair 2pt
\pinlabel {$s_{i+1}$} [br] at 65 52
\endlabellist
\includegraphics[width=0.22\textwidth]{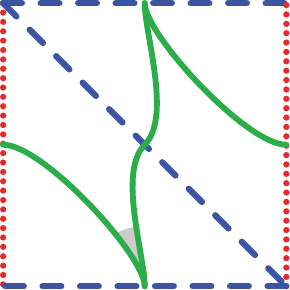}
\label{Fig:ImmediateFollowerSplitOpp}
}
\qquad
\subfloat[If the upper edge of $t$ is red.]{
\labellist
\small\hair 2pt
\pinlabel {$s_{i+1}$} [br] at 105 77
\endlabellist
\includegraphics[width=0.22\textwidth]{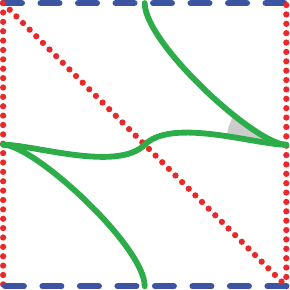}
\label{Fig:ImmediateFollowerSplitSame}
}
\caption{}
\label{Fig:ImmediateFollowerChanges}
\end{figure}

Let $P_i$ be the minimal strip of triangles containing $\ell^i(v_i, s_i)$.  
Breaking symmetry, we assume that $\ell^j(v_j, s_j)$ enters the face containing $s_j$ through a red edge.  
\reffig{ImmediateFollower} shows an example of what $P_j$ could look like.   

For $i \geq j$, as we move from layer $K_i$ to $K_{i+1}$ there are three possibilities; the tetrahedron $t$ between them either:
\begin{itemize}
\item misses $P_i$, 
\item is layered on the edge that $s_i$ points at, or 
\item is layered on the edge that $v_i$ points at. 
\end{itemize}
The tetrahedron $t$ cannot meet any other edges of $P_i$ as they are not sinks. 

If we layer onto the edge meeting $s_i$, as shown in \reffig{ImmediateFollowerSplit}, then there are two possibilities, depending on the colour of the upper edge of $t$. 
These are shown in Figures~\ref{Fig:ImmediateFollowerSplitOpp} and~\ref{Fig:ImmediateFollowerSplitSame}.  
Either the length of $P_{i+1}$ equals that of $P_i$ or goes up by one, with the new triangle being a majority red triangle.  
In this case, $P_{i+1}$ has one more internal red edge than $P_i$. 

If we layer onto the edge meeting $v_i$, then the track-cusp $v_i$ splits past the first track-cusp pointing backwards along $P_i$.  
Thus $P_{i+1}$ is one triangle shorter than $P_i$.  
Note then, that there is no way for $P_{i+1}$ to have more internal blue edges than $P_i$ has.  
Thus the branch line $V$ visits only a finite number of blue edges.  
This contradicts \refcor{BranchLinesToggle}.
\end{proof}

\subsection{Building the laminations}

Suppose that $S$ and $T$ are adjacent upper branch lines lying in $N = N^c$.  
Fix a layer $K$ of a layering; let $s$ and $t$ be the upper track-cusps in $K$ meeting $S$ and $T$ respectively (\reflem{BranchLines}\refitm{BranchLinesLayer}).  
Since $S$ and $T$ are adjacent, there is a train interval $\ell^K(s, t) \subset \tau^K \cap \bdy N$ connecting $s$ to $t$ (\reflem{BranchLines}\refitm{BranchStrip}).  
See \reffig{NeighbourhoodLayer}.

\begin{definition}
\label{Def:BoundaryLeaf}
The \emph{boundary train line} in $K$ between $S$ and $T$ is
\[
\ell^K(S, T) = \ell^K(S, s) \cup \ell^K(s, t) \cup \ell^K(t, T)
\]
The \emph{(upper) boundary leaf} is $\lambda(S, T) = \lambda(\bdy S, \bdy T)$. 
Suppose now that $R$ is the other branch line in $N$ adjacent to $S$. 
Then $\lambda(R,S)$ is \emph{adjacent} to $\lambda(S, T)$.
\end{definition}

\begin{definition}
\label{Def:UpperLamination}
Let $\Mobius$ be the M\"obius band past infinity for $\Circle$, as defined in \refsec{Mobius}.
We define $\Lambda^\calV \subset \Mobius$, the \emph{upper lamination} for $\calV$, to be the closure of the union of upper boundary leaves:
\[
\Lambda^\calV = \closure{\cup \, \lambda(S, T)}
\]
Here the pairs $(S, T)$ range over all adjacent upper branch lines. 
We equip $\Lambda^\calV \subset \Mobius$ with the subspace topology. 
Non-boundary leaves of $\Lambda^\calV$ are called \emph{interior leaves}.  
Similarly, we define $\Lambda_\calV$ using the lower boundary leaves. 
\end{definition}


\begin{remark}
We justify the names \emph{boundary} and \emph{interior}, for leaves of the laminations $\Lambda^\calV$ and $\Lambda_\calV$, in \reflem{Approach}.
\end{remark}

\begin{definition}
Any leaf $\lambda \in \Lambda^\calV$ separates $\Circle$ into two components, which we call the two \emph{sides} of $\lambda$. 
\end{definition}

Here we gather together the basic properties of $\Lambda^\calV$.

\begin{lemma}
\label{Lem:Laminations}
The upper lamination $\Lambda^\calV$ has the following properties. 
\begin{enumerate}
\item
\label{Itm:NoLinking}
It is a lamination.
\item
\label{Itm:PiOneInvariant}
It is $\pi_1(M)$--invariant.  
\item 
\label{Itm:LeavesAreCarried}
For any leaf $\lambda \in \Lambda^\calV$ and for any layer $K$ of any layering, 
there is a train line $\ell$ carried by $\tau^K$ with the same endpoints as $\lambda$.  
\item 
\label{Itm:FullyCarried}
For any layer $K$ of any layering and for any branch $b$ of $\tau^K$ there is a leaf $\lambda \in \Lambda^\calV$ so that the corresponding train line runs along $b$.
\item
\label{Itm:Irrational}
No endpoint of any leaf is a cusp.
\item
\label{Itm:Asymptotic}
A pair of distinct leaves $\lambda, \lambda' \in \Lambda^\calV$ share an endpoint if and only if they are adjacent boundary leaves.
\end{enumerate}
Replacing upper by lower, we obtain the same properties for $\Lambda_\calV$.
\end{lemma}

\begin{proof}
\noindent
\begin{enumerate}
\item 
Being linked is an open property for pairs of leaves in $\Mobius$.  
So it suffices to prove that any pair of upper boundary leaves $\lambda(S, T)$ and $\lambda(U, V)$ are unlinked.
We fix a layer $K$ and argue that the associated boundary train lines $\ell^K(S, T)$ and $\ell^K(U, V)$ are unlinked.  Suppose that $S, T \subset N^c$ and $U, V \subset N^d$.
We consider the unions $\ell^K(S, c) \cup \ell^K(c, T)$ and $\ell^K(U, d) \cup \ell^K(d, V)$.
If $c = d$ then we are done by the definition of adjacency and by \reflem{BranchLines}\refitm{Cyclic}.
If $c \neq d$ then, by \reflem{Unlinked}, the set $\{ \bdy U, d, \bdy V\}$ is contained in exactly one of the components of $\Circle - \{ \bdy S, c, \bdy T\}$.

\item 
Suppose that $\gamma \in \pi_1(M)$ is a deck transformation.
Suppose that $S$ and $T$ are upper branch lines.
Then $S$ and $T$ are adjacent if and only if $\gamma(S)$ and $\gamma(T)$ are adjacent.
Thus the collection of upper boundary leaves is $\pi_1(M)$--invariant and hence so is its closure.

\item
If $\lambda = \lambda(S, T)$ is a boundary leaf, 
then we take $\ell = \ell^K(S, T)$ to be the associated boundary train line.

Suppose instead that $\lambda$ is an interior leaf.
Orient $\lambda$ and let $\bdy_- \lambda$ and $\bdy_+ \lambda$ be its endpoints.
Fix a sequence $( \lambda_n )$ of oriented upper boundary leaves converging (respecting orientation) to $\lambda$.
Pass to a subsequence to ensure that $\bdy_+ \lambda_n$ converges to $\bdy_+ \lambda$ monotonically from one side.
Applying \refitm{NoLinking}, we deduce that $\bdy_- \lambda_n$ converges to $\bdy_- \lambda$ monotonically and from the same side.
Let $\ell_n \subset \tau^K$ be the boundary train line associated to $\lambda_n$.  

We now apply \refrem{NeighbourhoodBasis}:
pick nested neighbourhood bases of $\bdy_- \lambda$ and $\bdy_+ \lambda$ respectively.
If either is a cusp then its bases require a pair of edges;
we only keep those edges on the same side as the $\lambda_n$.
This gives us two collections of ``nested'' edges $( e^-_i )$ and $( e^+_j )$.
We pass to subsequences repeatedly (both of the $\lambda_n$ and of the $e^\pm_i$) to arrange that $\lambda_j$ links both $e^-_j$ and $e^+_j$.
Recall that $\tau^K$ is a tree.
Thus the boundary train line $\ell_j$ crosses $e^-_j$ and $e^+_j$; 
let $L_j$ be the resulting unique train interval $L_j$ running from $e^-_j$ to $e^+_j$.
Note that $L_j$ is a subinterval of all $\ell_k$ for $k \geq j$.
Also, since the two collections of edges are each nested, we have $L_j \subset L_{j+1}$.
Thus $\ell = \bigcup_j L_j$ is the desired train line. 

\item
By \reflem{BranchLines} there are upper branch lines $S$ and $T$ so that $b$ lies on the boundary train line $\ell^K(S, T)$.
The desired leaf of $\Lambda^K$ is then $\lambda(S, T)$.

\item
This follows from property \refitm{LeavesAreCarried} and \refcor{Irrational}. 

\item
The backwards direction follows from the definition of adjacency given in \refdef{BoundaryLeaf}.

For the forward direction, suppose that $\lambda$ and $\lambda'$ are distinct leaves of $\Lambda^\calV$ that share a single common endpoint, $x$.
Let $y$ and $y'$ be the other endpoints of $\lambda$ and $\lambda'$, respectively. 

Suppose that both $\lambda$ and $\lambda'$ are boundary leaves.
\reflem{BranchLinesNotAsymptotic} implies that there is a branch line $S$ so that $x=\bdy S$.
Thus $\lambda$ and $\lambda'$ are adjacent.

Suppose instead that $\lambda$ is an interior leaf.
Let $K$ be a layer of a layering of $\cover{\calV}$.
By \refitm{LeavesAreCarried}, there are train lines $\ell$ and $\ell'$ carried by $\tau^K$ with the same endpoints as $\lambda$ and $\lambda'$ respectively.
Thus the union of $\ell$ and $\ell'$ forms a sub-tree $Y \subset \tau^K$ with a single trivalent vertex, smoothed according to $\tau^K$.
See \reffig{NonAsymptotic}.

\begin{figure}[htbp]
\labellist
\small\hair 2pt
\pinlabel {$y$} at -5 50
\pinlabel {$\bdy S'$} at 15 15
\pinlabel {$t$} at 12 43
\pinlabel {$s$} at 118 43
\pinlabel {$c$} at 109 -5
\pinlabel {$y'$} at 126 -5
\pinlabel {$x$} [l] at 175 50
\endlabellist
\includegraphics[width=0.5\textwidth]{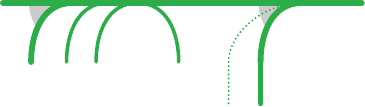}
\caption{One possible picture for an asymptotic pair of train lines.}
\label{Fig:NonAsymptotic}
\end{figure}

There is a track-cusp $s$ at the vertex of $Y$.
Let $S$ be the branch line containing $s$ and let $c$ be its associated cusp.
The points $y$ and $y'$ are on opposite sides of $\ell^K(c,S)$.
Let $S'$ be the branch line adjacent to $S$ on the side containing $y$.
Then $\ell^K(S, S')$ is a boundary train line.
By \refitm{NoLinking}, it does not link either $\ell$ or $\ell'$.
Thus $\ell^K(S, S')$ has one endpoint at $x = \bdy S$ and the other at $\bdy S'$.
Since $\lambda$ is not a boundary leaf, we have that $\ell \neq \ell^K(S, S')$.
Thus, the $\bdy S'$ is not equal to $y$.
We deduce that in $\Circle$, the point $\bdy S'$ is separated from $x$ by $\{y, y'\}$. 

As in the above argument, the union of $\ell$ and $\ell^K(S, S')$ forms a sub-tree giving a new track-cusp, $t$.
Let $T$ be the branch line passing through $t$.
Again, $x = \bdy T$.
This implies that $\bdy S = \bdy T$.
Since $s \neq t$, \reflem{BranchLines}\refitm{BranchLinesLayer} implies that $S \neq T$.
This contradicts \reflem{BranchLinesNotAsymptotic}.
\qedhere
\end{enumerate}
\end{proof}

\begin{proof}[Proof of \refthm{Laminations}\refitm{LaminationsInS1}]
Parts~\refitm{NoLinking} and~\refitm{PiOneInvariant} of \reflem{Laminations} give the desired statement.
\end{proof} 

We now justify the names of boundary and interior leaves of $\Lambda^\calV$.  
The following lemma proves that $\Lambda^\calV$ is \emph{perfect}: closed and without isolated leaves. 

\begin{lemma}
\label{Lem:Approach}
For either side of an interior leaf $\lambda \in \Lambda^\calV$, there is a sequence of boundary leaves $\lambda_k$ that converge to $\lambda$ from that side.
A boundary leaf $\lambda \in \Lambda^\calV$ is also a limit of boundary leaves $\lambda_k$, but only on the side not containing its cusp.
Similar properties hold for $\Lambda_\calV$.
\end{lemma}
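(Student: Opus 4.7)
\emph{Approach.} The plan is to prove a structural fact: every complementary region of $\Lambda^\alpha$ (viewed as a lamination in $\HH^2$ via the identification of $\calM$ with unoriented geodesics) is a \emph{cusp gap}, i.e., the maximal ideal polygon bounded by the boundary leaves $\{\lambda(S_i^c, S_{i+1}^c)\}_{i \in \ZZ}$ of some $N^c$.  Given this classification both statements of the lemma follow immediately: an interior leaf lies on no gap boundary and so is approached on both sides by leaves of $\Lambda^\alpha$, hence (by density of boundary leaves in $\Lambda^\alpha$ together with continuity of ``side of $\lambda$'' near $\lambda$) by boundary leaves on both sides; a boundary leaf $\lambda(S, T)$ sits on the boundary of the cusp gap at $c = c(S)$ exactly on its cusp side, so by the same reasoning boundary leaves accumulate on $\lambda(S, T)$ only from the non-cusp side.

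To establish the structural fact, suppose for contradiction that $G$ is a non-cusp complementary region.  I would split according to whether the closure of $G$ in $\HH^2 \cup S^1(\alpha)$ contains an \emph{ideal arc}, i.e., an open arc $I \subset S^1(\alpha)$ with no ideal vertex of $G$ in its interior.  In the first case, Theorem~\ref{Thm:VeeringCircle}\refitm{Dense} provides a cusp $c \in I$, and Lemma~\ref{Lem:Parabolics}\refitm{TipsOfCrown} guarantees that the endpoints $\bdy S_i^c$ converge to $c$ as $|i| \to \infty$.  For $|i|$ sufficiently large both endpoints of $\lambda(S_i^c, S_{i+1}^c)$ lie inside $I$, so the corresponding geodesic lies inside $G$, contradicting that $G$ contains no leaves of $\Lambda^\alpha$.

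In the second case, $\bdy G$ consists of isolated ideal vertices joined by edges.  Adjacent edges $e_{i-1}, e_i$ share a vertex $v_i$, which by \reflem{BranchLinesNotAsymptotic} corresponds to a unique branch line $S_i$ with $\bdy S_i = v_i$.  Lemma~\ref{Lem:Laminations}\refitm{Asymptotic} then forces $e_{i-1}$ and $e_i$ to be adjacent boundary leaves; each therefore contains $S_i$ and so lies in the unique cusp neighborhood $N^{c(S_i)}$ to which $S_i$ belongs.  Since $\bdy G$ is connected, this identification propagates along the chain of edges, forcing every edge of $G$ to live in the same $N^c$.  Hence $G$ is contained in the cusp gap at $c$; by maximality $G$ equals this cusp gap, contradicting the assumption that $G$ is non-cusp.

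The main obstacle is the second case, where one must carefully verify that the cusp $c$ is forced to be \emph{the same} cusp across the full chain of edges of $\bdy G$; this relies crucially on the fact that each branch line belongs to a unique cusp neighborhood, so that sharing a branch line forces edges to share a cusp.  A secondary subtlety is that in the first case $I$ must be open, so that the convergence $\bdy S_i^c \to c$ places both endpoints of $\lambda(S_i^c, S_{i+1}^c)$ strictly inside $I$ for large $|i|$.
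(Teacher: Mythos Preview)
Your approach is correct and genuinely different from the paper's. The paper argues constructively inside a fixed layer $K$: taking the train line $\ell$ carrying $\lambda$ and a track-cusp $s_0$ on the chosen side (supplied by \refcor{CannotTurnLeftForever}), it inductively produces boundary train lines $\ell_k$ whose overlap $\ell \cap \ell_k$ grows in both directions, because each new track-cusp $s_{k+1}$ points opposite to $s_k$. You instead prove the stronger structural statement that every complementary region of $\Lambda^\alpha$ is a cusp gap and read off the lemma; this classification is precisely what the paper later extracts \emph{from} \reflem{Approach} in the proof of \refthm{VeeringSphere}, so your route front-loads that work. A few points need tightening. In Case~1 the leaf with both endpoints in $I$ lies in $\closure{G}$ by convexity, hence on the frontier of $G$ since it is a leaf of $\Lambda^\alpha$; its endpoints are then ideal vertices of $G$ in the interior of $I$, which is the contradiction---saying the geodesic ``lies inside $G$'' is not quite it. In Case~2 the dichotomy deserves a sentence: $\closure{G}$ is convex in the closed disk, so $\bdy\closure{G}$ is a Jordan curve alternating frontier leaves with arcs of $S^1(\alpha)$, and absent ideal arcs consecutive frontier leaves share a vertex. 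Finally your citations are inverted: apply \reflem{Laminations}\refitm{Asymptotic} first to see that $e_{i-1}, e_i$ are adjacent boundary leaves (hence $v_i = \bdy S$ for some branch line $S$), and then \reflem{BranchLinesNotAsymptotic} gives uniqueness of $S$---it is this uniqueness, together with each branch line lying in a unique $N^c$, that forces the cusp to propagate along the chain.
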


\begin{proof}
Let $K$ be a layer of a layering.
Let $\ell$ be the oriented train line carried by $\tau^K$ with the same endpoints as $\lambda$, as given by \reflem{Laminations}\refitm{LeavesAreCarried}.
Suppose that $s_0$ is a track-cusp on the side of $\ell$ from which we wish to approach.
This exists by \reflem{CannotTurnLeftForever}.
See \reffig{Approach}.

\begin{figure}[htbp]
\labellist
\small\hair 2pt
\pinlabel {$s_2$} at 14 31
\pinlabel {$\bdy S_1$} at 16 2
\pinlabel {$\bdy S'_0$} at 47 2
\pinlabel {$s_0$} at 93 31
\pinlabel {$\bdy S_0$} at 160 2
\pinlabel {$\bdy S'_1$} at 177 2
\pinlabel {$s_1$} at 160 31
\pinlabel {$\bdy S_2$} at 226 2
\pinlabel {$s_3$} at 226 31
\endlabellist
\includegraphics[width=0.9\textwidth]{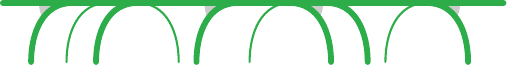}
\caption{Approaching a leaf by a sequence of boundary leaves.}
\label{Fig:Approach}
\end{figure}

The track-cusp $s_0$ is part of a branch line $S_0$. 
As in the proof of \reflem{Laminations}\refitm{Asymptotic}, the track-cusp $s_0$ gives us a pair of asymptotic boundary train lines both ending at $\bdy S_0$.
Let $\ell_0$ be the one of these that separates $s_0$ from $\ell$.
Note that $\ell_0$ is not equal to $\ell$, either because $\lambda$ is not a boundary leaf or because $\lambda$ is a boundary leaf whose cusp is on the other side from $s_0$.
Since the train lines $\ell$ and $\ell_0$ both run past $s_0$, their intersection is non-empty.  
By \reflem{Laminations}\refitm{Asymptotic} the intersection is a train interval.
Let $s_1$ be the track-cusp at the end $\ell \cap \ell_0$ pointing in the opposite direction to $s_0$.  

Suppose that $k>0$.
By induction, we have the following:
\begin{itemize}
\item a track-cusp $s_k$ (in a branch line $S_k$) meeting $\ell$ on the desired side and
\item a boundary train line $\ell_{k-1}$ that passes through the switch of $s_k$
\end{itemize}
such that
\begin{itemize}
\item if $k$ is even, $s_k$ points in the same direction as $s_0$;
if $k$ is odd, $s_k$ points in the opposite direction,
\item the boundary train line $\ell_{k-1}$ does not separate $s_k$ from $\ell$, and
\item the boundary train line $\ell_{k-1}$ is not asymptotic to $\bdy S_k$.
\end{itemize}

For the induction step, let $\ell_k$ be the boundary train line that ends at $\bdy S_k$ and that separates $s_k$ from $\ell$.
By \reflem{Laminations}\refitm{NoLinking}, the new boundary train line separates $\ell_{k-1}$ from $\ell$.
Thus we have $\ell \cap \ell_{k-1} \subset \ell \cap \ell_k$, and the latter is at least one branch of $\tau^K$ longer than the former, near $s_k$.
Let $s_{k+1}$ be the track-cusp at the end of the train interval $\ell \cap \ell_k$ pointing in the opposite direction to $s_k$.
This completes the construction of $s_{k+1}$ and $\ell_k$. 

Since $s_{k+1}$ points in the opposite direction to $s_k$, the first inductive property holds.
The second property holds by construction.
The third property holds by \reflem{Laminations}\refitm{Asymptotic}.

Let $\lambda_k$ be the leaf with the same endpoints as $\ell_k$.
Since the train intervals $\ell \cap \ell_k$ grow in both directions, and end on the midpoints of edges of the Farey triangulation of $K$, by \refrem{NeighbourhoodBasis} the leaves $\lambda_k$ converge to $\lambda$. 

We finally claim that a boundary leaf $\lambda(S, T)$ cannot be approached from the side containing its cusp $c$.
This is because, by \reflem{Unlinked}, any other boundary leaf $\lambda'$ is contained in one of the three components of $\Circle - \{\bdy S, c, \bdy T\}$.
\end{proof}


\subsection{Cantor cross-section}
\label{Sec:Cantor}

We now discuss the transverse structure of the upper lamination; 
similar properties hold for the lower.

\begin{definition}
\label{Def:LeavesThatLink}
Fix distinct cusps $c \neq d$.
Let $\Lambda^{(c, d)}$ be the leaves of $\Lambda^\calV$ that separate $c$ from $d$.
We equip $\Lambda^{(c, d)}$ with the subspace topology.
We define a total order on $\Lambda^{(c, d)}$ by taking $\lambda <^{(c,d)} \lambda'$ if the endpoints of $\lambda$ separate $c$ from some endpoint of $\lambda'$.
See \reffig{TotalOrder}. 
Thus $\lambda'$ separates some endpoint of $\lambda$ from $d$. 
\end{definition}

Note that by \reflem{Laminations}\refitm{NoLinking}, the subspace topology on $\Lambda^{(c, d)}$ agrees with the order topology coming from $<^{(c,d)}$.

\begin{figure}[htb]
\centering
\labellist
\small\hair 2pt
\pinlabel $c$ [t] at 199 0
\pinlabel $\bdy S'$ [tl] at 286 24
\pinlabel $\bdy T$ [bl] at 299 372
\pinlabel $d$ [b] at 199 400
\pinlabel $\bdy T'$ [br] at 132 384
\pinlabel $\bdy S$ [tr] at 59 61
\endlabellist
\includegraphics[width=0.6\textwidth]{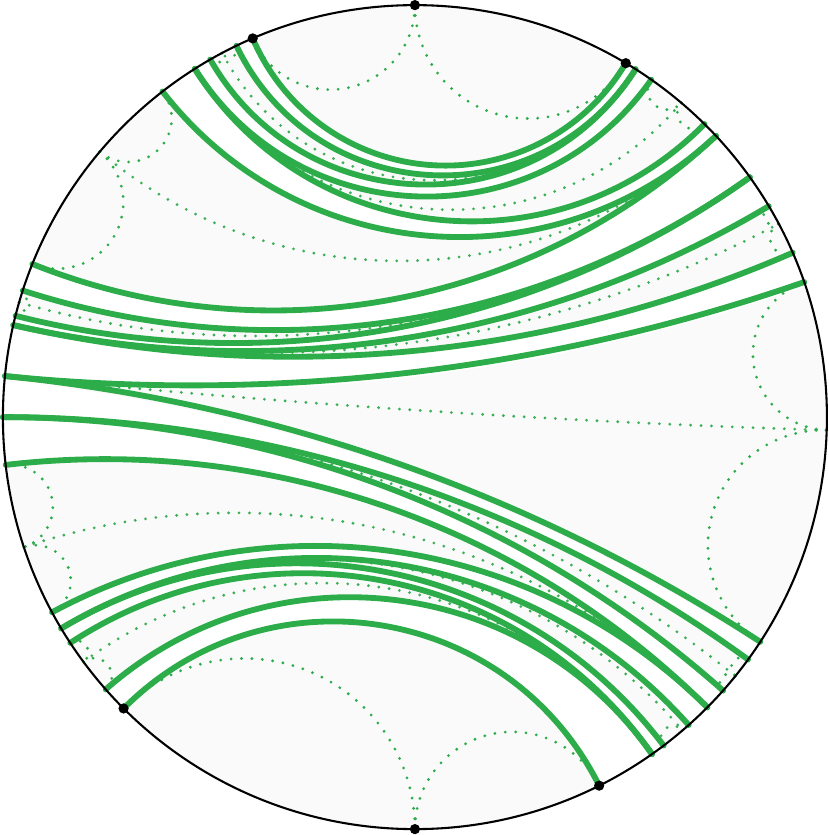}
\caption{Some of the boundary leaves (solid) in $\Lambda^{(c, d)}$.  The corresponding cusp leaves are dotted.}
\label{Fig:TotalOrder}
\end{figure}

Recall that $[x, y]^{\acw}$ is the closed arc in $\Circle$ anticlockwise of $x$ and clockwise of $y$.  
Let $\calC \subset [0,1]$ be the middle-thirds Cantor set equipped with its usual topology and total order. 

\begin{lemma}
\label{Lem:Cantor}
The subspace $\Lambda^{(c, d)}$ is order isomorphic (and thus homeomorphic) to the Cantor set $\calC$.  
Thus interior leaves are dense in $\Lambda^\calV$. 
\end{lemma}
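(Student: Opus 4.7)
The plan is to verify that $\Lambda^{(c,d)}$, equipped with its order $<^{(c,d)}$ and its subspace topology from $\calM$, satisfies the standard order-theoretic characterization of the middle-thirds Cantor set: a compact, metrizable, linearly ordered space with no isolated points and a countable dense subset whose adjacent pairs are themselves dense. The order isomorphism to $\calC$ then follows, and it is a homeomorphism since the order topology on $\Lambda^{(c,d)}$ coincides with the subspace topology inherited from $\calM$.

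For compactness, leaves in $\Lambda^{(c,d)}$ have one endpoint in $(c,d)^\acw$ and one in $(d,c)^\acw$. Applying \reflem{Parabolics} at $c$, the asymptotic boundary leaves $\lambda(S_i, S_{i+1})$ at $c$ cut off arbitrarily small neighborhoods of $c$ in $S^1(\alpha)$; by \reflem{Laminations}\refitm{NoLinking}, any leaf of $\Lambda^\alpha$ with one endpoint in such a small neighborhood has its other endpoint in the same neighborhood, and so cannot separate $c$ from $d$. The same holds near $d$. Hence endpoints of leaves in $\Lambda^{(c,d)}$ stay bounded away from both $c$ and $d$; combined with closedness of $\Lambda^\alpha$ in $\calM$ (\refdef{UpperLamination}), this gives compactness of $\Lambda^{(c,d)}$. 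For no isolated points: since separation of $c$ from $d$ is an open condition in $\calM$, the approaching boundary leaves from \reflem{Approach} eventually lie in $\Lambda^{(c,d)}$, so every leaf of $\Lambda^{(c,d)}$ is an accumulation point. The countable order-dense subset consists of the boundary leaves in $\Lambda^{(c,d)}$; these are countable (indexed by pairs of adjacent branch lines over countably many cusps of $\cover{M}$) and dense in $\Lambda^{(c,d)}$ by the definition of $\Lambda^\alpha$ as a closure.

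For density of adjacent pairs, I would argue that a pair of asymptotic boundary leaves $\lambda(R, S)$ and $\lambda(S, T)$ at a common cusp, both lying in $\Lambda^{(c,d)}$, is adjacent in $<^{(c,d)}$. Any leaf $\nu \in \Lambda^\alpha$ strictly between them in the order would have both endpoints trapped in the sector at that cusp bounded by these two leaves; by \reflem{Laminations}\refitm{NoLinking} (applied against the full cusp fan) it would share an endpoint with one of them, and then \reflem{Laminations}\refitm{Asymptotic} would force $\nu$ itself to be one of the two boundary leaves, a contradiction. Density of such pairs follows by first inserting a boundary leaf between any two given leaves of $\Lambda^{(c,d)}$ (using density of boundary leaves) and then choosing an asymptotic partner at its cusp on the appropriate side so that the partner is again in $\Lambda^{(c,d)}$.

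Combining these properties yields the order isomorphism $\Lambda^{(c,d)} \cong \calC$. For the final assertion, since $\calC$ is uncountable while boundary leaves are countable, the interior leaves form a dense subset of $\Lambda^{(c,d)}$. Given any $\lambda \in \Lambda^\alpha$, the density of cusps in $S^1(\alpha)$ allows us to choose $c, d \in \Delta_M$ on opposite sides of $\lambda$; then $\lambda \in \Lambda^{(c,d)}$ and is thus a limit of interior leaves, proving that interior leaves are dense throughout $\Lambda^\alpha$. The main obstacle is establishing the density of adjacent pairs, specifically showing that the asymptotic partner of a chosen boundary leaf on the appropriate side indeed lies in $\Lambda^{(c,d)}$; this requires a careful analysis of how $c$ and $d$ are positioned within the fan of cusp sectors at a given cusp.
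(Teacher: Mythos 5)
Your overall strategy---verifying the order-theoretic characterisation of $\calC$ via compactness, perfectness, a countable dense family of boundary leaves, and density of adjacent pairs---is the same as the paper's, which packages exactly these ingredients into an explicit ternary coding driven by \reflem{Approach}.  Your compactness and perfectness arguments match the paper's, and your claim that a pair of asymptotic boundary leaves lying in $\Lambda^{(c,d)}$ is adjacent in $<^{(c,d)}$ is correct (the trapping argument you sketch goes through using \reflem{Laminations}\refitm{NoLinking}, \refitm{Irrational} and \refitm{Asymptotic}).

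The obstacle you flag at the end is, however, a genuine gap, and the route you propose around it fails as stated.  If $\lambda(S_j,S_{j+1})$ is a boundary leaf of a crown $\Lambda^b$ lying in $\Lambda^{(c,d)}$, say with $c$ in the interior of $[\bdy S_j,\bdy S_{j+1}]^\acw$, then an asymptotic partner $\lambda(S_{j\pm1},S_{j\pm1+1})$ lies in $\Lambda^{(c,d)}$ only if $d$ happens to lie in the corresponding neighbouring tip-arc.  This fails, for example, for the unique boundary leaf of $\Lambda^c$ lying in $\Lambda^{(c,d)}$ (the minimum of the order): neither of its asymptotic partners separates $c$ from $d$.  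The repair is to drop the insistence that the adjacent pair be asymptotic.  By \reflem{Parabolics}\refitm{TipsOfCrown}, for $b \neq c,d$ the crown $\Lambda^b$ contributes exactly two boundary leaves to $\Lambda^{(c,d)}$: the one whose tip-arc contains $c$ and the one whose tip-arc contains $d$.  Unlinking with the intermediate boundary leaves of $\Lambda^b$, together with \reflem{Laminations}\refitm{Irrational} and \refitm{Asymptotic}, shows that no leaf of $\Lambda^{(c,d)}$ lies strictly between these two; so they form an adjacent pair in $<^{(c,d)}$ even when they share no endpoint.  Since any boundary leaf inserted strictly between two given leaves of $\Lambda^{(c,d)}$ belongs to a crown $\Lambda^b$ with $b \neq c,d$, adjacent pairs are dense, which is all the characterisation of $\calC$ requires.  (The paper's own proof asserts an asymptotic pair inside each clopen piece $I_\omega$, citing \reflem{Approach}, and is subject to the same caveat; its ternary coding goes through verbatim with these more general adjacent pairs in place of asymptotic ones.)
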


\begin{proof}
By \reflem{CrossingParabolics}\refitm{TipsOfCrown} there is exactly one boundary leaf $\lambda(S, S')$ associated to $c$ that lies in $\Lambda^{(c, d)}$.
Similarly, there is exactly one boundary leaf $\lambda(T, T')$ associated to $d$ that lies in $\Lambda^{(c, d)}$. 
These then are the minimum and maximum points of the order $<^{(c, d)}$.  
Breaking the symmetry of the situation we assume that the order of these points in $\Circle$ is 
\[
c, \bdy S', \bdy T, d, \bdy T', \bdy S
\]
See \reffig{TotalOrder}.

We deduce that any leaf of $\Lambda^{(c, d)}$ has one endpoint in $[\bdy S', \bdy T]^{\acw}$ and the other in $[\bdy T', \bdy S]^{\acw}$. 
Thus $\Lambda^{(c, d)}$ is closed in the M\"obius band past infinity $\Mobius$. 
Also, $\Lambda^{(c, d)}$ is contained in the quotient of $[\bdy S', \bdy T]^{\acw} \cross [\bdy T', \bdy S]^{\acw}$ lying in $\Mobius$.  
Thus $\Lambda^{(c, d)}$ is compact.  

We now recursively construct ternary codes for the leaves of $\Lambda^{(c, d)}$.
By \reflem{Approach} there is a pair of asymptotic upper boundary leaves $\lambda$ and $\lambda'$ in $\Lambda^{(c, d)}$. 
In fact there are countably many such (because there are countably many boundary leaves).
Choose an ordering of these pairs $((\lambda_n, \lambda'_n)\mid n\in\NN)$.
Break symmetry and assume that $\lambda_n <^{(c,d)} \lambda'_n$.  

We now recursively build various clopen subintervals $I_\omega$ of $\Lambda^{(c,d)}$ where $\omega$ ranges over finite ternary words.
Set $I_\epsilon = \Lambda^{(c,d)}$;
here $\epsilon$ is the empty word.
At stage $n$, we have that $(\lambda_n, \lambda'_n)$ lies in the interior of some $I_\omega$ previously produced.

We now define 
\[
I_{\omega 0} = \{ \mu \in \Lambda^{(c, d)} \st \mu \leq^{(c,d)} \lambda_n \} \\
\quad
\mbox{and}
\quad
I_{\omega 2} = \{ \mu \in \Lambda^{(c, d)} \st \lambda'_n \leq^{(c,d)} \mu \} 
\]
Each is clopen (and compact).

Now, for any leaf $\lambda \in \Lambda^{(c, d)}$ we send it to the real number whose ternary expansion agrees with the subscripts of the nested sets $I_\omega$ containing $\lambda$.
Note that $\lambda(S, S')$ and $\lambda(T', T)$ are sent to $\bar{0}$ and $\bar{2}$ (zero repeating and two repeating) respectively.
Similarly, the asymptotic pair of boundary leaves splitting $I_\omega$ receive the codes $\omega \bar{0}$ and $\omega \bar{2}$.
Conversely, for any ternary expansion $\omega$ let $\omega[{:}n]$ be the prefix of length $n$.
Then the nested intersection $\bigcap_n I_{\omega[{:}n]}$ is a singleton by \reflem{Approach} and the fact that we eventually split along each pair of boundary leaves $(\lambda, \lambda')$.
Thus the coding is a continuous order preserving injection, homeomorphic onto its image. 

Since interior points are dense in the Cantor set $\calC$ we deduce that interior leaves are dense in $\Lambda^{(c, d)}$.  
We deduce interior leaves are dense in $\Lambda^\calV$ from this and \reflem{Approach}. 
\end{proof}

\subsection{Suspending and descending}
\label{Sec:SuspendingDescending}

Suppose that $e$ is an edge of $\cover{\calV}$.  
Suppose that $c$ and $d$ are the cusps meeting $e$.  
Following \refdef{LeavesThatLink}, we define $\Lambda^e = \Lambda^{(c, d)}$ to be the leaves of $\Lambda^\calV$ that link $e$. 
By \reflem{Cantor}, the subspace $\Lambda^e$ is homeomorphic to a Cantor set. 
 
Now, for every edge $e$ of $\cover{\calV}$, we place a copy $\calC^e$ of $\Lambda^e$ along $e$.
We arrange matters so that if $\gamma \in \pi_1(M)$ then $\gamma(\calC^e) = \calC^{\gamma(e)}$.
Suppose that $f$ is a face of $\cover{\calV}$ with edges $e_0$, $e_1$, and $e_2$. 
Breaking symmetry, suppose that $\Lambda^{e_0} = \Lambda^{e_1} \cup \Lambda^{e_2}$. 
We connect corresponding points of $\calC^{e_0}$ to corresponding points of $\calC^{e_1}$ or $\calC^{e_2}$ using disjoint normal arcs $\calC^f$ in $f$.
We again arrange matters so that the arcs of $\calC^f$ are sent to the arcs of $\calC^{\gamma(f)}$ by an element $\gamma$ of $\pi_1(M)$.
Finally, inside of every tetrahedron $t \in \cover{\calV}$, we choose a collection of normal disks $\calC^t$ spanning the normal curves on the boundary of $t$.
To prove that only normal disks are needed to span the curves in $\bdy t$, we fix a layering, consider the layers $K$ and $K'$ immediately below and above $t$, and apply \reflem{Laminations}\refitm{LeavesAreCarried}. 
See \reffig{NormalUpperBranchedSurface}.
Again we do this equivariantly. 
Let $\cover{\Sigma}^\calV$ be the union $\bigcup_t \calC^t$.  
 
\begin{proof}[Proof of \refthm{Laminations}\refitm{LaminationsInM}]
The lamination $\cover{\Sigma}^\calV$ consists of planes.  
Also, $\cover{\Sigma}^\calV$ is $\pi_1(M)$--invariant.
Thus it descends to give a $\pi_1$--injective lamination $\Sigma^\calV$ in $M$, carried by $B^\calV$.
By \reflem{Laminations}\refitm{FullyCarried}, this lamination is fully carried.
This proves \refthm{Laminations}\refitm{Carried}.

Suppose that $\sigma$ is leaf of $\Sigma^\calV$.
Thus $\sigma$ has a cellulation as a union of sectors;
see Sections~\ref{Sec:Sectors} and~\ref{Sec:Leaves} and the figures therein. 
Suppose that $\sigma$ is closed.
By the above $\sigma$ has Euler characteristic zero, so it is a torus or Klein bottle. 
We must now derive a contradiction.

We construct a smooth \emph{upwards loop} in $\sigma$; 
that is, a loop $\gamma$ which exits each sector $s$ it meets through the upper vertex of $s$.
By~\cite[Theorem~3.2]{SchleimerSegerman20}, the upwards loop $\gamma$ is essential.
Thus $\sigma - \gamma$ is an annulus or M\"obius band.
Let $(P_i)_{i\in\ZZ}$ be the set of elevations of $\sigma - \gamma$ to $\cover{\sigma}$.
Each of these is homeomorphic to $(0,1) \times \RR$; the two boundary components are distinct elevations of $\gamma$. 
Let $\calK = (K_j)$ be a layering of $\cover\calV$.
For all $i$ and $j$, define $\alpha(i,j) = P_i \cap K_j$.
Note that $\alpha(i,j)$ is a properly embedded arc in $P_i$.
Also $\alpha(i,j)$ and $\alpha(i,j+1)$ cobound a rectangle in $P_i$.
Each $\alpha(i,j)$ is carried by a smooth subarc of the one-skeleton of $\sigma$.
Thus each $\alpha(i,j)$ receives a \emph{normal length} $\ell(i,j)$.
From \reffig{Sector} we observe that $\ell(i,j+1) \leq \ell(i,j)$, with equality if and only if there are no normal triangles between $\alpha(i,j)$ and $\alpha(i,j+1)$.
Since the fundamental group of $\sigma$ acts cocompactly on $\cover\sigma$, we deduce that $\ell(i,j+1) = \ell(i,j)$ for all $i$ and $j$.
Thus there are no normal triangles in $\cover\sigma$. 
We now reach a contradiction as in \refcor{BranchLinesToggle};
let $Q$ be a diagonal strip of quadrilaterals in $\cover\sigma$.
All quadrilaterals of $Q$ are contained in tetrahedra meeting a single edge of $\cover\calV$.
This contradicts the finiteness of edge degrees.
We deduce that $\Sigma^\calV$ has no closed leaves.

\begin{remark}
When $\calV$ is finite, it admits a strict angle structure by~\cite[Theorem~1.5]{HRST11} or by \cite[Theorem~1.4]{FuterGueritaud13}.
So, in the finite case, this gives an alternate proof that there are no closed surfaces carried by $B^\calV$.
\end{remark}

We now show that $\sigma$ is either a plane, annulus, or M\"obius band.
Again we use the decomposition of $\sigma$ into sectors (\refsec{Leaves}).
Suppose that $\gamma = (\gamma_i)_{i = 0}^{K-1}$ is an essential simple closed multicurve in the leaf $\sigma$.
Isotope $\gamma$ inside of $\sigma$ to be transverse to $\sigma^{(1)}$, the one-skeleton of $\sigma$.
We define the \emph{length} of $\gamma$ to be the number of points of intersection between $\gamma$ and $\sigma^{(1)}$.
So $\sigma^{(1)}$ cuts $\gamma$ into a collection of arcs in each sector:
these are \emph{maxima}, \emph{minima}, \emph{diagonals}, and \emph{verticals}.
The maxima and minima are of two types: they either separate the vertices of the sector, or they do not.
We call the former \emph{separating} and the latter \emph{non-separating}.
See \reffig{SectorArcs}.

\begin{figure}[htb]
\subfloat[]{
\label{Fig:SectorArcsBigon}
\centering
\labellist
\scriptsize\hair 2pt
\pinlabel {maxima} [r] at 115 46
\pinlabel {diagonals} [t] at 405 49
\pinlabel {verticals} [b] at 115 272
\pinlabel {{\parbox{2.2cm}{\begin{center}separating \\minimum\end{center}}}}  [b] at 240 270
\pinlabel {{\parbox{2.2cm}{\begin{center}non-separating \\minimum\end{center}}}} [b] at 400 216
\endlabellist
\includegraphics[width = 0.52\textwidth]{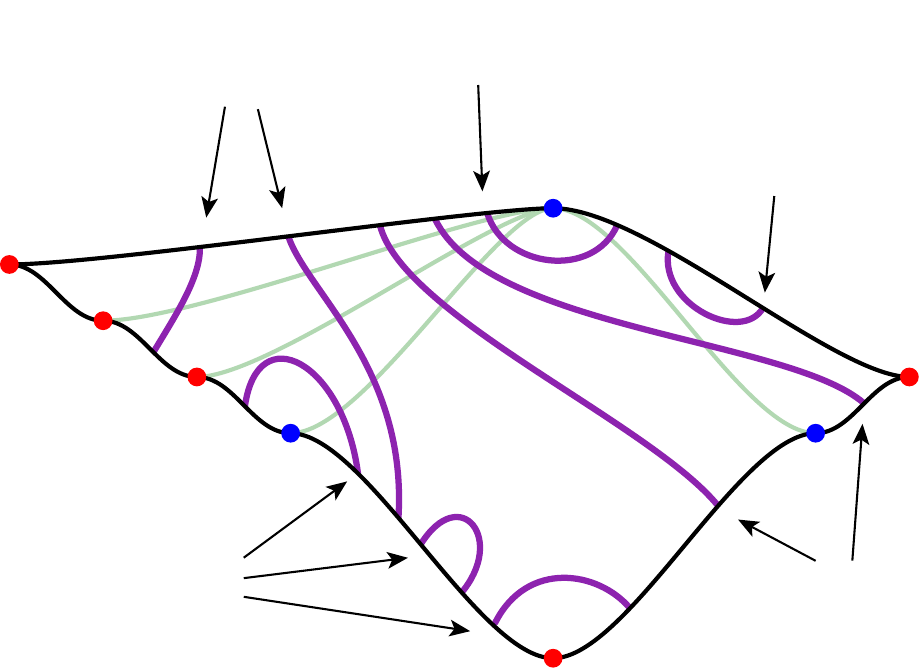}
}
\quad
\subfloat[]{
\label{Fig:SectorArcsDiamond}
\centering
\includegraphics[width = 0.4\textwidth]{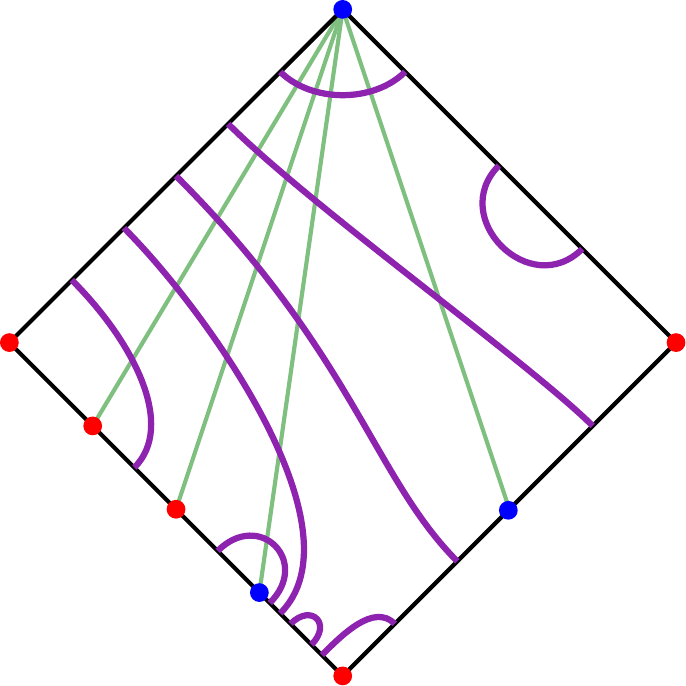}
}
\caption{The various possible subarcs of $\gamma$ meeting a fixed sector, drawn in the two styles of \reffig{Sector}.  
Verticals separate one cusp vertex from the upper and lower vertices of the normal quadrilateral.
Diagonals separate the upper vertex of the normal quadrilateral and one of the cusp vertices from the lower vertex of the normal quadrilateral and the other cusp vertex.
Maxima and minima do not separate the cusp vertices.}
\label{Fig:SectorArcs}
\end{figure}

\begin{figure}[htb]
\subfloat[]{
\label{Fig:BigonMoveNonSeparating}
\centering
\includegraphics[height = 4cm]{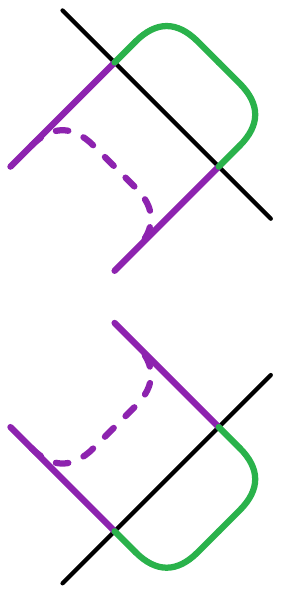}
}
\quad
\subfloat[]{
\label{Fig:BigonMoveT}
\centering
\includegraphics[height = 4cm]{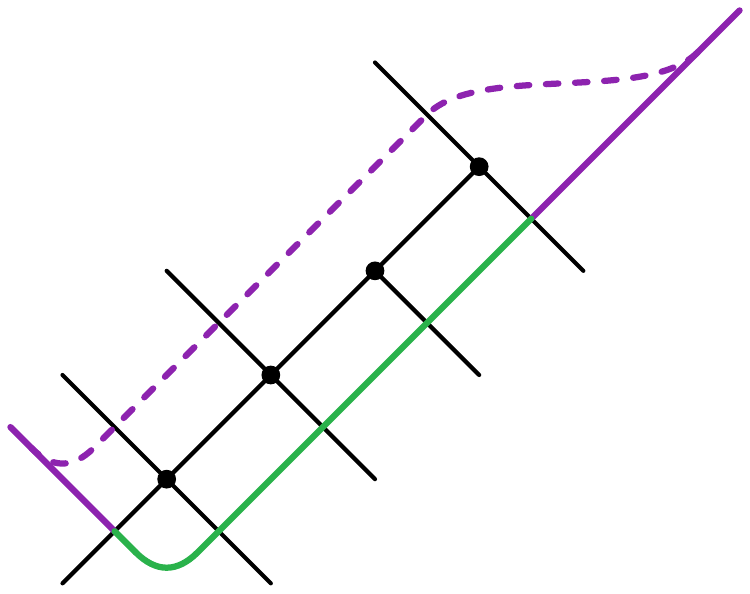}
}

\subfloat[]{
\label{Fig:BigonMoveVertical}
\centering
\includegraphics[height = 4cm]{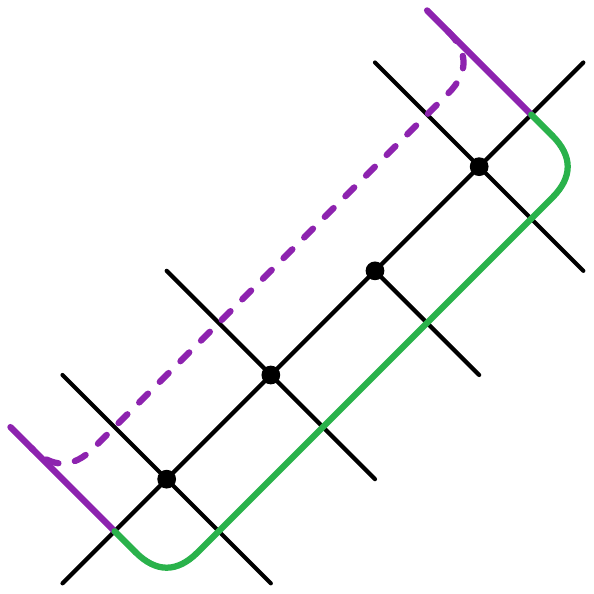}
}
\quad
\subfloat[]{
\label{Fig:BigonMoveMax}
\centering
\includegraphics[height = 4cm]{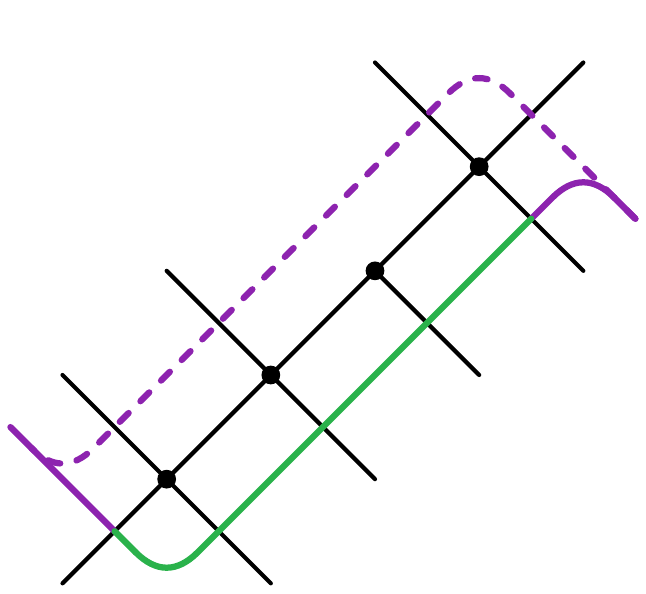}
}
\caption{Examples of arcs $\delta$ and the corresponding ambient isotopies.
In each case the arc $\delta \subset \gamma_k$ is drawn in green.
The result of the ambient isotopy is drawn with a dashed line.}
\label{Fig:BigonMoves}
\end{figure}

\begin{claim*}
The multicurve $\gamma = (\gamma_i)_{i = 0}^{K-1}$ can be isotoped to have no minima.
\end{claim*}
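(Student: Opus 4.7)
The plan is to remove minima by innermost-disk isotopies in $\sigma$, reducing the combinatorial length of $\gamma$ at each step and inducting. Suppose that $\gamma$ has a minimum. By \reffig{SectorArcs}, any minimum $m$ is an arc in some sector $D$ of $\sigma$ whose two endpoints lie on the portion of $\bdy D$ below the two cusp vertices. Together with the sub-arc $\beta \subset \bdy D$ passing through the lower vertex of $D$, the arc $m$ cobounds an embedded disk $D_m \subset D$. Among all minima of $\gamma$, I would take one for which $D_m$ is \emph{innermost}: no other arc of $\gamma$ meets the interior of $D_m$. Such a choice exists because the combinatorial length of $\gamma$ is finite, so within each sector the arcs of $\gamma$ are nested.

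Using $D_m$, I would then isotope $m$ across $\beta$ into the sectors of $\sigma$ adjacent to $D$ along its lower boundary. By \refrem{Sector}, the co-orientation $\alpha$ points into $D$ along $\beta$, so $\sigma$ genuinely extends across $\beta$ into the sectors attached below, and the isotopy takes place within $\sigma$. It is a valid isotopy because $D_m$ is innermost, so its interior meets $\gamma$ only in $m$. After the push, the arc that replaces $m$ lies in one or a few adjacent sectors; a case check based on \reffig{GluingAutomaton} and \reflem{EdgeNeighbourhood} shows that the combinatorial length strictly decreases, and iteration terminates with no minima remaining.

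The main obstacle is the case analysis in the last step: one must verify, across all possible ways in which the sectors of $\sigma$ can glue to $D$ along its lower boundary, that pushing an innermost minimum does not introduce another minimum in an adjacent sector of equal or greater complexity. The veering hypothesis, via the tight restriction on local sector gluings from \reflem{EdgeNeighbourhood} together with the sector structure of \refrem{Sector}, is what lets each push be realised as a strict reduction; in the purely transverse taut setting such minima might well fail to be removable.
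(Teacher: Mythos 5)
There is a genuine gap, and it comes from the direction of your push. The paper's proof moves minima \emph{upwards} (its ``bigon moves'' move a subarc containing exactly one minimum up, and its complexity is the lexicographic pair of combinatorial length and \emph{depth}, the ascending distance to an under-side vertex, which is finite by \refcor{BranchLinesToggle}). You instead push an innermost minimum \emph{downwards}, across the sub-arc $\beta$ of $\bdy D$ containing the lower vertex. The two directions are not symmetric. By \refrem{Sector} a sector has a single upper vertex where $\alpha$ points out, but its lower boundary is a long chain of edges separated by the lower vertex and by one under-side vertex per normal triangle. A minimum that can be pushed across a \emph{single} edge of $\bdy D$ does reduce the length (by two, since the two crossings at its endpoints disappear and the adjacent arcs merge); these are exactly the easy minima, the ones not separating the upper and lower vertices. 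But the minima you are treating enclose the lower vertex, so $\beta$ spans at least two edges of $\bdy D$. Pushing across $\beta$ removes the two crossings at the endpoints of the minimum but creates a new crossing for every branch-arc germ below $\bdy D$ that the push-off sweeps across -- at the lower vertex and at each under-side vertex contained in $\beta$ (see \reffig{LambdasAndX} for the local patterns). The count can therefore break even or go up, and the new sub-arcs enter the sectors below from above in a way that can again be minima of those sectors. So the assertion that ``the combinatorial length strictly decreases'' is not merely an unperformed case check; it is false in the hard case, and the iteration need not terminate.

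Innermost-ness does not rescue this: it guarantees the isotopy is embedded, not that the count improves. What the paper's argument buys by going up is that the only obstruction to escaping a sector upwards is its unique upper vertex, and the under-side vertices of \emph{other} sectors act as escape hatches reached after finitely many steps -- that is precisely what the second (depth) coordinate of the lexicographic complexity measures. If you want to keep a downward-pushing scheme you would need a replacement for that second coordinate, together with an argument that downward pushes do not feed new minima into the sectors below indefinitely; as written, the proposal does not supply either.
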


\begin{proof}
Fix $k \geq 0$.
We assume that, for $i < k$, the curves $\gamma_i$ have no minima.
We now perform a sequence of ambient isotopies of $\gamma$ that fix the $\gamma_i$ (for $i < k$) pointwise while reducing the following lexicographic complexity of $\gamma_k$:
\[
( \mbox{number of minima of $\gamma_k$, length of $\gamma_k$} )
\]
If $\gamma_k$ has no minima then we are done with $\gamma_k$. 

The ambient isotopies of $\gamma$ give \emph{bigon moves} of $\gamma_k$ that reduce complexity. 
A bigon move is applied to a (perhaps long) subarc $\delta$ of $\gamma_k$. 

The simplest case is when $\delta$ is a non-separating maximum or minimum. 
We push $\delta$ across a subarc of $\sigma^{(1)}$;
this does not increase the number of minima and reduces the length by two.
See \reffig{BigonMoveNonSeparating}.

When we cannot apply the previous move, we take $\delta$ to be an arc
\begin{itemize}
\item
starting with a separating minimum, 
\item
followed by a (possibly empty) sequence of diagonals fellow-travelling a smooth arc $\epsilon$ in $\sigma^{(1)}$, and 
\item
ending either when $\epsilon$ ends in a T-junction (\reffig{BigonMoveT}) or when $\delta$ ends  
    \begin{itemize}
    \item
    with a vertical (\reffig{BigonMoveVertical}) or
    \item
    immediately before a separating maximum (\reffig{BigonMoveMax}).
    \end{itemize}
\end{itemize}   

For the subcases shown in Figures~\ref{Fig:BigonMoveT} and~\ref{Fig:BigonMoveVertical} we push $\delta$ across $\epsilon$;
this does not increase the number of minima and reduces the length.
After applying all moves of the above types, we either have no minima in $\gamma_k$ (and are done) or the only subarcs $\delta$ are of the type shown in \reffig{BigonMoveMax} (and thus $\gamma_k$ has no verticals).
In particular, since $\epsilon$ ends at an X-junction, the separating maximum does not separate the end of $\epsilon$ from the cusp vertices of its sector.
(Because we are not in the case where $\epsilon$ ends in a T-junction.)

All remaining cases lead to contradiction, as follows.
Pick one minimum and consider its two subarcs (heading in opposite directions), $\delta$ and $\delta'$.
First, assume that $\delta$ and $\delta'$ do not end in the same maximum.
In this case, we repeatedly applying the bigon move shown in \reffig{BigonMoveMax} while keeping $\gamma_k$ fixed outside of a neighbourhood of $\delta \cup \delta'$.
This either reduces the length or reduces the number of minima.
Finally, suppose that $\delta$ and $\delta'$ end in the same maximum.
In this case, by repeatedly applying the bigon move shown in \reffig{BigonMoveMax}, we either reduce the length, or find an infinite or periodic strip of quadrilaterals.
This gives a contradiction, again as in \refcor{BranchLinesToggle}.
\end{proof} 

Recall that $\sigma$ is not closed.
If $\sigma$ is not a plane, annulus, or M\"obius band then we may choose a multicurve $\gamma$ which cuts out of $\sigma$ a compact subsurface $\sigma'$ of negative Euler characteristic.
Applying the claim immediately above, we may assume that $\gamma$ has no minima and thus no maxima.
Thus $\gamma$ is a union of diagonals and verticals.  
Consider a component $s'$ of $\sigma' - \sigma^{(1)}$, with $s'$ contained in the sector $s$.
Thus $s'$ is a component of $s - \gamma$.
Consulting \reffig{SectorArcsBigon}, we deduce that $s'$ is either 
\begin{itemize} 
\item
a \emph{cusped bigon} (all of $s$), 
\item 
a \emph{boundary trigon} (contains a cusp vertex), or
\item
a \emph{rectangle} (contains no cusp vertices).
\end{itemize}
See~\cite[Table~1]{SchleimerSegerman20}.
Thus $\sigma'$ has Euler characteristic zero.
This is a contradiction.
This proves \refthm{Laminations}\refitm{Types}.

Finally, \reflem{BranchLines}\refitm{CuspNeighbourhood} implies all complementary regions to $\Sigma^\calV$ are torus (or annulus, or plane) crossed with a ray, with paring locus on the inner boundary.
Thus $\Sigma^\calV$ is an \emph{essential lamination}. 
(See~\cite[Definition~6.14]{Calegari07}.)
This completes the proof of \refthm{Laminations}\refitm{Essential}.
\end{proof}

\subsection{Uniqueness}
\label{Sec:Uniqueness}

To simplify the combinatorics, here we relax the transversality assumption on our paths.
That is, suppose that $\gamma \subset B^\calV$ is an oriented path. 
In this section and the next we say that $\gamma$ is an \emph{ascending path} if 
\begin{itemize}
\item
$\gamma$ enters every sector through a lower edge or a lower vertex and 
\item
$\gamma$ exits every sector through an upper edge or its upper vertex.
\end{itemize}
An ascending path $\gamma$ is an \emph{upwards path} if $\gamma$ exits every  sector through its upper vertex.

\begin{remark}
\label{Rem:Ascend}
Suppose that $\sigma$ is a leaf carried by the upper branched surface $B^\calV$. 
Suppose that $\gamma$ is an ascending path that ends in $\sigma$. 
Then after a small isotopy, $\gamma$ is completely contained in $\sigma$.
\end{remark}

\begin{definition}
\label{Def:Parallel}
Fix $M$ a three-manifold and $B \subset M$ a branched surface.  
Let $N = N(B)$ be a tie-neighbourhood of $B$.  
A \emph{tie-isotopy} is an ambient isotopy of $M$ supported in $N$ and which preserves the ties of $N$. 

Suppose that $\Sigma$ and $\Sigma'$ are laminations carried by $B$.  
We say that leaves $\sigma$ and $\sigma'$ of $\Sigma$ and $\Sigma'$ are \emph{parallel} if they are tie-isotopic.
\end{definition}

Note that, by construction, $\Sigma^\calV$ has no (distinct) parallel leaves.

\begin{lemma}
\label{Lem:Squeeze} 
Suppose that $\Sigma$ and $\Sigma'$ are laminations fully carried by $B^\calV$.  
Then for any leaf $\sigma$ of $\Sigma$ there is a leaf $\sigma'$ of $\Sigma'$ so that $\sigma$ and $\sigma'$ are parallel. 
\end{lemma}

\begin{proof}
We first assume that $M \homeo \RR^3$. 
Fix $K$ a layer of a layering of $\calV$. 
Let $\ell = \sigma \cap K$. 
Fix a sector $E$ of $B^\calV$ which meets $\ell$. 
 
We have the following claim.
\begin{claim*}
For every $n > 0$ there is a leaf $\sigma'_n$ of $\Sigma'$ so that 
\begin{itemize}
\item
$\sigma'_n$ crosses $E$ and
\item
the train lines $\ell$ and $\ell'_n = \sigma'_n \cap K$ fellow travel for at least $n$ triangles of $K$, to either side of $E \cap K$. 
\end{itemize}
\end{claim*}

\begin{proof}[Proof of Claim]
Let $\gamma_n \subset \sigma$ be the unique (up to isotopy) upwards path which begins in $E$ and which crosses $n$ sectors.  
See \reffig{TAndX}.  
Let $E_n$ be the final sector that $\gamma_n$ meets.  

Since $\Sigma'$ is fully carried, there is a leaf $\sigma'_n$ of $\Sigma'$ crossing $E_n$.
By \refrem{Ascend} applied to $\gamma_n$, the leaf $\sigma'_n$ also crosses $E$. 

By \reflem{BranchLines}\refitm{BranchLinesLayer}, all branch lines meet $K$.  
This includes the branch lines $(S_i)$ that meet $\gamma_n$. 
Let $s_i$ be the track-cusp of $S_i$ contained in $K$.  
Since $S_i$ is a branch line, the lower component of $S_i - \gamma_n$ is (after a small isotopy) contained in $\sigma$.  
Equally well (after a different small isotopy) $S_i - \gamma_n$ is contained in $\sigma'_n$.
Since the lower component of $S_i - \gamma_n$ contains $s_i$ we deduce that $\ell = \sigma \cap K$ and $\ell'_n = \sigma'_n \cap K$ both cross the edge of $K$ meeting $s_i$.
Since all of the $s_i$ are distinct, and since there are $n$ of them to each side of $E$, the claim follows.
\end{proof}

Since $\Sigma'$ is a lamination, and since all of the $\sigma'_n$ meet $E$, we pass to a convergent subsequence of the leaves $\sigma'_n$ (in the Gromov-Hausdorff topology).  
Thus $\Sigma'$ contains the limiting leaf $\sigma'$.  
Thus $\ell' = \sigma' \cap K$ has the same endpoints in $\Circle$ as $\ell$. 
This holds for all layers of the layering. 
We deduce that $\sigma$ and $\sigma'$ are tie-isotopic in the tie-neighbourhood $N(B^\calV)$, as desired.

If $M$ is not homeomorphic to $\RR^3$ then we lift to $\cover{M}$ and $\cover{\calV}$. 
By \reflem{ExhaustImpliesThreeSpace} we have that $\cover{M}$ is homeomorphic to $\RR^3$.
The preimages of the laminations $\Sigma$ and $\Sigma'$ are again fully carried by the preimage of $B^\calV$.
So we apply the above argument and note that the tie-isotopy may be chosen to be equivariant, by inducting on the skeleta of $B^\calV$.
\end{proof}

\begin{definition}
\label{Def:Collapse}
Fix $M$ a three-manifold and $B \subset M$ a branched surface.
Suppose that $\sigma$ and $\sigma'$ are distinct parallel leaves of a lamination $\Sigma$ carried by $B$.
Then we may delete all leaves between $\sigma$ and $\sigma'$ and tie-isotope $\sigma'$ to $\sigma$.
This tie-isotopy extends to give a new lamination $\Sigma'$.
We say that $\Sigma'$ is obtained by \emph{collapsing} the given parallel leaves.
\end{definition}

\begin{proof}[Proof of \refthm{Laminations}\refitm{LaminationsUnique}] 
Suppose that $\Sigma$ is a lamination fully carried by $B^\calV$.  
From \reflem{Squeeze} we deduce that every leaf of $\Sigma$ is tie-isotopic to some leaf of $\Sigma^\calV$.  
However distinct leaves of $\Sigma$ may be parallel; 
if so, this correspondence will not be one-to-one. 
So we collapse maximal collections of parallel leaves in $\Sigma$.
Now the leaves of $\Sigma$ are naturally in bijection with the leaves of $\Sigma^\calV$.
Pick any order on the countably many boundary leaves of $\Sigma$.
Recalling that tie-isotopies are ambient, we tie-isotope boundary leaves of $\Sigma$, in order, to the corresponding boundary leaves of $\Sigma^\calV$.  
This completes the proof of \refthm{Laminations}\refitm{LaminationsUnique}.
\end{proof}

\subsection{An application}

In the context of pseudo-Anosov homeomorphisms, \refthm{Laminations}\refitm{LaminationsUnique} gives a strong uniqueness result. 
Before proving this, we require the following.

\begin{lemma}
\label{Lem:Transitive}
Suppose that $M$ is a three-manifold equipped with a finite transverse veering triangulation $\calV$. 
Suppose that $D$ and $D'$ are sectors of $B^\calV$.  
Then there is an ascending path $\gamma \subset B^\calV$ that starts in $D$ and ends in $D'$.  
\end{lemma}

\begin{proof}
The branch locus of $B^\calV$, in dual position, is a four-valent graph with a cyclically ordered edge-ends at the vertices.  
Since $\calV$ is finite, the branch locus of $B^\calV$ decomposes as a union of smooth circles; 
each circle is an image of some branch line under the universal covering map.

By \refrem{Dual}, there is some oriented path $\gamma$ in $B^\calV$ that starts and ends in the interiors of $D$ and $D'$ respectively.  
Homotope an initial segment of $\gamma$, staying in $B^\calV$, so that it initially exits $D$ through its uppermost vertex $v$. 
Similarly, we may make $\gamma$ enter $D'$ through its lowest vertex $v'$.  
Let $\beta$ be the subpath of $\gamma$ between $v$ and $v'$.  
Since the branch locus is isotopic to the dual one-skeleton (again by \refrem{Dual}) we may homotope $\beta$, relative to its endpoints, into the branch locus.  
Tighten $\beta$ so that it is locally an embedding.  

Consider a maximal arc $\beta'$ of $\beta$ which is contained in a smooth circle $C$ of the branch locus, and whose orientation disagrees with the given co-orientation.  
We call $\beta'$ a \emph{descending} arc.  
We replace $\beta'$ by its (ascending) complement in $C$. 
We now alternate between tightening and replacing descending arcs with ascending arcs, until all arcs are ascending.  
This produces the desired ascending path. 
\end{proof}

\begin{corollary}
\label{Cor:NonEmptyImpliesFull}
Suppose that $M$ is a three-manifold equipped with a finite transverse veering triangulation $\calV$. 
Suppose that $\Sigma$ is a non-empty lamination carried by $B^\calV$.
Then $\Sigma$ is fully carried.
\end{corollary}


\begin{proof}
Let $\sigma$ be any leaf of $\Sigma$. 
Let $D'$ be any sector crossed by $\sigma$. 
Let $D$ be any other sector of $B^\calV$. 
By \reflem{Transitive} there is an ascending path $\gamma$ in $B^\calV$ connecting $D$ to $D'$. 
From \refrem{Ascend} we deduce that $\sigma$ crosses $D$.  
Thus $\Sigma$ is fully carried.
\end{proof}

\begin{corollary}
\label{Cor:Surprise}
Suppose that $S$ is a compact, connected, oriented surface.  
Suppose that $f \from S \to S$ is a pseudo-Anosov homeomorphism.  
Let $B^f$ be the stable branched surface in the mapping torus $M_f$. 
Let $\Sigma^f$ be the suspension of the stable lamination for $f$. 
Then any non-empty lamination $\Sigma$ carried by $B^f$ is tie-isotopic to $\Sigma^f$, after collapsing parallel leaves.  
\end{corollary}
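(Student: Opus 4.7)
The plan is to reduce \refcor{Surprise} to \refthm{Laminations}\refitm{LaminationsUnique} via Agol's veering triangulation of the drilled mapping torus. Let $\Phi$ denote the suspension flow on $M_f$, and let $M \subset M_f$ be the complement of the (finitely many) singular orbits of $\Phi$; if $\bdy S \neq \emptyset$ then $M$ already inherits the corresponding boundary tori. By Agol's original construction~\cite{Agol11}, the maximal splitting sequence of the stable train track $\tau^s$ of $f$ yields a canonical transverse veering ideal triangulation $(\calT, \alpha)$ on $M$, layered with the intersections of $\Phi$-fibres with $M$ as layers.

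The first technical step would be to identify the restriction of $\calB^f$ to $M$ with the upper branched surface $\calB^\alpha$ of $(M, \calT, \alpha)$. By construction of $\calT$ from the splitting sequence, in any layer $K$ the upper train track $\tau^K$ of \refdef{UpperTrack} agrees with an iterated splitting of $\tau^s$, and consecutive layers differ by exactly one large-switch split, which is the local model of $\calB^\alpha$ inside a tetrahedron (see \reffig{UpperBranchedSurface}). Assembling these local models globally and pushing down to $M$ identifies $\calB^\alpha$ with $\calB^f \cap M$. Under this identification, $\Sigma^f \cap M$ is a lamination carried by $\calB^\alpha$ whose leaves match those of $\Sigma^\alpha$ up to parallel collapse.

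Given this identification, I would take an arbitrary lamination $\Sigma$ carried by $\calB^f$, restrict to $\Sigma \cap M$ (still a lamination, still carried by $\calB^\alpha$), and apply \refthm{Laminations}\refitm{LaminationsUnique} to obtain a tie-isotopy, after collapsing parallels in $\Sigma \cap M$, from $\Sigma \cap M$ to $\Sigma^\alpha$ inside a tie-neighbourhood of $\calB^\alpha$ in $M$. Since this tie-isotopy is supported in $M$, it extends by the identity across the drilled singular orbits to give the desired tie-isotopy of $M_f$ taking (the collapse of) $\Sigma$ to $\Sigma^f$.

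The main obstacle will be the naturality bookkeeping at the drilled singular orbits. One must check that leaves of $\Sigma$ which spiral onto a singular orbit of $\Phi$ restrict to leaves of $\Sigma \cap M$ spiralling into the corresponding cusp of $M$ in the manner predicted by \reflem{BranchLines}, so that no leaves are lost when passing from $M_f$ to $M$, parallel classes of leaves are preserved in both directions, and the collapses needed in $M$ correspond to collapses in $M_f$. This amounts to matching the branch loops on the drilled peripheral torus with the prongs of the pseudo-Anosov structure; once this is established, extending the tie-isotopy across the drilled locus is immediate because the isotopy is already the identity in a neighbourhood of each singular orbit.
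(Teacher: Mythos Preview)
Your approach is essentially the paper's: drill out the singular orbits, invoke Agol's construction to identify $\calB^f$ with $\calB^\alpha$ and $\Sigma^f$ with $\Sigma^\alpha$, and then apply \refthm{Laminations}\refitm{LaminationsUnique}. One simplification: your final ``obstacle'' is illusory, since the stable branched surface $\calB^f$ (and hence its tie-neighbourhood and anything it carries) is already disjoint from the singular orbits, so every carried lamination lives entirely in $M$ and no extension across the drilled locus is needed.
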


\begin{proof}
We remove the boundary of $M_f$ and drill out the singular orbits of the suspension flow $\Phi_f$.  
By \cite[Main~Construction]{Agol11} the resulting manifold $M$ admits a veering triangulation $\calV$; 
furthermore, $B^f$ is isotopic to $B^\calV$ and $\Sigma^f$ is isotopic to $\Sigma^\calV$.  

Suppose that $\Sigma$ is a non-empty lamination carried by $B^f$.  
By \refcor{NonEmptyImpliesFull} we have that $\Sigma$ is fully carried. 
By \refthm{Laminations}\refitm{LaminationsUnique} we have that, after collapsing parallel leaves, $\Sigma$ is tie-isotopic to $\Sigma^f$.
\end{proof}

\chapter{Interaction between the upper and lower laminations}

With the upper and lower laminations in hand we may explore their interactions.  

\subsection{No shared endpoints}


As usual we fix $M$ a three-manifold with a transverse veering triangulation $\calV$.  
We take $\cover{M}$ to be the universal cover; 
we take $\cover{B}$ to be the preimage, in $\cover{M}$, of the horizontal branched surface $B(\calV)$. 
If $L \subset \cover{M}$ is a landscape carried by $\cover{B}$ 
then we take $\tau^L$ and $\tau_L$ to be the induced upper and lower tracks. 
Finally we have $\Circle$ the veering circle and $\Lambda^\calV$ and $\Lambda_\calV$ the upper and lower laminations. 

\begin{lemma}
\label{Lem:NoMixedTypeAsymptotics}
No leaf of $\Lambda^\calV$ is asymptotic to a leaf of $\Lambda_\calV$.
\end{lemma}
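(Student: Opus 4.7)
The plan is to suppose for contradiction that $\lambda^\alpha \in \Lambda^\alpha$ and $\lambda_\alpha \in \Lambda_\alpha$ share an endpoint $x \in S^1(\alpha)$, and to derive a contradiction by comparing their representatives in a single layer. Using \refcor{VeerImpliesLayered}, fix a layering of $\cover{M}$ and a layer $K$. By \reflem{Laminations}\refitm{LeavesAreCarried} applied to each of the two laminations, represent $\lambda^\alpha$ by a train line $\ell^\alpha$ carried by $\tau^K$ and $\lambda_\alpha$ by $\ell_\alpha$ carried by $\tau_K$, both with $x$ as an endpoint. Orient the two rays toward $x$. By \refcor{Irrational}, each ray determines a nested sequence of $A$--arcs shrinking to $x$; and since $K$ is combinatorially the Farey tessellation (\reflem{Disk}), two such nested arc sequences converging to the same boundary point must eventually coincide. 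Thus the two rays eventually cross the same sequence of faces $\{f_n\}_{n \geq N}$ and edges $\{e_n\}_{n \geq N}$.

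Next, the veering combinatorics sharply restrict this common data. In any face $f$, the upper track admits only $\{e_0^{up}(f), e_i^{up}(f)\}$ with $i \in \{1,2\}$ as smooth transitions, and the lower track admits only the analogous pairs $\{e_0^{lo}(f), e_j^{lo}(f)\}$. A short case analysis using the veering colouring on the tetrahedra above and below $f$ shows $e_0^{up}(f) \neq e_0^{lo}(f)$: if they were equal, the veering rule (applied from above and from below) would force some edge of $f$ to be simultaneously red and blue. Hence the only pair simultaneously allowed by both tracks is the ``common passage'' $\{e_0^{up}(f_n), e_0^{lo}(f_n)\}$ between the two $\pi$--edges of $f_n$. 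The same local analysis shows that these two $\pi$--edges always share a colour (both red if $f_n$ is majority red, both blue if majority blue), so consecutive shared edges agree in colour; inductively, all $e_n$ for $n \geq N$ have a common colour, say red.

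To close the argument, note that in each shared majority-red face $f_n$ the two red $\pi$--edges $e_n$ and $e_{n+1}$ meet at a unique cusp $c_n$, namely the vertex of $f_n$ whose two incident edges in $f_n$ are both red; the common passage circles $c_n$. I would show that the sequence $\{c_n\}$ must stabilise at a single cusp $c^\ast$: granting this, $c^\ast$ is an endpoint of every $e_n$ for $n$ large, so $c^\ast \in A(e_n)$ for all such $n$, forcing $c^\ast \in \bigcap_n A(e_n) = \{x\}$ and contradicting \refcor{Irrational}, which says $x$ is not a cusp. The main obstacle is the stabilisation step. A ``jumping'' step $c_n \neq c_{n+1}$ imposes a specific local configuration of toggle and fan tetrahedra around the red edge $e_{n+1}$ (via the red analogue of \reflem{EdgeNeighbourhood}, combined with the choice of which two majority-red faces at $e_{n+1}$ lie in the layer $K$); ruling out infinitely many jumps will, I expect, need a colour-balance argument in the spirit of the proof of \refcor{BranchLinesToggle} together with \refcor{CannotTurnLeftForever}. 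A naive turn-direction approach is not enough: in each $f_n$ the turn direction depends on whether the entry edge $e_n$ is $e_0^{up}(f_n)$ or $e_0^{lo}(f_n)$, and both options are \apriori{} available, so one does not get a direct contradiction from ``eventually all left'' or ``eventually all right''.
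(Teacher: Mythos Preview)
Your first two paragraphs are correct and essentially identical to the paper's opening: both obtain train lines $\ell$ in $\tau^K$ and $m$ in $\tau_K$ sharing an endpoint, deduce that their subrays toward $x$ eventually cross the same sequence of edges, and conclude that the resulting infinite strip $P$ of triangles has all interior edges of one colour (say red).  Your argument that $e_0^{up}(f)\neq e_0^{lo}(f)$ is also correct; indeed this follows directly from the description of $\tau^f$ and $\tau_f$ in \reffig{VeeringTriangles}, since the two tracks point at edges opposite different cusps.

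The divergence, and the genuine gap, is your third paragraph.  You reduce the problem to showing that the cusps $c_n$ (the vertex of $f_n$ opposite its blue edge) eventually stabilise, and you acknowledge that you do not have a proof of this.  Your proposed strategy --- a colour-balance argument in the spirit of \refcor{BranchLinesToggle} together with \refcor{CannotTurnLeftForever} --- is not developed, and it is not clear that it can be made to work while staying inside the fixed layer $K$.  As you yourself note, the turn-direction in $f_n$ for each of $\ell$ and $m$ depends on which of $e_n$, $e_{n+1}$ equals $e_0^{up}(f_n)$, and this is determined by the tetrahedra above and below $f_n$ rather than by the colours alone; so there is no immediate monotonicity to exploit.

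The paper avoids this difficulty by leaving the layer $K$.  Having obtained the all-red strip $P$, it considers any in-fill tetrahedron $t$ attached above an interior edge of $P$.  If $t$ were a toggle, the new upper edge would be blue; but then the upper train line $\ell'$ in the new layer $K'$ (still representing $\lambda^\alpha$ by \reflem{Laminations}\refitm{LeavesAreCarried}) could not cross this blue edge while remaining in the strip, a contradiction.  Hence every such in-fill is a fan tetrahedron.  Now \refcor{CannotTurnLeftForever} guarantees an upper track-cusp $s$ on some interior edge of $P$ pointing along the strip; the upper branch line $S$ through $s$ then meets only fan tetrahedra, contradicting \refcor{BranchLinesToggle}.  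This argument uses exactly the two corollaries you mention, but applies them to a branch line moving \emph{up} through layers rather than to the rays within $K$, which is what makes it go through cleanly.
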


\begin{proof}
For a contradiction, suppose that $\lambda = \lambda(x, y)$ and $\mu = \lambda(y, z)$ are leaves of $\Lambda^\calV$ and $\Lambda_\calV$ respectively, which share a common endpoint $y \in \Circle$.  

Fix $K$ a layer of some layering.  Let $\tau^K$ and $\tau_K$ be the upper and lower train tracks in $K$.  \reflem{Laminations}\refitm{LeavesAreCarried} gives us train lines $\ell$ in $\tau^K$ and $m$ in $\tau_K$ that have the same endpoints as $\lambda$ and $\mu$, respectively.  We orient $\ell$ and $m$ towards $y$.   Since $\bdy_+ \ell = y = \bdy_+ m$, and applying \reflem{Laminations}\refitm{Irrational}, there are subrays of $\ell$ and $m$ that cross an identical collection of edges in the triangulation of $K$.  This collection of edges determines an infinite strip $P \subset K$ of triangles.  There are only two kinds of veering triangle, both shown in \reffig{VeeringTriangles}; from this we deduce that the internal edges (and the initial edge) of $P$ are all the same colour.  The boundary edges (other than the initial edge) are the opposite colour.  Breaking symmetry, we assume that the interior edges of $P$ are red.  For an example, see \reffig{NoMixedTypeAsymptotics}.  

\begin{figure}[htb]
\centering
\subfloat[A strip of triangles $P_0$ with blue boundary edges and red interior edges.  This locally contains the fellow travelling upper and lower train lines.]{
\includegraphics[width=0.8\textwidth]{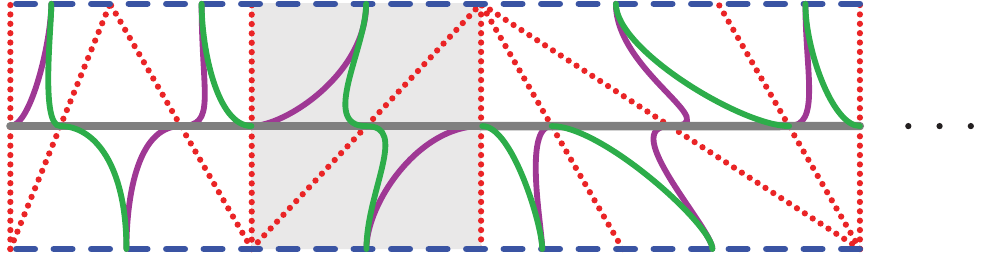}
\label{Fig:NoMixedTypeAsymptotics}
}

\subfloat[Layering a fan tetrahedron on top of $P_i$.]{
\includegraphics[width=0.8\textwidth]{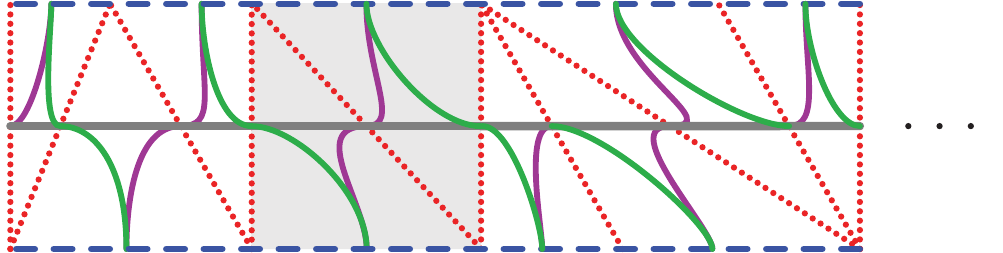}
\label{Fig:NoMixedTypeAsymptoticsRemains}
}

\subfloat[Layering a toggle tetrahedron on top of $P_i$ sends upper (green) train routes out of the strip.]{
\includegraphics[width=0.8\textwidth]{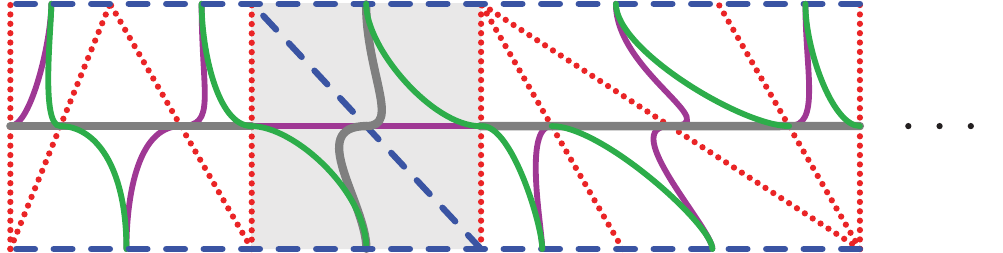}
\label{Fig:NoMixedTypeAsymptoticsExits}
}
\caption{Layering a tetrahedron on top of a strip of triangles. The altered triangles are shaded in light grey. Where the upper and lower tracks coincide we draw a thicker grey arc.}
\label{Fig:NoMixedTypeAsymptoticsFigs}
\end{figure}

\begin{claim*}
Any in-fill tetrahedron $t$ attached above $P$, to an interior edge of $P$, is a fan tetrahedron.
\end{claim*}

\begin{proof}
Let $K'$ be the landscape obtained from $K$ by removing the bottom two triangles of $t$ from $K$ and replacing it with the top two triangles of $t$.  Note that $K'$ is again a layer of some layering.  We define $P' \subset K'$ similarly.  Applying \reflem{Laminations}\refitm{LeavesAreCarried}, the leaf $\lambda$ determines a train line $\ell'$ in the track $\tau^{K'}$.  

Suppose that $t$ is a toggle.
Thus $P'$ has an internal edge which is blue.
Since $\ell'$ cannot cross this edge, we have reached a contradiction.
See \reffig{NoMixedTypeAsymptoticsExits}.
\end{proof}

Suppose that $t$ is an in-fill fan tetrahedron attached above $P$, to an interior edge of $P$.
The resulting strip $P'$ again has red interior edges.
Also, $\tau^{P'}$ still carries $\ell'$.  
See \reffig{NoMixedTypeAsymptoticsRemains}.  

By \reflem{CannotTurnLeftForever} the rays $\ell \cap P$ and $m \cap P$ both turn right and left infinitely often.  Thus there is an upper track-cusp $s$ meeting an interior edge of $P$ so that $s$ points in the same direction as $\ell$ and $m$.  By the claim above, the branch line $S$ passing through $s$ meets only fan tetrahedra.  This contradicts \refcor{BranchLinesToggle}.
\end{proof}

\subsection{Crowns}

\begin{definition}
\label{Def:Crown}
We define $\Lambda^c \subset \Lambda^\calV$, the \emph{upper crown} of the cusp $c$, to be the set of leaves $\lambda(S, T)$ where here $S$ and $T$ range over all adjacent upper branch lines in the upper cusp neighbourhood $N^c$.
We define the \emph{lower crown} $\Lambda_c$ similarly.
The endpoints of the boundary leaves of a crown are called its \emph{tips}.
\end{definition}

\begin{figure}[htbp]
\labellist
\small\hair 2pt
\pinlabel {$c$} [t] at 285 0
\pinlabel {$\bdy S'$} [tl] at 475 80
\pinlabel {$\bdy T$} [l] at 565 230
\pinlabel {$d$} [b] at 285 570
\pinlabel {$\bdy T'$} [br] at 97 492
\pinlabel {$\bdy S$} [r] at 5 341
\endlabellist
\includegraphics[width=0.4\textwidth]{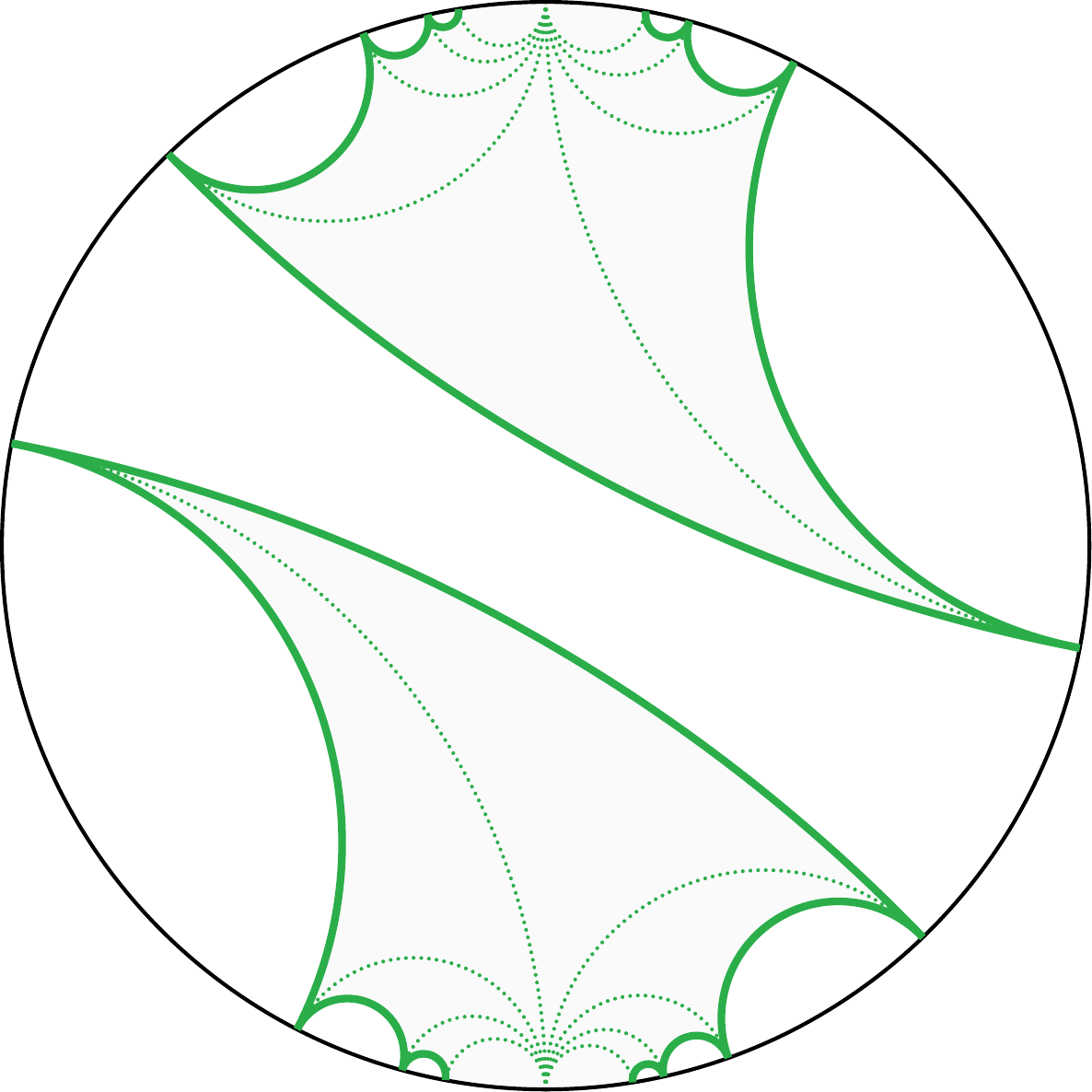}
\caption{Two upper crowns $\Lambda^c$ and $\Lambda^d$.
  The solid lines are boundary leaves and the dotted lines are cusp leaves.
  Compare with \reffig{TotalOrder}.
}
\label{Fig:UpperCrowns}
\end{figure}

By \reflem{BranchLines}\refitm{Cyclic} the boundary leaves of an upper crown $\Lambda^c$ are adjacent in pairs and ordered by $\ZZ$.  
By \reflem{CrossingParabolics}\refitm{TipsOfCrown} the tips of $\Lambda^c$ accumulate (only) on $c$.  
See \reffig{UpperCrowns}. 

\begin{definition}
\label{Def:LinkedCrowns}
Suppose that $\Lambda^c$ and $\Lambda_d$ are upper and lower crowns. 
\begin{itemize}
\item 
They are \emph{unlinked} if no leaf of $\Lambda^c$ links any leaf of $\Lambda_d$. 
\item 
They \emph{cross} if there are leaves $\lambda(R,S)$ and $\lambda(S, T)$ in $\Lambda^c$ and leaves $\lambda(U,V)$ and $\lambda(V,W)$ in $\Lambda_d$ so that these leaves of the crowns link and no others do.  
See \reffig{CrossingCrowns}.
\item 
They \emph{interleave} if $c = d$ and we may index the upper and lower branch lines $S_i$ and $V_i$ such that $\lambda(S_i,S_{i+1})$ links $\lambda(V_{i-1},V_i)$ and $\lambda(V_i,V_{i+1})$.
See \reffig{InterleavingCrowns}. \qedhere
\end{itemize}
\end{definition}

\begin{figure}[htb]
\centering
\subfloat[]{
\labellist
\small\hair 2pt
\pinlabel {$\bdy U$} at 590 200
\pinlabel {$\bdy V$} at 60 500
\pinlabel {$\bdy W$} at 370 -15
\pinlabel {$d$} at 500 70
\pinlabel {$\bdy R$} at -20 200
\pinlabel {$\bdy S$} at 520 500
\pinlabel {$\bdy T$} at 200 -15
\pinlabel {$c$} at 70 70
\endlabellist
\includegraphics[width=0.43\textwidth]{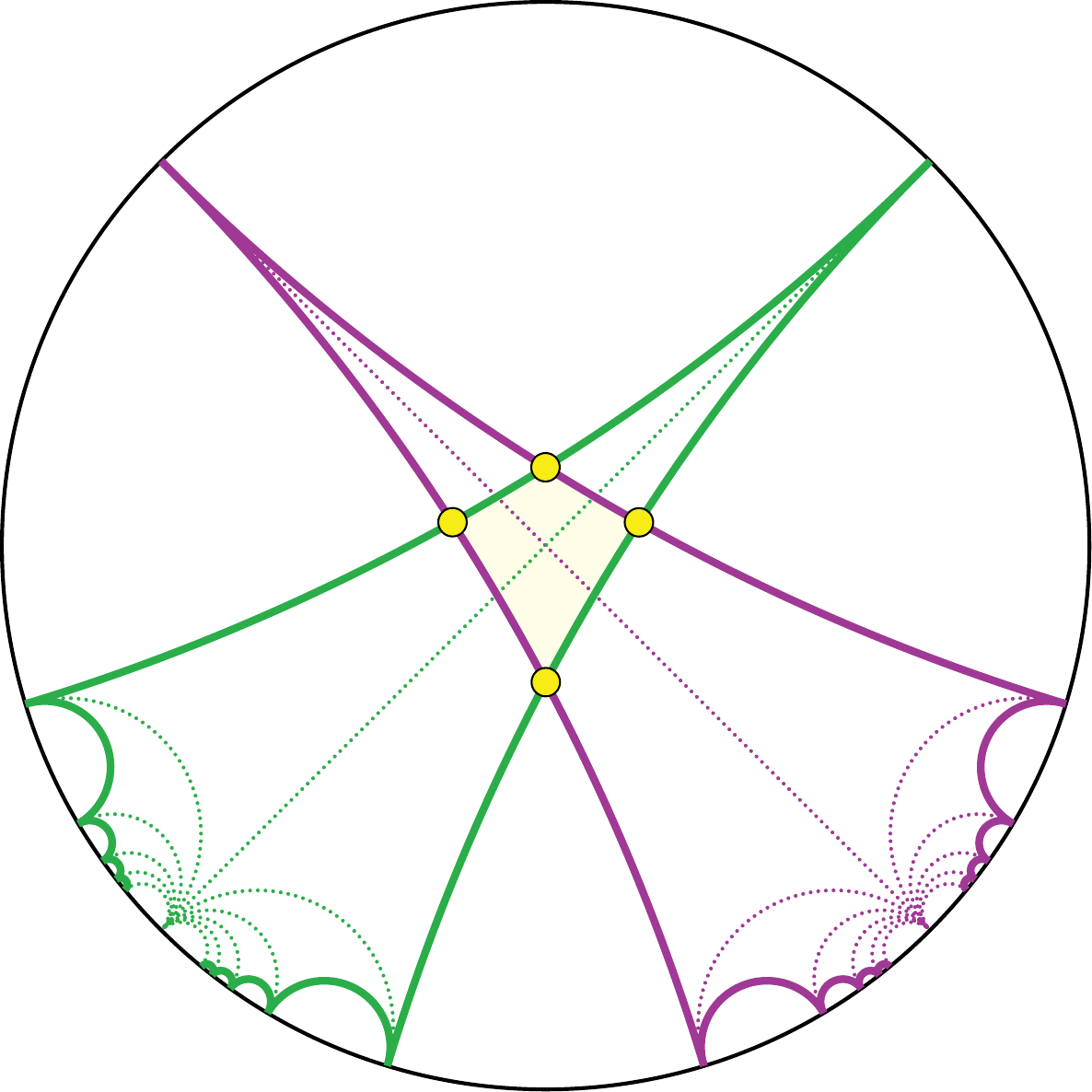}
\label{Fig:CrossingCrowns}
}
\qquad
\subfloat[]{
\includegraphics[width=0.43\textwidth]{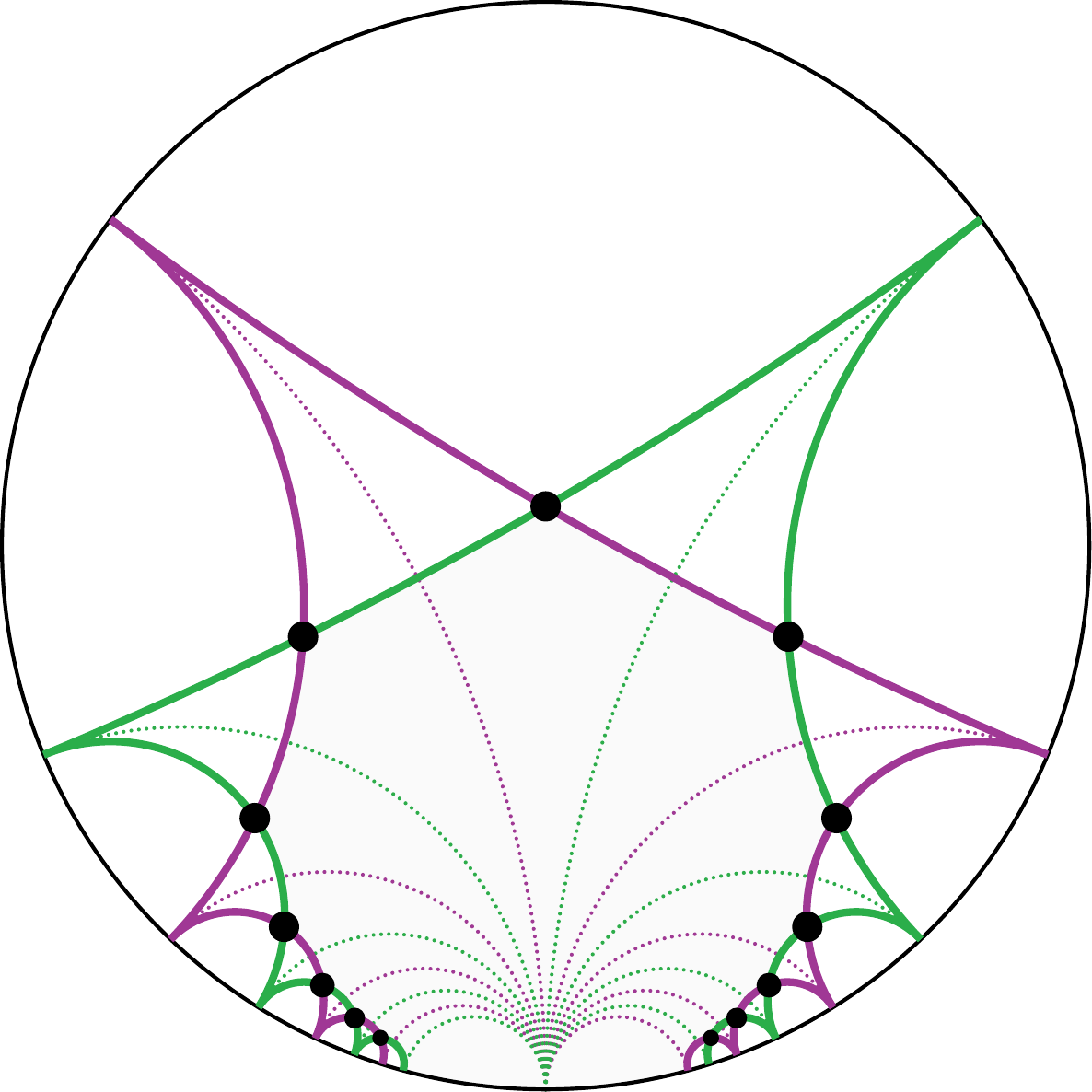}
\label{Fig:InterleavingCrowns}
}
\caption{Crossing and interleaving crowns.  We illustrate each leaf by drawing an arc through the unit disk joining the two points of the leaf.  We indicate linking pairs of leaves with black or yellow dots as the pair comes from interleaving crowns or otherwise.}
\label{Fig:LeafIdentifications}
\end{figure}

\begin{lemma}
\label{Lem:CrownsInterleave}
The upper and lower crowns $\Lambda^c$ and $\Lambda_c$ interleave.  
\end{lemma}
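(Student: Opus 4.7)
The plan is to reduce the interleaving of $\Lambda^c$ and $\Lambda_c$ to a combinatorial statement about the alternation of upper and lower track-cusps around $c$ in a single layer of a layering. Fix a layer $K$ of some layering of $\cover{M}$, which exists by \refcor{VeerImpliesLayered}. Applying \reflem{BranchLines}\refitm{Cyclic}, \refitm{BranchLinesLayer}, \refitm{NeighbourhoodLayer} to the upper branched surface $\calB^\alpha$, we get upper branch lines $\{S_i\}_{i\in\ZZ}$ in $N^c$ and track-cusps $s_i \subset K$ lying in faces of $K$ incident to $c$, pointing away from $c$, and cyclically ordered around $c$. The same lemma applied to $\calB_\alpha$ produces lower branch lines $\{V_j\}_{j\in\ZZ}$ in $N_c$ with track-cusps $\sigma_j \subset K$ in faces of $K$ incident to $c$, pointing away from $c$, and cyclically ordered around $c$.

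The crux of the proof is the \emph{alternation claim}: after suitable re-indexing, the combined cyclic order of the $s_i$ and the $\sigma_j$ around $c$ in $K$ is the strictly alternating pattern $\ldots, s_i, \sigma_i, s_{i+1}, \sigma_{i+1}, \ldots$. To prove it, one inspects the two types of veering triangles in \reffig{VeeringTriangles} and notes that in each face $f$ of $\cover{\calT}$ the upper and lower track-cusps point away from different cusps of $f$; in particular, at most one of them points away from $c$. Then, using the gluing automaton of \reffig{GluingAutomaton} together with \reflem{EdgeNeighbourhood} (which controls the stack of tetrahedra on each side of an edge incident to $c$), one verifies that consecutive faces of $K$ around $c$ alternately contribute an upper and a lower track-cusp pointing at $c$. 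This finite case analysis, organized by the colours of the edges at $c$, is the main obstacle.

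To finish, we translate the alternation from the layer $K$ to the veering circle $S^1(\alpha)$. By \refcor{Irrational}, each $\bdy S_i$ lies in the arc $A(e)$ where $e$ is the edge of $K$ at $s_i$ directed away from $c$, and likewise for $\bdy V_j$. Since the closed arcs $A(e)$ for edges of $K$ incident to $c$ cyclically partition $S^1(\alpha) - \{c\}$, the cyclic order of tips in $S^1(\alpha)$ around $c$ coincides with the cyclic order of track-cusps in $K$ around $c$, giving
\[
\ldots, \bdy S_i, \bdy V_i, \bdy S_{i+1}, \bdy V_{i+1}, \ldots
\]
By \reflem{NoMixedTypeAsymptotics} no upper and lower tips coincide, so the alternation is strict. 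Consequently $\bdy V_i$ lies in the arc between $\bdy S_i$ and $\bdy S_{i+1}$ while $\bdy V_{i-1}$ and $\bdy V_{i+1}$ lie outside it. Hence both lower leaves $\lambda(V_{i-1}, V_i)$ and $\lambda(V_i, V_{i+1})$ link the upper leaf $\lambda(S_i, S_{i+1})$, which is exactly the interleaving condition of \refdef{LinkedCrowns}.
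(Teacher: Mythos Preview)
Your approach is the same as the paper's: fix a layer $K$, show the upper and lower track-cusps pointing away from $c$ alternate around $c$, then pass to $S^1(\alpha)$. The difference lies in how you prove the alternation claim.

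You invoke the gluing automaton and \reflem{EdgeNeighbourhood}, and assert that ``consecutive faces of $K$ around $c$ alternately contribute an upper and a lower track-cusp''. That sentence is not quite right: most faces around $c$ contribute \emph{no} track-cusp pointing away from $c$. The paper's argument is cleaner and uses only \reffig{VeeringTriangles}. Walking anti-clockwise around $c$ through the edges $e_i$ of $K$ meeting $c$, a face $f_i$ (between $e_i$ and $e_{i+1}$) contains an upper track-cusp pointing away from $c$ exactly when $e_i$ is red and $e_{i+1}$ is blue, a lower one exactly when $e_i$ is blue and $e_{i+1}$ is red, and none when the colours agree. Since the colour sequence around $c$ consists of monochromatic runs, the colour transitions (and hence the outward-pointing track-cusps) alternate red-to-blue, blue-to-red, red-to-blue, \ldots, which is exactly upper, lower, upper, \ldots. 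No appeal to \reffig{GluingAutomaton} or \reflem{EdgeNeighbourhood} is needed.

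Your passage from the layer to $S^1(\alpha)$ via the arcs $A(e)$ and \refcor{Irrational} is more explicit than the paper's, which simply says the cusp lines (and hence the tips) alternate. Your citation of \reflem{NoMixedTypeAsymptotics} to get \emph{strict} alternation of tips (so that the boundary leaves genuinely link) is a point the paper leaves implicit; it is available, since that lemma precedes \reflem{CrownsInterleave}.
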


The analysis here is similar to, but more delicate than that in the proof of \reflem{BranchLines}\refitm{NeighbourhoodLayer}.

\begin{proof}[Proof of~\reflem{CrownsInterleave}]
Fix $K$ a layer of a layering.
We walk in $K$ anticlockwise about the cusp $c$.
As we do so, we pass a sequence $(e_i)$ of edges meeting $c$.
Let $f_i$ be the triangle meeting $e_i$ and $e_{i+1}$.
If $e_i$ is red and $e_{i+1}$ is blue then $f_i$ contains an upper track-cusp pointing away from $c$.
If the colours are interchanged then $f_i$ contains an lower track-cusp pointing away from $c$.
If $e_i$ and $e_{i+1}$ have the same colour then $f_i$ does not contain a track-cusp pointing away from $c$.
See \reffig{VeeringTriangles}.
Thus the connecting arcs emanating from $c$ alternate in the correct fashion.
By \reflem{Disk} the layer $K$ is a copy of the Farey tessellation.
Thus we deduce that the cusp lines emanating from $c$ also alternate.
Thus the upper and lower crowns interleave.
See \reffig{InterleavingCrowns}.
\end{proof}

Before dealing with crossing crowns, we require the following.

\begin{lemma}
\label{Lem:TriangleLeafCross}
Suppose that $c$ is a cusp and $R$ and $S$ are adjacent upper branch lines in $N^c$.
Suppose that $\mu$ is a lower cusp leaf, a lower boundary leaf or a lower interior leaf.
Then $\mu$ links at most one of the cusp leaves $\lambda(c,R)$ and $\lambda(c,S)$.
The same statement holds swapping upper and lower. 
\end{lemma}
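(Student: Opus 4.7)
The plan is to assume, for contradiction, that the lower leaf $\mu = \lambda(x, y)$ links both $\lambda(c, R)$ and $\lambda(c, S)$, and then produce a second, illegal linking.  The key input is \reflem{CrownsInterleave}, which supplies a lower branch line in $N^c$ interleaved between $R$ and $S$.

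Write $R = S_i$ and $S = S_{i+1}$, and let $V = V_i \subset N^c$ be the lower branch line given by \reflem{CrownsInterleave}, so that (after choosing orientation) the cyclic order of tips near $c$ is $\ldots, \bdy V_{i-1}, \bdy R, \bdy V, \bdy S, \bdy V_{i+1}, \ldots$ anti-clockwise.  Linking $\lambda(c, R)$ places $x$ and $y$ on opposite sides of the chord from $c$ to $\bdy R$, and similarly for $\lambda(c, S)$.  A short case analysis on the three arcs of $S^1(\alpha) - \{c, \bdy R, \bdy S\}$ shows that the only way to satisfy both requirements is, after swapping $x$ and $y$ if necessary, $x \in (c, \bdy R)^\acw$ and $y \in (\bdy S, c)^\acw$.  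Since these two arcs lie in $(c, \bdy V)^\acw$ and $(\bdy V, c)^\acw$ respectively, $\mu$ also links the lower cusp leaf $\lambda(c, V)$.

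If $\mu$ is a lower cusp leaf, then $\mu$ and $\lambda(c, V)$ are a linked pair of lower cusp leaves, contradicting the lower version of \reflem{Unlinked}.  Suppose instead that $\mu \in \Lambda_\alpha$, so $\mu$ is a lower boundary or interior leaf.  By the lower analog of \reflem{Parabolics}\refitm{TipsOfCrown}, the tips $\{\bdy V_k\}_{k \in \ZZ}$ form a discrete sequence in $S^1(\alpha) \setminus \{c\}$ accumulating at $c$ from both sides.  Assume first that $x$ is not itself a lower tip; then there is a unique $j$ with $x \in (\bdy V_{j-1}, \bdy V_j)^\acw$.  Comparing $x \in (c, \bdy R)^\acw$ with the position $\bdy V_i \in (\bdy R, \bdy S)^\acw$ forces $j \leq i$, so both $\bdy V_{j-1}$ and $\bdy V_j$ lie in $(c, \bdy S)^\acw$.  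Then $y \in (\bdy S, c)^\acw$ necessarily lies in the long arc $(\bdy V_j, \bdy V_{j-1})^\acw$ through $c$, opposite $x$.  Hence $\mu$ links the lower boundary leaf $\lambda(V_{j-1}, V_j) \in \Lambda_\alpha$, contradicting \reflem{Laminations}\refitm{NoLinking}.

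The exceptional case $x = \bdy V_k$ is excluded by \reflem{Laminations}\refitm{Asymptotic} together with the lower version of \reflem{BranchLinesNotAsymptotic}: any lower leaf with $\bdy V_k$ as an endpoint is either $\lambda(c, V_k)$ (already handled as a cusp leaf) or one of the adjacent boundary leaves $\lambda(V_{k-1}, V_k)$ or $\lambda(V_k, V_{k+1})$, whose remaining endpoint is another lower tip lying in $(c, \bdy R)^\acw$ rather than in $(\bdy S, c)^\acw$.  The hardest step is the case analysis leading to $x \in (c, \bdy R)^\acw$, $y \in (\bdy S, c)^\acw$; once that ``wrap around $c$'' configuration is isolated, the interleaved lower branch line $V$ immediately supplies a forbidden linking.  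The statement with upper and lower swapped follows by exactly the same argument.
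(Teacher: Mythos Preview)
Your argument is correct and follows essentially the same route as the paper's proof: assume $\mu$ links both cusp leaves, use \reflem{CrownsInterleave} to produce a lower branch line $V$ between $R$ and $S$ so that $\mu$ also links $\lambda(c,V)$, dispatch the cusp-leaf case via \reflem{Unlinked}, and for $\mu \in \Lambda_\alpha$ trap an endpoint of $\mu$ between consecutive lower tips to exhibit a linked lower boundary leaf, contradicting \reflem{Laminations}\refitm{NoLinking}. The paper compresses your index-chasing into the single sentence ``the endpoints of $\mu$ lie in distinct components of $S^1(\alpha)$ minus the tips of $\Lambda_c$, hence $\mu$ links two boundary leaves of $\Lambda_c$,'' but the content is the same. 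One tiny slip: in your exceptional case, when $k = i-1$ the adjacent tip $\bdy V_{k+1} = \bdy V_i$ lies in $(\bdy R, \bdy S)^\acw$, not in $(c, \bdy R)^\acw$; the conclusion ``not in $(\bdy S, c)^\acw$'' is unaffected, so the contradiction still goes through.
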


\begin{proof}
Suppose for contradiction that $\mu$ links both $\lambda(c,R)$ and $\lambda(c,S)$. 
Thus $\mu$ is neither a cusp leaf based at $c$ nor a boundary leaf of the crown $\Lambda_c$. 
By \reflem{CrownsInterleave}, there is a lower branch line $U$ in $N_c$ such that the cusp leaf $\lambda(c,U)$ links the boundary leaf $\lambda(R,S)$.
We deduce that $\mu$ also links the cusp leaf $\lambda(c,U)$.
If $\mu$ is a cusp leaf then we have a contradiction to \reflem{Unlinked}.
Thus by \reflem{CrossingParabolics}\refitm{TipsOfCrown} the endpoints of $\mu$ lie in distinct components of $\Circle$ minus the tips of the crown $\Lambda_c$. 
Thus, $\mu$ links exactly two boundary leaves of $\Lambda_c$. 
This contradicts \reflem{Laminations}\refitm{NoLinking}.
\end{proof}

\begin{figure}[htb]
\centering
\subfloat[]{
\labellist
\tiny\hair 2pt
\pinlabel {$\bdy U$} at 594 200
\pinlabel {$\bdy V$} at 60 500
\pinlabel {$\bdy R$} at -24 200
\pinlabel {$\bdy S$} at 520 500
\endlabellist
\includegraphics[width=0.27\textwidth]{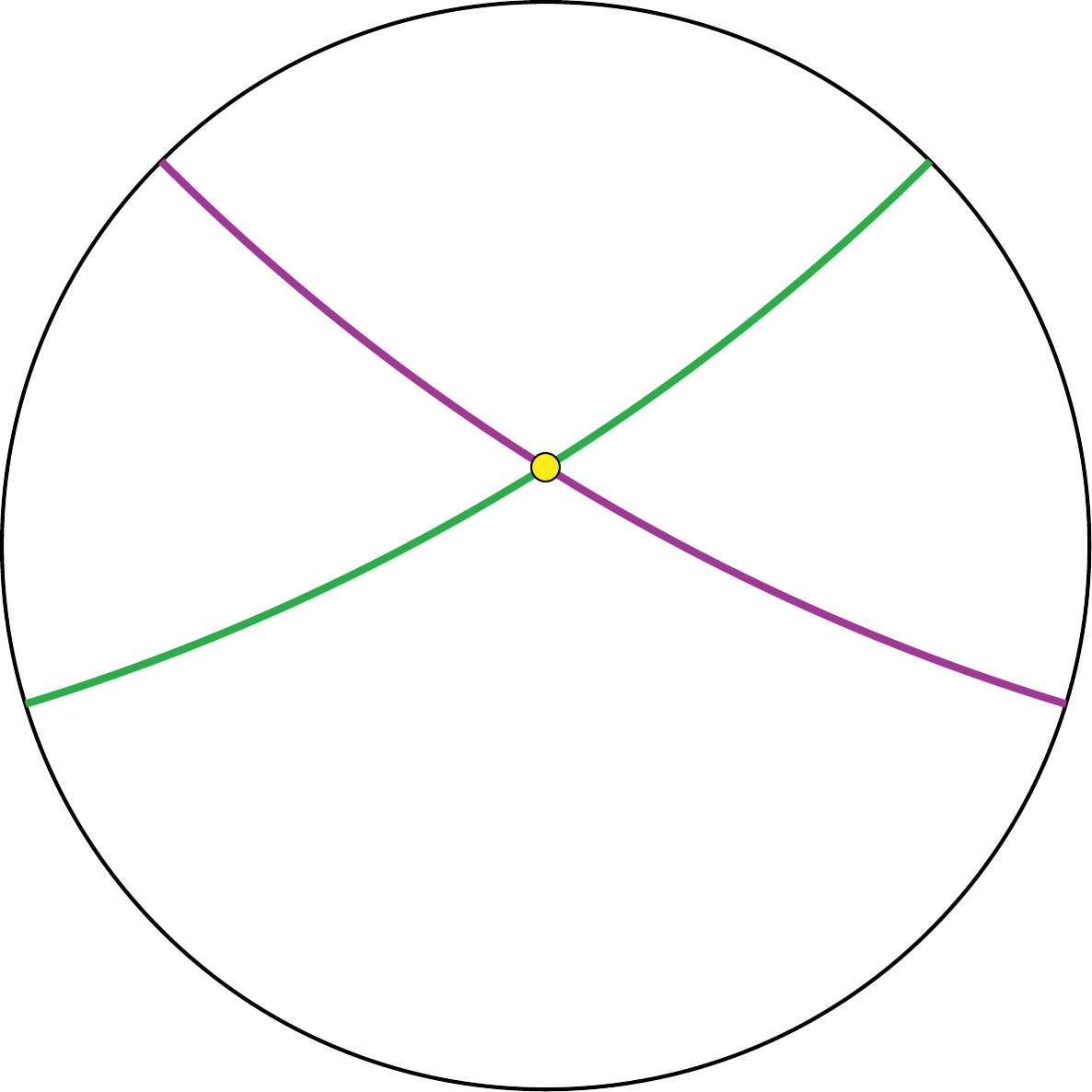}
\label{Fig:CrownsCross1}
}
\qquad
\subfloat[]{
\labellist
\tiny\hair 2pt
\pinlabel {$\bdy U$} at 594 200
\pinlabel {$\bdy V$} at 60 500
\pinlabel {$\bdy R$} at -24 200
\pinlabel {$\bdy S$} at 520 500
\pinlabel {$c$} at 70 70
\endlabellist
\includegraphics[width=0.27\textwidth]{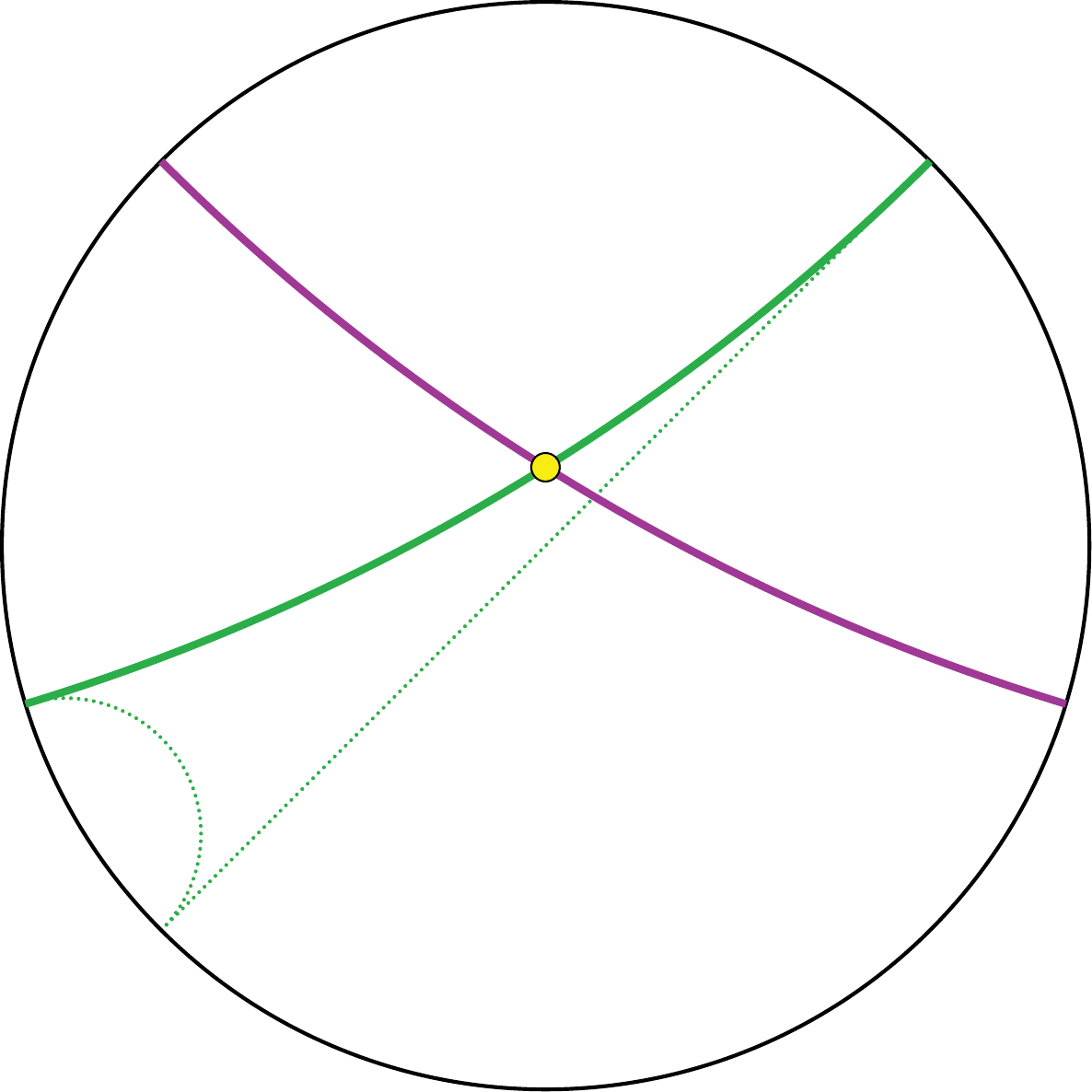}
\label{Fig:CrownsCross2}
}
\qquad
\subfloat[]{
\labellist
\tiny\hair 2pt
\pinlabel {$\bdy U$} at 594 200
\pinlabel {$\bdy V$} at 60 500
\pinlabel {$\bdy R$} at -24 200
\pinlabel {$\bdy S$} at 520 500
\pinlabel {$\bdy T$} at 200 -15
\pinlabel {$c$} at 70 70
\endlabellist
\includegraphics[width=0.27\textwidth]{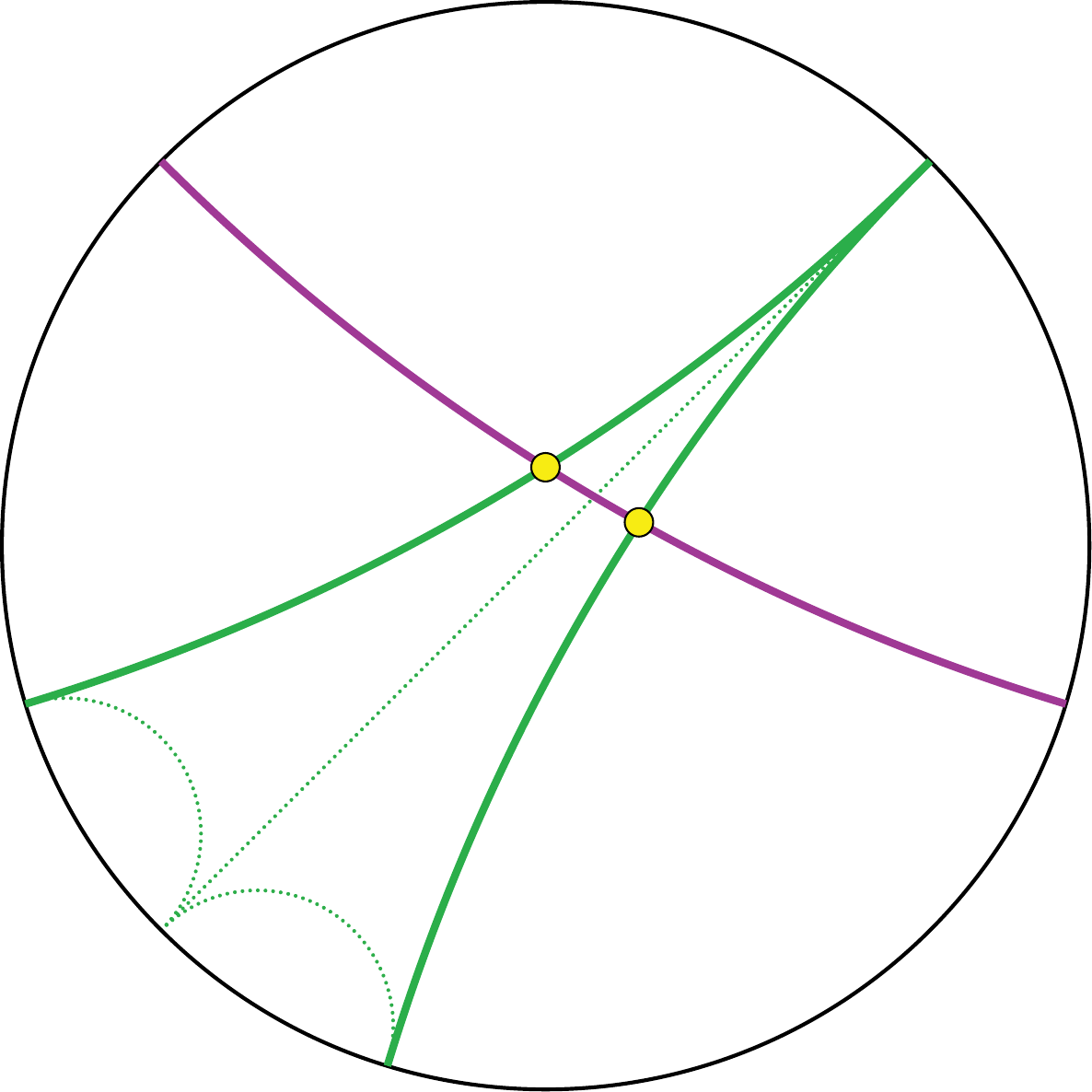}
\label{Fig:CrownsCross3}
}

\subfloat[]{
\labellist
\tiny\hair 2pt
\pinlabel {$\bdy U$} at 594 200
\pinlabel {$\bdy V$} at 60 500
\pinlabel {$d$} at 500 70
\pinlabel {$\bdy R$} at -24 200
\pinlabel {$\bdy S$} at 520 500
\pinlabel {$\bdy T$} at 200 -15
\pinlabel {$c$} at 70 70
\endlabellist
\includegraphics[width=0.27\textwidth]{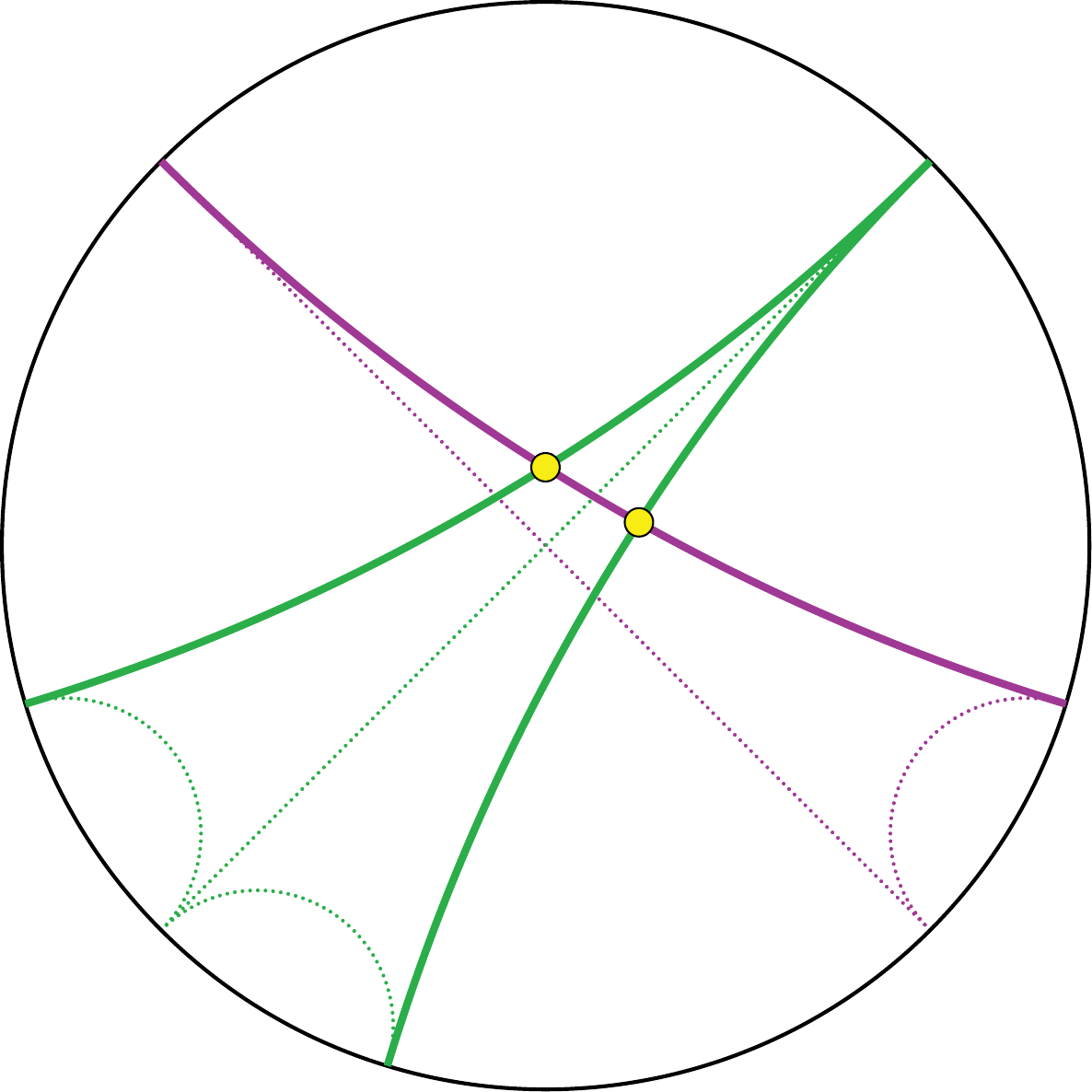}
\label{Fig:CrownsCross4}
}
\qquad
\subfloat[]{
\labellist
\tiny\hair 2pt
\pinlabel {$\bdy U$} at 594 200
\pinlabel {$\bdy V$} at 60 500
\pinlabel {$\bdy W$} at 370 -15
\pinlabel {$d$} at 500 70
\pinlabel {$\bdy R$} at -24 200
\pinlabel {$\bdy S$} at 520 500
\pinlabel {$\bdy T$} at 200 -15
\pinlabel {$c$} at 70 70
\endlabellist
\includegraphics[width=0.27\textwidth]{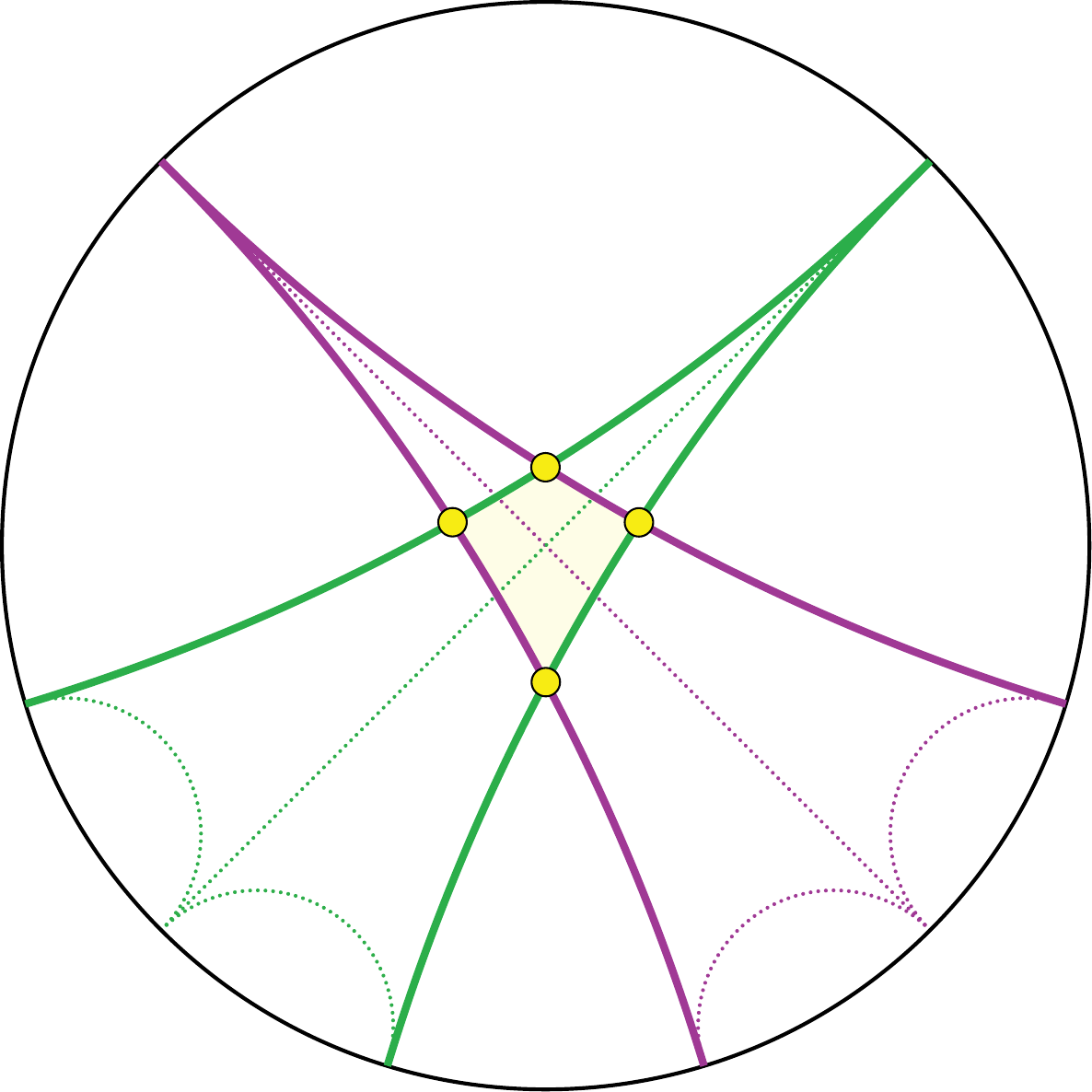}
\label{Fig:CrownsCross5}
}
\caption{Crowns with distinct cusps that have linking boundary leaves must cross. }
\label{Fig:CrownsCross}
\end{figure}

\begin{lemma}
\label{Lem:CrownsCross}
Suppose that $c$ and $d$ are distinct cusps. 
Then $\Lambda^c$ and $\Lambda_d$ are either unlinked or cross. 
\end{lemma}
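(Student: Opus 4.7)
The plan is to start from a linked pair of boundary leaves $\lambda(R,S) \in \Lambda^c$ and $\lambda(U,V) \in \Lambda_d$ (which must exist by hypothesis that the crowns are not unlinked) and to show that the configuration of tips is so rigidly constrained by \reflem{TriangleLeafCross} that exactly four linking pairs emerge, producing the crossing pattern of \refdef{LinkedCrowns}.

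First I would apply \reflem{TriangleLeafCross} to the adjacent upper pair $R, S$ in $N^c$ together with the lower leaf $\lambda(U, V)$: since $\lambda(U, V)$ separates $\bdy R$ from $\bdy S$, it crosses the curvilinear triangle bounded by $\lambda(c, R), \lambda(c, S)$ and $\lambda(R, S)$, so it links at least one of $\lambda(c, R), \lambda(c, S)$. The lemma says at most one, so exactly one is linked; after swapping $R \leftrightarrow S$ if needed I may assume $\lambda(U, V) \sim \lambda(c, S)$. This pins down the cyclic order of $\{\bdy R, \bdy U, \bdy S, \bdy V, c\}$ up to reflection. A symmetric application using $\lambda(c, R)$ and the lower adjacent pair $U, V$ forces $d$ onto the long arc of $\lambda(c, R)$ --- otherwise both $\lambda(d, U)$ and $\lambda(d, V)$ would link $\lambda(c, R)$, contradicting the swapped lemma.

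Next, let $T$ be the branch line in $\Lambda^c$ adjacent to $S$ on the side opposite $R$. Applying \reflem{TriangleLeafCross} to the pair $S, T$ with $\lambda(U, V)$ shows $\lambda(U, V) \not\sim \lambda(c, T)$. Since $\bdy T$ must lie in the arc $(\bdy S, c)^\acw$ by \reflem{Parabolics}\refitm{TipsOfCrown}, a direct check of the two possible sub-arcs shows that only the one beyond $\bdy V$ avoids the contradiction, and this position forces $\lambda(S, T) \sim \lambda(U, V)$. The key remaining step is to identify the correct ``middle'' branch line for $\Lambda_d$. According to whether $d$ lies on the short or long arc of $\lambda(R, S)$, the middle is either $V$ itself (so that $\bdy W$, the tip adjacent to $\bdy V$ opposite $\bdy U$, lies in the arc making $\lambda(V, W)$ link both $\lambda(R, S)$ and $\lambda(S, T)$) or $U$ (so that $\bdy V_{-2}$, the tip adjacent to $\bdy U$ opposite $\bdy V$, lies in the arc making $\lambda(V_{-2}, U)$ link both $\lambda(R, S)$ and $\lambda(S, T)$). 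In either sub-case, iterated use of \reflem{TriangleLeafCross} with $\lambda(R, S), \lambda(S, T)$ and the relevant cusp leaves from $d$ pins the tip to the required arc and realises all four linking pairs.

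Finally, I would show that no other boundary leaf of either crown links any boundary leaf of the other. Since the tips of each crown accumulate monotonically at the cusp (\reflem{Parabolics}\refitm{TipsOfCrown}), any boundary leaf of $\Lambda^c$ not incident to the chosen middle tip has both endpoints strictly on one side of each of the two chosen leaves of $\Lambda_d$, precluding linking, and symmetrically for $\Lambda_d$. The hardest part of the argument is the case analysis: there are several possible relative positions of $c, d, \bdy T, \bdy W$ on $S^1(\alpha)$, and the correct middle branch line depends on the configuration; tracking which cusp leaves link which boundary leaves in each sub-case, and ensuring that no sub-case slips past the four-linking-pair conclusion, is the main bookkeeping burden.
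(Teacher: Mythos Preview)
Your proposal is correct and follows essentially the same approach as the paper: both start from a linked pair of boundary leaves and repeatedly apply \reflem{TriangleLeafCross} (in both directions) to pin down the cyclic positions of $c$, $d$, and the neighbouring tips $\bdy T$, $\bdy W$ until the four-linking crossing configuration is forced. The paper's organisation is slightly more efficient --- it places $\bdy T$ \emph{before} constraining $d$, which lets it exclude the whole arc $[\bdy R, \bdy T]^\acw$ for $d$ in one stroke, whereas your early step (ruling out only the short arc of $\lambda(c,R)$) leaves a residual sub-case (roughly $d$ between $\bdy T$ and $c$) that your later split by ``short/long arc of $\lambda(R,S)$'' does not automatically handle; one further application of \reflem{TriangleLeafCross} to the pair $S,T$ with the cusp leaf $\lambda(d,U)$ disposes of it, and this falls squarely within the ``bookkeeping burden'' you already flag.
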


Recall that $[x, y]^{\acw}$ is the closed arc in $\Circle$ between $x$ and $y$ and anticlockwise of $x$. 

\begin{proof}[Proof of \reflem{CrownsCross}]
By \refcor{Irrational}, the cusps $c$ and $d$ are distinct from the tips of both crowns.  
Also, by \reflem{NoMixedTypeAsymptotics}, the tips of $\Lambda^c$ are all distinct from the tips of $\Lambda_d$. 

Suppose that there is a boundary leaf $\lambda(R,S)$ of $\Lambda^c$ that links some boundary leaf $\lambda(U,V)$ of $\Lambda_d$.  
Breaking symmetry, we assume that the points $\bdy R, \bdy U, \bdy S, \bdy V$ lie on the circle in that anticlockwise order.  See \reffig{CrownsCross1}.  The cusp $c$ is distinct from these points.  Breaking symmetry we will assume that $c$ lies in $[\bdy R, \bdy U]^{\acw}$.  See \reffig{CrownsCross2}.  Let $T$ be the branch line of $N^c$ that is adjacent to $S$ and not equal to $R$.  Note that $\bdy T$ is on the opposite side of $\lambda(c, S)$ from $\bdy R$.  \reflem{TriangleLeafCross} implies that $\bdy T$ cannot lie in $[\bdy U, \bdy S]^{\acw}$.  So $\bdy T$ lies in $[c, \bdy U]^{\acw}$.  See \reffig{CrownsCross3}.

We now consider the location of the cusp $d$.  Again by \reflem{TriangleLeafCross} the cusp $d$ cannot lie in $[\bdy R, \bdy T]^{\acw}$.  Breaking symmetry, there are two cases.  Either $d$ lies in $[\bdy T, \bdy U]^{\acw}$ or $[\bdy U, \bdy S]^{\acw}$.

Suppose that $d$ lies in $[\bdy T, \bdy U]^{\acw}$.  See \reffig{CrownsCross4}.  Let $W$ be the branch line of $N_d$ that is adjacent to $V$ and not equal to $U$.  Note that $\bdy W$ is on the opposite side of $\lambda(d, V)$ from $\bdy U$.  \reflem{TriangleLeafCross} implies that $\bdy W$ cannot lie in $[\bdy V, \bdy T]^{\acw}$.  So $\bdy W$ lies in $[\bdy T, d]^{\acw}$.  
See \reffig{CrownsCross5}.  
Thus the crowns $\Lambda^c$ and $\Lambda_d$ cross, as desired. 

The case where $d$ instead lies in $[\bdy U, \bdy S]^{\acw}$ is easier; we omit it.
\end{proof}

\chapter{The link space}

We now give a direct construction of the \emph{link space} $\link(\calV)$.
With this done, and after obtaining several important properties, we prove \refthm{LinkIsLoom}. 
That is, we show that link spaces are \emph{loom spaces}, in the sense of~\cite[Definition~2.11]{SchleimerSegerman24}.

\begin{remark}
Here is one way to think of the link space construction, at least in the layered case. 
The upper and lower laminations $\Lambda^\calV$ and $\Lambda_\calV$ collapse to give a pair of dendrites. 
The fundamental group of the fibre acts on their product.  
Removing $\Delta_\calV^2$ and then taking the \emph{Guirardel core}~\cite{Guirardel05} gives (a quotient of) the link space. 

However, in the non-layered case the upper and lower laminations need not admit (projective) measures compatible with the action of $\pi_1(M)$ or any obvious subgroup.
Thus the geometric steps of the construction must be replaced.  
Also we do not have the same uniformity of the action.
Furthermore, we will need quite delicate topological control over $\link(\calV)$; in fact we will show it is a copy of the plane. 
Also, we will require an in-depth combinatorial understanding of the \emph{rectangles} in $\link(\calV)$ in order to complete our programme of classifying pseudo-Anosov flows (without perfect fits).
\end{remark}

\begin{definition}
\label{Def:LocallyVeering}
Suppose that $M$ is a three-manifold which is not necessarily orientable or compact.
Suppose that $\calT$ is a taut ideal triangulation of $M$.
Let $\cover{T}$ be the induced taut ideal triangulation on the universal cover $\cover{M}$.
Fix an orientation on $\cover{M}$.
We say that $\calT$ is \emph{locally veering} if $\cover{T}$ has a transverse veering structure.
\end{definition}

As an example, there is a locally veering triangulation of the Gieseking manifold with a single tetrahedron. 
The double cover is the veering triangulation shown in \reffig{VeerFigEight}.

\begin{remark}
\label{Rem:FixOrientations}
Suppose that $\calV$ is a locally veering triangulation of a three-manifold $M$.
Since all of our constructions occur in the universal cover $\cover{M}$, we may fix an orientation of $\cover{M}$ and a transverse veering structure on $\cover{\calV}$.
We define $\Lambda^\calV$ and $\Lambda_\calV$ with respect to these choices.
Making different choices may switch the laminations and may swap colours.
\end{remark}

\begin{definition}
\label{Def:PairSpace}
We define the \emph{pair space} to be
\[
\pair(\calV)  = \big\{ (\lambda, \mu) \in \Lambda^\calV \cross \Lambda_\calV \st \mbox{$\lambda$ and $\mu$ are linked} \big\}
\]
The topology on $\pair(\calV)$ is that of a subspace of a product.  
\end{definition} 

We define an equivalence relation on points of $\pair(\calV)$ as follows.  
If $\lambda$ and $\lambda'$ are asymptotic upper leaves and if $\mu$ links both then $(\lambda, \mu) \sim (\lambda', \mu)$.  
We do the same for asymptotic lower leaves.  
Finally we take the transitive closure.  
Let $[(\lambda,\mu)]$ be the equivalence class of $(\lambda,\mu)$.

\begin{lemma}
\label{Lem:EquivalenceInL}
An equivalence class $[(\lambda,\mu)]$ in $\pair(\calV)$ has either one, two, four or infinitely many representatives. 
These correspond exactly to the cases that:
\begin{itemize}
\item the leaves $\lambda$ and $\mu$ are both interior,
\item one is interior and the other is boundary,
\item the leaves are boundary about different cusps, or
\item the leaves are boundary about a single cusp.
\end{itemize}
\end{lemma}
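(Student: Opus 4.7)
The plan is a case analysis on whether each of $\lambda$ and $\mu$ is a boundary or an interior leaf. The structural input is \reflem{Laminations}\refitm{Asymptotic}: two distinct leaves of $\Lambda^\alpha$ (respectively $\Lambda_\alpha$) are asymptotic if and only if they are adjacent boundary leaves of a single crown. Hence interior leaves admit no moves under the equivalence; every available move swaps a boundary leaf with one of its two adjacent boundary neighbours on the same crown, and is permitted only when the other coordinate of the pair links the new leaf as well.

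If both $\lambda$ and $\mu$ are interior then no moves are available, giving one representative. For the both-boundary cases I would invoke \reflem{CrownsCross} or \reflem{CrownsInterleave} and trace the orbit. If $\lambda \in \Lambda^c$ and $\mu \in \Lambda_d$ with $c \neq d$, then the crowns cannot be unlinked (since $\lambda$ and $\mu$ link), so they cross; \refdef{LinkedCrowns} then provides exactly two adjacent boundary leaves from each crown whose four pairwise combinations link and no others do. Cycling through asymptotic swaps produces all four pairs, and any further boundary neighbour of either crown fails to link anything on the other crown, so the orbit closes at four. If instead $c = d$, then \reflem{CrownsInterleave} provides a zigzag of linked pairs indexed by $\ZZ$; the orbit slides the upper and lower indices in lockstep along this zigzag and never revisits an earlier pair, yielding countably infinitely many representatives.

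The technical heart is the mixed case, where $\lambda = \lambda(\bdy S_i, \bdy S_{i+1})$ is a boundary leaf of $\Lambda^c$ and $\mu = \lambda(x, y)$ is an interior lower leaf linking $\lambda$. Since $\mu$ is interior it is frozen by the equivalence, so the only candidate moves are to the two adjacent boundary leaves $\lambda(\bdy S_{i-1}, \bdy S_i)$ and $\lambda(\bdy S_{i+1}, \bdy S_{i+2})$; the claim is that exactly one of these also links $\mu$. Normalising so that $x$ is in the short arc $[\bdy S_i, \bdy S_{i+1}]^\acw$ and $y$ is in the long arc, I would apply \reflem{TriangleLeafCross} to $\mu$ against the family of cusp leaves $\{\lambda(c, \bdy S_j)\}_{j \in \ZZ}$; this forbids $\mu$ from linking two consecutive such cusp leaves. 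Combined with \reflem{Laminations}\refitm{Irrational} and \reflem{NoMixedTypeAsymptotics} to exclude $y = c$ and $y = \bdy S_j$, this arc-counting forces $y$ into exactly one of the two short arcs directly adjacent to the short arc containing $x$, thereby pinpointing the unique adjacent boundary neighbour of $\lambda$ that links $\mu$. The main obstacle is this last arc-counting step, where \reflem{TriangleLeafCross} must be leveraged carefully to rule out both the degenerate positions of $y$ and the case of $y$ lying two or more short arcs away.
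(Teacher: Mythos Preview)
Your proposal is correct and follows the paper's proof closely: the same four-case analysis, invoking the same lemmas (\reflem{Laminations}\refitm{Asymptotic} to freeze interior leaves, \reflem{TriangleLeafCross} for the mixed case, \reflem{CrownsCross} and \reflem{CrownsInterleave} for the two boundary-boundary cases). The mixed case you call the ``technical heart'' is in fact the easiest after \reflem{TriangleLeafCross} is in hand---the paper dispatches it in two lines by noting that $\mu$ links exactly one of the two cusp leaves $\lambda(c,R)$, $\lambda(c,S)$, which immediately singles out the unique adjacent boundary leaf---so your arc-counting elaboration, while correct, is more machinery than needed.
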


\begin{proof}
\mbox{}
\begin{itemize}
\item Suppose that $\lambda$ and $\mu$ are interior leaves. By \reflem{Laminations}\refitm{Asymptotic}, the leaf $\lambda$ has no asymptotic partner in $\Lambda^\calV$. The same holds for $\mu$.

\item Breaking symmetry, suppose that $\mu$ is an interior leaf and $\lambda = \lambda(R,S)$ is a boundary leaf. Let $c$ be the cusp such that $N^c$ contains $R$ and $S$.  Breaking symmetry again, the leaf $\mu$ also links $\lambda(c, S)$. Let $T$ be the other branch line adjacent to $S$.  Applying \reflem{TriangleLeafCross} we have 
\[
[(\lambda, \mu)] = \{ (\lambda(R, S), \mu), (\lambda(S, T), \mu) \}
\] 

\item Suppose that both $\lambda$ and $\mu$ are boundary leaves, adjacent to distinct cusps $c$ and $d$.  Thus by \reflem{CrownsCross}, the crowns $\Lambda^c$ and $\Lambda_d$ cross, and $[(\lambda,\mu)]$ consists of four pairs.  See \reffig{CrossingCrowns}.

\item Suppose that both $\lambda$ and $\mu$ are boundary leaves, both adjacent to the cusp $c$. Thus by \reflem{CrownsInterleave},  the crowns $\Lambda^c$ and $\Lambda_c$ interleave, and $[(\lambda,\mu)]$ consists of a countable collection of pairs. See \reffig{InterleavingCrowns}. \qedhere
\end{itemize}
\end{proof}

We call the last type of equivalence class a \emph{cusp class}.  

\begin{definition}
\label{Def:LinkSpace}
The \emph{link space} $\link(\calV)$ is the quotient of $\pair(\calV)$, minus the cusp classes.  That is: 
\[
\link(\calV) \,\,=\,\, \elec \,\,-\,\, \{\mbox{cusp classes}\}
\]
We give $\link(\calV)$ a topology by realising it as a subspace of a quotient.  
\end{definition}

\begin{definition}
\label{Def:UpperFoliation}
We now define the \emph{upper foliation} $F^\calV$ of $\link(\calV)$.
It consists of two kinds of leaves.  Suppose that $\lambda \in \Lambda^\calV$ is an interior leaf.  Then we take 
\[
\ell^\lambda = \big\{ [(\lambda, \mu)] \in \link(\calV) \mbox{ for some $\mu \in \Lambda_\calV$} \big\}
\]
This is a \emph{non-cusp leaf} of $F^\calV$.  

Suppose instead that $c$ is a cusp and $S$ is an upper branch line in $N^c$. 
Let $\lambda(c,S)$ be the corresponding upper cusp leaf.  
By \reflem{BranchLines}\refitm{Cyclic}, there are exactly two upper branch lines $S'$ and $S''$ (in the boundary of $N^c$) that are adjacent to $S$. 
Take $\lambda$ to be one of $\lambda(S', S)$ or $\lambda(S, S'')$.  
Then we define 
\[
\ell^S = \big\{ [(\lambda, \mu)] \in \link(\calV) \st \mbox{$\mu \in \Lambda_\calV$ links $\lambda(c,S)$} \big\}
\]
This is a \emph{cusp leaf} of $F^\calV$.  
(Note that applying \reflem{TriangleLeafCross} twice shows that $\ell^S$ does not depend on whether we chose $\lambda(S', S)$ or $\lambda(S, S'')$ above.)  

We define the lower foliation $F_\calV$ similarly.
\end{definition}

\begin{theorem}
\label{Thm:LinkSpace}
Suppose that $M$ is a three-manifold equipped with a locally veering triangulation $\calV$. 
Then we have the following.
\begin{enumerate}
\item
\label{Itm:LinkSpacePlane}
The link space $\link(\calV)$ is homeomorphic to $\RR^2$.
\item
\label{Itm:LinkSpaceTransverse}
$F^\calV$ and $F_\calV$ are transverse foliations of $\link(\calV)$.  
\item 
\label{Itm:LinkSpaceDense}
Non-cusp leaves are dense in each of $F^\calV$ and $F_\calV$. 
\item
\label{Itm:LinkSpaceAction}
The induced action of $\pi_1(M)$ on $\link(\calV)$ is continuous and faithful.
\end{enumerate}
Suppose additionally that $M$ is orientable and $\calV$ is transverse veering. 
Then we have the following.
\begin{enumerate}[resume]
\item
\label{Itm:LinkSpaceOri}
There is a canonical orientation on $\link(\calV)$. 
\item
\label{Itm:LinkSpaceOriAction}
The induced action of $\pi_1(M)$ on $\link(\calV)$ is orientation preserving.
\end{enumerate}
\end{theorem}

Before proving the theorem, we develop some structure.   
If $M$ is orientable, and $\calV$ is transverse veering, then there are induced orientations in the universal cover. 
If not, then we appeal to \refrem{FixOrientations} to obtain such orientations in the universal cover. 

\begin{definition}
\label{Def:Rectangle}
A \emph{rectangle} $R$ in $\link(\calV)$ is an embedding of $(0,1)^2$ into $\link(\calV)$ that sends line segments parallel to the $y$--axis ($x$--axis) to arcs of the upper (lower) foliations.  
\end{definition}


\begin{definition}
\label{Def:Boundary}
Suppose that $R$ is a rectangle in $\link(\calV)$.  
Suppose that $(\ell_i)$ is a monotonic sequence of leaves of $R \cap F^\calV$ (or of $R \cap F_\calV$), exiting $R$.  
Then the set of accumulation points of $(\ell_i)$ is one of the four \emph{sides} of $R$.  
\end{definition}

Let $\closure{R}$ denote the closure of $R$ in $\link(\calV)$.
As usual, we set $\bdy R = \closure{R} - \interior(R) = \closure{R} - R$.
Note that the sides of a rectangle $R$ lie in $\bdy R$.  
Note also that a side need not be connected.  
Any point of $\bdy R$ lying in two sides is called a \emph{material corner} of $R$.  

\begin{definition}
\label{Def:Ideal}
Suppose that $R$ is a rectangle in $\link(\calV)$.  
Note that the closure of $R$, taken in $\elec$, consists of $\closure{R}$, together with some number of cusp classes.
We say that such a cusp class $p$ is an \emph{ideal point} of $R$.
We say that $p$ is in the \emph{interior} of a side of $R$ if it is accumulated by only that side.
We say that $p$ is an \emph{ideal corner} of $R$ if it is accumulated by exactly two, adjacent, sides of $R$.
\end{definition}

\subsection{Edge rectangles}

We take up the task of building rectangles in $\link(\calV)$. 

\begin{definition}
\label{Def:EdgeRect}
Fix an edge $e$ of $\cover{\calV}$.  Fix $(\lambda, \mu) \in \pair(\calV)$.  We say that a pair $(\lambda, \mu) \in \pair(\calV)$ \emph{links} $e$ if both $\lambda$ and $\mu$ link $e$.  The \emph{edge rectangle} $\rect(e)$ is defined to be
\[
\rect(e) = \{ p \in \link(\calV) \st \mbox{every representative $(\lambda,\mu)$ of $p$ links $e$} \} \qedhere
\]
\end{definition}

\begin{lemma}
\label{Lem:EdgeRect}
Suppose that $e$ is an edge of $\cover{\calV}$ with endpoints at cusps $c$ and $d$.
\begin{enumerate}
\item
\label{Itm:EdgeRectRect}
The edge rectangle $\rect(e)$ is a rectangle in the sense of \refdef{Rectangle}.
\item
\label{Itm:EdgeRectBdy}
The boundary of $\rect(e)$ is contained in the union of four cusp leaves that alternatingly lie in $F^\calV$ and $F_\calV$.
The closure of each side in $\elec$ is a closed interval connecting an ideal corner to a material corner.  
The former are the cusp classes coming from $c$ and $d$. 
\end{enumerate}
\end{lemma}

\begin{figure}[htb]
\centering
\labellist
\small\hair 2pt
\pinlabel {$d$} [r] at 0 21
\pinlabel {$c$} [l] at 141 56
\pinlabel {$e$} [b] at 69 40
\pinlabel {$\rect(e)$} at 289 36
\pinlabel {$d$} [r] at 198 8
\pinlabel {$c$} [l] at 378 69
\endlabellist
\includegraphics[width=0.6\textwidth]{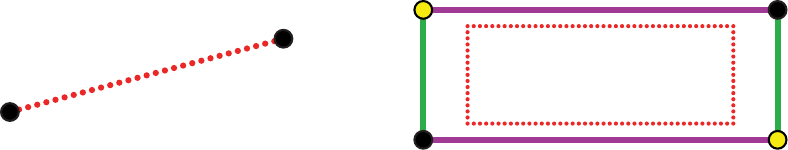}
\caption{Left: a red edge $e$.  Right: the edge rectangle $\rect(e)$.  The dotted, shrunken rectangle is intended to remind the reader that $\rect(e)$ does not include its boundary.  We have coloured the material corners yellow and the ideal corners black.}
\label{Fig:LinkSpaceEdge}
\end{figure}

\begin{proof}
Breaking symmetry, we suppose that $e$ is coloured red.  We orient $e$ from $c$ to $d$.   \reflem{CrownsInterleave} tells us that the crowns $\Lambda^c$ and $\Lambda_c$ interleave, as do the crowns $\Lambda^d$ and $\Lambda_d$.  

\begin{claim*}
There are boundary leaves 
\[
\lambda(S, S') \in \Lambda^c, \quad
\lambda(U, U') \in \Lambda_c, \quad
\lambda(T, T') \in \Lambda^d, \quad
\lambda(V, V') \in \Lambda_d
\]
so that each links the next, cyclically. 
\end{claim*}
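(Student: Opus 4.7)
The plan proceeds in two steps: identify the four boundary leaves explicitly, then verify the four cyclic linking conditions.

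For existence, observe that by \reflem{Parabolics}\refitm{TipsOfCrown} the tips of each crown accumulate at its cusp, so both complementary arcs of the chord $e$ on $S^1(\alpha)$ contain infinitely many tips from each crown.  Scanning the $\ZZ$-indexed sequence of consecutive tips of (say) the upper crown $\Lambda^c$, there is a unique pair of adjacent upper branch lines $S, S'$ whose tips $\bdy S, \bdy S'$ lie on opposite sides of $e$.  The resulting boundary leaf $\lambda(S, S') \in \Lambda^c$ automatically links $e$; analogously define $\lambda(U, U') \in \Lambda_c$, $\lambda(T, T') \in \Lambda^d$, and $\lambda(V, V') \in \Lambda_d$.

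For the four cyclic linkings, I split into same-cusp and cross-cusp cases.  The same-cusp linkings $\lambda(S,S') \sim \lambda(U,U')$ (at $c$) and $\lambda(T,T') \sim \lambda(V,V')$ (at $d$) follow from the interleaving of upper and lower crowns at a single cusp (\reflem{CrownsInterleave}): since the tips of $\Lambda^c$ and $\Lambda_c$ alternate around $c$, the four endpoints of the two boundary leaves straddling $e$ appear in alternating order around $S^1(\alpha)$, which is exactly the chord-crossing condition.  The cross-cusp linkings $\lambda(U,U') \sim \lambda(T,T')$ and $\lambda(V,V') \sim \lambda(S,S')$ require the crowns at opposite endpoints of $e$ to actually cross, in the sense of \reflem{CrownsCross}.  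I would argue that the red color of $e$ forces these crossings, with the witnessing leaves being precisely those from Step 1.

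The main obstacle is this cross-cusp step.  It requires tracking the positions of the first upper and first lower tips on each arc of $e$ at both $c$ and $d$, using the veering color-transition rule (red-to-blue gives an upper cusp, blue-to-red gives a lower cusp, going anti-clockwise in a layer around the cusp).  The asymmetric configuration imposed by the red color of $e$ is what should force the alternating (rather than block) arrangement of the eight tips around $S^1(\alpha)$ needed to realize both cross-cusp linkings.  I expect the argument to reduce to a case analysis over the possible colors of the edges near $e$ in a layer containing it, possibly exploiting the continental-exhaustion machinery of \refprop{VeerImpliesExhaust} to set up a convenient local picture around $e$ in which the track-cusps on each side of $e$ can be enumerated directly.
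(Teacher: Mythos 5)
Your reduction of the claim to two same-cusp linkings and two cross-cusp linkings is sound, and the same-cusp half works: by \reflem{Parabolics}\refitm{TipsOfCrown} each crown has exactly one boundary leaf linking $e$ (this is also how the paper opens the proof of \reflem{Cantor}), and since the tips of $\Lambda^c$ and $\Lambda_c$ alternate around $c$ by \reflem{CrownsInterleave}, the two leaves at $c$ that both separate $c$ from $d$ must link each other; likewise at $d$.

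The cross-cusp linkings, however, are the entire content of the claim, and your proposal does not prove them --- ``I would argue'' and ``I expect the argument to reduce to a case analysis over the possible colors of the edges near $e$'' is precisely where the proof has to happen. Three concrete problems. First, knowing that $\lambda(U,U')$ and $\lambda(T,T')$ each link $e$ gives nothing: two chords each crossing a third need not cross each other. Second, \reflem{CrownsCross} cannot supply the missing input, since its conclusion is ``unlinked \emph{or} cross''; you must first exhibit one linking pair between $\Lambda_c$ and $\Lambda^d$, which is exactly the open step. Third, no analysis of the colours \emph{near} $e$ can succeed, because the endpoints $\bdy U$ and $\bdy T$ are determined by global combinatorics. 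The paper's argument fixes a layer $K$ containing $e$ and takes the \emph{maximal} strip $P$ of majority-red faces on one side of $e$ (possibly very long or infinite); its boundary is $e$ together with blue edges, partitioned into $c$--edges and $d$--edges. The colouring prevents the lower cusp line $\ell_K(c,U)$ from exiting $P$ through a $c$--edge and the upper cusp line $\ell^K(d,T)$ from exiting through a $d$--edge, and \reflem{NoMixedTypeAsymptotics} forbids the two from being asymptotic, so one must exit through the other's side of $P$; this forces $\lambda(c,U)$ and $\lambda(d,T)$ to link. That appeal to \reflem{NoMixedTypeAsymptotics} is the essential global ingredient and is entirely absent from your outline. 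Repeating the argument on the other side of $e$ gives the second cross-cusp linking, and only then do \reflem{CrownsInterleave} and \reflem{CrownsCross} assemble the cyclic order of all the tips.
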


\begin{proof}
Fix a layer $K$ of a layering, chosen so that $e$ lies in $K$.
Let $P$ be the maximal strip of majority red faces in $K$ to the right of $e$.
See \reffig{FiniteFellowTravel}.
When $P$ is finite, we add a single majority blue face $f'$ at the end of $P$.
In this case, let $b$ be the cusp of $f'$ not meeting a red edge of $P$.

By construction, the boundary of $P$ consists of one red edge, $e$, and the rest blue.
Recall that $\tau^P$ and $\tau_P$ are the upper and lower tracks in $P \subset K$.
Consider the blue edge $e' \subset \bdy P$ adjacent to $c$.
Since $e$ and all interior edges of $P$ are red, the face of $P$ meeting $e'$ contains a lower track-cusp $u$ that points away from $c$.
Again see \reffig{FiniteFellowTravel}.
Let $U$ be the branch line containing $u$.
Similarly, let $e'' \subset \bdy P$ be the blue edge adjacent to $d$.
The face of $P$ meeting $e''$ contains an upper track-cusp $t$ that points away from $d$.
Let $T$ be the branch line containing $t$. 

Partition the blue edges of $P$ into two sets: those connected either to $c$ or to $d$ by a sequence of blue edges (not passing through $b$ in the case that $P$ is finite).
Call these the $c$--edges and the $d$--edges.
Due to the colouring of the edges of $P$, the cusp line $\ell_K(c, U)$ cannot exit $P$ through a $c$--edge.
Similarly, $\ell^K(d, T)$ cannot exit $P$ through a $d$--edge.
By \reflem{NoMixedTypeAsymptotics}, at least one must exit $P$, and hence the cusp leaves $\lambda(c, U)$ and $\lambda(d, T)$ link.

\begin{figure}[htbp]
\labellist
\small\hair 2pt
\pinlabel {$c$} [tr] at 0 0
\pinlabel {$d$} [br] at 0 120
\pinlabel {$e$} [r] at 0 30
\pinlabel {$e'$} [tl] at 30 0
\pinlabel {$e''$} [bl] at 30 122
\pinlabel {$u$} [r] at 83 53
\pinlabel {$t$} [r] at 46 70
\pinlabel {$b$} [l] at 400 61
\endlabellist
\[
\begin{array}{c}
\includegraphics[width = 0.7\textwidth]{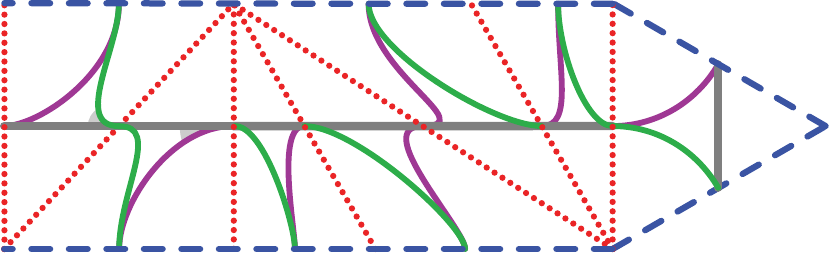}
\end{array}
\]
\caption{A possible (finite) strip of faces $P$ starting at the edge $e$.}
\label{Fig:FiniteFellowTravel}
\end{figure}

We make the same argument to the left of $e$.
This produces branch lines $S$ in $N^c$ and $V$ in $N_d$ so that $\lambda(d, V)$ links $\lambda(c, S)$.
Thus the points 
\[
c, \bdy T, \bdy U, d, \bdy S, \bdy V
\]
appear in $\Circle$ in that anticlockwise order.
Let $T'$ be the branch line in $N^d$, adjacent to $T$, chosen so that $c$ lies in $[\bdy T', \bdy T]^{\acw}$.
We define $S'$, $U'$, and $V'$ similarly.
Applying \reflem{CrownsInterleave} and \reflem{CrownsCross} we deduce that the points
\[
c, \bdy S', \bdy T, \bdy U, \bdy V', d, \bdy T', \bdy S, \bdy V, \bdy U'
\]
appear in $\Circle$ in that anticlockwise order.
See \reffig{LeafIdentificationsEdge}.
\end{proof}

\begin{figure}[htb]
\centering
\labellist
\small\hair 2pt
\pinlabel {$c$} [t] at 285 0
\pinlabel {$\bdy S'$} [tl] at 475 80
\pinlabel {$\bdy T$} [l] at 565 230
\pinlabel {$\bdy U$} [l] at 565 341
\pinlabel {$\bdy V'$} [bl] at 475 492
\pinlabel {$d$} [b] at 285 570
\pinlabel {$\bdy T'$} [br] at 97 492
\pinlabel {$\bdy S$} [r] at 5 341
\pinlabel {$\bdy V$} [r] at 5 230
\pinlabel {$\bdy U'$} [tr] at 93 80
\endlabellist
\includegraphics[width=0.6\textwidth]{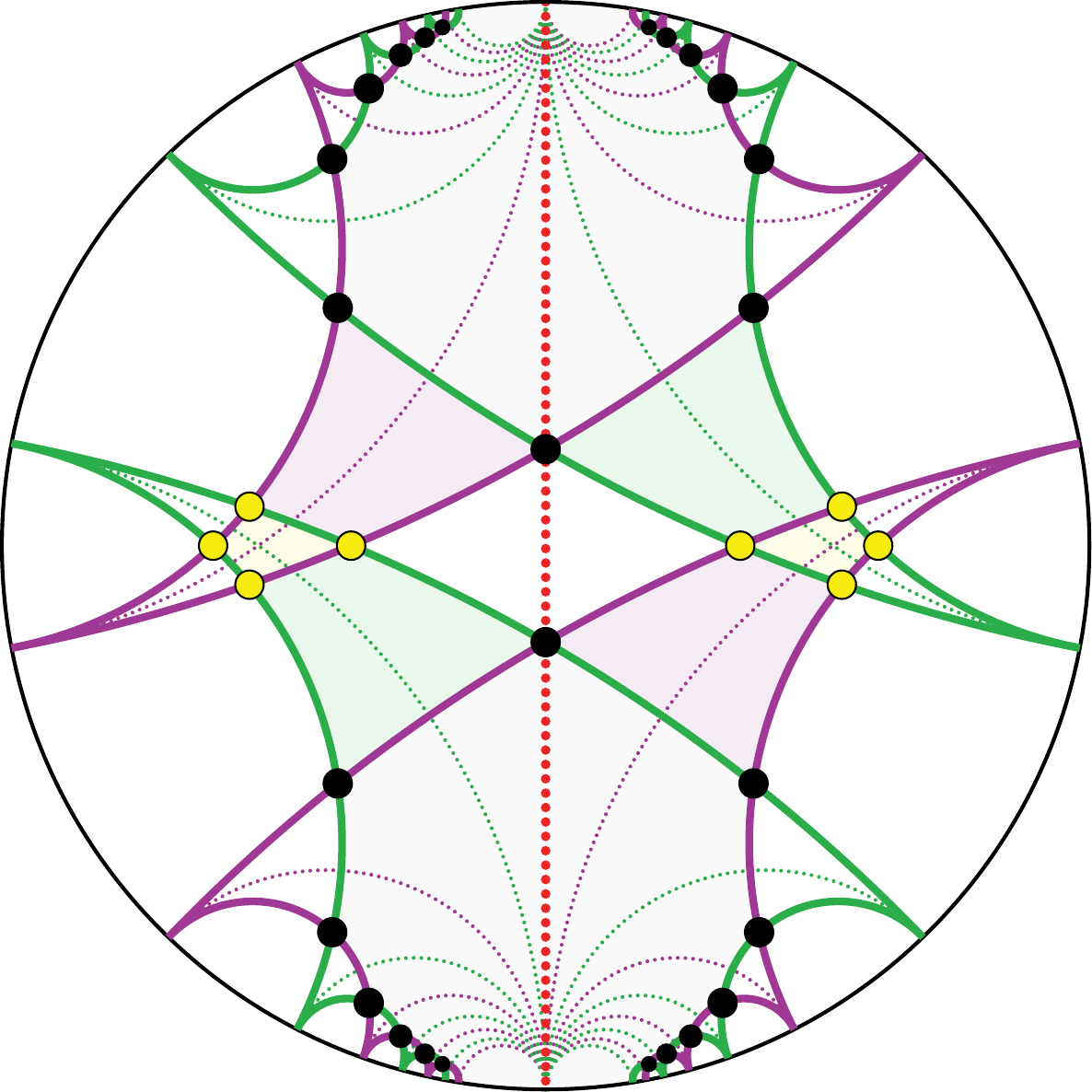}
\caption{The upper and lower crowns for cusps at the two ends of an edge.}
\label{Fig:LeafIdentificationsEdge}
\end{figure}

Recall that $e$ meets the cusps $c$ and $d$.
Set $\Lambda^e = \Lambda^{(c, d)}$ as in \refdef{LeavesThatLink}.
Note that the leaves of $\Lambda^e$ are exactly those of $\Lambda^\calV$ that link $e$.
We define $\Lambda_e$ in similar fashion.
By the claim immediately above, every $\lambda$ in $\Lambda^e$ links every $\mu$ in $\Lambda_e$.
Thus $\Lambda^e \cross \Lambda_e$ is a subset of $\pair(\calV)$.
By \reflem{Cantor} this is a product of two copies of the Cantor set $\calC$.
Mapping this product to its quotient in $\elec$ is realised by the applying the Cantor function in each coordinate.
That is, we replace ternary expansions by binary by replacing all twos by ones.
Now, since $0.\bar{1} = 1.\bar{0}$ and applying \reflem{EquivalenceInL}, pairs and four-tuples of points are identified in the desired fashion.
Thus the image in $\elec$ is homeomorphic to the closed square $[0,1]^2$.  

Note, however, that $\lambda(S, S')$ is smallest in $(\Lambda^e,<^e)$.
So, its adjacent boundary leaf in $\Lambda^c$, sharing the endpoint $\bdy S$, does not link $e$.
Thus any point $p \in \link(\calV)$, having a representative of the form $(\lambda(S, S'), \mu)$, is not in $\rect(e)$.
The same holds for $\lambda(U', U)$ as well as $\lambda(T', T)$ and $\lambda(V, V')$ in $\Lambda_e$.
This proves that $\rect(e)$ is homeomorphic to the open square $(0,1)^2$.
Fixing one coordinate and applying the Cantor function to the other produces the desired foliations, obtaining \refitm{EdgeRectRect}.

We deduce that $\bdy \rect(e)$ lies in the union of the cusp leaves $\ell^S$, $\ell_U$, $\ell^T$, and $\ell_V$.
Note that $\ell^S \cap \ell_V$ and $\ell^T \cap \ell_U$ give points of $\link(\calV)$.
These are the material corners of $\rect(e)$.
On the other hand $\ell^S$ and $\ell_U$ have a common cusp class in $\elec$ coming from $c$.
Similarly, $\ell^T$ and $\ell_V$ have a common cusp class coming from $d$.
These are the ideal corners of $\rect(e)$.
This gives \refitm{EdgeRectBdy}. 
\end{proof}

With \reflem{EdgeRect} in hand (and recalling \refrem{FixOrientations}), we make the following definition.

\begin{definition}
\label{Def:EdgeRectOri}
We now define the \emph{induced orientation} on edge rectangles.
Suppose that $e$ is red. 
Then we orient the sides of $\rect(e)$ as follows. 
The sides of $\rect(e)$ contained in $\ell^S$ and $\ell^T$ are oriented towards the cusps $c$ and $d$, respectively. 
The sides of $\rect(e)$ contained in $\ell_U$ and $\ell_V$ are oriented away from the cusps $c$ and $d$, respectively. 
This induces an orientation on $\rect(e)$.
Edge rectangles for blue edges are oriented using the opposite convention.
\end{definition}

In the following, we will abuse notation and refer to a ``cusp class coming from a cusp $c$'' simply as $c$.

\begin{remark}
\label{Rem:BoundaryEdgeRect}
We have the following useful characterisation of points of $\bdy \rect(e)$:
these are the classes where some (but not every) representative links $e$.  
\end{remark}


\begin{lemma}
\label{Lem:EdgesCover}
Suppose that $K$ is a layer of a layering.  Then
\[
\left\{ \closure{\rect(e)} \st \mbox{$e$ is an edge of $K$} \right\}
\]
is a closed cover of $\link(\calV)$. 
\end{lemma}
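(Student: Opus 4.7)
The goal is to show that every $p = [(\lambda, \mu)] \in \calL(\alpha)$ lies in $\closure{R}(e)$ for some edge $e$ of $K$.

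First I would apply \reflem{Laminations}\refitm{LeavesAreCarried} to obtain train lines $\ell \subset \tau^K$ and $m \subset \tau_K$ with $\bdy \ell = \bdy \lambda$ and $\bdy m = \bdy \mu$. Because $\tau^K$ meets each edge of $K$ in a single switch, the properly embedded line $\ell$ crosses each edge at most once. A standard planar argument in the closed topological disk $K \cup S^1(\alpha)$ (which is a closed disk by \reflem{Disk} together with the fact, established in the proof of \reflem{LayeredImpliesUnique}, that every cusp meets $K$) then shows that $\ell$ crosses $e$ if and only if $\bdy \lambda$ separates the two endpoints of $e$ in $S^1(\alpha)$, if and only if $\lambda$ links $e$. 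The same equivalence holds for $m$ and $\mu$.

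The main step is to locate an edge $e$ of $K$ linked by both $\lambda$ and $\mu$. Here I would use \reflem{Disk} to identify the ideally triangulated disk $K$ with the standard Farey tessellation of $\HH^2$; under this identification, the induced order-isomorphism $\Delta_M \to \QP^1$ extends to a homeomorphism of the veering circle $S^1(\alpha)$ with $\bdy \HH^2$. Since $\lambda$ and $\mu$ are linked at infinity, the two hyperbolic geodesics $\gamma_\lambda, \gamma_\mu$ with endpoints $\bdy \lambda, \bdy \mu$ meet at a unique interior point $q$ of $\HH^2$. If $q$ lies on a Farey edge $e$, then both $\lambda$ and $\mu$ link $e$. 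Otherwise $q$ lies in the interior of a Farey triangle $f$; then each of $\gamma_\lambda, \gamma_\mu$ crosses exactly two of the three sides of $f$, and pigeonhole guarantees a common side, which is an edge of $K$ linked by both.

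Once such an edge $e$ is in hand, the representative $(\lambda, \mu)$ of $p$ links $e$, so \refrem{BoundaryEdgeRectangle} yields $p \in R(e) \cup \bdy R(e) = \closure{R}(e)$, as required. I do not foresee any serious obstacle; the most delicate point is to check that the order-isomorphism $\Delta_M \to \QP^1$ extends continuously to the veering circle, but this is immediate from \refdef{VeeringCircle} and \refrem{NeighbourhoodBasis}, since the arcs $\{A(e)\}$ for $e$ an edge of $K$ already form a closed neighbourhood basis for $S^1(\alpha)$.
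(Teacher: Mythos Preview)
Your argument is correct, but it takes a different route from the paper's. The paper also invokes \reflem{Laminations}\refitm{LeavesAreCarried} to produce train lines $\ell \subset \tau^K$ and $m \subset \tau_K$, but then finishes combinatorially: since $\lambda$ and $\mu$ link, the lines $\ell$ and $m$ must cross in $K$; consulting \reffig{VeeringTriangles}, one sees that $\ell$ and $m$ run tangent to each other along a (possibly degenerate) train interval, and any edge of $K$ transverse to that interval is linked by both $\lambda$ and $\mu$. You instead abandon the train lines after setting them up and argue geometrically: identify $K$ with the Farey tessellation of $\HH^2$, extend to a homeomorphism $S^1(\alpha) \to \bdy\HH^2$, realise $\lambda$ and $\mu$ as crossing hyperbolic geodesics, and find a common edge by pigeonhole in the triangle containing their crossing point. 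Your approach has the virtue of not using the specific veering structure of $\tau^K$ and $\tau_K$ at all (only \reflem{Disk} and \reflem{Laminations}\refitm{Irrational} to ensure the geodesics do not terminate at cusps); the paper's is shorter and stays closer to the objects already in play. Note that your first paragraph about train lines is not actually needed for the rest of your argument.
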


\begin{proof}
Fix any linking pair $(\lambda, \mu) \in \pair(\calV)$.
Let $\ell$ and $m$ be the train lines in $\tau^K$ and $\tau_K$ given by \reflem{Laminations}\refitm{LeavesAreCarried}.
These cross.
Consulting \reffig{VeeringTriangles} we deduce that $\ell$ and $m$ are tangent to each other along some train interval, perhaps of length zero.
Some edge or edges are transverse to this interval;
any such edge $e$ links both $\lambda$ and $\mu$.
Thus $[(\lambda, \mu)]$ lies in $\closure{\rect(e)}$.  
\end{proof}

\subsection{Face rectangles}

\begin{definition}
\label{Def:FaceRect}
Fix a face $f$ in $\cover{\calV}$.
We say that a pair $(\lambda, \mu) \in \pair(\calV)$ \emph{links} $f$ if the pair links at least one of edges of $f$. 
The \emph{face rectangle} $\rect(f)$ is defined to be
\[
\rect(f) = \{ p \in \link(\calV) \st \mbox{every representative $(\lambda,\mu)$ of $p$ links $f$} \} \qedhere
\]
\end{definition}

Suppose that $e$, $e'$, and $e''$ are the edges of a face $f$. We choose the labels so that $e'$ and $e''$ have the same colour, and the track-cusp of $\tau^f$ meets $e''$. Note that this implies that the track-cusp of $\tau_f$ meets $e'$. See \reffig{VeeringTriangles}.
From the definitions we deduce that the edge rectangles $\rect(e)$, $\rect(e')$, and $\rect(e'')$ are contained in $\rect(f)$.
However, $\rect(f)$ is not the union of the edge rectangles.
For example, there are points in $\link(\calV)$ with one representative linking only $e$, and another representative linking only $e'$, say.
See the right side of \reffig{LinkSpaceFace}.

\begin{figure}[htb]
\centering
\labellist
\tiny\hair 2pt
\pinlabel {$s$} at 77.5 91
\pinlabel {$u$} at 88 80
\small\hair 2pt
\pinlabel {$c$} [l] at 140 141
\pinlabel {$c'$} [r] at 3 107
\pinlabel {$c''$} [l] at 105 8
\pinlabel {$e$} [tr] at 51 52
\pinlabel {$e'$} [l] at 120 70
\pinlabel {$e''$} [b] at 66 123
\pinlabel {$c$} [l] at 378 141
\pinlabel {$c'$} [r] at 201 82
\pinlabel {$c''$} [t] at 304 3
\pinlabel {$\rect(e)$} at 250 42
\pinlabel {$\rect(e')$} at 337 50
\pinlabel {$\rect(e'')$} at 260 108
\endlabellist
\includegraphics[width=0.6\textwidth]{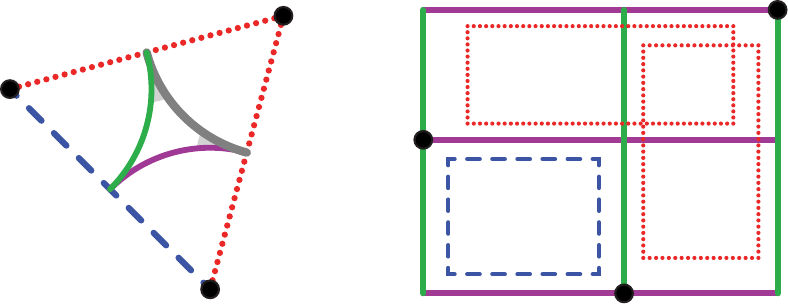}
\caption{Left: a majority red triangle with upper and lower tracks shown.
Right: the three edges $e$, $e'$, $e''$ of a face $f$ of $\cover{\calV}$ give three edge rectangles, all subrectangles of the face rectangle $\rect(f)$.}
\label{Fig:LinkSpaceFace}
\end{figure}

\begin{definition}
\label{Def:Spans}
Let $Q$ and $R$ be rectangles.
We say that $Q$ \emph{south-north spans $R$} 
if there is a leaf of $R \cap F^\calV$ contained in $Q$.
As a refinement of this, we say that $Q$ \emph{strictly south-north spans $R$}
if there is a leaf $\ell$ of $R \cap F^\calV$ so that $\closure{\ell}$ is contained in $Q$ and is compact.

We make similar definitions, replacing south-north with west-east and replacing $F^\calV$ with $F_\calV$.
\end{definition}


Note that in \refdef{Spans}, we do \emph{not} assume symmetry.
That is, $Q$ may south-north span $R$ while $R$ does not west-east span $Q$.

\begin{lemma}
\label{Lem:FaceRect}
Suppose that $f$ is a face of $\cover{\calV}$ with edges $e$, $e'$, and $e''$.
Suppose that $e'$ and $e''$ have the same colour, and that the track cusp of $\tau^f$ meets $e''$.
Suppose that $c$, $c'$, and $c''$ are the cusps opposite $e$, $e'$, and $e''$ respectively.
\begin{enumerate}
\item
\label{Itm:FaceRectRect}
The face rectangle $\rect(f)$ is a rectangle in the sense of \refdef{Rectangle}.
\item
\label{Itm:FaceRectBdy}
The boundary of $\rect(f)$ is contained in a union of six cusp leaves.
Two sides of $\rect(f)$ are segments connecting the ideal corner $c$ to distinct material corners.
The two remaining sides meet two material corners and contain $c'$ and $c''$ respectively. 
\item
\label{Itm:FaceSpan}
The edge rectangles $\rect(e')$ and $\rect(e'')$ respectively south-north and west-east span the face rectangle $\rect(f)$.
\end{enumerate}
\end{lemma}

\begin{proof}
Breaking symmetry, suppose that $e'$ and $e''$ are red and $e$ is blue.
Let $s$ be the track-cusp of $\tau^f$ on $e''$;
let $u$ be the track-cusp of $\tau_f$ on $e'$.
Let $S$ be the upper branch line through $s$;
let $U$ be the lower branch line through $u$. 

\begin{claim*}
The intersection $\closure{\rect(e)} \cap \closure{\rect(e')}$ is equal to $\ell^S \cap \bdy \rect(e)$. 
\end{claim*}

\begin{proof} 
Suppose that $p$ is a point of $\link(\calV)$.
Thus $p$ is not a cusp class.
We claim that the following are equivalent.  
\begin{enumerate}[label=(\alph*)]
\item 
The point $p$ lies in $\closure{\rect(e)} \cap \closure{\rect(e')}$.
\item 
The point $p$ has representatives $(\lambda,\mu)$ and $(\lambda',\mu')$, where $(\lambda,\mu)$ links $e$, where $(\lambda',\mu')$ links $e'$, where $\lambda$ and $\lambda'$ are distinct and asymptotic, and where $\mu$ and $\mu'$ are asymptotic.
\item 
The point $p$ has representatives $(\lambda,\mu)$ and $(\lambda',\mu')$, where $(\lambda,\mu)$ links $e$, where $(\lambda',\mu')$ links $e'$, where $\lambda$ and $\lambda'$ are the distinct boundary leaves sharing the endpoint $\bdy S$, and where $\mu$ and $\mu'$ are asymptotic.
\item 
The point $p$ lies in $\ell^S \cap \bdy \rect(e)$.
\end{enumerate}

Statement (a) implies the first half of statement (b) by definition.
Consulting the left-hand side of \reffig{LinkSpaceFace}, we see that $\lambda$ does not link $e'$ and that $\lambda'$ does not link $e$, so $\lambda$ and $\lambda'$ are distinct.
The various leaves are asymptotic by \reflem{EquivalenceInL}.
Statement (b) implies statement (a) by definition.

Statement (b), and \reflem{Laminations}\refitm{Asymptotic}, imply that $\lambda$ and $\lambda'$ are boundary leaves.  They share an endpoint; 
examining the track $\tau^f$ this endpoint must be $\bdy S$.
Thus (b) implies (c); 
the converse direction follows from the definition of asymptotic.

The first part of statement (c) implies that $p$ lies on $\ell^S$.
Since $(\lambda,\mu)$ links $e$, but $(\lambda',\mu')$ does not, \refrem{BoundaryEdgeRect} tells us that the point $p$ lies in $\bdy \rect(e)$. 
This gives (d).
To see the converse, since $p$ lies in $\ell^S$, we may choose two representatives, $(\lambda,\mu)$ and $(\lambda',\mu)$, so that $\lambda$ and $\lambda'$ are distinct asymptotic boundary leaves sharing the endpoint $\bdy S$ and $\lambda$ links $e$ while $\lambda'$ links $e'$.
If $\mu$ links $e$ then it also links $e'$ and we are done.
If not, then as $p$ lies in $\bdy \rect(e)$, we deduce that $\mu$ is asymptotic to some $\mu'$ which links $e$.
Replacing $\mu$ with $\mu'$ proves the claim.
\end{proof}

Similarly, the intersection $\closure{\rect(e)} \cap \closure{\rect(e'')}$ is equal to $\ell_U \cap \bdy \rect(e)$.

We adopt the following notation.
Set $R_{00} = \rect(e)$.
From $\rect(e')$ we remove the open interval $\ell_U \cap \rect(e')$ to obtain a pair of open rectangles $R_{10}$ and $R_{11}$.  We choose these so that $\closure{R}_{00}$ and $\closure{R}_{10}$ intersect in an arc; 
thus $\closure{R}_{00}\cap\closure{R}_{10}$ is exactly $\ell^S \cap \bdy \rect(e)$.
Appealing to \reflem{EdgeRect}\refitm{EdgeRectBdy} this interval is homeomorphically embedded in the boundaries of $R_{00}$ and $R_{10}$.
We cut $\rect(e'')$ using the interior interval $\ell^S \cap \rect(e'')$ into a pair of rectangles $R_{01}$ and $R'_{11}$.
Again $R_{00}$ and $R_{01}$ are nicely glued along $\ell_U \cap \bdy \rect(e)$.
We claim that $R'_{11} = R_{11}$.
To see this, note that if $\lambda$ and $\mu$ link each other, link $e'$, and neither link $e$, then necessarily both link $e''$. 

We deduce that the closures of the four rectangles $R_{00}$, $R_{10}$, $R_{11}$, and $R_{01}$ meet cyclically along subintervals of $\ell^S$ and $\ell_U$.
The only point contained in all four closures is their common material corner $\ell^S \cap \ell_U$.
This proves \refitm{FaceRectRect}.
Conclusion \refitm{FaceRectBdy} follows from \refitm{FaceRectRect} and \reflem{EdgeRect}\refitm{EdgeRectBdy}.
The two parts of Conclusion \refitm{FaceSpan} follow from the decomposition of $\rect(f)$ given above.
\end{proof}

\begin{lemma}
\label{Lem:EdgeSide}
Suppose that $K$ is a layer of a layering. 
Suppose that $e$ is an edge of $K$.
Suppose that $s$ is a side of $\rect(e)$.
Then there is a face of $K$ whose face rectangle contains the interior of $s$.
\end{lemma}


\begin{figure}[htbp]
\centering
\labellist
\small\hair 2pt
\pinlabel {(a)} at -20 742
\pinlabel {(b)}  at 522 777
\pinlabel {(c)}  at 336 500
\pinlabel {(d)} at 1070 597
\pinlabel {(e)} at 783 185
\pinlabel {(f)} at -20 295
\scriptsize
\pinlabel {$c$} [bl] at 174 760
\pinlabel {$s$} [l] at 175 646
\pinlabel {$e$} [b] at 90 726
\pinlabel {$e$} [b] at 390 990
\pinlabel {$e$} [b] at 737 990
\pinlabel {$e$} [b] at 455 626
\pinlabel {$e'$} [tr] at 427 560
\pinlabel {$e''$} [l] at 505 584
\pinlabel {$e'''$} [l] at 1170 640
\pinlabel {$e'''$} [bl] at 704 279
\pinlabel {$e'''$} [bl] at 1023 266
\pinlabel {$e'''$} [bl] at 205 395
\endlabellist
\includegraphics[width=\textwidth]{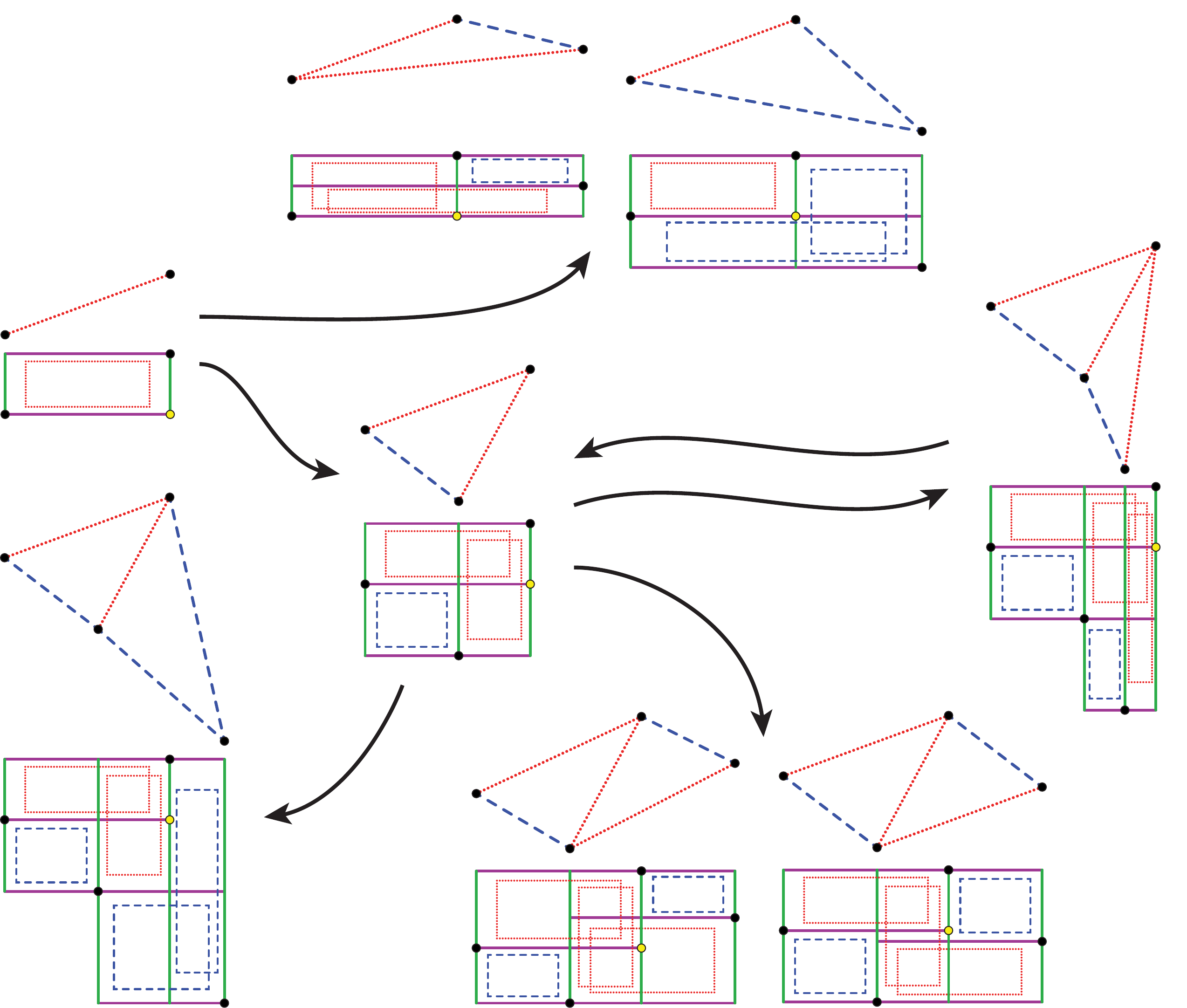}
\caption{
The logic of the proof of \reflem{EdgeSide}. The material point at the end of the side $s$ is marked with a yellow dot.
} 
\label{Fig:LinkSpaceFaceFlowChart}
\end{figure}

\begin{proof}
Breaking symmetry, we suppose that $e$ is red and $s$ is the east side of $\rect(e)$.
Let $c$ be the cusp corresponding to the northeastern ideal corner of $\rect(e)$.
See \reffig{LinkSpaceFaceFlowChart}(a).
Let $f$ be the face of $K$ incident to $e$ so that $\rect(f)$ extends to the south or east of $\rect(e)$.

Label the edges of $f$ as $e$, $e'$, and $e''$, ordered anticlockwise. 
Thus $e''$ is the other edge of $f$ incident to $c$.
There are now two cases as $e''$ is blue or red.
If $e''$ is blue then the interior of $s$ lies in $\rect(f)$ and we are done.
The two subcases in which $e''$ is blue are shown in \reffig{LinkSpaceFaceFlowChart}(b). 

Suppose instead that $e''$ is red.
See \reffig{LinkSpaceFaceFlowChart}(c).
Suppose that $f'$ is the face that meets $f$ along $e''$.
Let $e'''$ be the edge of $f'$ that meets $c$.
There are two cases as $e'''$ is red or blue.
\begin{itemize}
\item
If $e'''$ is red then $s$ again lies in $\bdy \rect(f')$.
So, in this case, we replace $f$ by $f'$ and continue rotating about $c$ in the anticlockwise direction.
See \reffig{LinkSpaceFaceFlowChart}(d).
By \reflem{CannotTurnLeftForever} we do not revisit this case forever and must eventually find that $e'''$ is blue.
\item
If $e'''$ is blue then there are two cases, as the remaining edge of $f'$ is red (\reffig{LinkSpaceFaceFlowChart}(e), two possible combinatorial configurations) or blue (\reffig{LinkSpaceFaceFlowChart}(f)).
In all cases, the interior of $s$ lies in $\rect(f')$.  \qedhere
\end{itemize}
\end{proof}

\begin{lemma}
\label{Lem:FacesCover}
Suppose that $K$ is a layer of a layering.  Then
\[
\{ \rect(f) \st \mbox{$f$ is a face of $K$} \}
\]
is an open cover of $\link(\calV)$.  
This cover has no finite subcover.
\end{lemma}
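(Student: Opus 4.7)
The plan is to verify three claims: (i) each $R(f)$ is open; (ii) the face rectangles cover $\calL(\alpha)$; and (iii) no finite subcollection suffices. Step (i) is immediate from \reflem{FaceRectangle}\refitm{FaceRectRect}, since each $R(f)$ is a rectangle, hence homeomorphic to $(0,1)^2$.

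For step (ii), fix $p \in \calL(\alpha)$ and apply \reflem{EdgesCover} to choose an edge $e$ of $K$ with $p \in \closure{R}(e)$. If $p \in R(e)$, then $R(e) \subset R(f)$ for either face $f \supset e$ of $K$, so $p \in R(f)$. Otherwise $p \in \bdy R(e)$, and by \reflem{EdgeRectangle}\refitm{EdgeRectBdy} it lies on one of the four singular leaves bounding $R(e)$; assume without loss of generality that $p$ is an interior point of $\ell^S$. By \reflem{EquivalenceInL}, the two representatives of $p$ are $(\lambda(R,S), \mu)$ and $(\lambda(S,T), \mu)$, which share the asymptote $\bdy S$ and therefore share the cusp train subray $\ell^K(s, S)$ in $\tau^K$. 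Since the endpoints of $\mu$ must lie in the arcs $(\bdy R, \bdy S)$ and $(\bdy S, \bdy T)$ on opposite sides of $\bdy S$, the train line of $\mu$ in $\tau_K$ must cross $\ell^K(s, S)$ inside some face $f$ of $K$. In this face the upper train line crosses two of the three edges and the lower train line crosses two of the three edges, so by pigeonhole they share at least one common edge $e^* \subset f$. Both representatives of $p$ then link $e^*$, giving $p \in R(f)$. The material-corner cases (four representatives) are handled analogously using shared subrays in both the upper and lower tracks.

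For step (iii), let $\{f_1, \ldots, f_n\}$ be a finite subcollection and let $V$ be the finite set of cusps arising as vertices of the $f_i$. Since $\Delta_M$ is countably infinite, I choose a cusp $c \notin V$ and let $A$ be the arc of $S^1(\alpha) \setminus V$ containing $c$. By the density of interior leaves in $\Lambda^\alpha$ and $\Lambda_\alpha$ (\reflem{Cantor}) together with the interleaving of the crowns $\Lambda^c$ and $\Lambda_c$ (\reflem{CrownsInterleave}), I find interior leaves $\lambda \in \Lambda^\alpha$ and $\mu \in \Lambda_\alpha$, forming a linked pair, with all four endpoints close enough to $c$ to lie in $A$. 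The class $p = [(\lambda, \mu)] \in \calL(\alpha)$ has $(\lambda, \mu)$ as its unique representative; since all endpoints lie in $A$, which is disjoint from $V$, neither $\lambda$ nor $\mu$ separates any pair of points in $V$. Hence $(\lambda, \mu)$ does not link any edge of any $f_i$, and so $p \notin R(f_i)$ for any $i$.

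The main obstacle is verifying the pigeonhole step in (ii): the veering faces of $K$ have specific upper and lower sink configurations, and one must confirm that the crossing of $\ell^K(s, S)$ and $\mu$'s train line genuinely occurs inside (or on an edge of) a single face of $K$, so that the count $2+2-3 = 1$ actually produces a shared edge witnessing $p \in R(f)$.
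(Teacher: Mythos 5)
Your steps (i) and (iii) are fine.  Step (iii) is in fact a different and more explicit argument than the paper's, which simply observes that a finite union of face rectangles is incident to only finitely many singular classes; your construction of a non-singular class supported near an unused cusp $c$ works, granted the (true) claim that the interleaving of $\Lambda^c$ and $\Lambda_c$ together with density of interior leaves (\reflem{CrownsInterleave}, \reflem{Cantor}) produces a linked pair of interior leaves with all four endpoints in the arc $A$.

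The gap is in step (ii), in the case $p \in \bdy R(e)$.  You aim to produce a single edge $e^*$ of $K$ linked by \emph{every} representative of $p$ --- that is, you aim to show $p \in R(e^*)$ for some edge $e^*$ of $K$.  This is strictly stronger than $p \in R(f)$: membership in $R(f)$ only requires each representative to link \emph{some} edge of $f$, possibly different edges for different representatives.  The stronger statement fails in general, and this is exactly the phenomenon the paper flags after \refdef{FaceRectangle} (``there are points in $\calL(\alpha)$ with one representative linking only $e$, and another representative linking only $e'$'').  Concretely, your pigeonhole count $2+2-3$ is wrong when the train line of $\mu$ meets the cusp line $\ell^K(c,S)$ only along the connecting arc $\ell^K(c,s)$, inside the face $f_0$ containing the track-cusp $s$: there the cusp line crosses only the single edge carrying the switch of $s$, so the count is $1+2-3=0$.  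In that configuration $\mu$ links the two edges of $f_0$ adjacent to $c$, while $\lambda(R,S)$ and $\lambda(S,T)$ each link only one of those two (together with the third edge); the two representatives link $f_0$ through \emph{different} edges, and no edge of $K$ linked by all of $\lambda(R,S)$, $\lambda(S,T)$, and $\mu$ need exist, since $\mu$ then crosses none of the edges met by the common ray $\ell^K(s,S)$.  The four-representative case, which you dismiss as analogous, is where this most clearly breaks: at the median of a face rectangle the upper and lower rays exit through different edges into disjoint halves of the disk $K$, so the edges linked by both upper representatives and the edges linked by both lower representatives form disjoint sets, and the median lies in no open edge rectangle at all.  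The conclusion $p \in R(f)$ is still true, but establishing it requires identifying the face whose rectangle contains $p$ when the representatives split among its edges; the paper does this by a case analysis on the colours of the relevant faces together with a rotation around the cusp $c$ that terminates by \refcor{CannotTurnLeftForever}.
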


\begin{proof}
Fix a point $p$ in $\link(\calV)$.
By \reflem{EdgesCover} there is some edge $e$ in $K$ so that $p$ lies in $\closure{\rect(e)}$.
Suppose that $p$ lies in $\rect(e)$.
Then let $f$ be either of the two faces in $K$ incident to $e$.
Thus $\rect(e) \subset \rect(f)$, proving the first statement.
Suppose instead that $p$ lies in $\bdy \rect(e)$.
If $p$ lies in the interior of a side of $\rect(e)$ then we are done by \reflem{EdgeSide}.
The final possibility is that $p$ is a material corner of $\rect(e)$. 
Let $s$ be either of the sides of $\rect(e)$ meeting $p$.
By \reflem{EdgeSide}, we obtain a face $f$ so that the interior of $s$ lies in $\rect(f)$.
We deduce that $p$ lies in the interior of a side $s'$ of some edge rectangle in $\rect(f)$.
Applying \reflem{EdgeSide} again proves the first statement.

Finally, note that any finite union of face rectangles has only finitely many cusp classes as ideal points.
Thus the open cover by face rectangles has no finite subcover. 
\end{proof}

\begin{proof}[Proof of \refthm{LinkSpace}]
Fix $K$, a layer of a layering $\calK$.
We fix one edge $e_0$ of $K$;
we order the faces $(f_i)$ of $K$ by their combinatorial distance from $e_0$ and break ties arbitrarily.
Set $\calL_k = \bigcup_{i = 0}^k \rect(f_i)$.

We claim that $\calL_k$ is homeomorphic to a closed disk with $k + 3$ points removed from its boundary.
The base case of $\calL_0 = \rect(f_0)$ follows from \reflem{FaceRect}\refitm{FaceRectBdy}.
To pass from $\calL_k$ to $\calL_{k+1}$ we note that $\calL_k \cap \rect(f_{k+1})$ is an edge rectangle.  

We now take $D_k \subset \calL_k$ to be a compact disk, obtained from $\calL_k$ by removing small neighbourhoods of the $k + 3$ cusp classes.
By appropriately shrinking these neighbourhoods we arrange that $D_k \subset D_{k+1}$.
By \reflem{FacesCover}, the link space $\link(\calV)$ is the increasing union of the closed disks $D_k$.
Thus $\link(\calV)$ is a non-compact connected surface without boundary so that any compact subsurface is planar.
By Ker\'ekj\'art\'o's \textit{Hauptsatz der Fl\"achentopologie f\"ur offen Fl\"achen}~\cite[page~170]{Kerekjarto23}, we deduce that $\link(\calV)$ is homeomorphic to the plane, as desired.
(See also \cite[Theorem~1]{Richards63}.)
This gives \refitm{LinkSpacePlane}. 



By \reflem{FaceRect}\refitm{FaceRectRect}, we have that $F^\calV$ and $F_\calV$ are foliations and are transverse inside of every face rectangle.
By \reflem{FacesCover} these cover $\link(\calV)$, and we obtain \refitm{LinkSpaceTransverse}.
From \reflem{Cantor}, we obtain \refitm{LinkSpaceDense}.

The action of $\pi_1(M)$ on $\Circle$ preserves the laminations (perhaps swapping them), hence the induced action preserves the foliations (perhaps swapping them).
The action of $\pi_1(M)$ on $\cover{\calV}$ sends faces to faces so the action on $\link(\calV)$ sends face rectangles to face rectangles. 
Thus the action is continuous.

Note that $\pi_1(M)$ acts faithfully on the edges of $\cover{\calV}$.
So fix $\gamma \in \pi_1(M)$ as well as distinct edges $e$ and $e'$ with $\gamma(e) = e'$.
If $\rect(e) = \rect(e')$ then the cusps at the ends of $e$ and $e'$ must agree, by \reflem{EdgeRect}\refitm{EdgeRectBdy}.  
However, this contradicts \reflem{NoParallelEdges}.
We deduce that the action on $\link(\calV)$ is faithful. 
Thus we obtain~\refitm{LinkSpaceAction}.

Suppose now that $M$ is oriented and that $\calV$ is transverse veering.
Suppose that $f$ is a face of $K$, the given layer of $\calK$.
Let $e, e'$, and $e''$ be the three edges of $f$, labelled as in \reffig{LinkSpaceFace}.
The combinatorics of $\rect(f)$ (see \reflem{FaceRect}\refitm{FaceRectBdy}) implies that the induced orientation on $\rect(e')$ and $\rect(e'')$ agrees on their intersection.
Furthermore, the induced orientations on $\bdy \rect(e)$ and $\bdy \rect(e')$ disagree on their intersection.
The same holds on the intersection of $\bdy \rect(e)$ and $\bdy \rect(e'')$.
This induces an orientation on $\rect(f)$.
The same holds for any face of $K$.
By \reflem{FacesCover}, and since face rectangles (for faces in $K$) are either disjoint or intersect in an edge rectangle, this induces an orientation on $K$.
Note that any other layer $K'$ of $\calK$ shares infinitely many faces with $K$ and thus has the same orientation.
Finally, any other layering is made up of the same faces as $\calK$.
Thus we obtain~\refitm{LinkSpaceOri}.

The action of $\pi_1(M)$ on $\cover{\calV}$ preserves the red and blue edges and the transverse orientation.
Also the action of $\pi_1(M)$ does not swap the laminations and thus does not swap the foliations.
Thus it preserves the given orientation of face rectangles.
Thus we obtain~\refitm{LinkSpaceOriAction}. 
\end{proof}

\subsection{Cardinal directions}
\label{Sec:CardinalDirections}

Applying \refthm{LinkSpace}\refitm{LinkSpacePlane} we deduce that $F^\calV$ and $F_\calV$ are orientable.
Thus, in addition to the choices made in \refrem{FixOrientations},
we fix an arbitrary orientation on $F_\calV$; we call this orientation \emph{east}.
This together with \refthm{LinkSpace}\refitm{LinkSpaceOri} determines an orientation on $F^\calV$, which we will call \emph{north}. 
As usual, the opposite of east is \emph{west}, and the opposite of north is \emph{south}.

Suppose that $p$ and $q$ are points of $\link(\calV)$. 
Suppose that $\rho = (\rho_i)$ is a \emph{polygonal path} from $p$ to $q$.
That is, each $\rho_i$ is an interval in some leaf, and for all $i$, the arcs $\rho_i$ and $\rho_{i+1}$ are in opposite foliations. 
If every $\rho_i$ lying in $F_\calV$ is oriented to the east, then we say that \emph{$q$ is to the east of $p$}.
We make similar definitions for the other cardinal directions.

\begin{lemma}
\label{Lem:PartialOrders} 
The relation \emph{to the east of} gives a partial order on $\link(\calV)$.
The same holds for the other three cardinal directions.
\end{lemma}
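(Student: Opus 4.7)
Because $\calL(\alpha) \cong \RR^2$ (\refthm{LinkSpace}\refitm{LinkSpacePlane}) and $\calF^\alpha$ is a foliation (transverse to $\calF_\alpha$, by \refthm{LinkSpace}\refitm{LinkSpaceTransverse}) of the plane by properly embedded lines, each leaf $\ell$ of $\calF^\alpha$ separates $\calL(\alpha)$ into two open half-planes. Using the orientation of $\calF_\alpha$, I designate one of these as the \emph{east half-plane} $E(\ell)$: the side into which short positively oriented $\calF_\alpha$-arcs exit from $\ell$. This choice is consistent along $\ell$ by continuity of the two orientations. I then say that $q$ is east of $p$ if and only if $q \in E(\ell_p)$, where $\ell_p$ denotes the $\calF^\alpha$-leaf through $p$. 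The notions of north, south, and west of $p$ are defined analogously, using $\calF_\alpha$-leaves and the orientation of $\calF^\alpha$ for north/south, and reversing orientations for the opposite pair.

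Irreflexivity is immediate since $p \in \ell_p$, and antisymmetry follows formally from irreflexivity plus transitivity, so the real content is transitivity. Suppose $q$ is east of $p$ and $r$ is east of $q$. Since $q \in \ell_q$ but $q \notin \ell_p$, the leaves $\ell_p$ and $\ell_q$ are distinct, and as leaves of a foliation they are disjoint. Connectedness of $\ell_q$ together with $q \in E(\ell_p)$ forces $\ell_q \subset E(\ell_p)$. The two disjoint lines $\ell_p$ and $\ell_q$ partition $\calL(\alpha) \cong \RR^2$ into three open regions: $W(\ell_p)$, an open strip $S$ between the lines, and an outer region $E_2$ beyond $\ell_q$. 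The crucial orientation-consistency claim is then $\ell_p \subset W(\ell_q)$; granting this, $E(\ell_q) = E_2 \subset E(\ell_p)$, so $r \in E(\ell_q) \subset E(\ell_p)$ and $r$ is east of $p$.

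To prove $\ell_p \subset W(\ell_q)$, I would exhibit a single $\calF_\alpha$-leaf $m'$ crossing both $\ell_p$ (at some point $y$) and $\ell_q$ (at some point $w$). Parametrising $m'$ eastward, the defining property of east sides forces both crossings to go east, so $y$ precedes $w$; the intermediate subarc of $m'$ lies in $S \cap W(\ell_q)$. Since $S$ is connected and disjoint from $\ell_q$, this forces $S \subset W(\ell_q)$, and hence $\ell_p \subset W(\ell_q)$. The main obstacle is producing such an $m'$: I plan to do this by covering a path in $\ell_p \cup \overline{S} \cup \ell_q$ from a point of $\ell_p$ to a point of $\ell_q$ by a finite chain of face rectangles (using \reflem{FacesCover}), and exploiting the local product structure in each rectangle to extend a single $\calF_\alpha$-leaf monotonically east across the whole chain. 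Implicit in the setup is the subsidiary planar fact that each $\calF_\alpha$-leaf meets each $\calF^\alpha$-leaf in at most one point, verified by a bigon argument: a double intersection would enclose a disk carrying a foliation by $\calF^\alpha$-arcs each crossing the $m$-boundary twice, which violates the global transverse orientability of $\calF^\alpha$. The analogous arguments for the north, south, and west partial orders follow verbatim after swapping the roles of the two foliations and/or reversing the relevant orientation.
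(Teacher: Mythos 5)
Your proof inverts the paper's division of labour.  The paper treats \emph{to the east of} as generated by eastward paths, so transitivity is immediate from concatenation and the only content is antisymmetry, which follows because leaves are properly embedded lines and hence separate $\calL(\alpha)$.  You instead define the relation by half-planes, so antisymmetry is formal and all of the content sits in transitivity, which you reduce (correctly) to the orientation-coherence claim that $\ell_q \subset E(\ell_p)$ forces $\ell_p \subset W(\ell_q)$.  That reduction, the three-region analysis behind it, and the at-most-one-intersection fact (which is in any case immediate from the construction of $\calL(\alpha)$, since $\ell^\lambda \cap \ell_\mu$ is the single class $[(\lambda,\mu)]$) are all fine.

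The gap is in your proof of coherence.  First, the rectangle-chain plan does not produce the leaf $m'$: covering a path from $\ell_p$ to $\ell_q$ by finitely many face rectangles controls only plaques, and the $\calF_\alpha$--plaques you would want to concatenate in consecutive rectangles generally lie on different global leaves; the leaf through your starting point need not meet the later rectangles of the chain at all.  Second, and more seriously, the leaf $m'$ need not exist.  Take $\ell_p = \ell^S$ and $\ell_q = \ell^T$ to be the singular leaves of $\calF^\alpha$ associated to adjacent upper branch lines $S$ and $T$ of a single cusp $c$.  A leaf $\ell_\mu$ of $\calF_\alpha$ meets $\ell^S$ only if $\mu$ links the cusp leaf $\lambda(c,S)$, and similarly for $T$; by \reflem{TriangleLeafCross} no $\mu$ links both, so no leaf of $\calF_\alpha$ crosses both $\ell^S$ and $\ell^T$.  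These two leaves are nevertheless disjoint, hence comparable under your relation, so your transitivity argument must handle them and cannot.  You need a different mechanism for coherence in such cases -- for instance passing around the cusp using the interleaving of the crowns, or reverting to the paper's path-based formulation, in which an eastward path crosses each leaf of $\calF^\alpha$ at most once and only eastward, so that the separation property alone rules out eastward loops.
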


\begin{proof}
Transitivity follows by concatenating.  
Antisymmetry follows because $F^\calV$ is a foliation of $\RR^2$ (\refthm{LinkSpace}\refitm{LinkSpacePlane} and \refitm{LinkSpaceTransverse}) so the leaves of $F^\calV$ are properly embedded lines (as in \cite[page~168]{Calegari07}), which thus separate $\link(\calV)$.
\end{proof}

For the remainder of the paper we require all rectangles $R \from (0, 1)^2 \to \link(\calV)$ to send oriented segments in the positive $y$--direction ($x$--direction) to oriented segments of $F^\calV$ ($F_\calV$).  

\begin{lemma}
\label{Lem:EdgeRectSlope}
With the choice of orientations of $F^\calV$ and $F_\calV$ as above we have the following.
An edge $e \in \cover{\calV}$ is red if and only if the northeast and southwest corners of $\rect(e)$ are ideal. 
\end{lemma}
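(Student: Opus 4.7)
The plan is to apply the orientation convention from \refsec{OrientingFoliations} using the cyclic order on $S^1(\alpha)$ of the boundary-leaf tips of $R(e)$, as derived in the proof of \reflem{EdgeRectangle}. That proof shows, for a red edge $e$ from $c$ to $d$, that the cyclic order is
\[
c, \bdy S', \bdy T, \bdy U, \bdy V', d, \bdy T', \bdy S, \bdy V, \bdy U',
\]
where $\ell^S, \ell^T$ are the $\calF^\alpha$-sides and $\ell_U, \ell_V$ are the $\calF_\alpha$-sides of $R(e)$. The crucial feature for this lemma is the relative position of upper and lower crown tips: in $A^+ = [c, d]^\acw$ the upper tips $\bdy S', \bdy T$ sit closer to $c$ while the lower tips $\bdy U, \bdy V'$ sit closer to $d$ (symmetrically in $A^-$). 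Hence for any interior leaves $\lambda = (u, v) \in \Lambda^e$ with $u \in A^+, v \in A^-$ and $\mu = (x, y) \in \Lambda_e$ with $x \in A^+, y \in A^-$, the cyclic order of these points is $c, u, x, d, v, y$. The mirror argument for a blue edge (swapping upper and lower throughout the proof of \reflem{EdgeRectangle}) yields cyclic order $c, x, u, d, y, v$.

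Next, I would orient $\calF_\alpha$ so that $\bdy_+ \mu = x \in A^+$ (the opposite choice flips both foliation orientations and preserves the NE/SW vs.\ NW/SE classification). The convention $\bdy_+ \lambda \in [\bdy_+ \mu, \bdy_- \mu]^\acw = [x, y]^\acw$ yields $\bdy_+ \lambda = v \in A^-$ for a red edge (since this arc contains $d, v$ but not $u$), and $\bdy_+ \lambda = u \in A^+$ for a blue edge (since then the arc contains $u, d$ but not $v$).

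To identify the corners of $R(e)$ in $\calL(\alpha)$, I would use that at an interior point $p = [(\lambda, \mu)]$, the east direction (positive $\calF_\alpha$) sends $\ell_\mu$ in the compactification toward $\bdy_+ \mu \in S^1(\alpha)$, while north sends $\ell^\lambda$ toward $\bdy_+ \lambda$. Identifying the east side as the vertical side whose ideal corner (among $c, d$) is the one cyclically closer to $\bdy_+ \mu$, and analogously for north: in the red case, $\bdy_+ \mu \in A^+$ is closer to $d$ and $\bdy_+ \lambda \in A^-$ is closer to $d$, so the east and north sides are $\ell^T$ and $\ell_V$, placing $d = \ell^T \cap \ell_V$ at the NE corner and $c = \ell^S \cap \ell_U$ at the SW corner. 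In the blue case, $\bdy_+ \mu \in A^+$ is closer to $c$ so east $= \ell^S$, but $\bdy_+ \lambda \in A^+$ is closer to $d$ so north $= \ell_V$; thus $c = \ell^S \cap \ell_U$ lands at the SE corner and $d = \ell^T \cap \ell_V$ at NW.

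The main obstacle is rigorously identifying the east side of $R(e)$ from the cyclic position of $\bdy_+ \mu$. This requires a compactification argument: taking a sequence $\lambda_n \nearrow \lambda(T, T')$ in $\Lambda^e$, the leaves $\ell^{\lambda_n}$ accumulate on $\ell^T$, and along a fixed $\ell_\mu$ their intersections $[(\lambda_n, \mu)]$ march monotonically toward $\ell^T$; tracking the corresponding endpoints $v_n = \bdy_+ \lambda_n \to \bdy T'$ confirms that this direction is the one which, under the orientation convention, is ``east''. A symmetric sequence in $\Lambda_e$ identifies the north side. Since both orientation choices for $\calF_\alpha$ lead to the same NE/SW--vs.--NW/SE dichotomy, this completes the proof.
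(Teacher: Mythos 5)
Your proposal is correct and follows essentially the same route as the paper: both arguments feed the cyclic order of crown tips established in the proof of \reflem{EdgeRectangle} into the orientation convention of \refsec{OrientingFoliations} to locate $\bdy_+\mu$ and $\bdy_+\lambda$, and then read off which corners of $R(e)$ are ideal. The paper compresses the final side-identification into the single assertion that $\ell_U$ and $\ell^S$ ``point away from $c$,'' so your extra care in identifying the east and north sides (and your explicit treatment of the blue case) only fills in details the paper leaves implicit.
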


\begin{proof}
We refer to the proof of \reflem{EdgeRect}, where $e$ was assumed to be red. 
With notation as in that lemma, we have that 
\[
c, \bdy U, \bdy V', d, \bdy T', \bdy S, 
\]
appear in $\Circle$ in that anticlockwise order.
See \reffig{LeafIdentificationsEdge}.
Fix $[(\lambda,\mu)] \in \rect(e)$ so that both $\lambda$ and $\mu$ are interior leaves.
Breaking symmetry, we may assume that $\bdy_+ \mu$ lies in $[\bdy U, \bdy V']^{\acw}$.
Since $\link(\calV)$ is connected, we deduce that $\bdy_+ \lambda$ lies in $[\bdy T', \bdy S]^{\acw}$.
Thus $\ell_U$ and $\ell^S$ both point away from $c$.
Thus $c$ lies in the southwest corner of the rectangle $\rect(e)$.
\end{proof}

In this paper, all figures showing parts of $\link(\calV)$ are drawn using this choice of orientation on $F^\calV$ and on $F_\calV$.

\subsection{Tetrahedron rectangles}
\label{Sec:TetrahedronRectangles}

\begin{definition}
\label{Def:TetRect}
Fix a tetrahedron $t$ in $\cover{\calV}$.
We say that a pair $(\lambda, \mu) \in \pair(\calV)$ \emph{links} $t$ if the pair links at least one of the six edges of $t$. 
The \emph{tetrahedron rectangle} $\rect(t)$ is defined to be
\[
\rect(t) = \{ p \in \link(\calV) \st \mbox{every representative $(\lambda,\mu)$ of $p$ links $t$} \} \qedhere
\]
\end{definition}

Recall from \refsec{TautIdealTriangulation} that two of the edges of $t$ are the upper and lower  edges (respectively) of $t$ and the remaining four are the equatorial edges of $t$.
Similarly we have two upper and two lower faces of $t$.

\begin{lemma}
\label{Lem:TetRect}
Suppose that $t$ is a tetrahedron of $\cover{\calV}$.
Let $f$ and $g$ be the upper faces of $t$ and also $f'$ and $g'$ be the lower faces. 
\begin{enumerate}
\item
\label{Itm:TetRectRect}
The tetrahedron rectangle $\rect(t)$ is a rectangle in the sense of \refdef{Rectangle}.
\item
\label{Itm:TetRectBdy}
The boundary of $\rect(t)$ is contained in a union of eight cusp leaves.
There are four material corners.
Each of the four sides contains exactly one ideal point, corresponding to one of the four cusps of $t$.
\item
\label{Itm:TetRectFace}
The tetrahedron rectangle $\rect(t)$ is equal to both $\rect(f) \cup \rect(g)$ and $\rect(f') \cup \rect(g')$.
Moreover, $\rect(f)$ and $\rect(g)$ both south-north span $\rect(t)$, while $\rect(f')$ and $\rect(g')$ both west-east span $\rect(t)$.
\item
\label{Itm:TetKeane}
No pair of ideal points in the boundary of $\rect(t)$ share a common leaf.
\end{enumerate}
\end{lemma}

\begin{proof}
It follows from the definitions that $\rect(f) \cup \rect(g) \subset \rect(t)$ and, similarly, $\rect(f') \cup \rect(g') \subset \rect(t)$.  
\begin{claim*}
$\rect(t) \subset \rect(f) \cup \rect(g)$ and $\rect(t) \subset \rect(f') \cup \rect(g')$.  
\end{claim*}

\begin{proof}
Suppose that $p$ is a point of $\rect(t)$.
We prove that $p$ is contained in $R(f) \cup R(g)$; the other case is similar.
If $p$ is contained in an edge rectangle for an equatorial edge of $t$, or the upper edge of $t$, then we are done by the remarks following \refdef{FaceRect}.

In the remaining cases, $p$ lies on the common boundary of two edge rectangles, both contained in one of $R(f)$ or $R(g)$; see \reffig{LinkSpaceFaceFlowChart}(e).
By \reflem{FaceRect}\refitm{FaceRectRect}, the face rectangles $R(f)$ and $R(g)$ are rectangles in the sense of \refdef{Rectangle}.
Thus the common boundary of the two edge rectangles, and so $p$, lies in the interior of $R(f)$ or of $R(g)$.
\end{proof}

The claim proves the first statement of \refitm{TetRectFace}.
Let $e = f \cap g$ and $e' = f' \cap g'$ be the upper and lower (respectively) edges of $t$.
By \reflem{FaceRect}\refitm{FaceSpan}, the rectangle $\rect(e)$ south-north spans both $\rect(f)$ and $\rect(g)$.
Thus $\rect(e)$ south-north spans $\rect(t)$.
Since $\rect(f)$ contains $\rect(e)$, we deduce that $\rect(f)$ south-north spans $\rect(t)$.
The other three statements of \refitm{TetRectFace} are proved similarly.

Note, by \reflem{FaceRect}\refitm{FaceSpan},  that $\rect(e)$ is a subrectangle of, and south-north spans, both $\rect(f)$ and $\rect(g)$.
\reffig{LinkSpaceFaceFlowChart}(e) shows the cases where the upper edge $e$ is red.   
Thus $\rect(t)$ is a rectangle, and we obtain \refitm{TetRectRect}.

Note that $\rect(f)$ and $\rect(g)$ share precisely two cusp classes, namely those on the boundary of $\rect(e)$.
Thus there are four cusp classes in $\bdy \rect(t)$.
No two can be in a single side as that would give a cusp leaf meeting two cusp classes, contradicting \refcor{Irrational}.  
This gives \refitm{TetRectBdy}.  

Finally, suppose that $a$ and $c$ are cusps in the southern and northern sides of $\rect(t)$ respectively.
Again appealing to \refcor{Irrational}, we find that $a$ and $c$ cannot lie on a common leaf.
This gives \refitm{TetKeane}.
\end{proof}

The next result is needed to relate the combinatorics of $\cover{\calV}$ to the combinatorics of rectangles in $\link(\calV)$. 

\begin{lemma}
\label{Lem:FaceInTet}
Suppose that $f$ and $t$ are, respectively, a face and a tetrahedron of $\cover{\calV}$.
Then $f \subset t$ if and only if $\rect(f) \subset \rect(t)$. 
\end{lemma}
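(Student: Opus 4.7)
The forward direction will be essentially by definition. If $f$ is an upper face of $t$, let $g$ be the other upper face; by \refdef{TetRectangle}, $R(t) = R(f) \cup R(g)$, so $R(f) \subset R(t)$. If instead $f$ is a lower face of $t$, then \reflem{TetRectangleLower} writes $R(t)$ as the union of the two lower face rectangles, and again $R(f) \subset R(t)$.

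The reverse direction is where the real content lies, and the plan is to match up ideal points. Suppose $R(f) \subset R(t)$. Let $c, c', c''$ be the three cusps of $f$. By \reflem{FaceRectangle}\refitm{FaceRectBdy}, each of $c, c', c''$ is an ideal point of $\bdy R(f)$ (with $c$ an ideal corner and $c', c''$ lying in the interiors of the two sides not incident to $c$). In each case, I would fix a sequence in $R(f)$ witnessing the ideal point in question; since $R(f) \subset R(t)$, the same sequence lies in $R(t)$, and since $c, c', c''$ are singular classes and so do not belong to $\calL(\alpha)$, they cannot be interior points of $R(t)$. Hence each of $c, c', c''$ must appear as an ideal point of $\bdy R(t)$.

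The next step is to invoke \reflem{TetRectangle}\refitm{TetRectBdy}, which says that the ideal points of $\bdy R(t)$ are precisely the four cusps of $t$. Therefore $\{c, c', c''\}$ is a subset of the cusps of $t$, and there is a unique face $f_0$ of $t$ whose vertex set is $\{c, c', c''\}$. To close the argument I would appeal to \reflem{NoParallelEdges}: any two cusps of $\cover{\calT}$ are joined by at most one edge, so each of the three edges of $f$ and each of the three edges of $f_0$ is determined by its pair of endpoints. Hence $f$ and $f_0$ have the same edges and coincide as faces of $\cover{\calT}$, giving $f \subset t$.

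The main ingredients (the explicit boundary structure of $R(f)$ and $R(t)$, together with uniqueness of edges between pairs of cusps) are all already in place, so I do not anticipate a serious obstacle; the only point that warrants care is the bookkeeping of ideal points, which must be handled in the extended sense of ``lying in $\bdy R$'' introduced after \refdef{LinkSpace}.
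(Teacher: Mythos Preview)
Your forward direction matches the paper's.  For the backward direction you take a genuinely different route: the paper argues the contrapositive by choosing the lowest layer $K$ above $t$ that contains $f$ and then splits into two cases (according to whether the upper faces of $t$ lie in $K$), in each case exhibiting a cusp of $f$ that cannot be a cusp of $t$.  Your approach is more conceptual --- you observe that the three cusps of $f$ are ideal accumulation points of $R(f)\subset R(t)$, hence by \reflem{TetRectangle}\refitm{TetRectBdy} must be among the four cusps of $t$ --- and it sidesteps the case analysis entirely.  Both arguments eventually invoke \reflem{NoParallelEdges}.

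There is, however, a real gap at your final step.  From ``$f$ and $f_0$ have the same three edges'' you pass directly to ``$f$ and $f_0$ coincide as faces of $\cover{\calT}$''.  This does not follow from \reflem{NoParallelEdges} alone: that lemma gives uniqueness of \emph{edges} between a pair of cusps, not uniqueness of \emph{faces} with a prescribed edge set, and nothing in the paper asserts that $\cover{\calT}$ is simplicial in this stronger sense.  The fix is short but not automatic.  Pick a layer $K$ containing $f$ and a layer $K'$ containing $f_0$.  Each of the three common edges lies in both $K$ and $K'$; since every edge of $\cover{\calT}$ appears in a contiguous interval of layers (it is the upper $\pi$--edge of exactly one tetrahedron and the lower $\pi$--edge of exactly one other), none of these three edges is flipped between $K$ and $K'$.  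Hence $f$ is never removed and so lies in $K'$.  But $K'$ is a copy of the Farey tessellation (\reflem{Disk}), where three edges forming a triangle bound a unique face; thus $f=f_0$.  With this sentence added, your argument is complete.
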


\begin{proof}
The forwards direction follows from \reflem{TetRect}\refitm{TetRectFace}.

We prove the contrapositive of the backwards direction.
Suppose that $f$ is not a face of $t$.
Fix a layering $\calK$ of $\cover{\calV}$.
We break symmetry and suppose that $f$ is contained in a layer $K$, of $\calK$, that lies above $t$.
Furthermore, we take $K$ to be the lowest such layer in $\calK$.  

Suppose that the upper faces of $t$ lie in $K$.
Thus there is some edge, say $e$, of $f$ that separates $f$ (in $K$) from the upper faces of $t$.
Thus the cusp of $f$ not meeting $e$ cannot lie in a side of $\rect(t)$ and we are done.
See Figures~\ref{Fig:LinkSpaceFace} and~\ref{Fig:LinkSpaceFaceFlowChart}(e).

Suppose instead that the upper faces of $t$ do not lie in $K$.
In this case let $e$ be the edge that is removed from $K$ in order to obtain the immediately lower layer.
We deduce that $e$ is an edge of $f$.
Let $c$ and $d$ be the endpoints of $e$.
The faces of $t$ are all strictly below $K$, hence $e$ is not an edge of $t$.
Therefore by \reflem{NoParallelEdges}, no edge of $t$ connects $c$ and $d$.
Therefore at least one of $c$ or $d$ does not lie in any side of $\rect(t)$.
Therefore $\rect(f)$ is not a subset of $\rect(t)$ and we are done.
\end{proof}

\subsection{All rectangles}

We now prove that the tetrahedron rectangles control all rectangles of $\cover{\calV}$.

\begin{theorem}
\label{Thm:RectInTetRect}
Suppose that $R$ is a rectangle in $\link(\calV)$.
Then there is a tetrahedron $t$ of $\cover{\calV}$ so that $R \subset \rect(t)$.
\end{theorem}

Before giving the proof we need several lemmas. 

\begin{lemma}
\label{Lem:FinitelySpanned}
Suppose that $K$ is a layer of a layering and $R$ is a rectangle.
Then $R$ is south-north spanned by at most finitely many edge rectangles $\rect(e)$ for $e$ in $K$.
The same holds west-east.
\end{lemma}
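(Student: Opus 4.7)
The plan is as follows. First I will use \refthm{LinkSpace}\refitm{LinkSpaceDense} to fix two distinct interior leaves $m_1, m_2$ of $R \cap \calF_\alpha$, with underlying interior leaves $\mu_1, \mu_2 \in \Lambda_\alpha$.  Such a pair exists because interior leaves are dense in $\calF_\alpha$ and $R \cap \calF_\alpha$ is a one-parameter family.

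The crux is to show: if $R(e)$ south-north spans $R$ then both $\mu_1$ and $\mu_2$ link $e$. By \refdef{Spans}, spanning produces a leaf of $R \cap \calF^\alpha$ entirely inside $R(e)$; as a vertical arc of $R$, this leaf crosses each horizontal $m_i$ at a point $p_i \in R \cap R(e) \cap m_i$. Since $\mu_i$ is an interior leaf of $\Lambda_\alpha$, it has no asymptotic partner (by \reflem{Laminations}\refitm{Asymptotic}), so every representative of $p_i$ has $\mu_i$ as its $\Lambda_\alpha$--coordinate; by \refdef{EdgeRectangle}, the condition $p_i \in R(e)$ then forces $\mu_i$ to link $e$.

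Next I will apply \reflem{Laminations}\refitm{LeavesAreCarried} to represent each $\mu_i$ by a train line $m_i^K$ carried by $\tau_K$. The condition that $\mu_i$ links $e$ translates to $m_i^K$ passing through the switch at the midpoint of $e$. Hence the edges $e$ of $K$ for which $R(e)$ south-north spans $R$ inject into the set of switches of $\tau_K$ lying in $m_1^K \cap m_2^K$, and it suffices to show this intersection is compact.

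For compactness: $\mu_1$ and $\mu_2$ share no endpoint in $S^1(\alpha)$ (again by \reflem{Laminations}\refitm{Asymptotic}), and by \refcor{Irrational} the train lines $m_1^K, m_2^K$ have four pairwise distinct endpoints at infinity. If $m_1^K \cap m_2^K$ were unbounded in $\tau_K$, proper embeddedness would force the two train lines to share an infinite ray, contradicting distinctness of their endpoints. Hence $m_1^K \cap m_2^K$ is a compact subgraph of $\tau_K$, containing only finitely many switches, which finishes the south-north case. The west-east case follows identically, swapping the roles of $\calF^\alpha$ and $\calF_\alpha$ and of $\tau^K$ and $\tau_K$. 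The main obstacle I expect is cleanly obtaining the key implication that spanning forces both $\mu_i$ to link $e$; once that is set up, the compactness step is a short topological argument from the distinctness of endpoints at infinity.
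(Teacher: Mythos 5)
Your proof is correct and follows essentially the same route as the paper's: the paper also fixes two distinct non-singular leaves of $\calF_\alpha$ crossing $R$ (via \reflem{Cantor}), notes that any south-north spanning edge rectangle is crossed by both, passes to the carried train lines in $\tau_K$ via \reflem{Laminations}\refitm{LeavesAreCarried}, and uses \reflem{Laminations}\refitm{Asymptotic} to conclude that the two train lines fellow-travel across only finitely many edges. The only cosmetic difference is that your compactness step really rests on $\tau_K$ being a tree (so that the intersection of the two embedded train lines is connected, and unboundedness forces a shared ray), rather than on proper embeddedness alone -- but this is the same fact the paper invokes implicitly when it asserts the finite fellow-travelling.
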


\begin{proof}
We consider only the south-north case.
Applying the second statement of \reflem{Cantor}, we choose distinct non-cusp leaves $m, m' \in F_\calV$ which intersect $R$.  
For any edge $e$, if $\rect(e)$ south-north spans $R$, then $m$ and $m'$ both cross $\rect(e)$.

Since $m$ is non-cusp, by \refdef{UpperFoliation}, there is a unique leaf $\mu$ of $\Lambda_\calV$ so that
\[
m = \big\{ [(\lambda, \mu)] \in \link(\calV) \mbox{ for some $\lambda \in \Lambda^\calV$} \big\}
\]
Let $\mu'$ be the corresponding leaf for $m'$.
Thus $\mu$ and $\mu'$ are distinct interior leaves.
\reflem{Laminations}\refitm{LeavesAreCarried} provides train lines $m_K$ and $m'_K$ carried by the train track $\tau_K$ with the same endpoints as $\mu$ and $\mu'$ respectively.
Note that for any edge $e$ in $K$, if the leaf $m$ crosses $\rect(e)$ then the train line $m_K$ links $e$.

By \reflem{Laminations}\refitm{Asymptotic}, the train lines $m_K$ and $m'_K$ share no endpoints;
thus they fellow travel for at most a finite collection of edges $E$ of $K$.
Thus, at most $|E|$ edge rectangles south-north span the subrectangle of $R$ between $m$ and $m'$.
Therefore, at most $|E|$ edge rectangles south-north span $R$.
\end{proof}

\begin{definition}
\label{Def:FaceRectMedian}
Let $f$ be a face with edges $e$, $e'$, and $e''$.
The \emph{median} of $\rect(f)$ is the intersection $\closure{\rect(e)} \cap \closure{\rect(e')} \cap \closure{\rect(e'')}$.
\end{definition}

In the right-hand side of \reffig{LinkSpaceFace}, the median is the central yellow dot. 

\begin{lemma}
\label{Lem:MedianSpan}
Suppose that $f$ is a face with edges $e$, $e'$, and $e''$, where $e$ has the minority colour in $f$.
If $R$ is a rectangle that contains the median of $\rect(f)$ then $R$ is spanned by at least one of $\rect(e')$ or $\rect(e'')$.
\end{lemma}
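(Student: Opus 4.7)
The plan is to work in the parametric coordinates on $R(f)$ set up in the proof of \reflem{FaceRectangle}.  Breaking symmetry, I assume $e$ is blue and $e'$, $e''$ are red; the other case is handled analogously.  Identify $R(f)$ with $(0, 3) \times (0, 3)$ so that $R(e) = (0, 2) \times (0, 2)$, $R(e') = (2, 3) \times (0, 3)$, and $R(e'') = (0, 3) \times (2, 3)$.  In these coordinates the median lies at $m = (2, 2)$; the ideal corner of $R(f)$ lies at $(3, 3)$; and the cusps $c'$ and $c''$ lie at the parametric points $(0, 2)$ and $(2, 0)$ in the interiors of the west and south sides of $R(f)$.

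First I would observe that since $R$ and $R(f)$ are both rectangles with respect to the same pair of foliations $\calF^\alpha$ and $\calF_\alpha$, their intersection $R \cap R(f)$ is again a sub-rectangle.  In the parametric coordinates on $R(f)$ it may be written as $(x_L, x_R) \times (y_L, y_R)$ with $0 \leq x_L < 2 < x_R \leq 3$ and $0 \leq y_L < 2 < y_R \leq 3$.

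The hard part will be to rule out extensions of $R$ that would place an ideal point in the interior of $R$.  The key principle is that ideal points are not members of $\calL(\alpha)$, so they cannot be images of interior parametric points under the embedding $(0, 1)^2 \hookrightarrow \calL(\alpha)$ parametrizing $R$.  Since the $y$-range of $R$ contains $2$, were $R$ to extend strictly west of $R(f)$, the cusp $c'$ at parametric $(0, 2)$ would correspond to an interior parametric point of $R$, a contradiction.  A symmetric argument rules out $R$ extending south of $R(f)$.  Finally, if $R$ extended both east and north of $R(f)$, then the cusp $c$ at $(3, 3)$ would similarly sit at an interior parametric point of $R$, again impossible.

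The proof then splits into three cases: (a) $R \subseteq R(f)$; (b) $R$ extends east of $R(f)$ but not north; or (c) $R$ extends north but not east.  In cases (a) and (b), the full vertical leaf of $R \cap \calF^\alpha$ at any parametric $x_0 \in (2, x_R)$ lies entirely within $R(f)$ with $y$-range $(y_L, y_R) \subseteq (0, 3)$, and hence is a full leaf of $R$ contained in $R(e')$; thus $R(e')$ south-north spans $R$.  In case (c) a symmetric argument places a full horizontal leaf of $R \cap \calF_\alpha$ inside $R(e'')$, so that $R(e'')$ west-east spans $R$.
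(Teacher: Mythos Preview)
Your proof is correct and follows essentially the same approach as the paper's. Both arguments observe that the singular leaves through the median terminate at the ideal points $c'$ and $c''$ on the west and south sides of $R(f)$, so $R$ cannot extend past $R(f)$ in those directions; that the northeastern ideal corner $c$ prevents $R$ from extending both east and north; and then conclude by the same three-case analysis. Your version simply makes the parametric coordinates on $R(f)$ explicit, which is a cosmetic rather than a substantive difference.
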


\begin{proof}
Breaking symmetry, suppose that $e$ is blue while $e'$ and $e''$ are red.
For example, see \reffig{LinkSpaceFace}.
Since $R$ contains the median, it cannot extend out of $\rect(f)$ to the west or south;
the cusp leaves running through the median end at ideal points in the sides of $\rect(f)$.
The rectangle $R$ may extend out of $\rect(f)$ to either the east or north, but not both.
This is because the northeastern ideal corner of $\rect(f)$ is not contained in $R$. 

We apply \reflem{FaceRect}\refitm{FaceSpan}.
If $R$ extends out of $\rect(f)$ to the east then $R$ is south-north spanned by $\rect(e')$.
If it extends out the north then it is west-east spanned by $\rect(e'')$.
If $R$ is contained in $\rect(f)$ then it is spanned by both edge rectangles.
\end{proof}

\begin{lemma}
\label{Lem:SpannedSNOrWE}
Suppose that $K$ is a layer of a layering and $R$ is a rectangle.
Then there is an edge $e$ in $K$ so that $\rect(e)$ either south-north or west-east spans $R$.
\end{lemma}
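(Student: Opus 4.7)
The plan is to reduce the lemma to \reflem{MedianSpan} by showing that there is a face $f \in K$ whose median lies in $R$; then the two majority-coloured edges of $f$ belong to $K$ and give the required spanning edge rectangle.

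I would first take a non-singular point $p \in R$ and apply \reflem{FacesCover} to find a face $f_0 \in K$ with $p \in R(f_0)$. Then I would analyse the position of $R \cap R(f_0)$ using the four-quadrant decomposition of $R(f_0)$ from the proof of \reflem{FaceRectangle}: the two transverse singular leaves $\ell^{S_0}$ and $\ell_{U_0}$ emanating from the track-cusps of $f_0$ meet at the median $m_0$ of $R(f_0)$ and partition $R(f_0)$ into four quadrants; the three edge rectangles of $f_0$ appear as one quadrant (for the minority-coloured edge) and two unions of adjacent quadrants (for the two majority-coloured edges).

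If $R \cap R(f_0)$ crosses both $\ell^{S_0}$ and $\ell_{U_0}$ then, because $R$ is a rectangle (hence convex in the product structure), $R$ contains $m_0$, and \reflem{MedianSpan} immediately gives the conclusion. Otherwise $R \cap R(f_0)$ lies in a union of at most two adjacent quadrants. If these form one of the two majority-coloured edge rectangles of $f_0$ and $R \subseteq R(f_0)$, then we are done. In the remaining subcases $R$ either lies in an L-shaped union of two quadrants (not forming an edge rectangle of $f_0$) or extends beyond $R(f_0)$, and I would chase $R$ by passing into an adjacent face of $K$ across one of the bounding singular leaves, paralleling the rotation-chase around a cusp at an ideal corner of $R(f_0)$ used in the proof of \reflem{FacesCover}.

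The key further claim is that this chase terminates at a face $f^*$ whose median lies in $R$. As in \reflem{RiversSimplify}, I would track the combinatorial length of the train interval in $\tau^K$ (or $\tau_K$) cut out by the cross-section of $R$ through the current face rectangle as a monotonicity invariant; the colour alternation around the chased cusp, forced by \refcor{CannotTurnLeftForever}, then rules out infinite rotation in a single direction and produces $f^*$. Applying \reflem{MedianSpan} to $f^*$ finishes the proof.

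The main obstacle will be making this rotation argument precise. In \reflem{FacesCover} the rotation tracked a single point $p$ across face rectangles; in the rectangle case I must simultaneously control all of the horizontal and vertical leaves of $R$, and verify that the monotonicity invariant strictly decreases at each rotation step, using \refcor{CannotTurnLeftForever} to rule out the degenerate behaviour that would otherwise allow the chase to escape to infinity.
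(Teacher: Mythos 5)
There is a genuine gap here, and it is structural rather than just a matter of missing detail. Your plan is to terminate the chase at a face $f^*$ whose \emph{median} lies in $R$ and then invoke \reflem{MedianSpan}. But that terminal state is not always achievable: a thin rectangle sitting in the middle of a single edge rectangle $R(e)$, away from all singular leaves, is spanned by $R(e)$ yet contains no median of any face of $K$. The paper's proof reflects this: its endgame has two distinct cases, and only one of them goes through \reflem{MedianSpan}; in the other (the face beyond the end of the strip is majority red), spanning is deduced directly from maximality of the strip, with no median of any face lying in $R$. So a reduction that funnels everything through \reflem{MedianSpan} cannot close.

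The second, related problem is that your termination argument targets the wrong obstruction. You invoke \refcor{CannotTurnLeftForever} to rule out ``infinite rotation in a single direction'' around a cusp, as in \reflem{FacesCover}. But the configuration that actually threatens non-termination here is an infinite monochromatic strip: a maximal strip $P$ of majority-red faces whose interior edges are all red, each of whose edge rectangles contains a fixed material corner $p_0$ of $R$. Such a strip can turn both left and right (see \reffig{RedStripRectangles}), so \refcor{CannotTurnLeftForever} does not exclude it. The paper kills this configuration by a different mechanism: the corners $p_i \in R$ force all the $d_l$ to be northeast of all the $c_k$ (via \reflem{PartialOrders}); then one flips the strip downward through tetrahedra until \refcor{BranchLinesToggle} produces an interior \emph{blue} edge joining some $c_k$ to some $d_l$, and \reflem{EdgeRectSlope} makes one of them southeast of the other --- a contradiction. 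Your proposed monotonicity invariant (combinatorial length of a train interval cut out by a cross-section of $R$) is neither clearly well defined nor clearly decreasing at each step, and nothing in your sketch supplies the colour-based contradiction that actually forces finiteness. To repair the argument you would need both the two-case endgame and the flip-down/\refcor{BranchLinesToggle} finiteness argument, at which point you have essentially reconstructed the paper's proof.
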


\begin{proof}
By \reflem{EdgesCover}, there is an edge $e_0$ in $K$ so that $R\cap \closure{\rect(e_0)}$ is non-empty.
Since $R$ is open, we also have that $R \cap \rect(e_0)$ is non-empty.
Breaking symmetry, suppose that $e_0$ is red.
If $\rect(e_0)$ does not span $R$ in either direction, then $R$ contains precisely one of the two material corners of $\rect(e_0)$.
Breaking symmetry again and applying \reflem{EdgeRectSlope}, we assume that $R$ contains $p_0$, the southeastern corner of $\rect(e_0)$.  

Consider now the maximal strip $P \subset K$ of edges and triangles $(f_i, e_i)_{i \geq 1}$ so that 
\begin{enumerate}
\item
all edges $e_i$ are red, 
\item
all triangles $f_i$ are majority red,
\item 
$e_0$ is contained in $f_1$, 
\item
$e_i$ is contained in $f_i$ (and in $f_{i+1}$ when $f_i$ is not the last triangle in $P$), 
\item
$R$ contains $p_i$, the southeastern material corner of $\rect(e_i)$. 
\end{enumerate}

Let $c_0$ and $d_0$ be the endpoints of $e_0$.
We label the other vertices of $P$ as follows.
Suppose that we have labelled the vertices of the edges $e_0, \ldots, e_{k-1}$.
Two of the vertices of $f_k$ are already labelled.  
The remaining vertex of $f_k$ receives the label $c_k$ if we turn left through $f_k$ when travelling from $e_{k-1}$ to $e_k$, 
or the label $d_k$ if we turn right.  
For an example, see the left side of \reffig{RedStripRectangles}.

\begin{figure}[htbp]
\subfloat[The rectangle $R$ is not spanned by any of the edge rectangles $\rect(e_i)$.]{
\labellist
\small\hair 2pt
\pinlabel {$e_0$} [br] at 107 237
\pinlabel {$c_0$} [r] at 3 170
\pinlabel {$c_1$} [tr] at 67 129
\pinlabel {$c_2$} [tr] at 117 81
\pinlabel {$c_5$} [tr] at 154 29
\pinlabel {$d_0$} [bl] at 195 297
\pinlabel {$d_3$} [bl] at 272 251
\pinlabel {$d_4$} [bl] at 328 215
\pinlabel {$R$} [tl] at 660 209
\pinlabel {$c_0$} [r] at 430 148
\pinlabel {$c_1$} [tr] at 526 92
\pinlabel {$c_2$} [tr] at 580 60
\pinlabel {$c_5$} [t] at 620 23
\pinlabel {$p_0$} [l] at 730 151
\pinlabel {$d_0$} [b] at 728 348
\pinlabel {$d_3$} [bl] at 801 301
\pinlabel {$d_4$} [bl] at 854 263
\endlabellist
\includegraphics[width = 0.9\textwidth]{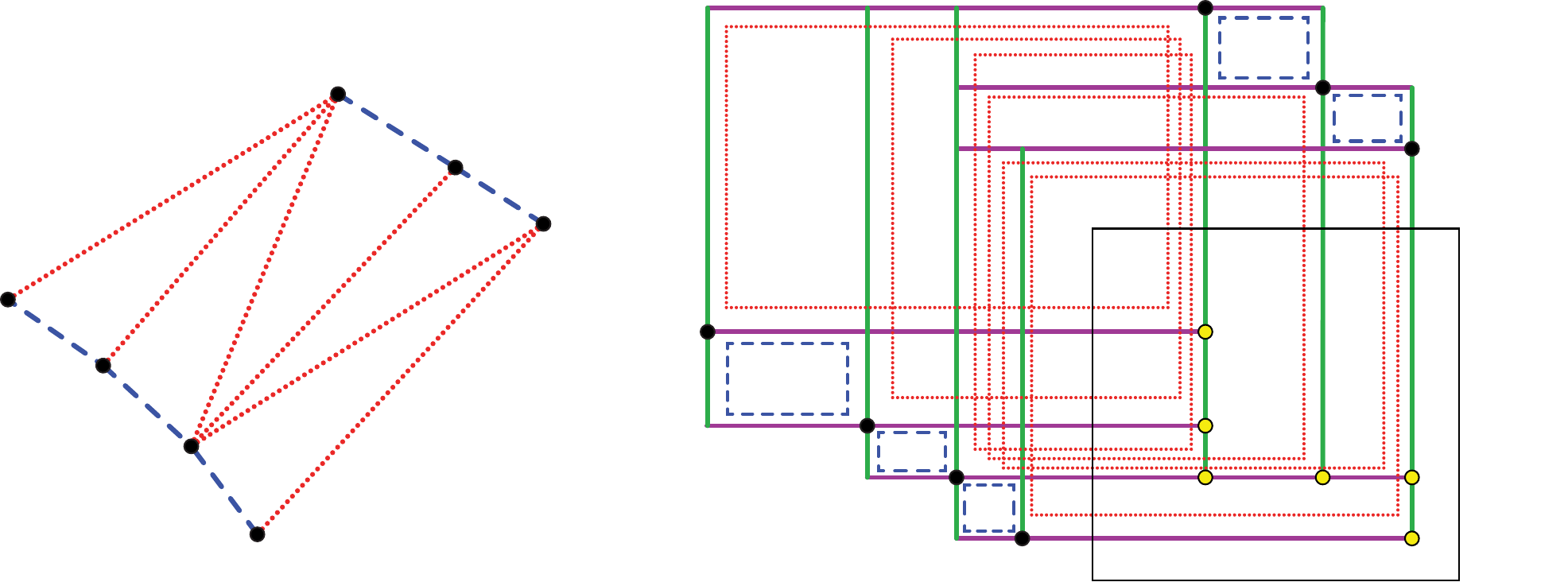}
\label{Fig:RedStripRectangles1}
}

\subfloat[The rectangle $R$ is spanned by the edge rectangle $\rect(e')$.]{
\labellist
\small\hair 2pt
\pinlabel {$e_0$} [br] at 107 237
\pinlabel {$c_0$} [r] at 3 170
\pinlabel {$c_1$} [tr] at 67 129
\pinlabel {$c_2$} [tr] at 117 81
\pinlabel {$c_5$} [tr] at 154 29
\pinlabel {$d_0$} [bl] at 195 297
\pinlabel {$d_3$} [bl] at 272 251
\pinlabel {$d_4$} [bl] at 328 215
\pinlabel {$d$} [bl] at 367 159
\pinlabel {$e'$} [tl] at 263 100
\pinlabel {$R$} [tl] at 660 209
\pinlabel {$c_0$} [r] at 430 148
\pinlabel {$c_1$} [tr] at 526 92
\pinlabel {$c_2$} [tr] at 580 60
\pinlabel {$c_5$} [t] at 620 23
\pinlabel {$p_0$} [l] at 730 151
\pinlabel {$d_0$} [b] at 728 348
\pinlabel {$d_3$} [bl] at 801 301
\pinlabel {$d_4$} [bl] at 854 263
\pinlabel {$d$} [bl] at 945 123
\endlabellist
\includegraphics[width = 0.9\textwidth]{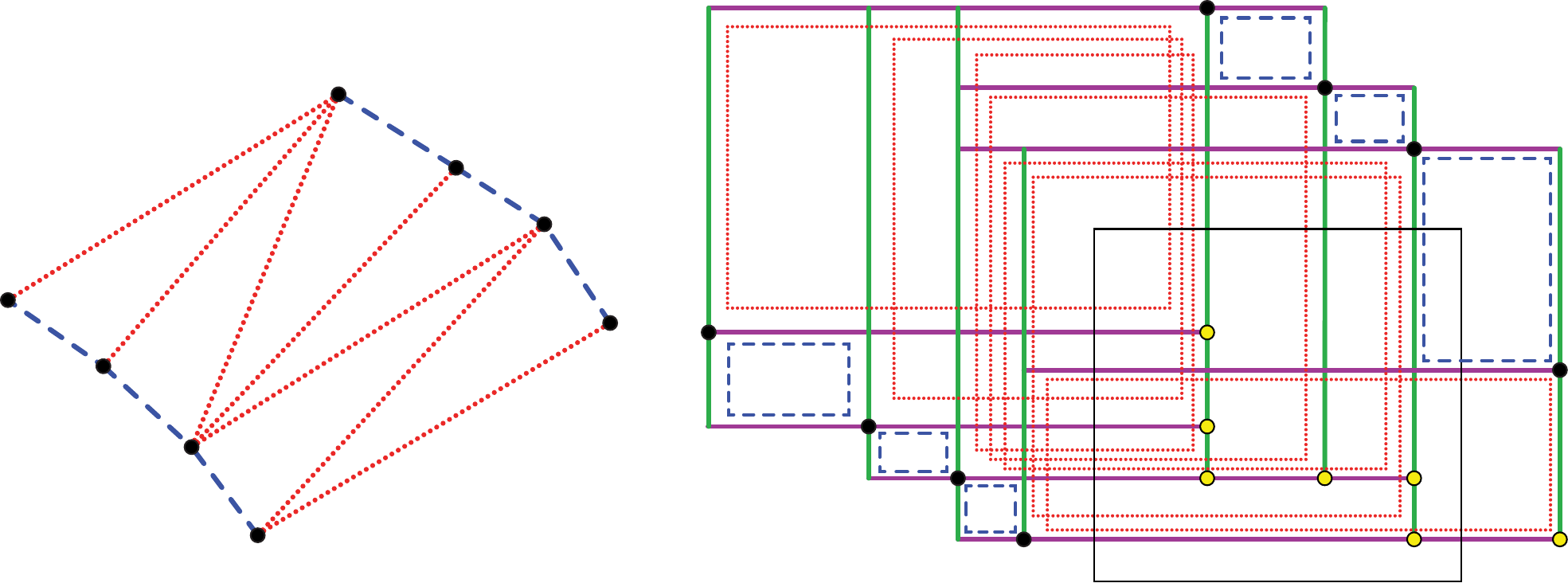}
\label{Fig:RedStripRectangles2}
}
\caption{A possible strip $P$ and its corresponding edge rectangles.
A possible rectangle $R$ is drawn in black.}
\label{Fig:RedStripRectangles}
\end{figure}

\begin{figure}[htbp]
\subfloat[Northwest.]{
\includegraphics[width = 0.4\textwidth]{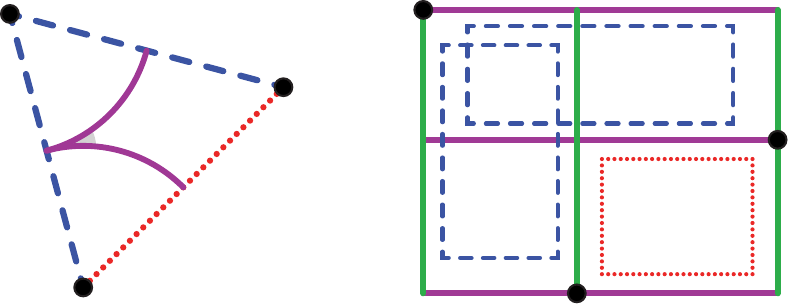}
\label{Fig:Northwest}
}
\qquad
\subfloat[Northeast.]{
\includegraphics[width = 0.4\textwidth]{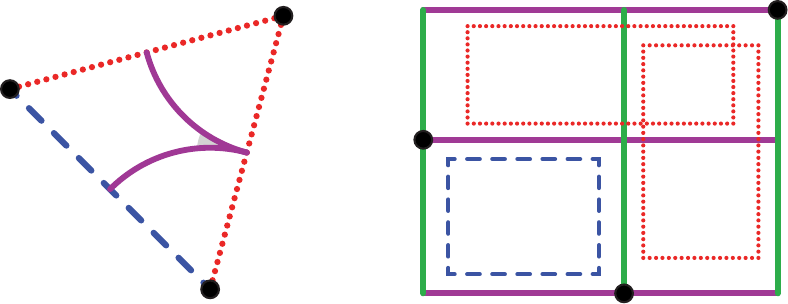}
\label{Fig:Northeast}
}

\subfloat[Southwest.]{
\includegraphics[width = 0.4\textwidth]{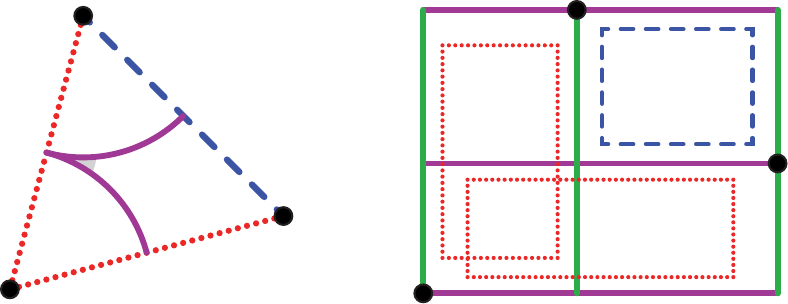}
\label{Fig:Southwest}
}
\qquad
\subfloat[Southeast.]{
\includegraphics[width = 0.4\textwidth]{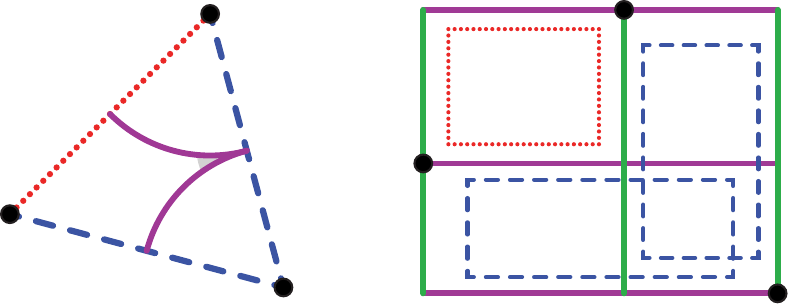}
\label{Fig:Southeast}
}

\caption{The four possible face rectangles, labelled by the intercardinal direction of the ideal corner.}
\label{Fig:FourFaces}
\end{figure}

\begin{claim*}
The closure of $\rect(e_k)$ contains $p_0$.
\end{claim*}

\begin{proof}
The base case of the induction is trivial.
Suppose that the claim holds for $e_k$.
Breaking symmetry, suppose that the southwest corner of $\rect(f_{k+1})$ is ideal.
See \reffig{Southwest}.
By the inductive hypothesis, $p_0$ is in the closure of $\rect(e_k)$: the western half of $\rect(f_{k+1})$.
Note that $\rect(e_{k+1})$ is the southern half; 
thus $\rect(e_{k})$ and $\rect(e_{k+1})$ intersect non-trivially, in a rectangle lying in the southwest of $\rect(f_{k+1})$.
Again see \reffig{Southwest}.
Since $R$ contains $p_{k+1}$, the north side of $R$ is (perhaps non-strictly) south of $d_{k+1}$.
Since $R$ contains $p_0$, this and \reflem{PartialOrders} means that $p_0$ is strictly south of $d_{k+1}$.
Thus $p_0$ is contained in the closure of $\rect(e_{k+1})$.
\end{proof}

\begin{claim*}
Every $d_{k'}$ is northeast of every $c_k$. 
\end{claim*}

\begin{proof}
The previous claim implies that the northeast (southwest) corner of $\rect(e_k)$ is northeast (southwest) of $p_0$.
Thus all $d_{k'}$ are northeast of $p_0$, and all $c_k$ are southwest of $p_0$.
Now \reflem{PartialOrders} gives the claim.
\end{proof}

\begin{claim*}
The strip $P$ is finite.
\end{claim*}

\begin{proof}
Suppose that $P$ is infinite.
We produce a sequence of strips $(P_i)$ below $P = P_0$, where $P_{i+1}$ is obtained from $P_i$ by flipping down through the first pair of triangles in $P_i$ that form the top of a tetrahedron.
Such a pair must exist by \reflem{CannotTurnLeftForever}.
Note that the vertices and boundary edges of $P_{i+1}$ are the same as those of $P_i$, and hence the same as those of $P_0 = P$.
Let $P_n$ be the first strip that contains an interior blue edge, $e_\triangledown$ say.
Note that $K$ exists by \refcor{BranchLinesToggle} (for lower branch lines).
An example sequence of strips is shown in \reffig{RedStripTrainTracks}. 

The edge $e_\triangledown$ connects vertices on opposite sides of the strip $P_n$.
Suppose that the endpoints of $e_\triangledown$ are $c_k$ and $d_{k'}$.
Applying \reflem{EdgeRectSlope}, we find that one of $c_k$ and $d_{k'}$ is southeast of the other.  
By \reflem{PartialOrders} this contradicts the previous claim.
\end{proof}

We finish the proof of \reflem{SpannedSNOrWE} as follows. 
Let $e_n$ be the last edge of $P$.
Let $f'$ be the face adjacent to $e_n$ that is not in $P$.  
Either $f'$ is majority blue (see \reffig{Southeast}) or majority red (see \reffig{Northeast} or \reffig{Southwest}).
In the majority blue case, $R$ contains the median of $\rect(f')$, so by \reflem{MedianSpan} we are done.
In the majority red case, let $e'$ be the other red edge of $f'$.
Since $P$ is a maximal strip, $R$ does not contain the southeastern corner of $\rect(e')$.
Thus $R$ is spanned by $\rect(e')$ (south-north in the case of \reffig{Northeast}, and west-east in the case of \reffig{Southwest}) and we are done.
\end{proof}

\begin{figure}[htbp]
\subfloat[]{
\includegraphics[width = 0.45\textwidth]{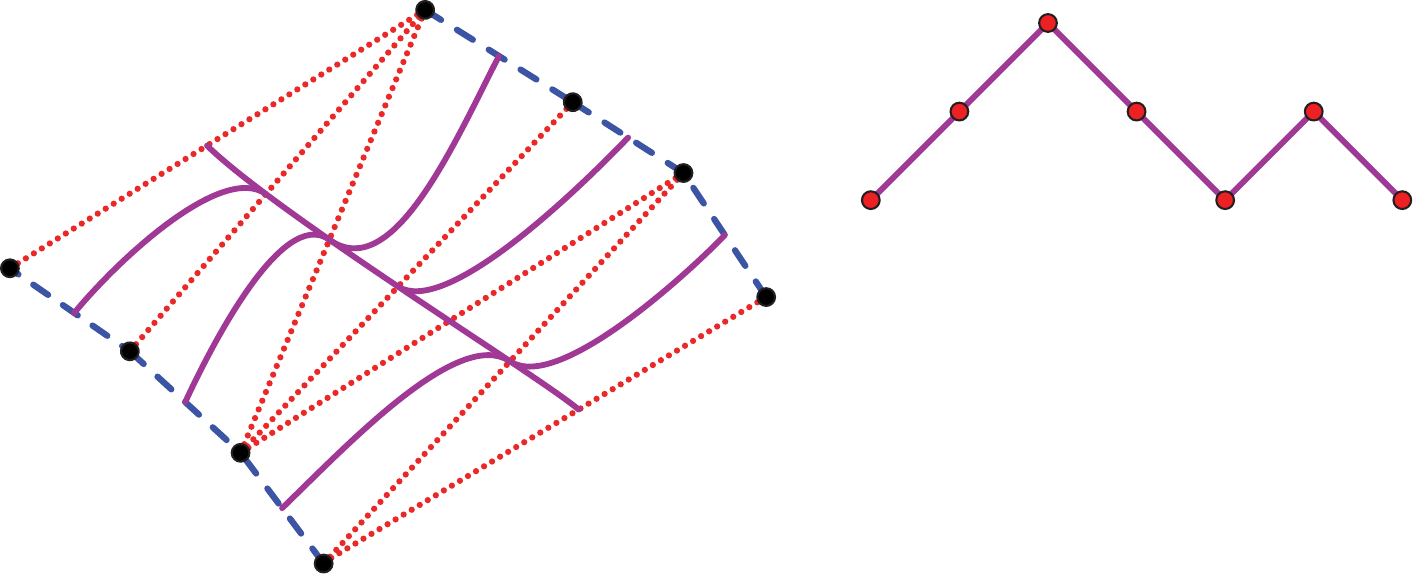}
\label{Fig:RedStripTrainTrack1}
}
\quad
\subfloat[]{
\includegraphics[width = 0.45\textwidth]{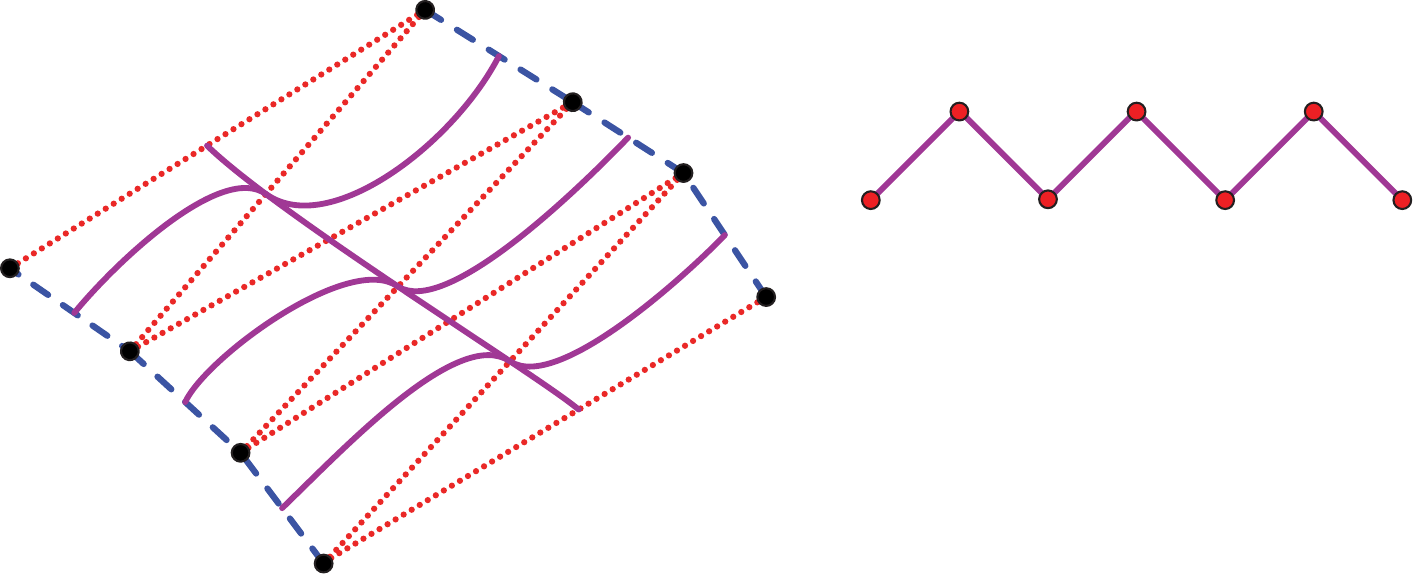}
\label{Fig:RedStripTrainTrack1p5}
}

\subfloat[]{
\includegraphics[width = 0.45\textwidth]{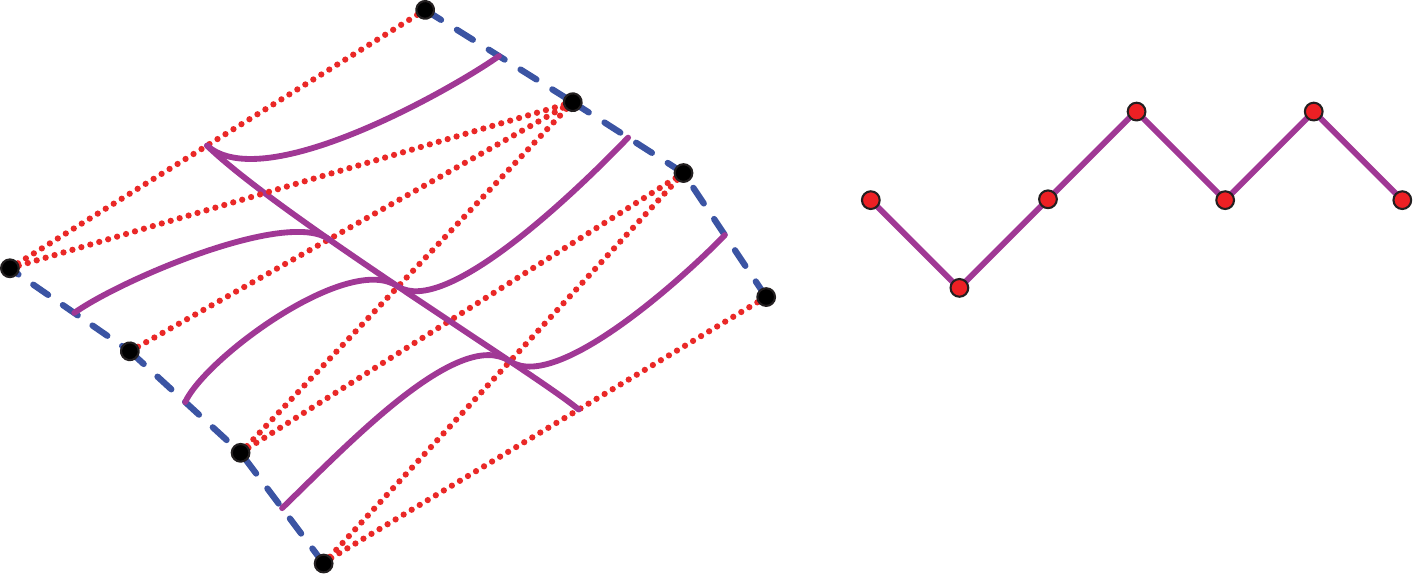}
\label{Fig:RedStripTrainTrack2}
}
\quad
\subfloat[]{
\includegraphics[width = 0.45\textwidth]{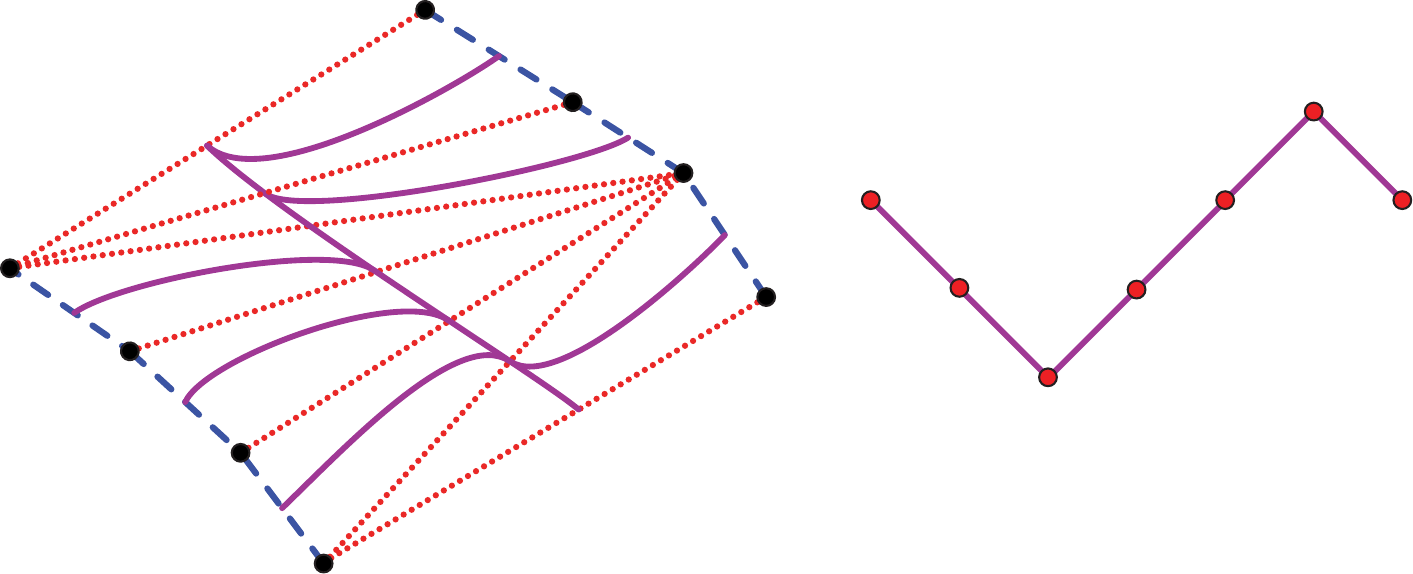}
\label{Fig:RedStripTrainTrack3}
}

\subfloat[]{
\includegraphics[width = 0.45\textwidth]{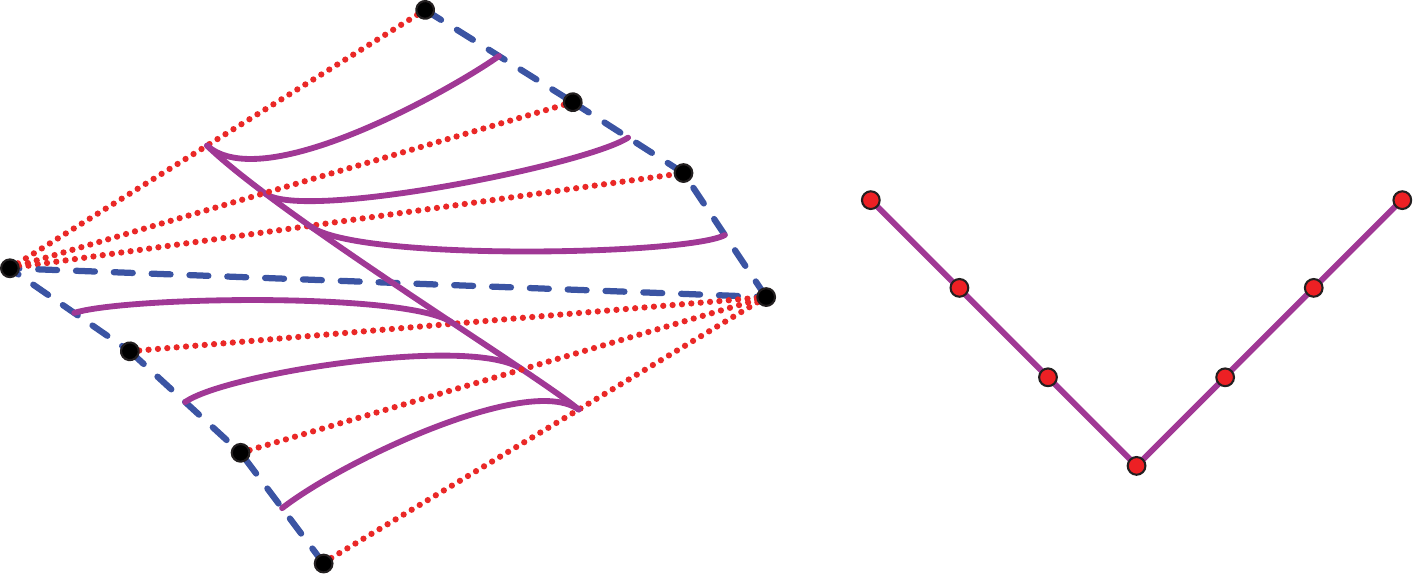}
\label{Fig:RedStripTrainTrack4}
}
\caption{On the left in each subfigure we see a strip of triangles from above.
On the right, we see a side view, showing the height of the mid-curve through the strip.
In particular, the top edge of a tetrahedron appears as a local maximum of the curve.}
\label{Fig:RedStripTrainTracks}
\end{figure}

\begin{proof}[Proof of \refthm{RectInTetRect}]
It suffices to consider the case when $R$ is maximal.
Fix $K$, a layer of a layering.
By \reflem{SpannedSNOrWE}, there is an edge of $K$ whose edge rectangle spans $R$.
Breaking symmetry, we suppose that $R$ is south-north spanned.
By \reflem{FinitelySpanned}, the set $E$ of edges in $K$, whose rectangles south-north span $R$, is finite.
For $e, e' \in E$ we say that $\rect(e)$ is \emph{to the west} of $\rect(e')$ if 
\begin{itemize}
\item the west side of $\rect(e)$ is to the west of the west side of $\rect(e')$ or, in the case of a tie, 
\item the east side of $\rect(e)$ is to the west of the east side of $\rect(e')$.
\end{itemize}

\noindent 
Let $e_\triangleleft$ be any edge whose rectangle $\rect(e_\triangleleft)$ is the furthest to the west. 
(The subscript indicates a cone pointing to the west.)

Breaking symmetry, suppose that $e_\triangleleft$ is red.
Let $f_\triangleleft$ be the face containing $e_\triangleleft$ and so that the (material) southeastern corners of $\rect(e_\triangleleft)$ and $\rect(f_\triangleleft)$ coincide. 

There are four possible face rectangles, depending on which corner is ideal.
See \reffig{FourFaces}. 

\begin{claim*}
The rectangle $\rect(f_\triangleleft)$ has its ideal corner to its northeast. 
\end{claim*}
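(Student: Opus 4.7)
The plan is to rule out the three non-NE possibilities (NW, SE, SW) for the ideal corner of $R(f_\triangleleft)$ by contradiction, leveraging the westernmost choice of $e_\triangleleft$.

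First, since $e_\triangleleft$ is red, \reflem{EdgeRectSlope} gives that the SE corner of $R(e_\triangleleft)$ is material; the shared SE corner hypothesis then forces the SE corner of $R(f_\triangleleft)$ to be material as well, which rules out the SE case. A short case analysis using \reflem{EdgeRectSlope} and the quadrant decomposition in the proof of \reflem{FaceRectangle} shows that a majority red face has its ideal corner at NE or SW, while a majority blue face has its ideal corner at NW or SE. Combined with the material SE constraint, we are left with three possibilities: (I-NE) $f_\triangleleft$ is majority red with ideal corner at NE (the desired conclusion); (I-SW) $f_\triangleleft$ is majority red with ideal corner at SW; (II-NW) $f_\triangleleft$ is majority blue with ideal corner at NW.

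In cases (I-SW) and (II-NW) I would exhibit a distinguished edge $e^\star$ of $f_\triangleleft$ whose rectangle $R(e^\star)$ is the west half of $R(f_\triangleleft)$: in (I-SW) this is the other red edge of $f_\triangleleft$, and in (II-NW) it is one of the two blue edges. The rectangle $R(e^\star)$ has the full vertical extent of $R(f_\triangleleft)$, which contains the vertical extent of $R(e_\triangleleft)$, which in turn contains the vertical extent of $R$ (since $R(e_\triangleleft)$ south-north spans $R$). Its west side coincides with that of $R(e_\triangleleft)$, while its east side is the interior singular leaf $\ell^S$ of $R(f_\triangleleft)$, strictly west of $R(e_\triangleleft)$'s east side. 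Provided $R$ extends strictly west of $\ell^S$, the rectangle $R(e^\star)$ south-north spans $R$, ties $R(e_\triangleleft)$ on the west, and wins the east-side tiebreaker. So $R(e^\star)$ is strictly further west than $R(e_\triangleleft)$ in the partial order on $E$, contradicting the westernmost choice.

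The main obstacle is the sub-case where $R$'s west side fails to extend strictly west of $\ell^S$. Maximality of $R$ (together with the fact that no other $\calF^\alpha$-singular leaf lies strictly between $\ell^S$ and the east side of $R(f_\triangleleft)$) rules out $R$'s west side lying strictly east of $\ell^S$, so in the problem sub-case $R$'s west side must lie exactly on $\ell^S$; the naive spanning argument fails because $R \cap R(e^\star)$ reduces to a boundary segment. Here one must use the combinatorial structure at the cusp $c''$ at the end of $\ell^S$ on the south side of $R(f_\triangleleft)$ — specifically the interleaving of upper and lower crowns given by \reflem{CrownsInterleave} — to locate another edge of the layer $K$ (in an adjacent face) whose rectangle south-north spans $R$ and lies west of $R(e_\triangleleft)$, again contradicting the westernmost choice. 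Once this technicality is dispatched, only case (I-NE) survives, proving the claim.
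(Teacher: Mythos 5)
Your strategy is the same as the paper's: dispose of the southeast case using the assumed material corner, and kill the southwest and northwest cases by exhibiting the ``west half'' edge of $f_\triangleleft$ as a member of $E$ lying further west than $e_\triangleleft$, contradicting the westmost choice.  (The paper does exactly this for the southwest case, using the other red edge, and dismisses the northwest case as ``a similar argument''.)  Your treatment of the main cases is right, apart from one slip: in the northwest case the west side of the west half is the west side of $R(f_\triangleleft)$, which is strictly west of the west side of $R(e_\triangleleft)$ (the latter lies on the dividing leaf $\ell^S$), so the two west sides do not coincide and no tiebreak is needed there.

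The genuine gap is the sub-case you flag and then do not close.  Your proposed fix --- locating a replacement spanning edge in an adjacent face --- is left as an unexecuted plan, and it is not the natural move: the sub-case is vacuous, and \reflem{CrownsInterleave} is the tool for showing \emph{that}, not for finding another edge.  Since $R$ is south-north spanned by $R(e_\triangleleft)$, its west side is trapped inside the vertical extent of $R(e_\triangleleft)$, and a maximal rectangle must carry a singular class on each of its sides.  In the southwest case the segment of $\ell^S$ inside $\closure{R}(e_\triangleleft)$ contains no singular class at all (the ideal endpoint of $\ell^S$ sits on the \emph{north} side of $R(f_\triangleleft)$, outside the south half), so the west side of $R$ cannot rest there and must reach the west side of $R(f_\triangleleft)$, as the paper asserts.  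In the northwest case the only singular class available on the relevant segment of $\ell^S$ is $c''$ at its southern end; if the west side of $R$ lay on $\ell^S$ then $c''$ would be an ideal \emph{corner} of $R$, with the south side of $R$ on a lower prong at $c''$.  But by \reflem{CrownsInterleave} the only prong separating the two lower prongs at $c''$ on the side facing $R(f_\triangleleft)$ is $\ell^S$ itself, so the leaves of $\calF_\alpha$ through $R$ near its south side cross $\ell^S$ just north of $c''$ and continue west; hence $R$ could be enlarged westward, contradicting maximality.  Once the sub-case is eliminated, $R$ extends strictly west of $\ell^S$, the west half spans $R$, and your contradiction goes through.
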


\begin{proof}
We rule out three of the four cases.
\begin{itemize}
\item 
By assumption, the southeastern corner of $\rect(f_\triangleleft)$ is material.
\item 
Suppose that the ideal corner of $\rect(f_\triangleleft)$ is in the southwest, as shown in \reffig{Southwest}.
Let $e$ be the other red edge of $f_\triangleleft$.
Note that $\rect(e)$ extends further to the north than $\rect(e_\triangleleft)$.
Since $R$ is maximal, it meets the west side of $\rect(e_\triangleleft)$.
Thus $\rect(e)$ also spans $R$; we deduce that $e \in E$.
This contradicts our assumption that $e_\triangleleft$ is the westmost edge in $E$. 
\item 
A similar argument rules out the ideal corner being in the northwest. See \reffig{Northwest}.  \qedhere
\end{itemize}
\end{proof}

Since $R$ is maximal, its west side contains the cusp class at the west side of $\rect(f_\triangleleft)$.
We have a similar situation on the east of $R$, with an eastmost edge $e_\triangleright$ and face $f_\triangleright$.
See \reffig{MaxRectangleCaps}.

\begin{figure}[htbp]
\centering
\includegraphics[width=0.6\textwidth]{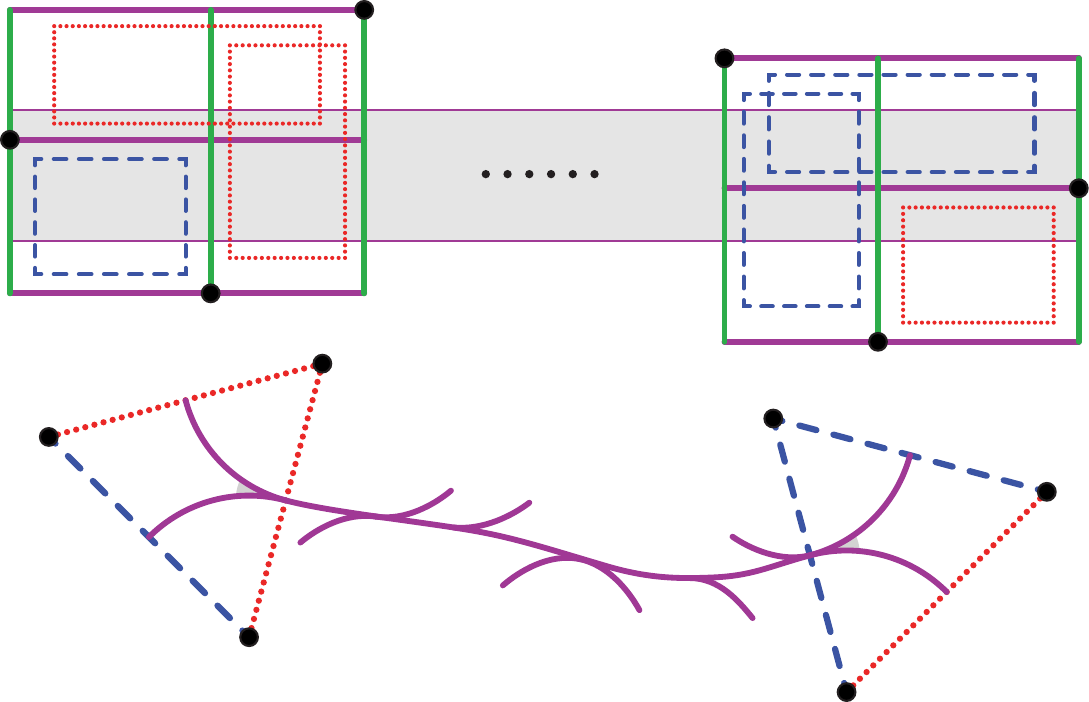}
\caption{Above: An example of a maximal rectangle $R$ (shaded).
It has a westmost south-north spanning edge rectangle that is in the east of a face rectangle $\rect(f_\triangleleft)$ as in \reffig{Northeast}.
It also has an eastmost south-north spanning edge rectangle that is in the west of a face rectangle $\rect(f_\triangleright)$ as in \reffig{Northwest}.
Below: a sketch of the corresponding strip of triangles $P$ in $K$, connecting $f_\triangleleft$ with $f_\triangleright$.} 
\label{Fig:MaxRectangleCaps}
\end{figure}

Since $K$ is a copy of the Farey triangulation (\reflem{Disk}), there is a unique strip of triangles $P$ connecting $f_\triangleleft$ with $f_\triangleright$.
By \refthm{LinkSpace}\refitm{LinkSpaceDense}, we may take $m$ to be a non-cusp leaf of $F_\calV$ meeting $R$.
By \reflem{Laminations}\refitm{LeavesAreCarried}, there is a corresponding train line $m_K \subset \tau_K$.
Note that $m_K \cap \tau_P$ contains a train interval which runs from the track-cusp in $f_\triangleleft$ to the track-cusp in $f_\triangleright$.
These track-cusps point towards each other, so there is at least one large switch of $\tau_P$ at an edge in the interior of $P$.
The two faces incident to such an edge are the upper two faces of a tetrahedron.
Moving $P$ down through such a tetrahedron has the effect of performing a split on $\tau_P$.
After finitely many such moves, $P$ consists of two faces only.
These are thus the top two faces of a tetrahedron $t$.
Thus $R$ is contained in (and in fact equal to) $\rect(t)$.
\end{proof}

\subsection{Loom spaces}

We now recall the definition of a loom space from our previous paper~\cite[Definition~2.11]{SchleimerSegerman24}.

Suppose that $\calL$ is a \emph{bifoliated plane}:
a copy of $\RR^2$ equipped with a pair of transverse foliations $F^\calL$ and $F_\calL$.
We define rectangles in $\calL$ just as in \refdef{Rectangle}.
We define the sides of rectangles just as in \refdef{Boundary}.
In both cases, we replace $\link(\calV)$, $F^\calV$, and $F_\calV$ with $\calL$, $F^\calL$, and $F_\calL$.

\begin{definition}
\label{Def:CuspRectLoom}
Suppose that $R$ is a rectangle in the bifoliated plane $\calL$.
Suppose that $f_R\from (0,1)^2 \to R$ is an associated embedding.
We say that $R$ is a \emph{south-west cusp rectangle} if there is a continuous extension of $f_R$ to a homeomorphism
\[
\closure{f}_R\from [0,1]^2 - \{(0,0)\} \to \closure{R} 
\]
We call the southern and western sides of $R$ its \emph{cusp sides}. 
We orient these to the east and north respectively.
We define the other three intercardinal cusp rectangles, and their cusp sides, similarly.
\end{definition}

As in \cite[Definitions~3.1 and 3.3]{SchleimerSegerman24}, a \emph{loom-cusp} of $\calL$ is an equivalence class of cusp rectangles.
The relation is generated by shrinking or expanding cusp rectangles, and walking from one cusp rectangle to another through a shared cusp side.

\begin{definition}
\label{Def:TetRectLoom}
Suppose that $R$ is a rectangle in the bifoliated plane $\calL$.
Suppose that $f_R\from (0,1)^2 \to R$ is an associated embedding.
We say that $R$ is a \emph{loom-tetrahedron rectangle} if there are $a,b,c,d \in (0,1)$ and a continuous extension of $f_R$ to a homeomorphism
\[
\closure{f}_R\from [0,1]^2 - \{(a,0), (1,b), (c,1), (0,d)\} \to \closure{R} \qedhere
\]
\end{definition}

\begin{definition}
\label{Def:Loom}
Suppose that $\calL$ is a bifoliated plane.
We say that $\calL$ is a \emph{loom space} if it satisfies the following two axioms.
\begin{enumerate}
\item
\label{Itm:Cusp}
For every cusp side $s$ of every cusp rectangle $R$, some rectangle $Q$ contains an initial open interval of $s$.
\item
\label{Itm:Tet}
For every rectangle $R$, some loom-tetrahedron rectangle $Q$ contains $R$.
\qedhere
\end{enumerate} 
\end{definition}

\begin{definition}
\label{Def:LoomIso}
Suppose that $\calL$ and $\calM$ are loom spaces.
Suppose that $f\from \calL \to \calM$ is a homeomorphism.
We say that $f$ is a \emph{loom isomorphism} if it sends leaves to leaves.
\end{definition}

We now show that link spaces are loom spaces.

\begin{theorem}
\label{Thm:LinkIsLoom}
Suppose that $M$ is a three-manifold equipped with a locally veering triangulation $\calV$. 
\begin{enumerate}
\item
\label{Itm:LinkIsLoom}
The link space $\link(\calV)$, equipped with the foliations $F^\calL$ and $F_\calL$ is a loom space. 
\item
\label{Itm:TetIsLoomTet}
Every tetrahedron rectangle is a loom-tetrahedron rectangle; and conversely.
\item
\label{Itm:CuspIsLoomCusp}
The set of cusp rectangles in $\link(\calV)$, with ideal point a given cusp class of $\elec$, is a loom-cusp; and conversely.
\item
\label{Itm:LoomAction}
The action of $\pi_1(M)$ is faithful and by loom automorphisms.
\end{enumerate}
\end{theorem}



\begin{proof}
By \refthm{LinkSpace}\refitm{LinkSpacePlane} the link space $\link(\calV)$ is a copy of $\RR^2$.
By \refthm{LinkSpace}\refitm{LinkSpaceTransverse}, we have that $F^\calV$ and $F_\calV$ are foliations and are transverse.

Suppose that $R$ is a cusp rectangle in $\link(\calV)$. 
Suppose that $s$ is a cusp side of $R$.
By \refthm{RectInTetRect}, there is a tetrahedron $t$ of $\cover{\calV}$ so that $R \subset \rect(t)$.
We deduce that the cusp sides of $R$ accumulate, in $\pair(\calV)$, at one of the four ideal points in $\bdy \rect(t)$.
Therefore we are justified in saying that the cusp rectangle $R$ has exactly one ideal corner.
Consulting \reffig{LinkSpaceFaceFlowChart}(e), we deduce that an initial segment of $s$ is contained in a side of some edge rectangle.
Applying \reflem{EdgeSide}, we obtain \refdef{Loom}\refitm{Cusp}.
\refdef{Loom}\refitm{Tet} follows from \refthm{RectInTetRect}.
Thus $\link(\calV)$ satisfies all axioms of a loom space and we have obtained~\refitm{LinkIsLoom}.

By \refthm{RectInTetRect}, each loom-tetrahedron rectangle is contained in some tetrahedron rectangle.
By \reflem{TetRect} (parts~\refitm{TetRectRect} and~\refitm{TetRectBdy}) each tetrahedron rectangle is a loom-tetrahedron rectangle.
Thus we have obtained~\refitm{TetIsLoomTet}.

Fix any cusp $c$ of $\Delta_\calV$.
As in \reflem{CrownsInterleave}, this yields a pair of interleaving crowns $\Lambda^c$ and $\Lambda_c$.
As in \refdef{UpperFoliation}, the boundary leaves of $\Lambda^c$ collapse to give cusp-leaves in $F^\calV$, and similarly for $\Lambda_c$ and $F_\calV$.
By \reflem{FacesCover}, some initial segment of each cusp leaf is contained in some face rectangle.
Cutting down gives a cusp rectangle $R$ in $\link(\calV)$.
The cusp rectangles generated in this way form a loom-cusp by another application of \reflem{CrownsInterleave}.

In the other direction, suppose that $R$ is a cusp rectangle.
By parts \refitm{LinkIsLoom} and \refitm{TetIsLoomTet}, the rectangle $R$ is contained in a tetrahedron rectangle.
Thus it arises as in the previous paragraph, and we have obtained \refitm{CuspIsLoomCusp}.

By construction, the action of $\pi_1(M)$ on $\link(\calV)$ sends tetrahedron rectangles to tetrahedron rectangles, thus is continuous.
It also sends leaves to leaves.
Since elements of $\pi_1(M)$ have inverses, we have obtained~\refitm{LoomAction}.
\end{proof}

\chapter{...and back again}
\label{Cha:BackAgain}

Let $\Loom(\RR^2)$ be the category in which objects are loom spaces and arrows are loom isomorphisms.
Let $\Veer(\RR^3)$ be the category in which objects are locally veering triangulations of $\RR^3$ and arrows are \emph{taut isomorphisms}: that is, isomorphisms of ideal triangulations that preserve tautness. 

In our previous work \cite[Proposition~5.19 and Theorem~6.50]{SchleimerSegerman24}, following Gu\'eritaud, we gave a functor 
\[
\veer \from \Loom(\RR^2) \to \Veer(\RR^3)
\]
defined as follows.
In a loom space $\calL$, there are three kinds of \emph{skeletal rectangles}.
These are the loom-tetrahedron rectangles (\refdef{TetRectLoom}), the loom-face rectangles, and the loom-edge rectangles.
These meet four, three, or two cusps respectively, with exactly one cusp in the closure of each side.

\begin{definition}
\label{Def:InducedVeering}
We define the \emph{induced} locally veering triangulation $\calV = \veer(\calL)$ by taking one model cell $\cell(R)$ 
for every skeletal rectangle $R\subset \calL$.
We glue two model cells $\cell(R)$ and $\cell(R')$ along another model cell $\cell(S)$ if $R \cap R' = S$.
\end{definition}

Our goal in \refcha{BackAgain} is to show that the link space construction 
\[
\link \from \Veer(\RR^3) \to \Loom(\RR^2)
\]
is a functor (\refprop{LinkIsFunctor}) and is an ``inverse'' for $\veer$ (\refthm{Equivalence}).

\subsection{Partial orders}

Throughout this section we make parallel definitions for loom spaces and locally veering triangulations. 

Suppose that $\calL$ is a loom space.
We choose an orientation on $\calL$.
We also choose one of the foliations to be upper, that is $F^\calL$, and the other to be lower, that is $F_\calL$.
The loom automorphisms of $\calL$ may not respect these choices. 
However, there is a subgroup of finite index which does.

\begin{definition}
\label{Def:OrderLoom}
Suppose that $Q$ and $R$ are loom-tetrahedron rectangles of $\calL$ so that $Q$ west-east spans $R$. 
In this case, we write $Q \prec R$.
\end{definition}
\noindent
Note that $Q \prec R$ if and only if $R$ south-north spans $Q$.
Also, \refdef{OrderLoom} gives a partial order on loom-tetrahedron rectangles.

Suppose that $\calV$ is a locally veering triangulation of $\RR^3$.
We choose an orientation of $\RR^3$ and a transverse veering structure for $\calV$.
The taut automorphisms of $\calV$ may not respect these choices.
However, there is a subgroup of finite index which does.

\begin{definition}
\label{Def:OrderVeer}
Suppose that $s$ and $t$ are tetrahedra of $\calV$ sharing a face $f$.
If the transverse orientation on $f$ points from $s$ to $t$ 
then we write $s \prec t$, and take the transitive closure.  
\end{definition}
 
The orders given by Definitions~\ref{Def:OrderLoom} and~\ref{Def:OrderVeer} are related as follows.
 
\begin{lemma}
\label{Lem:Order}
\mbox{}
\begin{itemize}
\item Suppose that $Q$ and $R$ are loom-tetrahedron rectangles of $\calL$.
Then $Q \prec R$ if and only if $\cell(Q) \prec \cell(R)$.
\item Suppose that $s$ and $t$ are tetrahedra of $\calV$. 
Then $s \prec t$ if and only if $\rect(s) \prec \rect(t)$.
\end{itemize}
\end{lemma}

\begin{proof}
Suppose that $Q \prec R$.
By \cite[Lemma~4.22]{SchleimerSegerman24} and by the construction given in \cite[Definition~5.9]{SchleimerSegerman24}, we deduce that $\cell(Q) \prec \cell(R)$.

Now suppose that $\cell(Q) \prec \cell(R)$. 
Let 
\[
(\cell(Q) = \cell(R_0), \cell(R_1), \ldots, \cell(R_n) = \cell(R))
\]
be an alternating sequence of tetrahedra and faces with the following property. 
\begin{itemize}
\item For all $i$, we have $\cell(R_{2i+1})$ is an upper face of $\cell(R_{2i})$ and is a lower face of $\cell(R_{2i+2})$.
\end{itemize}
By the construction given in \cite[Definition~5.9]{SchleimerSegerman24}, we have that $R_{2i} \prec R_{2i+2}$.
By transitivity, $Q \prec R$.

Suppose $s \prec t$.
As above, there is an alternating sequence 
\[
(s = c_0, c_1, \ldots, c_n = t)
\]
of tetrahedra and faces with the following property.
\begin{itemize}
\item For all $i$, we have $c_{2i+1}$ is an upper face of $c_{2i}$ and is a lower face of $c_{2i+2}$.
\end{itemize}
By \reflem{TetRect}\refitm{TetRectFace}, we have that $\rect(c_i)$ west-east spans $\rect(c_{i+1})$.
We deduce that $\rect(s)$ west-east spans $\rect(t)$.
Thus $\rect(s) \prec \rect(t)$.

Finally, suppose that $\rect(s) \prec \rect(t)$.
By \refthm{LinkIsLoom}, we may apply \cite[Lemma~4.22]{SchleimerSegerman24}.  
From \reflem{TetRect}\refitm{TetRectFace} we deduce that $s \prec t$.
\end{proof}

\begin{remark}
\label{Rem:LoomBetter}
\reflem{Order} implies that the relation given in \refdef{OrderVeer}, on the tetrahedra of $\calV$, is isomorphic to the relation given in \refdef{OrderLoom}, on tetrahedron rectangles of $\link(\calV)$.
Since the latter is a partial order, so is the former.
Thus it suffices to carry out constructions in the setting of loom spaces.
\end{remark}

\begin{lemma}
\label{Lem:Bookends}
Suppose that $R \prec S \prec T$ are loom-tetrahedron rectangles of $\calL$. 
Then $R \cap T$ is nonempty and is contained in $S$.
\end{lemma}

\begin{proof}
Since $\prec$ is transitive, $R \prec T$, and so $R \cap T$ is nonempty.
Suppose that $p$ is a point of $R \cap T$.
Let $\ell$ and $m$ be the leaves of $F^\calV$ and $F_\calV$, respectively, through $p$.
Note that $p \in \ell \cap R \subset \ell \cap S$.
Similarly, $p \in m \cap T \subset m \cap S$.
Therefore $p \in S$ and we are done.
\end{proof}

Recalling \refdef{Spans}, we have the following.

\begin{lemma}
\label{Lem:StrictlySpansLoom}
Suppose that $R \prec T$ are loom-tetrahedron rectangles in $\calL$.  
Then $T$ strictly south-north spans $R$ if and only if
the south and north vertices of $T$ are disjoint from the south and north vertices of $R$.
In similar fashion $R$ strictly west-east spans $T$ if and only if
the west and east vertices of $R$ are disjoint from the west and east vertices of $T$.  \qed
\end{lemma}

Note that neither hypothesis in \reflem{StrictlySpansLoom} implies the other. 
See \reffig{StrictBound}.
We now give a slightly weakened version of the \emph{astroid lemma}~\cite[Lemma~4.10]{SchleimerSegerman24}.

\begin{lemma}
\label{Lem:WeakAstroid}
Suppose that $p$ is a point of a loom space $\calL$.
Suppose that $S$ is a rectangle of $\calL$ containing $p$.
Then there is a loom-tetrahedron rectangle $R$ with the following properties.
\begin{itemize}
\item $p \in R$.
\item $R$ west-east spans $S$.
\item $S$ strictly south-north spans $R$.
\end{itemize}
Similarly, there is a loom-tetrahedron rectangle $T$ with the corresponding properties, swapping west-east and south-north. 
\end{lemma}

\begin{figure}[htbp]
\centering
\subfloat[]{
\labellist
\small\hair 2pt
\pinlabel {$R$} at 330 187
\pinlabel {$T$} at 140 65
\endlabellist
\includegraphics[width=0.29\textwidth]{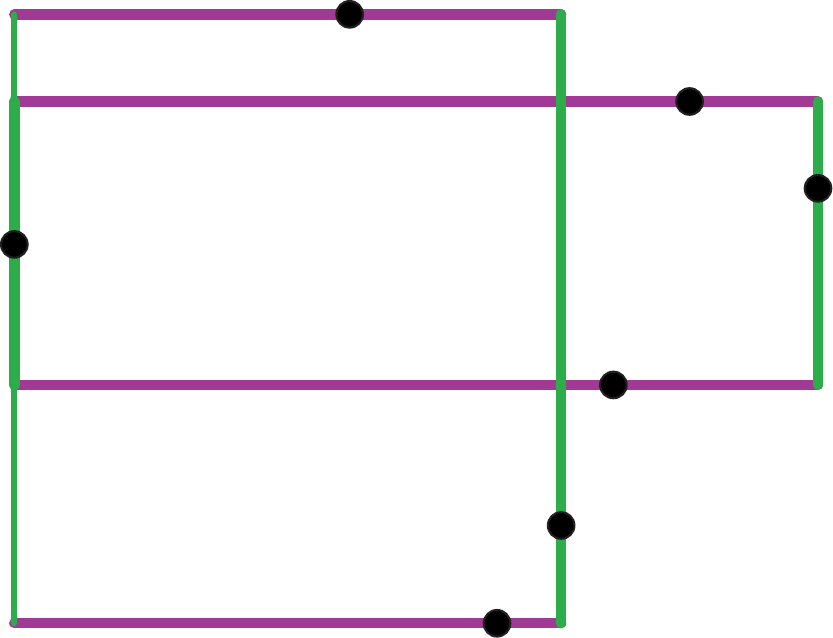}
\label{Fig:NotStrictWE}
}
\quad
\subfloat[]{
\labellist
\small\hair 2pt
\pinlabel {$R$} at 330 210
\pinlabel {$T$} at 190 65
\endlabellist
\includegraphics[width=0.29\textwidth]{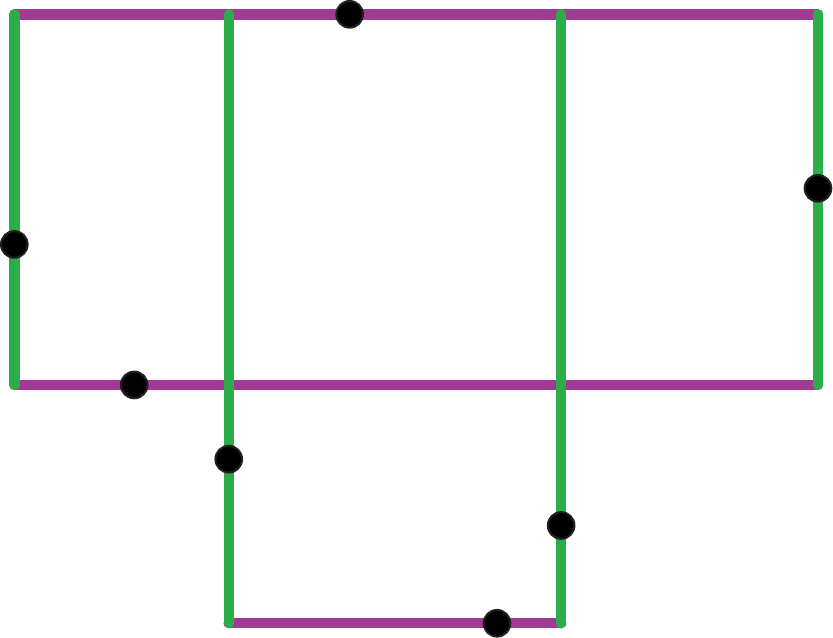}
\label{Fig:NotStrictSN}
}
\quad
\subfloat[]{
\labellist
\small\hair 2pt
\pinlabel {$R$} at 330 187
\pinlabel {$T$} at 190 65
\endlabellist
\includegraphics[width=0.29\textwidth]{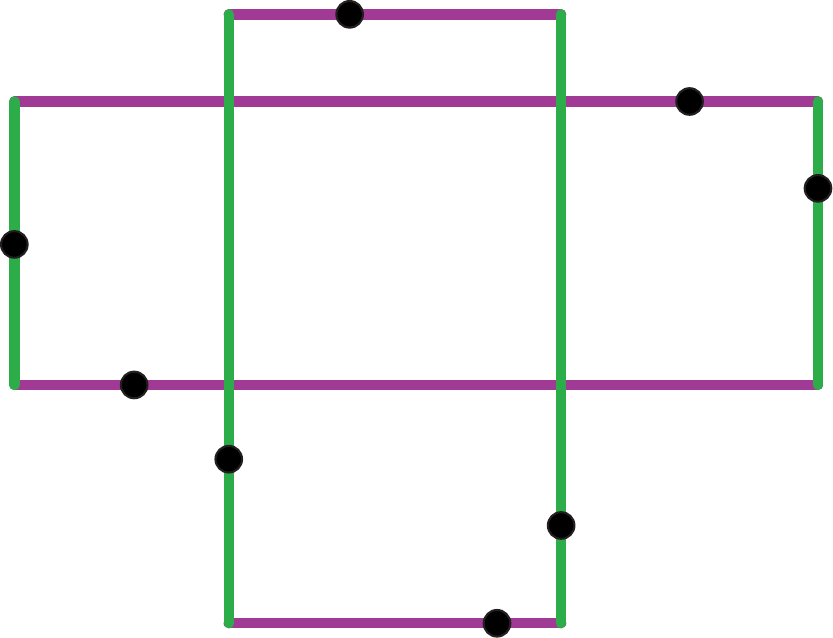}
\label{Fig:BothStrict}
}
\caption{Loom-tetrahedron rectangles $R \prec T$.  
In \reffig{NotStrictWE} we have that $T$ strictly south-north spans $R$, but $R$ does not strictly west-east span $T$. 
In \reffig{NotStrictSN} we have that $R$ strictly west-east spans $T$, but $T$ does not strictly south-north span $R$.  
In \reffig{BothStrict} we have that each strictly spans the other.
}
\label{Fig:StrictBound}
\end{figure}

\begin{proof}[Sketch proof of \reflem{WeakAstroid}]
Let $m$ be the west-east leaf through $p$.
By \cite[Lemma~3.10]{SchleimerSegerman24}, the leaf $m$ meets at most one cusp.
Breaking symmetry, we may assume that this cusp, if it exists, is not to the west of $p$.
Take a copy of $S$ and push its northern side slightly to the south and its southern side slightly to the north to obtain a new rectangle, still containing $p$.
Now push its western side to the west.
By the astroid lemma~\cite[Lemma~4.10]{SchleimerSegerman24}, the western side eventually meets a cusp, say $c_0$, and we stop.

By our assumption, $c_0$ does not lie on $m$.
Breaking symmetry we assume that $c_0$ is to the north of $p$.
Push the northern side of our rectangle south until $c_0$ lies at its north-west corner.
We repeat this process, replacing $c_0$ with $c_i$, until we find $c_n$, a cusp on the western side of the rectangle but which is to the south of $p$.
Again, we terminate after a finite number of steps by the astroid lemma.

Now push the southern side of our rectangle north until $c_n$ lies at its south-west corner.
At this point, push the western side west and the eastern side east until they meet cusps, $c_W$ and $c_E$ say.
Again, the astroid lemma ensures that $c_W$ and $c_E$ exist.

We now have a loom-tetrahedron rectangle $R$ with $c_{n-1}$ as its northern cusp, with $c_n$ as its southern cusp, and with $c_W$ and $c_E$ as its western and eastern cusps.
Moreover, since we never increased the south-north extent, and in the first step, decreased it by a definite amount, $S$ strictly south-north spans $R$.
Since we never decreased the west-east extent, $R$ west-east spans $S$.
\end{proof}

\begin{corollary}
\label{Cor:SubBasis}
The loom-tetrahedron rectangles give a subbasis for the topology of the loom space. \qed
\end{corollary}

We also have the following finiteness property~\cite[Lemma~4.16]{SchleimerSegerman24}.

\begin{lemma}
\label{Lem:Finiteness}
Suppose that $S$ is a rectangle of $\calL$.
Then there are only finitely many loom-tetrahedron rectangles containing $S$. \qed
\end{lemma}



\begin{lemma}
\label{Lem:CommonBounds}
Suppose that $S$ and $S'$ are loom-tetrahedron rectangles of $\calL$.
Suppose that $p$ is a point of $S \cap S'$.
Then there are loom-tetrahedron rectangles $R$ and $T$ so that $p \in R \cap T$ and so that $S$ and $S'$ strictly south-north span $R$ and strictly west-east span $T$.
\end{lemma}

\begin{proof}
We apply \reflem{WeakAstroid} to the (not necessarily loom-tetrahedron) rectangle $S \cap S'$. This gives loom-tetrahedron rectangles $R$ and $T$ with the desired properties.
\end{proof}

\subsection{Astroids and axes}

\begin{definition}
\label{Def:Astroid}
Suppose that $\calL$ is a loom space.
Suppose that $p$ is a point of $\calL$.
The \emph{astroid} $\astro(p)$ is the set of loom-tetrahedron rectangles containing $p$.
\end{definition}

\begin{figure}[htbp]
\includegraphics[width=\textwidth]{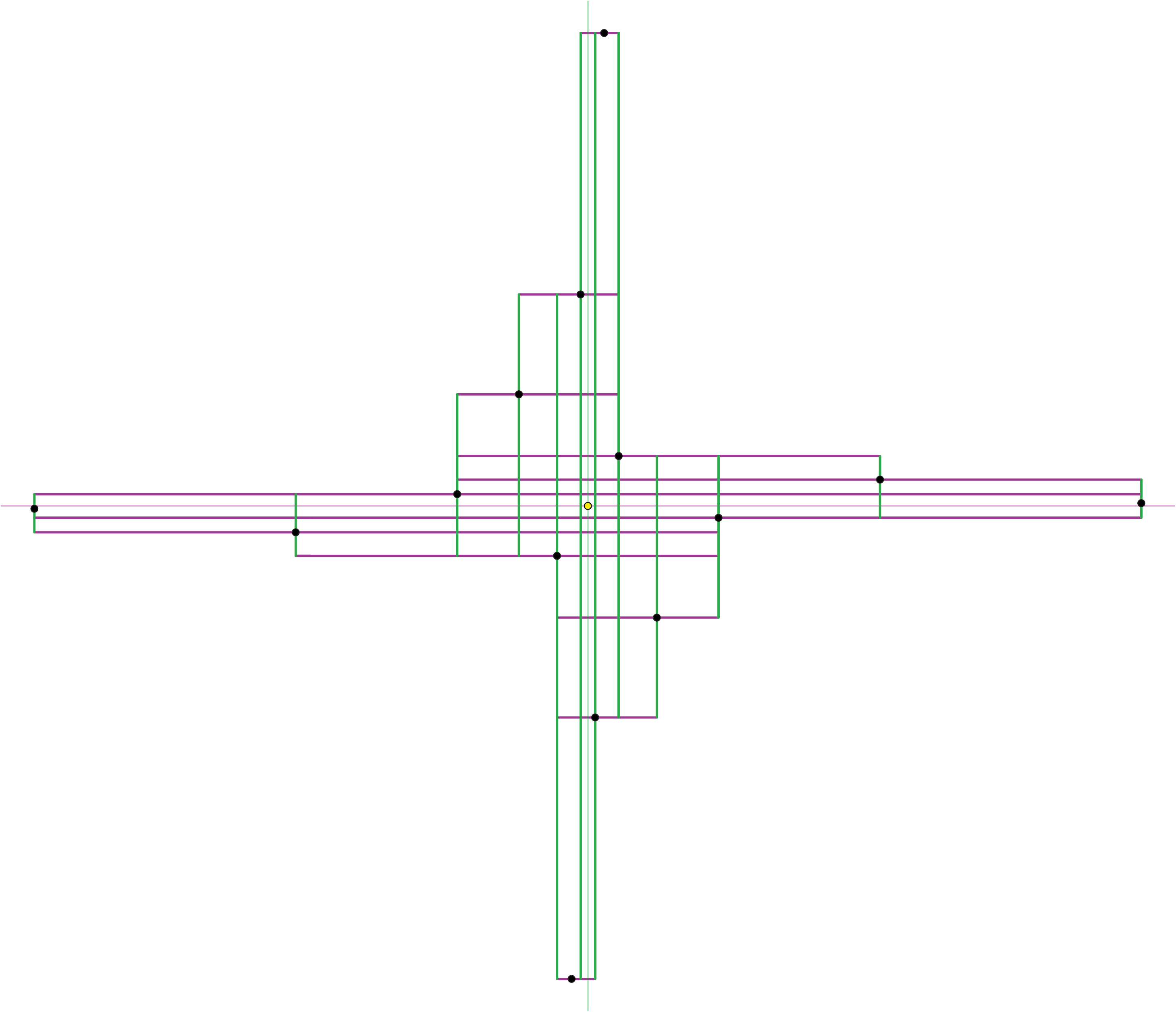}
\caption{Some of the astroid $\astro(p)$ when $p$ is a Weierstrass point (centre) for the fibre of the monodromy of the figure eight knot complement.
If $p$ lies on one or two cusp leaves then the corresponding ``arms'' of the astroid are truncated.}
\label{Fig:Astroid}
\end{figure}

\begin{lemma}
\label{Lem:AstroidIntersection}
The intersection of all rectangles in $\astro(p)$ equals $\{p\}$.
\end{lemma}

\begin{proof}
Let $P$ be the intersection of all rectangles in $\astro(p)$.
Suppose that $S$ is any (not necessarily loom-tetrahedron) rectangle containing $p$.
By \reflem{WeakAstroid}, there is a rectangle $R \in \astro(p)$ so that $S$ strictly south-north spans $R$.
Thus no point of $P$ lies to the south or to the north of $S$.
By a similar argument, no point of $P$ lies to the west or to the east of $S$.
Since $S$ was arbitrary, $P = \{p\}$.
\end{proof}

\begin{definition}
\label{Def:LoomAxis}
Suppose that $\calL$ is a loom space.
We say that a non-empty collection $A$ of loom-tetrahedron rectangles is a \emph{loom-axis} if it satisfies the following properties.
\begin{enumerate}
\item
\label{Itm:LoomBounds}
For all $S, S' \in A$ there are $R, T \in A$ so that $S$ and $S'$ strictly south-north span $R$ and strictly west-east span $T$.
\item
\label{Itm:LoomMaximal}
$A$ is maximal with respect to the previous property. \qedhere
\end{enumerate}
\end{definition}

\begin{theorem}
\label{Thm:AxisIFFAstroid}
Suppose that $\calL$ is a loom space.
Suppose that $A$ is a set of loom-tetrahedron rectangles in $\calL$.
Then $A$ is a loom-axis if and only if it is an astroid.
\end{theorem}

\begin{proof}
Suppose that $A$ is a loom-axis.
By definition, $A$ is nonempty.
From \cite[Corollary~4.25]{SchleimerSegerman24} we deduce that $A$ is countable.
We order the elements of $A = (S_i)$. 
By \refdef{LoomAxis}\refitm{LoomBounds} there is a sequence $(R_k) \subset A$ so that 
each element of the collection 
\[
\{ S_0, S_1, \ldots, S_{k}\} \cup \{ R_{k} \}  
\]
strictly south-north spans $R_{k+1}$.
There is a similar sequence $(T_k) \subset A$ replacing south-north by west-east.
We deduce that 
\[
R_{k+1} \prec R_{k} \prec T_{k} \prec T_{k+1}
\quad \mbox{and} \quad 
R_{k+1} \prec S_{k} \prec T_{k+1}
\]

By \reflem{Bookends}, the intersection $R_{k+1} \cap T_{k+1}$ is nonempty and lies in $R_{k} \cap S_k \cap T_{k}$.
Since $R_{k}$ and $T_{k}$ strictly span $R_{k+1}$ and $T_{k+1}$, respectively, the closure of $R_{k+1} \cap T_{k+1}$ is strictly contained in $R_k \cap T_k$. 
\reflem{Finiteness} implies that the nested intersection is a single point $p$ of $\calL$.
\reflem{Bookends} now implies that $p$ lies in $S_k$ for all $k$.
Thus $A \subset \astro(p)$.

On the other hand, by \reflem{CommonBounds}, the astroid $\astro(p)$ satisfies \refdef{LoomAxis}\refitm{LoomBounds}.
This, together with $A \subset \astro(p)$ and \refdef{LoomAxis}\refitm{LoomMaximal} for $A$ proves that $A = \astro(p)$.

For the opposite direction, fix $p$ and consider the astroid $\astro(p)$.
Again, by \reflem{CommonBounds}, we have that $\astro(p)$ satisfies \refdef{LoomAxis}\refitm{LoomBounds}.
Suppose that $A$ is a set of loom-tetrahedron rectangles that also satisfies \refdef{LoomAxis}\refitm{LoomBounds} and contains $\astro(p)$.
Suppose, for a contradiction, that there is some loom-tetrahedron rectangle $S$ so that $S \in A - \astro(p)$.
There are two cases as $p$ misses or lies in $\bdy S$. 

Suppose that $p$ is not in $\bdy S$. 
Breaking symmetry, suppose that $m$ is the leaf of $F_\calL$ which contains a side of $S$ and separates $S$ from $p$.
By \reflem{WeakAstroid} there is a loom-tetrahedron rectangle $S'$ that contains $p$ and is disjoint from $m$, and thus is disjoint from $S$.  
Since $S'$ contains $p$, we have that $S' \in \astro(p) \subset A$.
Now applying \refdef{LoomAxis}\refitm{LoomBounds} and \reflem{Bookends}, we contradict the fact that $S$ and $S'$ are disjoint.

Suppose instead that $p$ lies in $\bdy S$. 
Breaking symmetry, suppose that $p$ lies in the north side of $S$.
By \refdef{LoomAxis}\refitm{LoomBounds} (applied to the loom-tetrahedron rectangle $S$ twice)
there is a loom-tetrahedron rectangle $R$ in $A$ so that $S$ strictly south-north spans $R$.
Thus $R$ and $S$ have disjoint north sides.
Thus $p$ does not lie in the closure of $R$.
Taking $R$ in place of $S$, we are now in the first case:
since $R \in A$ and $p$ misses $\bdy{R}$ we reach a contradiction.

Thus $\astro(p)$ is maximal and so satisfies \refdef{LoomAxis}\refitm{LoomMaximal}. 
Therefore $\astro(p)$ is a loom-axis, as desired.
\end{proof}

\subsection{Bijections}

We now translate \reflem{StrictlySpansLoom} into the language of veering triangulations.

\begin{definition}
\label{Def:UpperLower}
As in \refsec{TautIdealTriangulation}, for every tetrahedron $t$ in $\calV$, the transverse orientation chooses one of the two non-equatorial edges to be the upper edge of $t$. 
The other non-equatorial edge is the lower edge of $t$.
We refer to the two vertices of $t$ incident to the upper edge as its \emph{upper vertices}.
The remaining two vertices of $t$ are its \emph{lower vertices}.
\end{definition}

\begin{definition}
\label{Def:StrictlySpansVeer}
Suppose that $r \prec t$ are tetrahedra in $\calV$.  
We say that $t$ \emph{strictly south-north spans} $r$ if the upper vertices of $t$ are disjoint from the upper vertices of $r$.
In similar fashion, we say that $r$ \emph{strictly west-east spans} $t$ if 
the lower vertices of $r$ are disjoint from the lower vertices of $t$.  
\end{definition}

We transport \refdef{LoomAxis} to veering triangulations, as follows.

\begin{definition}
\label{Def:FlowAxis}
Suppose that $\calV$ is a transverse veering triangulation of $\RR^3$.
We say that a non-empty collection $A \subset \calV$ of tetrahedra is a \emph{(coarse flow) axis} if
\begin{enumerate}
\item
\label{Itm:TautStrictBounds}
For all $s, s' \in A$ there are $r, t \in A$ so that $s$ and $s'$ strictly south-north span $r$ and strictly west-east span $t$.
\item
\label{Itm:TautMaximal}
$A$ is maximal with respect to the previous property. \qedhere
\end{enumerate}
\end{definition}

\begin{remark}
\label{Rem:Swap}
Choosing the other transverse veering structure on $\calV$ swaps upper with lower but does not change the set of flow-axes.
\end{remark}

Note that, following \refrem{LoomBetter}, we do not need to define ``astroids'' for veering triangulations. 
We now define two functions.

\begin{definition}
\label{Def:AxisForP}
Suppose that $p$ is a point of a loom space $\calL$.
Recall that if $R$ is a loom-tetrahedron rectangle then $\cell(R)$ is the corresponding tetrahedron of $\veer(\calL)$.
We define $\axis(p)$, the \emph{flow-axis for} $p$, as follows.
\[
\axis(p) = \{ \cell(R) \in \veer(\calL) \st  R \in \astro(p) \} \qedhere
\] 
\end{definition}

By \refthm{AxisIFFAstroid} and then \reflem{Order}, we have that $\axis(p)$ is a flow-axis in $\veer(\calL)$.
Thus $\axis$ is a function from $\calL$ to the set of flow-axes in $\veer(\calL)$.

\begin{definition}
\label{Def:PointForA}
Suppose that $A$ is a flow-axis of a veering triangulation $\calV$. 
Recall that if $t$ is a tetrahedron then $\rect(t)$ is the corresponding rectangle of $\link(\calV)$.
We define $\point(A)$, the \emph{point for $A$}, as follows.
\[
\{\point(A)\} = \cap \{ \rect(t) \st t \in A \} \qedhere
\]
\end{definition}

By \reflem{Order}, 
\refthm{AxisIFFAstroid}, 
and \reflem{AstroidIntersection}
we have that the intersection is a singleton set in $\link(\calV)$.
Therefore, $\point(A)$ is a well-defined point of $\link(\calV)$.
That is, $\point$ is a function from the set of flow-axes in $\calV$ to $\link(\calV)$.
Applying \reflem{Order} and \refthm{AxisIFFAstroid} again, we deduce the following.

\begin{corollary}
\label{Cor:Bijections}
The functions $\axis$ and $\point$ are bijections. \qed
\end{corollary}

Restricting to tetrahedra and loom-tetrahedron rectangles, and applying \refdef{InducedVeering} and \refthm{RectInTetRect}, we also have the following.

\begin{corollary}
\label{Cor:MoreBijections}
The functions $\cell$ and $\rect$ are bijections. \qed
\end{corollary}


\begin{lemma}
\label{Lem:AxisCentre}
Suppose that $\calV$ is a veering triangulation.
Suppose that $A$ is a flow-axis of $\calV$.
Then we have the following.
\begin{enumerate}
\item
\label{Itm:AxisCentre}
$\rect(A) = \astro(\point(A))$.
\item
\label{Itm:TetInAxis}
Suppose that $t$ is a tetrahedron of $\calV$.
Then $t \in A$ if and only if $\point(A) \in \rect(t)$.
\end{enumerate}
\end{lemma}

\begin{proof}
By \reflem{Order} and \refcor{MoreBijections}, we have that $\rect(A)$ is a loom-axis.
By \refthm{AxisIFFAstroid}, we have that $\rect(A)$ is an astroid.
By \refdef{Astroid}, there is some $q \in \link(\calV)$ so that  $\rect(A) = \astro(q)$.
By \reflem{AstroidIntersection}, we have that $\{q\} = \cap\astro(q) = \cap\rect(A)$.
However, by \refdef{PointForA}, we have that $\cap\rect(A) = \point(A)$.
Thus $\{q\} = \point(A)$.
Substituting, we find that $\rect(A) = \astro(\point(A))$, giving \refitm{AxisCentre}.

Now suppose that $t$ lies in $A$.
So $\rect(t) \in \rect(A)$ and thus by \refdef{PointForA}, we have that $\point(A) \in \rect(t)$.

Suppose instead that $\point(A) \in \rect(t)$. 
By \refdef{Astroid}, we have that $\rect(t) \in \astro(\point(A))$.
Since $\astro(\point(A)) = \rect(A)$, we have that $\rect(t) \in \rect(A)$.
Applying \refcor{MoreBijections}, we obtain \refitm{TetInAxis}.
\end{proof}

\begin{definition}
\label{Def:Quiver}
Suppose that $\calV$ is a veering triangulation.
Suppose that $t$ is a tetrahedron of $\calV$.
The \emph{quiver} $\qui(t)$ is the set of flow-axes in $\calV$ containing $t$.
\end{definition}

\begin{lemma}
\label{Lem:PointInverse}
$\point(\qui(t)) = \rect(t)$.
\end{lemma}

\begin{proof}
Suppose that $p \in \point(\qui(t))$.
That is, there is a flow-axis $A \in \qui(t)$ with $p = \point(A) = \cap\rect(A)$.
Since $t \in A$, by the forward direction of \reflem{AxisCentre}\refitm{TetInAxis} we have that $p \in \rect(t)$.

Suppose that $p \in \rect(t)$. 
Set $A = \rect^{-1}(\astro(p))$.
By \refthm{AxisIFFAstroid}, \reflem{Order}, and \refcor{MoreBijections}, the set $A$ is a flow-axis.
Since $\rect(t) \in \astro(p)$, applying \refcor{MoreBijections} again we have that $t \in A$.
By \refdef{Quiver}, we have that $A \in \qui(t)$.
On the other hand, $\rect(A) = \astro(p)$, so by \reflem{AxisCentre}\refitm{AxisCentre}, we have that $p = \point(A)$.
Since $A \in \qui(t)$, we may apply $\point$ and obtain $p = \point(A) \in \point(\qui(t))$.
\end{proof}

\subsection{Functoriality}

For the convenience of the reader, we recall the definition of the taut isomorphism induced by a loom isomorphism~\cite[Definition~5.10]{SchleimerSegerman24}.

\begin{definition}
\label{Def:InducedTautIso}
Suppose that $\calL$ and $\calM$ are loom spaces. 
Suppose that $\phi \from \calL \to \calM$ is a loom isomorphism.
We define $\veer(\phi) = \veer_\phi$, the \emph{induced map} from $\veer(\calL)$ to $\veer(\calM)$, as follows.
\[
\veer_\phi(\cell(R)) = \cell(\phi(R)) \qedhere 
\] 
\end{definition}

\begin{lemma}
\label{Lem:ImageOfAxis}
With hypotheses as in \refdef{InducedTautIso}, suppose that $p$ is a point of $\calL$. 
Then we have the following.
\[
\veer_\phi(\axis(p)) = \axis(\phi(p))
\]
\end{lemma}

\begin{proof}
Recall from \refdef{Astroid} that $\astro(p)$ is the set of loom-tetrahedron rectangles containing $p$.
Since $\phi$ is a loom isomorphism, we have that $R \in \astro(p)$ if and only if $\phi(R) \in \astro(\phi(p))$. 
This proves the following.
\begin{claim}
\label{Clm:AstroCommute}
$\phi(\astro(p)) = \astro(\phi(p))$ \qed
\end{claim}
\noindent
We now compute.
\begin{align*}
\veer_\phi(\axis(p)) &= \veer_\phi(\cell(\astro(p))) && \mbox{\refdef{AxisForP}} \\
            &= \cell(\phi(\astro(p))) && \mbox{\refdef{InducedTautIso}} \\
            &= \cell(\astro(\phi(p))) && \mbox{\refclm{AstroCommute}} \\ 
            &= \axis(\phi(p)) && \mbox{\refdef{AxisForP}} \qedhere
\end{align*}
\end{proof}

We now set out the parallel definition and lemma in the context of veering triangulations.

\begin{definition}
\label{Def:InducedLoomIso}
Suppose that $\calV$ and $\calW$ are locally veering triangulations of $\RR^3$. 
Suppose that $f \from \calV \to \calW$ is a taut isomorphism.
We define $\link(f) = \link_f$, the \emph{induced map} from $\link(\calV)$ to $\link(\calW)$, as follows.
\[
\link_f(\point(A)) = \point(f(A)) \qedhere 
\] 
\end{definition}

Note that in making this definition, by \refcor{Bijections} it suffices to consider points of $\link(\calV)$ of the form $\point(A)$ where $A$ is a flow-axis in $\calV$.

\begin{lemma}
\label{Lem:ImageOfRect}
With hypotheses as in \refdef{InducedLoomIso}, suppose that $t$ is a tetrahedron of $\calV$. 
Then we have the following.
\[
\link_f(\rect(t)) = \rect(f(t))
\]
\end{lemma}

\begin{proof}
Recall from \refdef{Quiver} that $\qui(t)$ is the set of flow-axes containing $t$.
Since $f$ is a taut isomorphism, $t$ lies in a flow-axis $A$ if and only if $f(t) \in f(A)$. 
This proves the following.
\begin{claim}
\label{Clm:QuiCommutes}
$f(\qui(t)) = \qui(f(t))$ \qed
\end{claim}
\noindent
We now compute.
\begin{align*}
\link_f(\rect(t)) &= \link_f(\point(\qui(t))) && \mbox{\reflem{PointInverse}} \\
            &=  \point(f(\qui(t)))&& \mbox{\refdef{InducedLoomIso}} \\
            &=  \point(\qui(f(t)))&& \mbox{\refclm{QuiCommutes}} \\ 
            &= \rect(f(t))&& \mbox{\reflem{PointInverse}} \qedhere
\end{align*}
\end{proof}

We are now equipped to prove the following.

\begin{proposition}
\label{Prop:LinkIsFunctor}
The link space construction $\link$ is a functor.
\end{proposition}

\begin{proof}
Suppose that $\calV$ is a locally veering triangulation of $\RR^3$. 
By \refthm{LinkIsLoom}, the link space $\link(\calV)$ is a loom space.

Applying \refdef{InducedLoomIso} twice, we have 
\[
\link_{\id_\calV}(\point(A)) = \point(\id_\calV(A)) = \point(A) = \id_{\link(\calV)}(\point(A))
\]
Thus $\link(\id_\calV) = \id_{\link(\calV)}$. 

Suppose that $\calU$ and $\calW$ are also locally veering triangulations of $\RR^3$. 
Suppose that $f \from \calU \to \calV$ and $g \from \calV \to \calW$ are taut isomorphisms.
Suppose that $A$ is a flow-axis in $\calU$.
Applying \refdef{InducedLoomIso} three times we have the following.
\[
\link_{g\circ f}(\point(A)) = \point((g\circ f)(A)) = \point(g(f(A))) = \link_g(\point(f(A))) =
 \link_g \circ \link_f(\point(A))
\]

Since $f$ is a taut isomorphism, so is $f^{-1} \from \calV \to \calU$.
We now compute.
\[
\link(f) \circ \link(f^{-1}) = \link(f \circ f^{-1}) = \link(\id_\calV) = \id_{\link(\calV)}
\]
Similarly, $\link(f^{-1}) \circ \link(f) = \id_{\link(\calU)}$.

By \reflem{ImageOfRect}, we have that $\link(f)$ sends tetrahedron rectangles to tetrahedron rectangles.
The same holds for  $\link(f^{-1})$.
By \refcor{SubBasis}, we deduce that $\link(f)$ and $\link(f^{-1})$ are both continuous.
Thus both are homeomorphisms.

\begin{claim}
\label{Clm:Wall}
Suppose that $p$ and $p'$ are points of a loom space $\calL$.
Then $p$ and $p'$ are on a common leaf of $F^\calL$ if and only if for any $S \in \astro(p)$ and $S' \in \astro(p')$ there is a loom-tetrahedron rectangle which south-north spans both $S$ and $S'$.
The same holds, replacing $F^\calL$ with $F_\calL$ and south-north with west-east.
\end{claim}

\begin{proof}
Suppose that $p$ and $p'$ lie on $\ell$, a leaf of $F^\calL$. 
Fix $S\in \astro(p)$ and $S' \in \astro(p')$.
Let $\ell'$ be an interval in $\ell$, not containing any non-cusp end of $\ell$ but meeting both the north and south sides of both $S$ and $S'$.

Since rectangles provide a basis for the topology of $\calL$, and applying \refdef{Loom}\refitm{Cusp} if needed, we may cover the interval $\ell'$ by a finite collection of rectangles. 
Thus $\ell'$ is contained in a single rectangle $S''$.
We deduce that $S''$ south-north spans both $S$ and $S'$.
By \reflem{WeakAstroid}, there is a loom-tetrahedron rectangle which contains $p$ and which south-north spans $S''$. 

We now prove the converse.
Suppose that for every $S\in \astro(p)$ and $S' \in \astro(p')$ there is a loom-tetrahedron rectangle that south-north spans both.
By \refthm{AxisIFFAstroid}, we have a sequence $(S_n) \subset \astro(p)$ so that $S_n$ strictly west-east spans $S_{n+1}$.
Let $\ell_p$ be the leaf of $F^\calL$ containing $p$. 
From \reflem{Finiteness} we deduce that $\limsup S_n$ is contained in $\ell_p$.
Suppose for a contradiction that $\limsup S_n$ is not all of $\ell_p$.
Breaking symmetry, suppose that $\limsup S_n$ has a material boundary point, say $q$, to its south.
Let $Q$ be a small rectangle about $q$.
Thus $Q$ contains the southern sides of infinitely many of the $S_n$.
We deduce that these loom-tetrahedron rectangles $S_n$ do not have cusps on their southern sides, a contradiction.

Similarly, there is a sequence $(S'_n) \subset \astro(p')$ so that $\limsup S'_n = \ell_{p'}$.
Let $T_n$ be the given loom-tetrahedron rectangle that south-north spans both $S_n$ and $S'_n$.
We deduce that $\limsup T_n$ is equal to both $\ell_p$ and $\ell_{p'}$.
Thus $\ell_p = \ell_{p'}$, as desired.
\end{proof}

\refclm{Wall} gives a combinatorial characterisation of leaves in terms of loom-axes.
From \reflem{Order} we deduce that $\link(f)$ sends leaves to leaves.
Thus $\link(f)$ is a loom isomorphism.
\end{proof}

\subsection{Equivalence of categories}
\label{Sec:EquivalenceOfCategories}

The goal of this section is to prove the following.

\begin{theorem}
\label{Thm:Equivalence}
The functors 
\[
\link \from \Veer(\RR^3) \to \Loom(\RR^2) \quad \mbox{and} \quad \veer \from  \Loom(\RR^2) \to \Veer(\RR^3)
\]
give an equivalence of categories.
\end{theorem}

To prove the theorem, we produce two natural isomorphisms:
\[
\eta \from \Id_{\Loom} \to \link \circ \veer \quad \mbox{and} \quad \zeta \from \Id_{\Veer}  \to \veer \circ \link
\]

\begin{definition}
\label{Def:Eta}
Suppose that $\calL$ is a loom space. 
We define a map 
\[
\eta_\calL \from \calL \to \link \circ \veer (\calL) \quad \mbox{by setting} \quad
\eta_\calL(p) = \point(\axis(p)) \qedhere
\]
\end{definition}

\begin{proposition}
\label{Prop:Eta}
$\eta$ is a natural isomorphism.
\end{proposition}

\begin{proof}
We verify the defining properties as follows.

\begin{claim}
Suppose that $\calL$ is a loom space.
Then the map $\eta_\calL$ is a loom isomorphism.
\end{claim}

\begin{proof}
By \refcor{Bijections}, the map $\eta_\calL$ is a bijection.

Let $R$ be a loom-tetrahedron rectangle in $\calL$.
We now have the following.
\begin{align*}
p \in R \,\, &\mbox{if and only if $R \in \astro(p)$} && \mbox{\refdef{Astroid}} \\
            &\mbox{if and only if $\cell(R) \in \axis(p)$} && \mbox{\refdef{AxisForP}} \\
            &\mbox{if and only if $\point(\axis(p)) \in \rect(\cell(R))$} && \mbox{\reflem{AxisCentre}\refitm{TetInAxis}} 
\end{align*}
We deduce that $\eta_\calL(R) = \rect(\cell(R))$ is a loom-tetrahedron rectangle in $\link \circ \veer (\calL)$.
By \refcor{SubBasis}, the map $\eta_\calL$ is a homeomorphism.

By \refclm{Wall}, \reflem{Order}, and \refthm{AxisIFFAstroid} the map $\eta_\calL$ sends leaves to leaves.
\end{proof}

\begin{claim}
Suppose that $\phi \from \calL \to \calM$ is a loom isomorphism.
Then the following diagram commutes.
\[
\begin{tikzcd}[row sep = large, column sep=large]
\calL \arrow[r, "\phi"] \arrow[d, "\eta_\calL"] & \calM \arrow[d, "\eta_\calM"] \\
\link\circ\veer(\calL) \arrow[r, "\link\circ\veer(\phi)"] & \link\circ\veer(\calM)
\end{tikzcd}
\]
\end{claim}

\begin{proof}
Suppose that $p \in \calL$.
We now compute.
\begin{align*}
(\link\circ\veer)_\phi(\eta_\calL(p))
  &= (\link\circ\veer)_\phi(\point(\axis(p))) && \mbox{\refdef{Eta}}  \\ 
  &= \link_{\veer(\phi)}(\point(\axis(p))) && \mbox{$(\link\circ\veer)_\phi = \link(\veer(\phi)) = \link_{\veer(\phi)}$}\\ 
  &= \point(\veer_\phi(\axis(p))) && \mbox{\reflem{ImageOfAxis}}\\ 
  &= \point(\axis(\phi(p))) && \mbox{\refdef{InducedTautIso}}\\ 
  &= \eta_\calM(\phi(p)) && \mbox{\refdef{Eta}}  \qedhere
\end{align*}
\end{proof}
This completes the proof that $\eta$ is a natural isomorphism.
\end{proof}

\begin{definition}
\label{Def:Zeta}
Suppose that $\calV$ is a veering triangulation of $\RR^3$. 
We define a map 
\[
\zeta_\calV \from \calV \to \veer \circ \link (\calV) \quad \mbox{by setting} \quad
\zeta_\calV(t) = \cell(\rect(t)) \qedhere
\]
\end{definition}

\begin{proposition}
\label{Prop:Zeta}
$\zeta$ is a natural isomorphism.
\end{proposition}

\begin{proof}
We verify the defining properties as follows.

\begin{claim}
\label{Clm:ZetaIsTaut}
Suppose that $\calV$ is a veering triangulation of $\RR^3$.
Then the map $\zeta_\calV$ is a taut isomorphism.
\end{claim}

\begin{proof}
By \refcor{Bijections}, the map $\zeta_\calV$ is a bijection.

Suppose that $f$ is a face of $\calV$.
Let $t$ and $t'$ be tetrahedra of $\calV$.
By \reflem{FaceInTet}, we have that $t$ meets $t'$ along $f$ if and only if $\rect(t) \cap \rect(t') = \rect(f)$.
By \cite[Lemma~2.31 and Definition~5.9]{SchleimerSegerman24}, we have that $\rect(t) \cap \rect(t') = \rect(f)$ if and only if $\cell(\rect(t))$ meets $\cell(\rect(t'))$ along the face $\cell(\rect(f))$.
Thus $\zeta_\calV$ is an isomorphism of triangulations.

By \reflem{TetRect}, we have that $e$ is an upper or lower edge of $t$ if and only if $\rect(e)$ is contained in and spans $\rect(t)$ (either south-north or west-east).
By \cite[Definition~5.9]{SchleimerSegerman24} we have that $\rect(e)$ is contained in and spans $\rect(t)$ if and only if $\cell(\rect(e))$ is an upper or lower edge of $\cell(\rect(t))$.
Thus $\zeta_\calV$ sends the taut structure of $\calV$ to that of $\veer(\link(\calV))$.
\end{proof}

\begin{claim}
Suppose that $f \from \calV \to \calW$ is a taut isomorphism.
Then the following diagram commutes.
\[
\begin{tikzcd}[row sep = large, column sep=large]
\calV \arrow[r, "f"] \arrow[d, "\zeta_\calV"] & \calW \arrow[d, "\zeta_\calW"] \\
\veer\circ\link(\calV) \arrow[r, "\veer\circ\link(f)"] & \veer\circ\link(\calW)
\end{tikzcd}
\]
\end{claim}

\begin{proof}
Suppose that $t \in \calV$ is a tetrahedron.
We now compute.
\begin{align*}
(\veer\circ\link)_f(\zeta_\calV(t))
  &= (\veer\circ\link)_f(\cell(\rect(t))) && \mbox{\refdef{Zeta}}  \\ 
  &= \veer_{\link(f)}(\cell(\rect(t))) && \mbox{$(\veer\circ\link)_f = \veer(\link(f)) = \veer_{\link(f)}$}\\ 
  &= \cell(\link_f(\rect(t))) && \mbox{\refdef{InducedTautIso}}\\ 
  &= \cell(\rect(f(t))) && \mbox{\reflem{ImageOfRect}}\\ 
  &= \zeta_\calW(f(t)) && \mbox{\refdef{Zeta}}  \qedhere
\end{align*}
\end{proof}
This completes the proof that $\zeta$ is a natural isomorphism.
\end{proof}

\begin{proof}[Proof of \refthm{Equivalence}]
Propositions~\ref{Prop:Eta} and~\ref{Prop:Zeta} prove that $\eta$ and $\zeta$ are natural isomorphisms. 
Thus the categories $\Veer(\RR^3)$ and $\Loom(\RR^2)$ are equivalent.
\end{proof}

With \refthm{Equivalence} in hand, we prove the following. 

\begin{corollary}
\label{Cor:Recover}
Suppose that $M$ is a connected three-manifold.  
Suppose that $\calV$ is a locally veering triangulation on $M$.
Suppose that $\Gamma = \veer \circ \link (\pi_1(M))$ is the induced subgroup of $\Aut(\veer \circ \link (\cover{\calV}))$.  
Then $\zeta_\cover{\calV}$ descends to give an isomorphism from $\calV$ to $\veer \circ \link(\cover{\calV}) / \Gamma$. 
\end{corollary}

\begin{proof}
By \refclm{ZetaIsTaut}, the map $\zeta_\calV$ is a taut isomorphism from $\cover{\calV}$ to $\veer \circ \link(\cover{\calV})$.
By \refthm{Equivalence}, we have that $\zeta$ carries the $\pi_1(M)$--action on $\cover{\calV}$ to the $\Gamma$--action on $\veer \circ \link(\cover{\calV})$. 
In particular, $\zeta_\cover{\calV}$ takes orbits of cells to orbits of cells.
Thus, the quotient of $\zeta_\cover{\calV}$ is the desired isomorphism between $\calV$ and $\veer \circ \link(\cover{\calV}) / \Gamma$. 
\end{proof}

\appendix
\chapter{Towards Cannon--Thurston maps}
\label{App:CT}

Here we give results required for our work with Manning~\cite{ManningSchleimerSegerman}.

\subsection{Parabolics revisited}
\label{App:ParabolicsRevisited}

Suppose that $M$ is an oriented three-manifold equipped with a transverse veering triangulation $\calV$.
Here we suppose that $\calV$ contains finitely many tetrahedra.
Thus $M$ is a compact three-manifold with non-empty toroidal boundary.
As in \refsec{TautIdealTriangulation}, we take $\Delta_\calV$ to be the collection of vertices of $\cover{\calV}$.
Fixing a cusp $c \in \Delta_\calV$, we call $\Stab(c) < \pi_1(M)$ a \emph{(maximal) parabolic} subgroup.

We continue the analysis of the action of $\Stab(c)$ on the veering circle $\Circle$, as begun in \refsec{CrossingParabolics}.
We begin with a review of the \emph{ladder decomposition} of the \emph{cusp torus} due to Futer--Gu\'eritaud~\cite{FuterGueritaud13}.

\subsection{Ladders}
\label{Sec:Ladders}

Let $L(c)$ be a \emph{vertex link} about $c$ in $\cover{M}$.
That is, if $t$ is a tetrahedron incident to $c$ then $t \cap L(c)$ is a small triangle close to $c$. 
We choose $L(c)$ equivariantly, so it is stabilised by $\Stab(c) \isom \ZZ^2$.
We also take $L(c)$ sufficiently close to $c$ to ensure that $L(c) \subset N^c \cap N_c$, the intersection of the upper and lower cusp neighbourhoods, as defined in \refsec{BranchLines}.

As discussed in \refsec{HorizontalBranchedSurface}, the taut structure makes the two-skeleton of $\cover{\calV}$ into a branched surface, denoted $\cover{B}$. 
Thus the intersection $\tau(c) = L(c) \cap \cover{B}$ is a \emph{bigon train track}.
See \reffig{CuspTorus} for an example of a fundamental domain of the $\Stab(c)$ action on $\tau(c) \subset L(c)$. 
The veering census~\cite{GSS19} contains many more examples. 
Also see 
Figure~3 of~\cite{FuterGueritaud13}, 
Figure~6 of~\cite{Landry18}, and 
Figure~6 of~\cite{Landry22}. 

\begin{figure}[htbp]
\centering
\labellist
\small\hair 2pt
\pinlabel {$Q$} at 10 15
\pinlabel {$S$} at 152 65
\pinlabel {$P$} at 290 15
\pinlabel {$V$} at 440 595
\endlabellist
\includegraphics[width=0.6\textwidth]{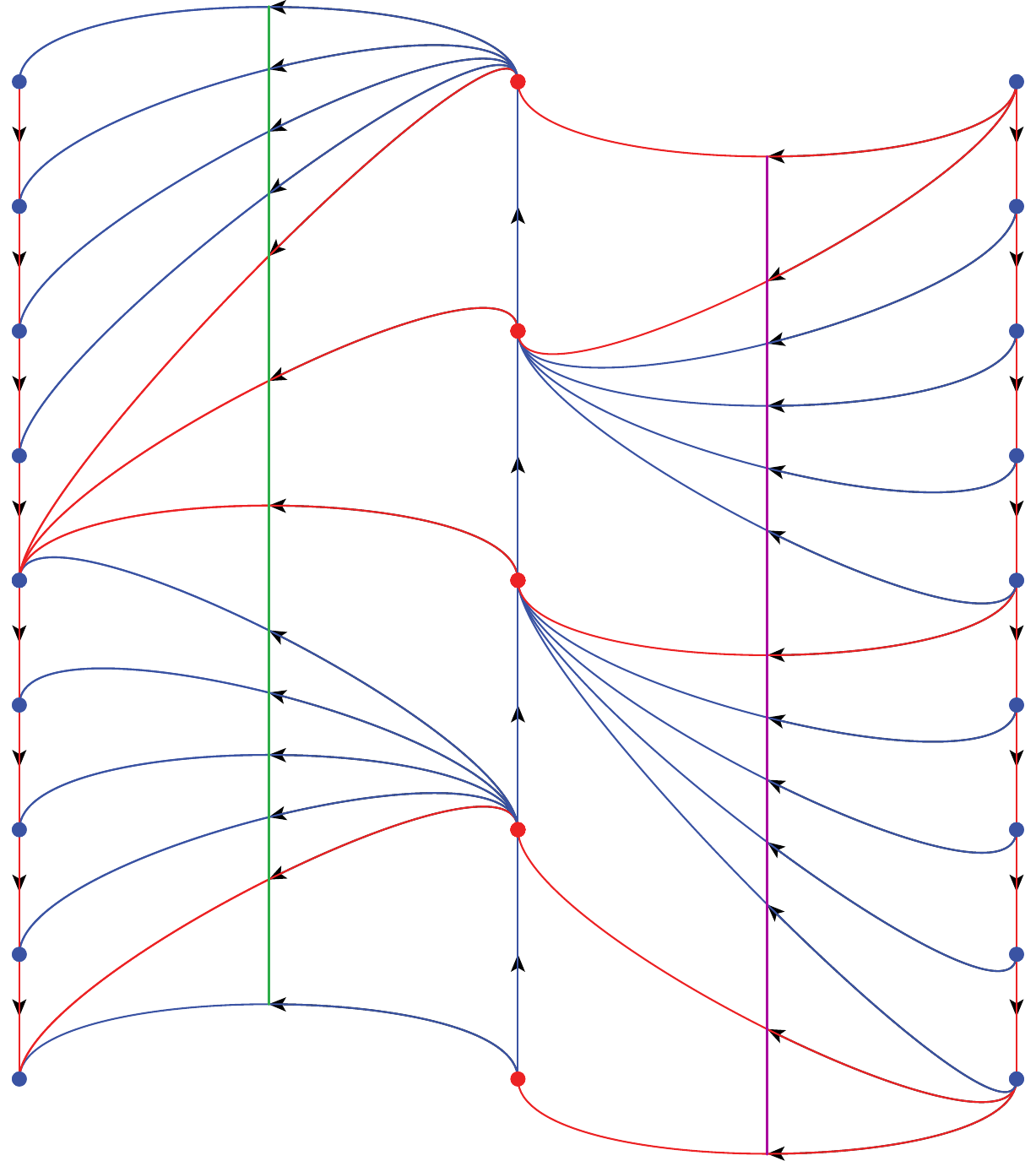}
\caption{
The bigon track $\tau(c)$ in the vertex link $L(c) \subset N(c)$, for the SnapPy manifold \texttt{s227} as determined by the veering triangulation \texttt{gLLAQbecdfffhhnkqnc\_120012} 
from the veering census~\cite{GSS19}.  
Note that we only give a fundamental domain for the action of $\Stab(c)$.  
Here the action of $\Stab(c)$ is generated by two translations parallel to the $x$-- and $y$--axes, respectively.  
The colours and orientations are discussed in the main text.
}
\label{Fig:CuspTorus}
\end{figure}

Every edge $e$ in $\cover{\calV}$ meeting $c$ gives a switch $v$ to the bigon track $\tau(c)$; 
we colour $v$ in $\tau(c)$ with the veering colour of $e$.  
Every face $f$ in $\cover{\calV}$ meeting $c$ gives a branch $b$ to the bigon track $\tau(c)$; 
we colour $b$ with the veering colour of the third edge of $f$ (that is, by \refthm{TetEmbeds}, the edge of $f$ which does not meet $c$). 
Suppose that $v$ and $v'$ are the switches at the ends of $b$.  
If these are \emph{different} colours we call $b$ a \emph{rung branch}.  
If these are the \emph{same} colour we call $b$ a \emph{ladderpole branch}.
Again, see \reffig{CuspTorus} for an example.  

\begin{definition}
\label{Def:Ladders}
By Observation~2.4 of~\cite{FuterGueritaud13}, the union of the ladderpole branches give a collection of parallel loops in the torus $L(c)/\Stab(c)$;
this is the \emph{ladderpole slope}.
Similarly, the union of the ladderpole branches in $L(c)$ cut it into strips called \emph{ladders}.  
See also \refrem{BranchLines}.

Following the above observation, we call the two sides of a ladder its \emph{ladderpoles}.  
We give a ladderpole the colour of its switches.  
\end{definition}

The colouring convention for ladderpoles, in \refdef{Ladders}, is forced by the convention set out in \reffig{VeeringTriangles}.  
Note that, as we scan along $L(c)$ from right to left, we encounter, cyclically, a red ladderpole, then an upper (green) branch line, then a blue ladderpole, and then a lower (purple) branch line.
This convention is the opposite of the one suggested by Lemma~6.5 and Figure~3 of~\cite{FuterGueritaud13}.

\subsection{Orienting the bigon track}

Suppose that $f$ is a face of $\cover{\calV}$ meeting $c$;
let $b$ be the resulting branch of $\tau(c)$.
Let $e$ and $e'$ be the edges of $f$ meeting $c$;
let $v$ and $v'$ be the resulting switches of $\tau(c)$.
We also take $d$ and $d'$ to be the other cusps (not $c$) on $e$ and $e'$ respectively.
Suppose that $e'$ is immediately \emph{before} $e$ in the boundary of $f$ (with orientation induced by the transverse taut structure).
By \refthm{VeerImpliesUnique} the circular order $\calO_\calV$ is compatible;
so $\calO_\calV(c, d, d') = 1$.
We record this in $\tau(c)$ by orienting $b$ from $v$ towards $v'$.
Again, see \reffig{CuspTorus} for an example.
Note that, in the example, all rungs are oriented from right to left, 
all red ladderpoles are oriented upwards, and 
all blue ladderpoles are oriented downwards.
This is generally true; see \reffig{UpperGluingAutomaton}.

With notation as in the previous paragraph, we additionally assume that $e$ is red and $e'$ is blue. 
Let $e''$ be the third edge of the face $f$. 
In this case, the branch $b''$ running from $v$ to $v'$ is a rung. 
Consulting \reffig{VeeringTrianglesUpperTrack}, we deduce that the upper track $\tau^f$ has a switch $s$, associated to $c$, at the midpoint of $e''$.   
The switch $s$ lies on an upper branch line $S \subset \bdy N^c$.  
If we reverse the colours of $e$ and $e'$ then instead a lower branch line $T$ meets the midpoint of $b''$.  
In any case, if $e$ and $e'$ have the same colour then no branch line associated to $c$ meets $b''$. 
From this and \reffig{NormalUpperBranchedSurface} we deduce the following;
see also~\cite[Lemma~2.7]{Landry23}.

\begin{lemma}
\label{Lem:BranchLinesCuspTorus}
There is exactly one branch line running along the centre of every ladder;
the branch line is upper (green) if and only if the right ladderpole is red. \qed
\end{lemma}

It follows that the ladderpole slope and the branch slope $\beta$ are equal (up to sign) in $\Stab(c)$. 

\subsection{The action of the branch loop}

Recall that $\Circle$ is the veering circle.  
Suppose that $c \in \Delta_\calV$ is a cusp and $\beta \in \Stab(c)$ is the branch slope, as defined in \refdef{BranchSlope}.
Recall that $\Lambda^c$ and $\Lambda_c$ are the crowns for $c$.  
We will denote their tips in $\Circle$ by $\bdy \Lambda^c$ and $\bdy \Lambda_c$. 

\begin{lemma}
\label{Lem:BranchLoop}
Suppose that $\beta \in \Stab(c)$ is the branch slope.
Its set of fixed points, as it acts on $\Circle$, is 
\[
\{ c \} \cup \bdy \Lambda^c \cup \bdy \Lambda_c 
\]
The points of $\bdy \Lambda^c$ are attracting while the points of $\bdy \Lambda_c$ are repelling.
\end{lemma}

\begin{proof}
Suppose that $R$ and $S$ are adjacent upper branch lines.
Suppose that $\calO_\calV(c, \bdy R, \bdy S) = 1$. 
By \reflem{CrossingParabolics}\refitm{TipsOfCrown} it suffices to consider the action of $\beta$ on the interval $[\bdy R, \bdy S]^{\acw}$.
By \reflem{CrownsInterleave} there is exactly one lower branch line $V$ with $\bdy V$ in $[\bdy R, \bdy S]^{\acw}$. 

By \reflem{BranchLinesCuspTorus} there are ladders $L(S)$ and $L(V)$ in $L(c)$ containing $S$ and $V$, respectively.  
By our choice of $V$, the ladders $L(S)$ and $L(V)$ share a ladderpole $P$.
Breaking symmetry, we will assume that $P$ is red.
See \reffig{CuspTorus} for such an example; there $S$ is the green branch line, $V$ is the purple branch line, and $P$ is the ladderpole inbetween. 
Let $(d_i)_{i \in \ZZ} \subset \Delta_\calV$ be the cusps corresponding to the switches of $P$.  
We arrange matters so that $d_i$ and $d_{i+1}$ share a face with $c$ and so that $\calO_\calV(c, d_i, d_{i+1}) = 1$. 
Again, see \reffig{CuspTorus}.

Let $e_i$ be any rung meeting, and to the left of, $d_i$.
Thus $e_i$ crosses $S$.
Note that the orientations on the ladderpoles show that the endpoints of $A(e_{i+1})$, and thus $A(e_{i+1})$ itself, are contained in $A(e_i)$.  
From \reflem{BranchLinesShrink} and \refdef{VeeringCircle} we deduce that $\bigcap A(e_i) = \{ \bdy S \}$.  
Thus the cusps $d_i$ converge to $\bdy S$ (in $\Circle$) as $i$ tends to infinity.  
(Also, the cusps $d_i$ converge to $\bdy V$ as $i$ tends to negative infinity.)
We can repeat this argument for the blue ladderpole $Q$ on the other side of $S$.

Let $k$ be the number of switches on the ladderloop $P / \beta$.  
We deduce that $\beta(d_i) = d_{i+k}$, for all $i$.  
A similar statement holds for $Q$.
Thus $\bdy S$ is an attracting fixed point of $\beta$ as it acts on $\Circle$. 
A similar argument shows that $\bdy T$ is a repelling fixed point. 
Finally, $\beta$ sends $[d_{i - k}, d_i]^{\acw}$ to $[d_i, d_{i + k}]^{\acw}$; thus $\beta$ has no fixed points in the interval $(\bdy T, \bdy S)^{\acw}$.
\end{proof}

\subsection{Conical limit points}
\label{App:Conical}

We will also need delicate control of neighbourhoods of \emph{singletons} in $\Circle$ and leaves in $\Mobius$.

\begin{definition}
A point $p \in \Circle$ is a \emph{singleton} if
\begin{itemize}
\item
it is not a cusp and
\item
it is not an endpoint of a leaf of $\Lambda^\calV$ or of $\Lambda_\calV$. \qedhere
\end{itemize}
\end{definition}

Suppose that $e$ is a co-oriented edge of $\cover{\calV}$.
Recall that $A(e)$ is the closed arc of $\Circle$ between the endpoints of $e$, which is pointed at by the co-orientation. See \reffig{CoorientedEdge}. 

\begin{lemma}
\label{Lem:Singleton}
Let $x \in \Circle$ be a singleton.  
Then there are two co-oriented edges $e$ and $e'$ in $\cover{\calV}$ and a sequence 
$(\gamma_i) \subset \pi_1(M)$ with the following properties.
\begin{enumerate}
\item
$A(e')$ is contained in the interior of $A(e)$,
\item
$A(\gamma_{i+1}(e)) \subset A(\gamma_i(e'))$, for all $i$, and
\item
$\bigcap A(\gamma_i(e)) = \{ x \}$.
\end{enumerate}
\end{lemma}


\begin{proof}
Let $K$ be a layer of a layering.  
We co-orient every edge $e$ of $K$ so that the co-orientation points at $x$. 
Fix any edge $e_0$ of $K$.  
Let $P$ be the strip of triangles in $K$ so that every interior edge of $P$ separates $e_0$ from $x$. 

By \reflem{AlternatingPathsShrink},
since $x$ is not a cusp the strip $P$ turns right and left infinitely often as it travels away from $e_0$. 
As in \refrem{NeighbourhoodBasis}, the edges of $P$ give a neighbourhood basis for $x$ in $\Circle$. 

\begin{figure}[htbp]
\centering
\labellist
\small\hair 2pt
\pinlabel {$e_0$} [r] at 6 64
\pinlabel {$e_n$} [l] at 263.5 70
\pinlabel {$e''_n$} [tl] at 317 70
\pinlabel {$e'_n$} [l] at 374 50
\pinlabel {$x$} [l] at 620 64
\endlabellist
\includegraphics[width=0.8\textwidth]{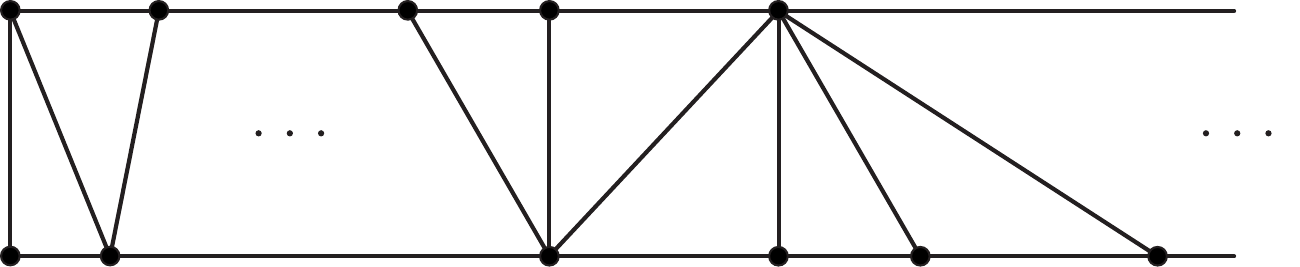}
\caption{The strip $P$ turns right and then left immediately before and after, respectively, the edge $e''_n$.}
\label{Fig:Turning}
\end{figure}

Consider the (infinitely many) edges $e''_n$ of $P$ where $P$ turns right in the triangle before $e''_n$ and left in the triangle after $e''_n$.  
Let $e_n$ and $e'_n$ be the edges immediately before and after $e''_n$ along $P$.  
Note that $e_n$ and $e'_n$ share no cusps.  
See \reffig{Turning}.
Note that, up to the action of $\pi_1(M)$, there are only finitely many orbits of such pairs of edges.  
Thus there is at least one such orbit which is represented infinitely often in $P$.
Any such orbit gives us a subsequence of pairs of edges with the desired properties. 
\end{proof}

We now turn our attention to leaves.
Here we require neighbourhood bases in the M\"obius band past infinity, $\Mobius$.
See \refdef{UpperLamination}.

\begin{definition}
Suppose that $e$ is a red edge of $\cover{\calV}$. 
Suppose that $c$ and $d$ are the cusps meeting $e$, with $c$ being the southwest corner of $\rect(e)$. 
Following the notation of \reffig{LeafIdentificationsEdge} we choose $\lambda(S, S')$ and $\lambda(T, T')$ to be the boundary leaves, 
in \refdef{UpperFoliation}, giving rise to the upper cusp leaves $\ell^S$ and $\ell^T$. 
These, in turn, contain the west and east sides of $\rect(e)$.    

We define the \emph{upper product neighbourhood} $P^\calV(e)$ to be the image of $[\bdy S', \bdy T]^{\acw} \cross [\bdy T', \bdy S]^{\acw}$ inside of $\Mobius$. 
We note that this is a closed neighbourhood for any leaf of $F^\calV$ meeting the interior of $\rect(e)$. 
We define the \emph{lower product neighbourhood} $P_\calV(e)$ similarly, using $F_\calV$.

We make similar definitions in the case that $e$ is blue. 
\end{definition}

\begin{lemma}
\label{Lem:Leaf}
Suppose that $\lambda$ is an interior leaf of $\Lambda^\calV$.  
Let $\ell = \ell^\lambda$ be the corresponding non-cusp leaf of $F^\calV$.
Then there are two edges $e$ and $e'$ in $\cover{\calV}$ and a sequence $( \gamma_i ) \subset \pi_1(M)$ as follows:
\begin{enumerate}
\item
\label{Itm:NoCusp}
$e'$ and $e$ have no cusps in common,
\item
\label{Itm:Spans}
$\rect(e')$ south-north spans $\rect(e)$, and
\item 
\label{Itm:Basis}
the products $P^\calV(\gamma_i(e'))$ give a neighbourhood basis for $\lambda$ in $\Mobius$, as do the products $P^\calV(\gamma_i(e))$.
\end{enumerate}
A similar statement holds for an interior leaf $\mu$ of $\Lambda_\calV$ replacing $\lambda$.
\end{lemma}

\begin{proof}
In order to prove the lemma, we will need the following.

\begin{claim}
\label{Clm:Leaf}
Suppose that $e''$ is an edge of $\cover{\calV}$ so that $\ell$ meets $\rect(e'')$.
Then there are edges $e$ and $e'$ such that
\begin{enumerate}[label=(\roman*)]
\item
\label{Itm:ClmNoCusp}
$e'$ and $e$ have no cusps in common,
\item
\label{Itm:ClmSpans}
$\ell$ meets $\rect(e')$, which south-north spans $\rect(e)$, which south-north spans $\rect(e'')$, and
\item 
\label{Itm:ClmFaces}
There are faces $f$ and $f'$, sharing an edge, such that $e$ lies in $f$ and $e'$ lies in $f'$. 
\end{enumerate}
\end{claim}

This implies \reflem{Leaf} as follows.
Suppose that $K$ is a layer of a layering. 
By \reflem{EdgeRect}\refitm{EdgeRectBdy} the boundary of any edge rectangle is contained in a union of cusp leaves.  
Thus, by \reflem{EdgesCover} there is some edge $e''_0$ in $K$ so that $\rect(e''_0)$ meets $\ell$.
\refclm{Leaf} now gives us $e_0$ and $e'_0$.
Given $e_n$ and $e'_n$, we set $e''_{n+1} = e'_n$ and appeal to \refclm{Leaf} to obtain $e_{n+1}$ and $e'_{n+1}$. 

With the sequence of pairs of edges $(e_n, e'_n)$ now in hand, we proceed as follows.
Conclusions \ref{Itm:ClmNoCusp} and \ref{Itm:ClmSpans} of the claim imply that $e_n$ and $e'_n$ satisfy conclusions \refitm{NoCusp} and \refitm{Spans} of the lemma.
Also, each product $P^\calV(e'_n)$ is a neighbourhood of $\ell$ in $\Mobius$, 
and if $m<n$ then $P^\calV(e'_n)$ is contained in the interior of $P^\calV(e'_m)$. 

Suppose, for a contradiction, that the sequence $(P^\calV(e'_n))$ is not a neighbourhood basis of $\ell$.
We deduce that $\limsup \rect(e'_n)$ contains $\ell$ as a proper subset.
Let $t_n$ be any tetrahedron that contains $e'_n$.
Thus $\limsup \rect(t_n)$ contains $\ell$ as a proper subset. 
Appealing to \refthm{LinkIsLoom}, we obtain a contradiction to \reflem{Finiteness}.
Thus the sequence $(P^\calV(e'_n))$ is a neighbourhood basis of $\ell$.

By conclusion \ref{Itm:ClmFaces} of the claim, there are only finitely many orbits (under the action of $\pi_1(M)$) of such pairs of edges.
We pass to an infinite subsequence consisting of a single orbit and reindex.
This gives the sequence $(\gamma_i) \subset \pi_1(M)$. 
We set $\gamma_0=1$.
We set $e=e_0$ and $e'=e'_0$.
By the previous paragraph, the sequence $(P^\calV(\gamma_i(e')))$ is a neighbourhood basis of $\ell$.
The same argument applies to $(P^\calV(\gamma_i(e)))$.
This gives conclusion \refitm{Basis} of the lemma. 

\begin{figure}[htbp]
\centering
\subfloat[Fan tetrahedron.]{
\labellist
\small\hair 2pt
\pinlabel {$e$} [b] at 57 83
\pinlabel {$\eu$} [l] at 124 180
\pinlabel {$\en$} [br] at 86 164
\pinlabel {$\es$} [tl] at 135 100
\pinlabel {$\ell$} [r] at 411 269
\pinlabel {$c$} [b] at 206 163
\endlabellist
\includegraphics[width=0.45\textwidth]{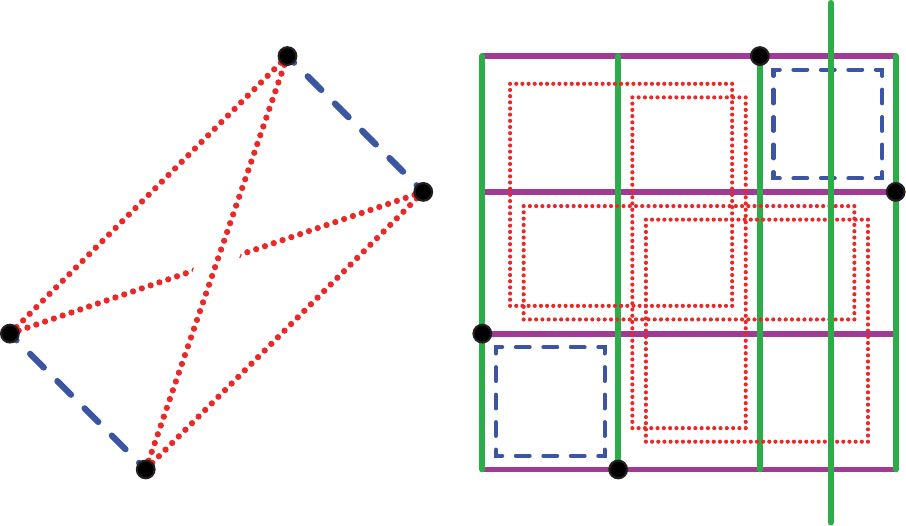}
\label{Fig:WhereIsEllFan}
}
\quad
\subfloat[Toggle.]{
\labellist
\small\hair 2pt
\pinlabel {$e$} [b] at 58 85
\pinlabel {$\eu$} [l] at 92 172
\pinlabel {$\en$} [br] at 42 155
\pinlabel {$\es$} [tl] at 172 104
\pinlabel {$\ell$} [r] at 411 269
\pinlabel {$c$} [b] at 206 163
\endlabellist
\includegraphics[width=0.45\textwidth]{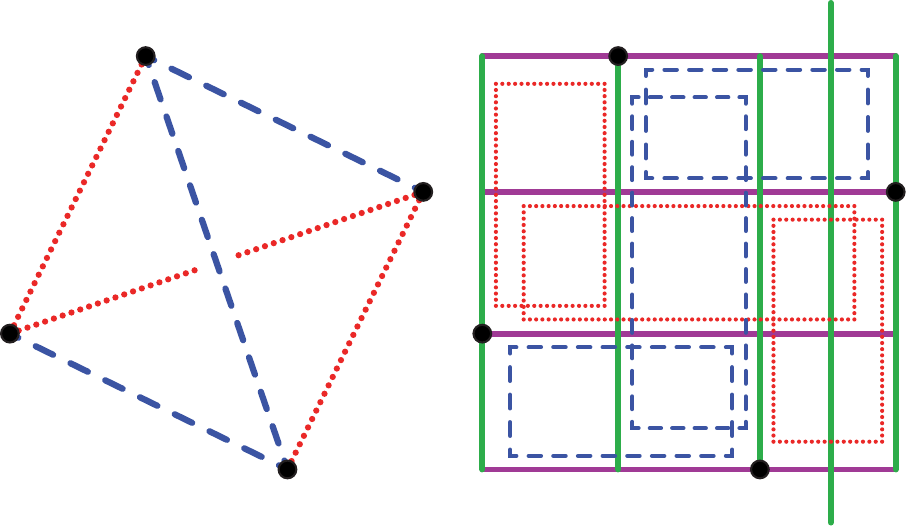}
\label{Fig:WhereIsEllToggle}
}
\caption{}
\label{Fig:WhereIsEll}
\end{figure}

\begin{proof}[Proof of \refclm{Leaf}]
As in \refsec{CardinalDirections}, we choose orientations on the foliations $F^\calV$ and $F_\calV$.
We use the following notation.  
Suppose that $e$ is any red edge of $\cover{\calV}$.  
Let $t$ be the tetrahedron immediately above $e$.  
See \reffig{WhereIsEll}.
We use $\eu$ to denote the upper edge of $t$. (The superscript indicates a cone pointing out of the page.)
We use $\es$ and $\en$ to denote the red equatorial edges of $t$ which meet the southern and northern cusps of $e$, respectively. 
(The superscripts indicate a cone pointing south or north.)

Breaking symmetry, we suppose that the given edge $e''$ is red.
Set $e_0 = e''$.
If $\ell$ meets $\rect(\eu_0)$, then we take $e = e_0$, we take $e'=\eu_0$, and we are done. 

Otherwise, let $t_0$ be the tetrahedron immediately above $e_0$.  
Let $\es_0$ and $\en_0$ be the red equatorial edges of $t_0$. 
By \reflem{FaceRect}\refitm{FaceSpan}, both $\rect(\es_0)$ and $\rect(\en_0)$ south-north span $\rect(e_0)$.
See \reffig{WhereIsEll}. 
Since $\ell$ does not meet $\rect(\eu_0)$, it must meet exactly one of $\rect(\es_0)$ or $\rect(\en_0)$.
Breaking symmetry again, assume that $\ell$ meets $\rect(\es_0)$.
Let $c$ be the cusp shared by $e_0$ and $\es_0$.
Again, see \reffig{WhereIsEll}. 

By induction, at stage $k$ we now have the following.
\begin{enumerate}
\item[($\textrm{A}_k$)]
$e_k$ is red, 
\item[($\textrm{B}_k$)]
$e_k$ and $\es_k$ meet $c$, 
\item[($\textrm{C}_k$)]
$\ell$ meets $\rect(e_k)$,
\item[($\textrm{D}_k$)]
$\ell$ does not meet $\rect(\eu_k)$, and
\item[($\textrm{E}_k$)]
$\ell$ meets $\rect(\es_k)$.
\end{enumerate}
We must now either produce $e$ and $e'$ fulfilling the conclusion of \refclm{Leaf} or extend the induction to stage $k+1$.

Let $t_k$ be the tetrahedron immediately above $e_k$.
By definition, $\es_k$ is red. 
Set $e_{k+1} = \es_k$, and note that we have ($\textrm{A}_{k+1}$). 
By definition, the edges $e_{k+1}$ and $\es_{k+1}$ have the same north-east cusp. 
By ($\textrm{B}_k$), this is $c$, verifying ($\textrm{B}_{k+1}$).
By ($\textrm{E}_k$), the leaf $\ell$ meets $\rect(e_{k+1})$, giving ($\textrm{C}_{k+1}$).

Let $t_{k+1}$ be the tetrahedron immediately above $e_{k+1}$.
If $\ell$ meets $\rect(\eu_{k+1})$ then we take $e = e_{k+1}$, we take $e' = \eu_{k+1}$, and \refclm{Leaf} holds.
If not, then ($\textrm{D}_{k+1}$) holds. 

\begin{figure}[htbp]
\centering
\labellist
\small\hair 2pt
\pinlabel {$\en_k$} [r] at 45 246
\pinlabel {$e_k$} [b] at 82 172
\pinlabel {$\eu_k$} [l] at 126 250
\pinlabel {$\en_{k+1}$} [br] at 308 370
\pinlabel {$\es_k = e_{k+1}$} [tl] at 332 155
\pinlabel {$\ell$} [r] at 807 540
\pinlabel {$c$} [l] at 437 243
\endlabellist
\includegraphics[width=0.85\textwidth]{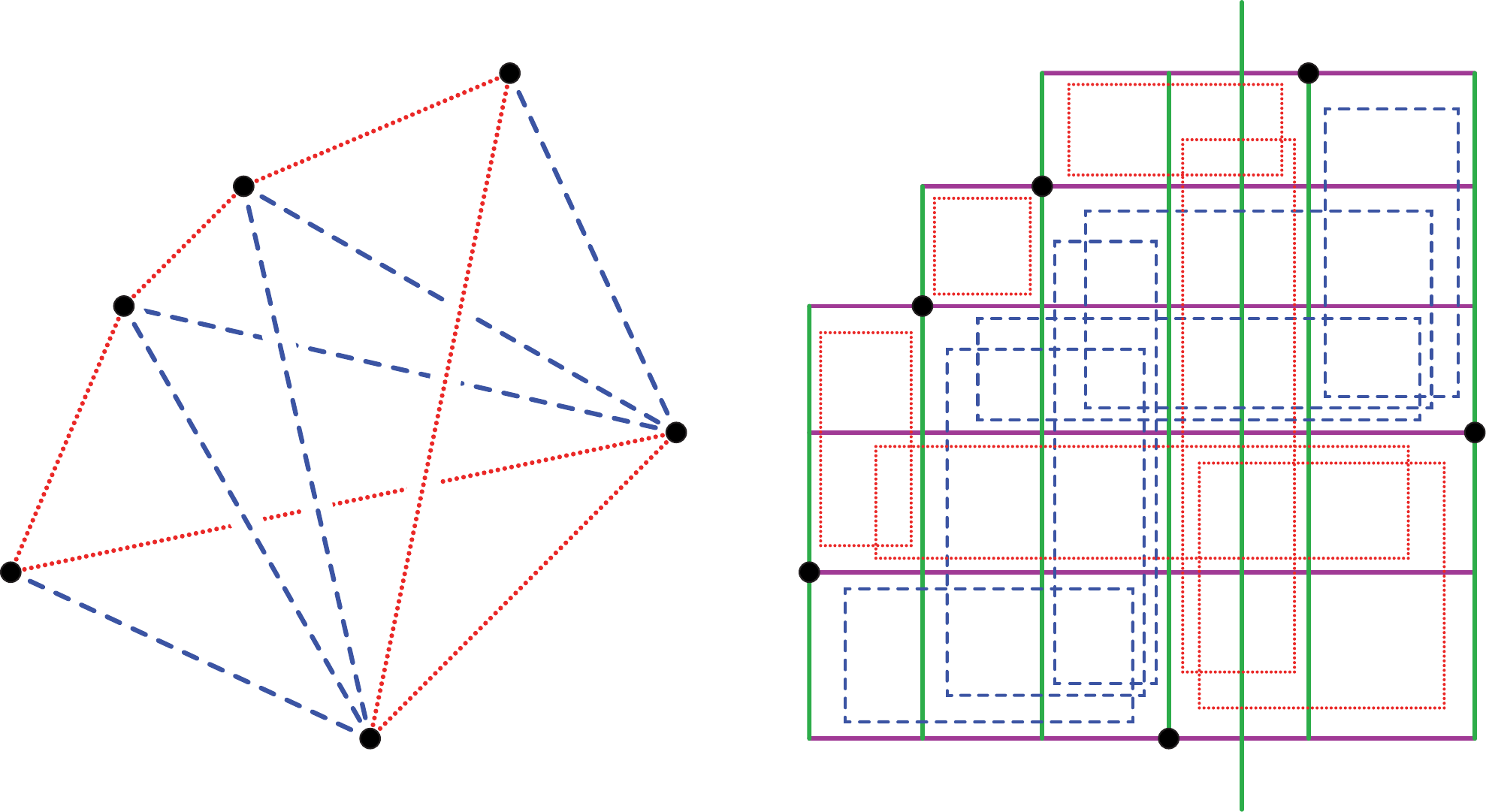}
\caption{}
\label{Fig:WhereIsEllToggleNext}
\end{figure}

There are two cases, as $t_k$ (not $t_{k+1}$) is a fan tetrahedron or a toggle.

\begin{itemize}
\item
Suppose that $t_k$ is a fan tetrahedron.
See \reffig{WhereIsEllFan}.
Then the common face of $t_k$ and $t_{k+1}$ contains the edges $\es_k = e_{k+1}$ and $\eu_k = \en_{k+1}$.
By ($\textrm{D}_{k}$) we have that $\ell$ does not meet $\rect(\eu_{k}) = \rect(\en_{k+1})$. 
Since $\ell$ does not meet $\rect(\eu_{k+1})$ (by ($\textrm{D}_{k+1}$) above) and it does not meet $\rect(\en_{k+1})$, it must meet  $\rect(\es_{k+1})$, giving ($\textrm{E}_{k+1}$).
 
 \item
Suppose that $t_k$ is a toggle.
See \reffig{WhereIsEllToggleNext}.
If $\ell$ meets $\rect(\en_{k+1})$ then we take $e = e_k$, we take $e' = \en_{k+1}$, and \refclm{Leaf} holds. 
Suppose instead that $\ell$ does not meet $\rect(\en_{k+1})$.
By ($\textrm{D}_{k+1}$), the leaf $\ell$ does not meet $\rect(\eu_{k+1})$ either. 
Thus it must meet $\rect(\es_{k+1})$, giving ($\textrm{E}_{k+1}$).
\end{itemize}

Suppose that the induction never terminates. 
Then we have found an infinite sequence of distinct edges $e_k$, all meeting $c$, 
with $\ell$ meeting each $\rect(e_k)$, which south-north spans $\rect(e_0)$. 
Let $q$ be the intersection of $\ell$ with the south side of $\rect(e_0)$.
Let $R$ be the rectangle with south-west corner at $q$ and north-east corner at $c$.
Note that $R$ is contained in $\rect(e_k)$ for all $k$. 
Since every edge rectangle is contained in a tetrahedron rectangle, and each tetrahedron rectangle contains only six edge rectangles, this contradicts \reflem{Finiteness}.

Thus the induction does terminate, giving \refclm{Leaf}.  
\end{proof}
With \refclm{Leaf} in hand, \reflem{Leaf} follows.
\end{proof}

\chapter{Normal paths are essential}
\label{App:CarriedArcs}

Here we sketch a proof of \refthm{NormalPathsEssential}, a version of~\cite[Theorem~5.1]{SchleimerSegerman20}
that applies to proper paths instead of loops.  
Many of the definitions, statements, and proofs are unchanged;
we indicate only the necessary changes. 
Let $M$ be a connected three-manifold equipped with a taut ideal triangulation $\calT$.
Since we are dealing with properly immersed paths we use \emph{truncated} models -- 
that is, we have removed a small open neighbourhood of all model vertices.
For a picture of a truncated tetrahedron, see \reffig{RaisedArcsTrunc}.
Note that truncated faces are hexagons with three sides in $\bdy M$. 

\begin{figure}[htbp]
\labellist
\small\hair 2pt
 \pinlabel {$\textsc{d}$} [br] at 153 198
 \pinlabel {$\textsc{e}_1$} [br] at 220 187
 \pinlabel {$\textsc{f}$} [b] at 304 131
 \pinlabel {$\textsc{e}_2$} [br] at 246 52
\endlabellist
\includegraphics[height = 4.5 cm]{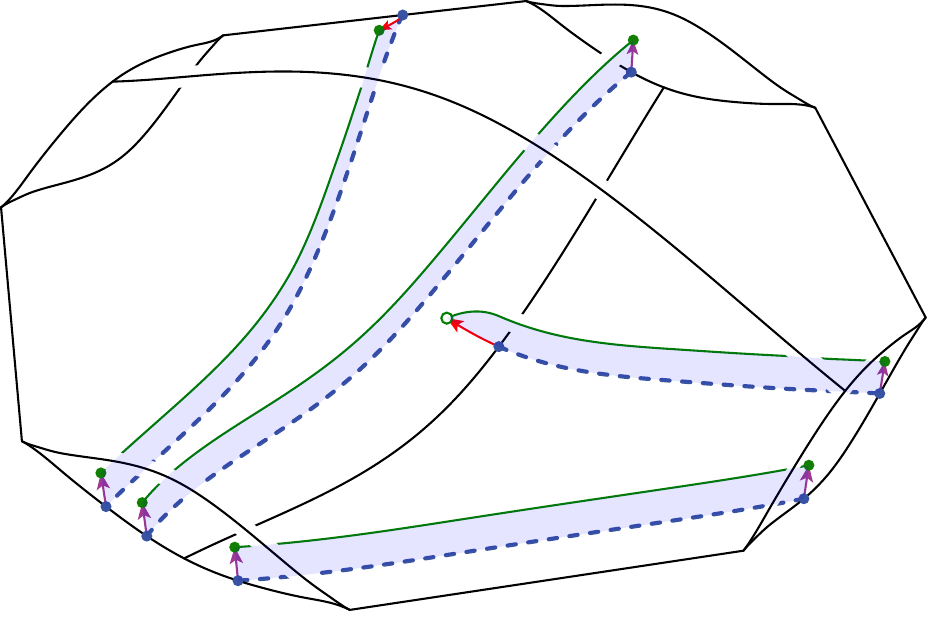}
\caption{A truncated taut tetrahedron containing the four additional types of raised arcs of $\delta$ coming from normal arcs.  Compare with~\cite[Figure~3]{SchleimerSegerman20}.}
\label{Fig:RaisedArcsTrunc}
\end{figure}

\subsection{Normal paths}

We adapt~\cite[Definition~2.2]{SchleimerSegerman20}
to properly immersed paths. 

\begin{definition}
\label{Def:NormalPath}
Suppose that $\gamma$ is a smooth path properly immersed in the horizontal branched surface $B = B(\calT)$ and transverse to the edges of $B$. 
Suppose that for every truncated model face $f$ of $B$ and for every component $J$ of $\gamma^{-1}(f)$ the arc $\gamma|J$ connects distinct, non-adjacent sides of (the hexagon) $f$.  
Then we say that the path $\gamma$ is \emph{normal} with respect to $B$. 
\end{definition}

For examples, see the dashed arcs of \reffig{RaisedArcsTrunc}.

\subsection{Peripheral homotopies}

Let $D = D^2 \subset \CC$ be the closed unit disk. 
We partition $\bdy D = b \cup c$ with $b$ lying in the left half-plane and $c$ lying in the right. 
Suppose that $\gamma \from c \to M$ is a proper path, transverse to $B$.   
We say that $\gamma$ is \emph{peripherally homotopic} if there is a map $H \from D \to M$ so that $H|c = \gamma$ and $H(b) \subset \bdy M$.
If there is no such $H$ we call $\gamma$ \emph{essential}.

Given a peripheral homotopy $H$, we may homotope it, keeping the image of $b$ in $\bdy M$ and not altering $H|c$, to ensure that $H$ is transverse to $\gamma$ and to $B$.
This done we set $\tau = H^{-1}(B)$. 
In \reffig{DiskIndicesTrunc} we give pictures of, and names to, the disk regions of $D - \tau$ with positive index which do not already appear in~\cite[Table~1]{SchleimerSegerman20}.
 
\def\m{0.16}
\begin{figure}[htbp]
\centering
\subfloat[Index $1/2$.]{
\includegraphics[width=\m\textwidth]{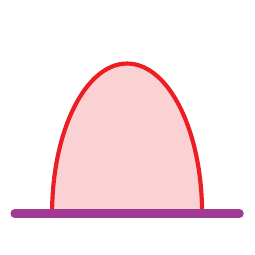}
}
\qquad
\subfloat[Index $1/2$.]{
\includegraphics[width=\m\textwidth]{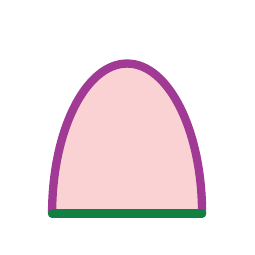} 
}
\qquad
\subfloat[Index $1/4$.]{
\includegraphics[width=\m\textwidth]{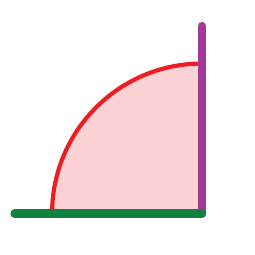}
}
\caption{Disk regions (which are not already listed in~\cite[Table~1]{SchleimerSegerman20})
with positive index arising from proper paths.
From left to right these are 
a \emph{peripheral bigon}, 
a \emph{complete bigon}, and 
a \emph{peripheral trigon}. 
We draw arcs of $b$ in purple and arcs of $c$ in green.}
\label{Fig:DiskIndicesTrunc}
\end{figure}
 
Suppose that $H \from D \to M$ is a peripheral homotopy, as above.  The sum of the indices of the regions of $D - \tau$ is $1/2$; to see this, note that there are two corners at the intersection of $b$ and $c$. 

The proof of~\cite[Lemma~3.1]{SchleimerSegerman20}
goes through without changes; 
now the possible regions of $D - \tau$ with positive index are a single complete bigon, or a non-empty collection of boundary bigons and peripheral trigons (shown in~\cite[Table~1]{SchleimerSegerman20}
and \reffig{DiskIndicesTrunc}).

\subsection{New results}

Here is the promised version of~\cite[Theorem~5.1]{SchleimerSegerman20}
for normal paths.

\begin{theorem}
\label{Thm:NormalPathsEssential}
Let $M$ be a three-manifold equipped with a taut ideal triangulation $\calT$.  
Let $B = B(\calT)$ be the resulting horizontal branched surface in $M$.  
Any path $\gamma$ in $M$, which is normal with respect to $B$, is essential.
\end{theorem}

With $M$, $\calT$, and $B$ as above, let $\cover{B}$ be the lift of $B$ to $\cover{M}$. 
We then have the following.

\begin{corollary}
\label{Cor:VerticesDistinct}
Suppose that $F$ is a connected surface (perhaps with boundary) carried by $\cover{B}$ and realised as a union of faces of $\cover{B}$.  
Then distinct vertices of $F$ are distinct vertices of $\cover{B}$. \qed
\end{corollary}


\subsection{Minimal homotopies}

We define minimal peripheral homotopies as before: 
that is, the number $r(H)$ of regions of $D - \tau$ is minimised across all peripheral homotopies.
We alter the statement of~\cite[Lemma~3.1]{SchleimerSegerman20}, 
taking $\delta$ to be a proper path, and taking $H$ to be a minimal peripheral homotopy.
The conclusions are unchanged except for the final sentence.
Now the allowed regions of positive index are boundary bigons, complete bigons, and
peripheral trigons.
The proof that peripheral bigons do not appear in minimal peripheral homotopies is very similar to that of~\cite[Lemma~3.1(2)]{SchleimerSegerman20}.
That is, peripheral bigons give non-normal branches of $H(\tau)$ in some truncated face and these can be used to reduce $r(H)$. 

We now indicate how to extend the proof of~\cite[Theorem~5.1]{SchleimerSegerman20} to include normal paths. 

\begin{proof}[Sketch proof of \refthm{NormalPathsEssential}]
The model annulus $A$ in the proof of~\cite[Theorem~5.1]{SchleimerSegerman20}
is replaced with a model rectangle.
The homotopy $G$ is proper, so sends the vertical boundary of $A$ into $\bdy M$.
There are now an additional four types of raised arc; see \reffig{RaisedArcsTrunc}.

Suppose that $H \from D \to M$ is a minimal peripheral homotopy of $\delta$.
Recall that $D$ has index $1/2$.
Applying the modification of~\cite[Lemma~3.1]{SchleimerSegerman20}
gives us that $D - \tau$ has either
at least one boundary bigon, or
at least two peripheral trigons.
Using the transverse orientation, every region with positive index is either a max-bigon, max-trigon, min-bigon, or min-trigon.
The discussion of~\cite[Section~5.2]{SchleimerSegerman20}
applies to min-trigons just as it does to min-bigons; see \reffig{PushOverMinTrigon}. 

\begin{figure}[htbp]
\includegraphics[width=0.7\textwidth]{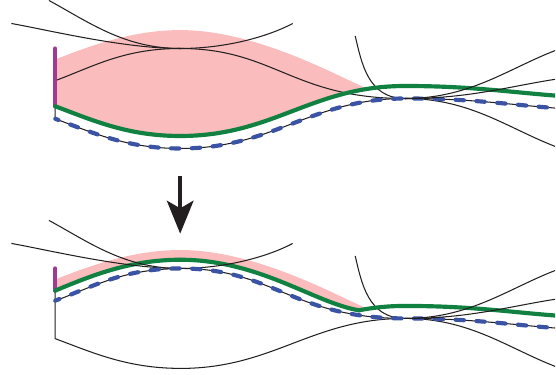}
\caption{Pushing over a min-peripheral trigon, necessarily of type $\textsc{d}$ (see \reffig{RaisedArcsTrunc}).
Compare with~\cite[Figure 4]{SchleimerSegerman20}.}
\label{Fig:PushOverMinTrigon}
\end{figure}

Now~\cite[Definition~5.6]{SchleimerSegerman20}
becomes a definition of \emph{properly transverse} homotopies: that is, these homotopies must also respect the cellulation of $\bdy M$.

Furthermore~\cite[Section~5.3]{SchleimerSegerman20}
applies to max-bigons of the peripheral homotopy as before, except that the region $R_N$ may meet $\bdy M$.
Each max-trigon $R_0$ is either \emph{left} or \emph{right} depending on whether it contains the final, or initial, point of $\delta$ as equipped with the (tangential) orientation it receives from $D$.
Let $d_0 = \bdy R_0 - (s \cup \bdy M)$. 
In the first conclusion of~\cite[Claim~5.4]{SchleimerSegerman20},
the raised arc $H(d_0)$ has type $\textsc{c}$ or $\textsc{f}$, as $R_0$ is a bigon or trigon respectively.

Given a left max-trigon $R_0$, the argument of~\cite[Section~5.3]{SchleimerSegerman20}
extends it to the right to obtain $B_\triangleright$.
Similarly, if $R_0$ is a right max-trigon, the argument extends it to the left to obtain $B_\triangleleft$.
Finally, $S(R_0)$ is either $B_\triangleright \cup Q_\triangleright$ or $B_\triangleleft \cup Q_\triangleleft$. 
Now~\cite[Claim~5.8]{SchleimerSegerman20}
applies to any pair of regions of positive index.

The remainder of the argument is unchanged.
\end{proof}

\chapter{Notation}
\label{App:Notation}

We list some of the notation used throughout the paper. 

\newcolumntype{L}[1]{>{\raggedright\let\newline\\\arraybackslash\hspace{0pt}}p{#1}}
\newcolumntype{C}[1]{>{\centering\let\newline\\\arraybackslash\hspace{0pt}}p{#1}}
\newcolumntype{R}[1]{>{\raggedleft\let\newline\\\arraybackslash\hspace{0pt}}p{#1}}

\begin{longtable}{L{0.17\textwidth}L{0.82\textwidth}}
\renewcommand{\arraystretch}{1.3}$M$ 
& tame connected three-manifold \\ 

$\calT$ & \refsec{TautIdealTriangulation} -- taut ideal triangulation of $M$ \\

$\cover{M}$ & universal cover \\

$\cover{\calT}$ & taut ideal triangulation of $\cover{M}$ \\

$e, f, t$ & edges, faces, tetrahedra of $\cover{\calT}$ \\

$\Delta_\calT$ &  \refsec{TautIdealTriangulation} -- cusps of $\cover{\calT}$ \\

$a, b, c, d$ & cusps of $\cover{\calT}$ \\

$B(\calT)$ & \refsec{HorizontalBranchedSurface} -- horizontal branched surface \\

$\cover{B}$ & preimage of $B(\calT)$ in $\cover{M}$ \\

$B^{(k)}$ & $k$--skeleton \\

$\calO$ & \refdef{CircularOrder} -- circular order \\

$\calK = ( K_i )$ & \refdef{Layered} -- layering of $\cover{\calT}$ \\

$\rho, \sigma$ & \refsec{TrainTracks} -- train routes \\

$s, t, u, v$ & \refdef{TrackCusp} -- upper and lower track-cusps \\

$\tau^L, \tau_L$ & \refdef{UpperTrack} -- upper and lower tracks of a landscape $L$ \\

$( C_i )$ & \refdef{ContinentalExhaustion} -- continental exhaustion of $\cover{\calT}$ \\
 
$\calV$ & \refdef{Veering} -- veering triangulation \\
 
$B^\calV, B_\calV$ & \refsec{UpperLowerSurfaces} -- upper and lower branched surfaces \\

$\cover{B}^\calV$ & preimage of $B^\calV$ in $\cover{M}$ \\

$S, T, U, V$ & \refsec{BranchLines} -- upper or lower branch lines of $\cover{B}^\calV$ and $\cover{B}_\calV$ \\

$\Delta(e)$ & \refdef{Arcs} -- arc in $\Delta_\calV$ \\

$A(e)$ & \refsec{BuildingVeeringCircle} -- arc in $\Circle$ \\

$\bdy \rho, \bdy S$ & \refsec{BuildingVeeringCircle} -- endpoint at infinity \\

$[a, b]_\Delta^{\acw}$ & \refdef{Arcs} -- cusps between $a$ and $b$, anticlockwise of $a$ \\

$\Circle$ & \refdef{VeeringCircle} -- veering circle \\

$[x, y]^{\acw}$ & \refdef{CircleArc} -- arc between $x$ and $y$, anticlockwise of $x$ \\

$\lambda = \lambda(a, b)$ & leaf in $\Circle$ (that is, a point of $\Mobius$) \\

$\Lambda^\calV, \Lambda_\calV$ & \refcha{LaminationsAlone} -- upper and lower laminations in $\Circle$ \\

$\lambda, \mu$ & upper and lower leaves of the laminations \\

$\Mobius$ & \refsec{Mobius} -- M\"obius band for $\Circle$ \\

$\lambda(c, S)$ & \refsec{ConnectingArc} -- cusp leaf\\

$\ell^K(c,S)$ & \refdef{CuspLine} -- cusp (train) line \\

$\lambda(S, T)$ & \refdef{BoundaryLeaf} -- boundary leaf \\

$\ell^K(S,T)$ & \refdef{BoundaryLeaf} -- boundary (train) line \\ 

$\Lambda^{(c, d)}, \Lambda_{(c,d)}$ & \refdef{LeavesThatLink} -- upper or lower leaves separating $c$ and $d$ \\

$<^{(c, d)}, <_{(c, d)}$ & \refdef{LeavesThatLink} -- linear orders \\

$\Sigma^\calV, \Sigma_\calV$ & \refsec{SuspendingDescending} -- upper and lower laminations in $M$ \\

$\Lambda^c, \Lambda_c$ & \refdef{Crown} -- upper and lower crowns \\

$\pair(\calV)$ & \refdef{PairSpace} -- pair space \\

$(\lambda, \mu)$ & element of the pair space \\

$\link(\calV)$ & \refdef{LinkSpace} -- link space \\

$[(\lambda, \mu)]$ & element of the link space \\

$F^\calV, F_\calV$ & \refdef{UpperFoliation} -- upper and lower foliations \\

$\ell, m$ & upper and lower leaves of the foliations \\

$\ell^S, \ell_U$ & upper and lower cusp leaves of the foliations \\

$\rect(e)$ & \refdef{EdgeRect} -- edge rectangle \\

$\rect(f)$ & \refdef{FaceRect} -- face rectangle \\

$\rect(t)$ & \refdef{TetRect} -- tetrahedron rectangle \\

$\calL$ & \refdef{Loom} -- loom space \\

$\veer$ & \refdef{InducedVeering} -- veering triangulation functor \\

$\link$ & \refcha{BackAgain} -- link space functor \\

$\cell(R)$ & \refdef{InducedVeering} -- model cell for the skeletal rectangle $R$ \\

$Q \prec R$ & \refdef{OrderLoom} -- partial order on rectangles \\

$s \prec t$ & \refdef{OrderVeer} -- partial order on tetrahedra \\

$\astro(p)$ & \refdef{Astroid} -- astroid for $p$ \\

$\axis(p)$ & \refdef{AxisForP} -- flow-axis for $p$ \\

$\point(A)$ & \refdef{PointForA} -- point for $A$ \\

$\qui(t)$ & \refdef{Quiver} -- quiver for $t$ \\

\end{longtable}

\backmatter

\renewcommand{\UrlFont}{\tiny\ttfamily}
\renewcommand\hrefdefaultfont{\tiny\ttfamily}

\bibliographystyle{plainurl}
\bibliography{bibfile}

\begin{thebibliography}{10}

\bibitem{Agol11}
Ian Agol.
\newblock Ideal triangulations of pseudo-{A}nosov mapping tori.
\newblock In {\em Topology and geometry in dimension three}, volume 560 of {\em
  Contemp. Math.}, pages 1--17. Amer. Math. Soc., Providence, RI, 2011.
\newblock \href {http://arxiv.org/abs/1008.1606} {\path{arXiv:1008.1606}},
  \href {https://doi.org/10.1090/conm/560/11087}
  {\path{doi:10.1090/conm/560/11087}}.

\bibitem{Agol15}
Ian Agol.
\newblock Veering triangulations and pseudo-{A}nosov flows, 2015.
\newblock 
  \url{http://www.ias.edu/math/events/princeton-university-mathematics-department-colloquium-4}.

\bibitem{AgolTsang24}
Ian Agol and Chi~Cheuk Tsang.
\newblock Dynamics of veering triangulations: infinitesimal components of their
  flow graphs and applications.
\newblock {\em Algebr. Geom. Topol.}, 24(6):3401--3453, 2024.
\newblock \href {http://arxiv.org/abs/2201.02706} {\path{arXiv:2201.02706}},
  \href {https://doi.org/10.2140/agt.2024.24.3401}
  {\path{doi:10.2140/agt.2024.24.3401}}.

\bibitem{BaikKim22}
Hyungryul Baik and KyeongRo Kim.
\newblock Groups acting on veering pairs and loom spaces, 2022.
\newblock \href {http://arxiv.org/abs/2206.10874} {\path{arXiv:2206.10874}}.

\bibitem{Bell15}
Mark~C. Bell.
\newblock {\em Recognising Mapping Classes}.
\newblock PhD thesis, The University of Warwick, 2015.
\newblock \url{http://wrap.warwick.ac.uk/77123/}.

\bibitem{Brown61}
Morton Brown.
\newblock The monotone union of open {$n$}-cells is an open {$n$}-cell.
\newblock {\em Proc. Amer. Math. Soc.}, 12:812--814, 1961.
\newblock \href {https://doi.org/10.2307/2034881} {\path{doi:10.2307/2034881}}.

\bibitem{Calegari07}
Danny Calegari.
\newblock {\em Foliations and the geometry of 3-manifolds}.
\newblock Oxford Mathematical Monographs. Oxford University Press, Oxford,
  2007.
\newblock
  \url{http://math.uchicago.edu/~dannyc/books/foliations/foliations.html}.

\bibitem{Calegari21}
Danny Calegari.
\newblock Personal communication, 2021-01-13.

\bibitem{CalegariDunfield03}
Danny Calegari and Nathan~M. Dunfield.
\newblock Laminations and groups of homeomorphisms of the circle.
\newblock {\em Invent. Math.}, 152(1):149--204, 2003.
\newblock \href {http://arxiv.org/abs/math/0203192}
  {\path{arXiv:math/0203192}}, \href
  {https://doi.org/10.1007/s00222-002-0271-6}
  {\path{doi:10.1007/s00222-002-0271-6}}.

\bibitem{CassonBleiler88}
Andrew~J. Casson and Steven~A. Bleiler.
\newblock {\em Automorphisms of surfaces after {N}ielsen and {T}hurston},
  volume~9 of {\em London Mathematical Society Student Texts}.
\newblock Cambridge University Press, Cambridge, 1988.
\newblock \href {https://doi.org/10.1017/CBO9780511623912}
  {\path{doi:10.1017/CBO9780511623912}}.

\bibitem{Christy93}
Joe Christy.
\newblock Branched surfaces and attractors. {I}. {D}ynamic branched surfaces.
\newblock {\em Trans. Amer. Math. Soc.}, 336(2):759--784, 1993.
\newblock \href {https://doi.org/10.2307/2154374} {\path{doi:10.2307/2154374}}.

\bibitem{snappy}
Marc Culler, Nathan Dunfield, Matthias Goerner, and Jeffrey~R. Weeks.
\newblock Snap{P}y, a computer program for studying the geometry and topology
  of three-manifolds.
\newblock \url{http://snappy.computop.org}.

\bibitem{FarbLeiningerMargalit11}
Benson Farb, Christopher~J. Leininger, and Dan Margalit.
\newblock Small dilatation pseudo-{A}nosov homeomorphisms and 3-manifolds.
\newblock {\em Adv. Math.}, 228(3):1466--1502, 2011.
\newblock \href {http://arxiv.org/abs/0905.0219} {\path{arXiv:0905.0219}},
  \href {https://doi.org/10.1016/j.aim.2011.06.020}
  {\path{doi:10.1016/j.aim.2011.06.020}}.

\bibitem{Fenley12}
S\'{e}rgio Fenley.
\newblock Ideal boundaries of pseudo-{A}nosov flows and uniform convergence
  groups with connections and applications to large scale geometry.
\newblock {\em Geom. Topol.}, 16(1):1--110, 2012.
\newblock \href {http://arxiv.org/abs/math/0507153}
  {\path{arXiv:math/0507153}}, \href {https://doi.org/10.2140/gt.2012.16.1}
  {\path{doi:10.2140/gt.2012.16.1}}.

\bibitem{Frankel18}
Ian Frankel.
\newblock {CAT}(-1)-type properties for {T}eichm\"uller space, 2018.
\newblock \href {http://arxiv.org/abs/1808.10022} {\path{arXiv:1808.10022}}.

\bibitem{Frankel13}
Steven Frankel.
\newblock {\em Quasigeodesic flows from infinity}.
\newblock PhD thesis, University of Cambridge, May 2013.
\newblock \url{http://users.math.yale.edu/~sf472/thesis.pdf}.

\bibitem{FuterGueritaud13}
David Futer and Fran\c{c}ois Gu\'{e}ritaud.
\newblock Explicit angle structures for veering triangulations.
\newblock {\em Algebr. Geom. Topol.}, 13(1):205--235, 2013.
\newblock \href {http://arxiv.org/abs/1012.5134} {\path{arXiv:1012.5134}},
  \href {https://doi.org/10.2140/agt.2013.13.205}
  {\path{doi:10.2140/agt.2013.13.205}}.

\bibitem{FuterTaylorWorden18}
David Futer, Samuel~J. Taylor, and William Worden.
\newblock Random veering triangulations are not geometric.
\newblock {\em Groups Geom. Dyn.}, 14(3):1077--1126, 2020.
\newblock \href {http://arxiv.org/abs/1808.05586} {\path{arXiv:1808.05586}},
  \href {https://doi.org/10.4171/ggd/575} {\path{doi:10.4171/ggd/575}}.

\bibitem{Gabai83}
David Gabai.
\newblock Foliations and the topology of {$3$}-manifolds.
\newblock {\em J. Differential Geom.}, 18(3):445--503, 1983.
\newblock \href {https://doi.org/10.4310/jdg/1214437784}
  {\path{doi:10.4310/jdg/1214437784}}.

\bibitem{GabaiOertel89}
David Gabai and Ulrich Oertel.
\newblock Essential laminations in {$3$}-manifolds.
\newblock {\em Ann. of Math. (2)}, 130(1):41--73, 1989.
\newblock \href {https://doi.org/10.2307/1971476} {\path{doi:10.2307/1971476}}.

\bibitem{GSS19}
Andreas Giannopoulos, Saul Schleimer, and Henry Segerman.
\newblock A census of veering structures.
\newblock \url{https://math.okstate.edu/people/segerman/veering.html}.

\bibitem{Gueritaud06}
Fran\c{c}ois Gu\'{e}ritaud.
\newblock On canonical triangulations of once-punctured torus bundles and
  two-bridge link complements.
\newblock {\em Geom. Topol.}, 10:1239--1284, 2006.
\newblock With an appendix by David Futer.
\newblock \href {http://arxiv.org/abs/0406242} {\path{arXiv:0406242}}, \href
  {https://doi.org/10.2140/gt.2006.10.1239}
  {\path{doi:10.2140/gt.2006.10.1239}}.

\bibitem{Gueritaud16}
Fran\c{c}ois Gu\'{e}ritaud.
\newblock Veering triangulations and {C}annon-{T}hurston maps.
\newblock {\em J. Topol.}, 9(3):957--983, 2016.
\newblock \href {http://arxiv.org/abs/1506.03387} {\path{arXiv:1506.03387}},
  \href {https://doi.org/10.1112/jtopol/jtw016}
  {\path{doi:10.1112/jtopol/jtw016}}.

\bibitem{Guirardel05}
Vincent Guirardel.
\newblock C\oe ur et nombre d'intersection pour les actions de groupes sur les
  arbres.
\newblock {\em Ann. Sci. \'{E}cole Norm. Sup. (4)}, 38(6):847--888, 2005.
\newblock \href {http://arxiv.org/abs/0407206} {\path{arXiv:0407206}}, \href
  {https://doi.org/10.1016/j.ansens.2005.11.001}
  {\path{doi:10.1016/j.ansens.2005.11.001}}.

\bibitem{HSS09}
Tracy Hall, Saul Schleimer, and Henry Segerman.
\newblock Dehn surgery images.
\newblock 
  \url{https://math.okstate.edu/people/segerman/dehn_surgery_images.html}.

\bibitem{Hamenstadt09}
Ursula Hamenst\"{a}dt.
\newblock Geometry of the mapping class groups. {I}. {B}oundary amenability.
\newblock {\em Invent. Math.}, 175(3):545--609, 2009.
\newblock \href {http://arxiv.org/abs/0510116} {\path{arXiv:0510116}}, \href
  {https://doi.org/10.1007/s00222-008-0158-2}
  {\path{doi:10.1007/s00222-008-0158-2}}.

\bibitem{HodgsonIssaSegerman16}
Craig~D. Hodgson, Ahmad Issa, and Henry Segerman.
\newblock Non-geometric veering triangulations.
\newblock {\em Exp. Math.}, 25(1):17--45, 2016.
\newblock \href {http://arxiv.org/abs/1406.6439} {\path{arXiv:1406.6439}},
  \href {https://doi.org/10.1080/10586458.2015.1005256}
  {\path{doi:10.1080/10586458.2015.1005256}}.

\bibitem{HRST11}
Craig~D. Hodgson, J.~Hyam Rubinstein, Henry Segerman, and Stephan Tillmann.
\newblock Veering triangulations admit strict angle structures.
\newblock {\em Geom. Topol.}, 15(4):2073--2089, 2011.
\newblock \href {http://arxiv.org/abs/1011.3695} {\path{arXiv:1011.3695}},
  \href {https://doi.org/10.2140/gt.2011.15.2073}
  {\path{doi:10.2140/gt.2011.15.2073}}.

\bibitem{Hodgson15}
Craig~D. Hodgson, J.~Hyam Rubinstein, Henry Segerman, and Stephan Tillmann.
\newblock Triangulations of 3-manifolds with essential edges.
\newblock {\em Ann. Fac. Sci. Toulouse Math. (6)}, 24(5):1103--1145, 2015.
\newblock \href {http://arxiv.org/abs/1412.0401} {\path{arXiv:1412.0401}},
  \href {https://doi.org/10.5802/afst.1477} {\path{doi:10.5802/afst.1477}}.

\bibitem{Kapovich09}
Michael Kapovich.
\newblock {\em Hyperbolic manifolds and discrete groups}.
\newblock Modern Birkh\"{a}user Classics. Birkh\"{a}user Boston, Inc., Boston,
  MA, 2009.
\newblock Reprint of the 2001 edition.
\newblock \href {https://doi.org/10.1007/978-0-8176-4913-5}
  {\path{doi:10.1007/978-0-8176-4913-5}}.

\bibitem{Kozai13}
Kenji Kozai.
\newblock {\em Singular hyperbolic structures on pseudo-{A}nosov mapping tori}.
\newblock PhD thesis, Stanford University, June 2013.
\newblock 
  \url{http://math.berkeley.edu/~kozai/research/thesis-june2013.pdf}.

\bibitem{Lackenby00}
Marc Lackenby.
\newblock Taut ideal triangulations of 3-manifolds.
\newblock {\em Geom. Topol.}, 4:369--395, 2000.
\newblock \href {http://arxiv.org/abs/math/0003132}
  {\path{arXiv:math/0003132}}, \href {https://doi.org/10.2140/gt.2000.4.369}
  {\path{doi:10.2140/gt.2000.4.369}}.

\bibitem{Landry23}
Michael Landry.
\newblock Stable loops and almost transverse surfaces.
\newblock {\em Groups Geom. Dyn.}, 17(1):35--75, 2023.
\newblock \href {http://arxiv.org/abs/1903.08709} {\path{arXiv:1903.08709}},
  \href {https://doi.org/10.4171/ggd/655} {\path{doi:10.4171/ggd/655}}.

\bibitem{Landry18}
Michael~P. Landry.
\newblock Taut branched surfaces from veering triangulations.
\newblock {\em Algebr. Geom. Topol.}, 18(2):1089--1114, 2018.
\newblock \href {http://arxiv.org/abs/1703.00336} {\path{arXiv:1703.00336}},
  \href {https://doi.org/10.2140/agt.2018.18.1089}
  {\path{doi:10.2140/agt.2018.18.1089}}.

\bibitem{Landry22}
Michael~P. Landry.
\newblock Veering triangulations and the {T}hurston norm: Homology to isotopy.
\newblock {\em Advances in Mathematics}, 396:108102, 2022.
\newblock \href {http://arxiv.org/abs/2006.16328} {\path{arXiv:2006.16328}},
  \href {https://doi.org/10.1016/j.aim.2021.108102}
  {\path{doi:10.1016/j.aim.2021.108102}}.

\bibitem{LandryMinskyTaylor23}
Michael~P. Landry, Yair~N. Minsky, and Samuel~J. Taylor.
\newblock Flows, growth rates, and the veering polynomial.
\newblock {\em Ergodic Theory and Dynamical Systems}, 43(9):3026–3107, 2023.
\newblock \href {http://arxiv.org/abs/2107.04066} {\path{arXiv:2107.04066}},
  \href {https://doi.org/10.1017/etds.2022.63}
  {\path{doi:10.1017/etds.2022.63}}.

\bibitem{LandryMinskyTaylor24}
Michael~P. Landry, Yair~N. Minsky, and Samuel~J. Taylor.
\newblock A polynomial invariant for veering triangulations.
\newblock {\em J. Eur. Math. Soc. (JEMS)}, 26(2):731--788, 2024.
\newblock \href {http://arxiv.org/abs/2008.04836} {\path{arXiv:2008.04836}},
  \href {https://doi.org/10.4171/jems/1368} {\path{doi:10.4171/jems/1368}}.

\bibitem{LandryTsang23}
Michael~P. Landry and Chi~Cheuk Tsang.
\newblock Endperiodic maps, splitting sequences, and branched surfaces, 2023.
\newblock \href {http://arxiv.org/abs/2304.14481} {\path{arXiv:2304.14481}}.

\bibitem{Li02}
Tao Li.
\newblock Laminar branched surfaces in 3-manifolds.
\newblock {\em Geom. Topol.}, 6:153--194, 2002.
\newblock \href {http://arxiv.org/abs/0204012} {\path{arXiv:0204012}}, \href
  {https://doi.org/10.2140/gt.2002.6.153} {\path{doi:10.2140/gt.2002.6.153}}.

\bibitem{ManningSchleimerSegerman}
Jason Manning, Saul Schleimer, and Henry Segerman.
\newblock From veering triangulations to convergence actions and back again.
\newblock In preparation.

\bibitem{MinskyTaylor17}
Yair~N. Minsky and Samuel~J. Taylor.
\newblock Fibered faces, veering triangulations, and the arc complex.
\newblock {\em Geom. Funct. Anal.}, 27(6):1450--1496, 2017.
\newblock \href {http://arxiv.org/abs/1605.08943} {\path{arXiv:1605.08943}},
  \href {https://doi.org/10.1007/s00039-017-0430-y}
  {\path{doi:10.1007/s00039-017-0430-y}}.

\bibitem{Morgan84}
John~W. Morgan.
\newblock On {T}hurston's uniformization theorem for three-dimensional
  manifolds.
\newblock In {\em The {S}mith conjecture ({N}ew {Y}ork, 1979)}, volume 112 of
  {\em Pure Appl. Math.}, pages 37--125. Academic Press, Orlando, FL, 1984.
\newblock \href {https://doi.org/10.1016/S0079-8169(08)61637-2}
  {\path{doi:10.1016/S0079-8169(08)61637-2}}.

\bibitem{Mosher96}
Lee Mosher.
\newblock Laminations and flows transverse to finite depth foliations.
\newblock Preprint, 1996.
\newblock 
  \url{https://web.archive.org/web/20190829013413/http://andromeda.rutgers.edu/~mosher/}.

\bibitem{Mosher03}
Lee Mosher.
\newblock Train track expansions of measured foliations.
\newblock Preprint, 2003.
\newblock 
  \url{https://web.archive.org/web/20190829013413/http://andromeda.rutgers.edu/~mosher/}.

\bibitem{NeumannZagier85}
Walter~D. Neumann and Don Zagier.
\newblock Volumes of hyperbolic three-manifolds.
\newblock {\em Topology}, 24(3):307--332, 1985.
\newblock \href {https://doi.org/10.1016/0040-9383(85)90004-7}
  {\path{doi:10.1016/0040-9383(85)90004-7}}.

\bibitem{Nimershiem23}
Barbara Nimershiem.
\newblock Geometric triangulations of a family of hyperbolic 3-braids.
\newblock {\em Algebr. Geom. Topol.}, 23(9):4309--4348, 2023.
\newblock \href {http://arxiv.org/abs/2108.09349} {\path{arXiv:2108.09349}},
  \href {https://doi.org/10.2140/agt.2023.23.4309}
  {\path{doi:10.2140/agt.2023.23.4309}}.

\bibitem{Parlak23}
Anna Parlak.
\newblock The taut polynomial and the {A}lexander polynomial.
\newblock {\em J. Topol.}, 16(2):720--756, 2023.
\newblock \href {http://arxiv.org/abs/2101.12162} {\path{arXiv:2101.12162}},
  \href {https://doi.org/10.1112/topo.12302} {\path{doi:10.1112/topo.12302}}.

\bibitem{Parlak24}
Anna Parlak.
\newblock Computation of the taut, the veering and the {T}eichm\"{u}ller
  polynomials.
\newblock {\em Exp. Math.}, 33(1):1--26, 2024.
\newblock \href {http://arxiv.org/abs/2009.13558} {\path{arXiv:2009.13558}},
  \href {https://doi.org/10.1080/10586458.2021.1985656}
  {\path{doi:10.1080/10586458.2021.1985656}}.

\bibitem{Richards63}
Ian Richards.
\newblock On the classification of noncompact surfaces.
\newblock {\em Trans. Amer. Math. Soc.}, 106:259--269, 1963.
\newblock \href {https://doi.org/10.2307/1993768} {\path{doi:10.2307/1993768}}.

\bibitem{Sakata16}
Naoki Sakata.
\newblock Veering structures of the canonical decompositions of hyperbolic
  fibered two-bridge link complements.
\newblock {\em J. Knot Theory Ramifications}, 25(4):1650015, 34, 2016.
\newblock \href {https://doi.org/10.1142/S0218216516500152}
  {\path{doi:10.1142/S0218216516500152}}.

\bibitem{SchleimerSegerman5}
Saul Schleimer and Henry Segerman.
\newblock From veering triangulations to pseudo-{A}nosov flows and back again.
\newblock In preparation.

\bibitem{veering_dehn_surgery}
Saul Schleimer and Henry Segerman.
\newblock Veering {D}ehn surgery.
\newblock In preparation.

\bibitem{SchleimerSegerman20}
Saul Schleimer and Henry Segerman.
\newblock Essential loops in taut ideal triangulations.
\newblock {\em Algebr. Geom. Topol.}, 20(1):487--501, 2020.
\newblock \href {http://arxiv.org/abs/1902.03206} {\path{arXiv:1902.03206}},
  \href {https://doi.org/10.2140/agt.2020.20.487}
  {\path{doi:10.2140/agt.2020.20.487}}.

\bibitem{SchleimerSegerman24}
Saul Schleimer and Henry Segerman.
\newblock From loom spaces to veering triangulations.
\newblock {\em Groups Geom. Dyn.}, 18(2):419--462, 2024.
\newblock \href {http://arxiv.org/abs/2108.10264} {\path{arXiv:2108.10264}},
  \href {https://doi.org/10.4171/ggd/742} {\path{doi:10.4171/ggd/742}}.

\bibitem{SchleimerSegerman4}
Saul Schleimer and Henry Segerman.
\newblock From veering triangulations to dynamic pairs, 2024.
\newblock \href {http://arxiv.org/abs/2305.08799} {\path{arXiv:2305.08799}}.

\bibitem{Smale67}
Stephen~J. Smale.
\newblock Differentiable dynamical systems.
\newblock {\em Bull. Amer. Math. Soc.}, 73:747--817, 1967.
\newblock \href {https://doi.org/10.1090/S0002-9904-1967-11798-1}
  {\path{doi:10.1090/S0002-9904-1967-11798-1}}.

\bibitem{Strenner23}
Bal\'{a}zs Strenner.
\newblock Fibrations of 3-manifolds and asymptotic translation length in the
  arc complex.
\newblock {\em Algebr. Geom. Topol.}, 23(9):4087--4142, 2023.
\newblock \href {http://arxiv.org/abs/1810.07236} {\path{arXiv:1810.07236}},
  \href {https://doi.org/10.2140/agt.2023.23.4087}
  {\path{doi:10.2140/agt.2023.23.4087}}.

\bibitem{Thurston78}
William~P. Thurston.
\newblock Geometry and topology of three-manifolds.
\newblock Lecture notes, 1978.
\newblock \url{http://msri.org/publications/books/gt3m/}.

\bibitem{Thurston97}
William~P. Thurston.
\newblock Three-manifolds, foliations and circles, {I}: preliminary version.
\newblock In {\em Collected works of {W}illiam {P}. {T}hurston with commentary.
  {V}ol. {I}. {F}oliations, surfaces and differential geometry}, pages
  353--412. Amer. Math. Soc., Providence, RI, 2022.
\newblock December 1997 eprint.
\newblock \href {http://arxiv.org/abs/math/9712268}
  {\path{arXiv:math/9712268}}.

\bibitem{Thurston98}
William~P. Thurston.
\newblock Three-manifolds, foliations and circles {II}: the transverse
  asymptotic geometry of foliations.
\newblock In {\em Collected works of {W}illiam {P}. {T}hurston with commentary.
  {V}ol. {I}. {F}oliations, surfaces and differential geometry}, pages
  413--449. Amer. Math. Soc., Providence, RI, 2022.
\newblock January 1998 preprint.

\bibitem{Tillmann12}
Stephan Tillmann.
\newblock Degenerations of ideal hyperbolic triangulations.
\newblock {\em Math. Z.}, 272(3-4):793--823, 2012.
\newblock \href {http://arxiv.org/abs/math/0508295v4}
  {\path{arXiv:math/0508295v4}}, \href
  {https://doi.org/10.1007/s00209-011-0958-8}
  {\path{doi:10.1007/s00209-011-0958-8}}.

\bibitem{Tsang23}
Chi~Cheuk Tsang.
\newblock Veering branched surfaces, surgeries, and geodesic flows.
\newblock {\em New York J. Math.}, 29:1425--1495, 2023.
\newblock \href {http://arxiv.org/abs/2203.02874} {\path{arXiv:2203.02874}}.

\bibitem{Tsang24}
Chi~Cheuk Tsang.
\newblock Constructing {B}irkhoff sections for pseudo-{A}nosov flows with
  controlled complexity.
\newblock {\em Ergodic Theory Dynam. Systems}, 44(8):2308--2360, 2024.
\newblock \href {http://arxiv.org/abs/2206.09586} {\path{arXiv:2206.09586}},
  \href {https://doi.org/10.1017/etds.2023.105}
  {\path{doi:10.1017/etds.2023.105}}.

\bibitem{Kerekjarto23}
Bela {von Ker\'ekj\'art\'o}.
\newblock {\em {Vorlesungen \"uber Topologie. I.: Fl\"achentopologie. Mit 80
  Textfiguren.}}, volume~8.
\newblock Springer, Berlin, 1923.
\newblock \url{http://www.springer.com/de/book/9783642505157}.

\bibitem{Weeks85}
Jeffrey~Renwick Weeks.
\newblock {\em Hyperbolic structures on three-manifolds}.
\newblock ProQuest LLC, Ann Arbor, MI, 1985.
\newblock Thesis (Ph.D.)--Princeton University.
\newblock \url{http://sschleimer.warwick.ac.uk/weeks.html}.

\bibitem{Williams74}
Robert~F. Williams.
\newblock Expanding attractors.
\newblock {\em Inst. Hautes \'{E}tudes Sci. Publ. Math.}, 43:169--203, 1974.
\newblock \href {https://doi.org/10.1007/BF02684369}
  {\path{doi:10.1007/BF02684369}}.

\bibitem{Worden18}
William Worden.
\newblock Experimental statistics of veering triangulations.
\newblock {\em Experimental Mathematics}, 29(1):101--122, Mar 2018.
\newblock \href {http://arxiv.org/abs/1710.01198} {\path{arXiv:1710.01198}},
  \href {https://doi.org/10.1080/10586458.2018.1437850}
  {\path{doi:10.1080/10586458.2018.1437850}}.

\bibitem{Yaman04}
Asl{\i} Yaman.
\newblock A topological characterisation of relatively hyperbolic groups.
\newblock {\em J. Reine Angew. Math.}, 566:41--89, 2004.
\newblock \href {https://doi.org/10.1515/crll.2004.007}
  {\path{doi:10.1515/crll.2004.007}}.

\end{thebibliography}
\printindex

\end{document}